\renewcommand{\eprint}[1]{\href{https://urldefense.com/v3/__http://arxiv.org/abs/*1*7D*7Barxiv:*1__;IyUlIw!!GSt_xZU7050wKg!5e2VFGOgXR6qNFqqDgCkkknu5OrXS-YtQKIFAF9ewZ_5pbwzi8zJTKx7pd3ES5jBWcc$ }} 
\let\oldfootnotemark\footnotemark
\let\oldfootnotetext\footnotetext
\let\oldfootnote\footnote
\renewcommand\footnote[1]{\addtocounter{footnote}{1}\hypertarget{fnbackref.\arabic{footnote}}{}\addtocounter{footnote}{-1}\oldfootnote{#1\fnbackref}}
\renewcommand\footnotemark{\addtocounter{footnote}{1}\hypertarget{fnbackref.\arabic{footnote}}{}\addtocounter{footnote}{-1}\oldfootnotemark}
\renewcommand\footnotetext[1]{\oldfootnotetext{#1\fnbackref}}
\newcommand{\fnbackref}{\hyperlink{fnbackref.\arabic{footnote}}{\footnotesize$\uparrow$}}
\definecolor{mydefnblue}{rgb}{.2,.2,.7}
\newcommand{\demph}[1]{\textcolor{mydefnblue}{\it #1}}
\newcommand{\sss}{\@startsection{subsubsection}{2}{0pt}{-3ex
plus -1ex minus -0.2ex}{-2mm plus -0pt minus
-2pt}{\normalfont\bfseries}} 
\definecolor{mynotecol}{rgb}{.4,.1,.1}
\let\oldmarginpar\marginpar
\renewcommand\marginpar[1]{\mbox{}\oldmarginpar[\raggedleft\hspace{0pt}\textcolor{mynotecol}{\small #1}]%
{\raggedright\hspace{0pt}\textcolor{mynotecol}{\small #1}}}
\newcommand{\refnewtheoremn}[4]{%
\newaliascnt{#1}{#2}
\newtheorem{#1}[#1]{#3}
\aliascntresetthe{#1}
\expandafter\providecommand\csname #1autorefname\endcsname{#4}}
\newcommand{\refnewtheorem}[3]{\refnewtheoremn{#1}{#2}{#3}{#3}}
\theoremstyle{plain}
\newtheorem{theorem}{Theorem}[subsection]
\theoremstyle{definition}
\par\vspace{2pt}\noindent\fbox{\vbox{\vspace{.25pt}\TheSbox\vspace{.25pt}}}}
\par\vspace{2pt}\noindent\fbox{\vbox{\vspace{.25pt}\TheSbox\vspace{.25pt}}}}
\par\vspace{2pt}\noindent\fbox{\vbox{\vspace{.25pt}\TheSbox\vspace{.25pt}}}}
\newcommand{\heart}{\heartsuit}
\newcommand{\eqdef}{\overset{\text{\tiny def}}{=}}
\newcommand{\ilim}{\mathop{\varprojlim}\limits}
\newcommand{\dlim}{\mathop{\varinjlim}\limits}
\newcommand{\isom}{\simeq}           
\newcommand{\ps}[1]{[\![#1]\!]}
\newcommand{\pl}[1]{(\!(#1)\!)}
\newcommand{\sh}[1]{#1^{!}}
\newcommand{\dual}{{\mkern-1.5mu{}^{\vee}}}
\newcommand{\shotimes}{\overset{!}{\otimes}}
\newcommand{\ol}[1]{\overline{#1}}
\newcommand{\oh}[1]{\widehat{#1}}
\newcommand{\sq}[1]{\widetilde{#1}}
\newcommand{\res}[2]{\left. #1 \right|_{#2}}
\newcommand{\DD}{\mathbb{D}}
\newcommand{\GG}{\mathbb{G}}
\newcommand{\LL}{\mathbb{L}}
\newcommand{\PP}{\mathbb{P}}
\newcommand{\ZZ}{\mathbb{Z}}
\let\vinograd\gg 
\renewcommand{\gg}{\mathfrak{g}}
\newcommand{\E}{\mathscr{E}}
\newcommand{\F}{\mathscr{F}}
\newcommand{\G}{\mathscr{G}}
\newcommand{\K}{\mathscr{K}}
\renewcommand{\P}{\mathscr{P}}
\newcommand{\X}{\mathscr{X}}
\newcommand{\Y}{\mathscr{Y}}
\newcommand{\Z}{\mathscr{Z}}
\newcommand{\C}{\mathcal{C}}
\newcommand{\D}{\mathcal{D}}
\renewcommand{\H}{\mathcal{H}}
\renewcommand{\O}{\mathcal{O}}
\newcommand{\pt}{\ensuremath{\text{pt}}}
\newcommand{\op}{{\mkern-1mu\scriptstyle{\mathrm{op}}}}
\newcommand{\red}{{\mkern-1.5mu\scriptstyle{\mathrm{red}}}}
\renewcommand{\mod}{\text{\rm{-mod}}}  
\DeclareMathOperator{\Coh}{Coh}
\DeclareMathOperator{\DCoh}{DCoh}
\DeclareMathOperator{\Perf}{Perf}
\DeclareMathOperator{\QC}{QC}
\newcommand{\QCsh}{\sh{\QC}}
\newcommand{\Phish}{\sh{\Phi}}
\DeclareMathOperator{\tr}{tr}
\DeclareMathOperator{\cone}{cone}
\DeclareMathOperator{\supp}{supp}
\DeclareMathOperator{\Hom}{Hom}
\DeclareMathOperator{\Map}{Map}
\DeclareMathOperator{\RGamma}{R\Gamma}
\DeclareMathOperator{\Sym}{Sym}
\DeclareMathOperator{\RHom}{RHom}
\DeclareMathOperator{\Spec}{Spec}
\DeclareMathOperator{\Fun}{Fun}
\DeclareMathOperator{\id}{id}
\DeclareMathOperator{\Ind}{Ind}
\DeclareMathOperator{\fib}{fib}
\DeclareMathOperator{\Tot}{Tot}
\newcommand{\colim}{\operatorname{colim}\limits}
\newcommand{\dgcatidm}{\dgcat^{\mkern-2mu\scriptstyle{\mathrm{idm}}}}
\newcommand{\dgcatbig}{\dgcat^{\mkern-2mu\scriptstyle{\infty}}}
\newcommand{\dgcat}{\mathbf{dgcat}}
\newcommand{\Cat}{\mathbf{Cat}}
\newcommand{\sSet}{\mathbf{sSet}}
\newcommand{\Alg}{\mathbf{Alg}}
\newcommand{\adjunct}[2]{\xymatrix@1{ #1 \ar@<.7ex>[r] & \ar@<.7ex>[l] #2 }}
\newcommand{\ssetr}[2]{\xymatrix@1{ #1 \ar[r] & \ar@<.7ex>[l] #2 \ar@<-.7ex>[l] \ar@<.7ex>[r] \ar@<-.7ex>[r] & \cdots \ar@<1.4ex>[l] \ar@<-1.4ex>[l] \ar[l] }}
\newcommand{\ssetlar}[4]{\xymatrix@1{  #1 \ar@<.7ex>[r]^-{#2} \ar@<-.7ex>[r]_-{#3} & #4}}
\newcommand{\ssetl}[2]{\xymatrix@1{ \cdots \ar@<1.4ex>[r] \ar@<-1.4ex>[r] \ar[r] & #2 \ar@<.7ex>[r] \ar@<-.7ex>[r] \ar@<.7ex>[l] \ar@<-.7ex>[l] & #1 \ar[l]}}
\newcommand{\ssetrr}[3]{\xymatrix@1{ #1 \ar[r] & #2 \ar@<.7ex>[l] \ar@<-.7ex>[l] \ar@<.7ex>[r] \ar@<-.7ex>[r] & #3 \ar@<1.4ex>[l] \ar@<-1.4ex>[l] \ar[l]  \ar@<1.4ex>[r] \ar@<-1.4ex>[r] \ar[r] & \cdots\ar@<.7ex>[l] \ar@<-.7ex>[l] \ar@<2.1ex>[l] \ar@<-2.1ex>[l]}}
\newcommand{\ssetll}[3]{\xymatrix@1{ \cdots\ar@<.7ex>[r] \ar@<-.7ex>[r] \ar@<2.1ex>[r] \ar@<-2.1ex>[r] & #3 \ar@<1.4ex>[r] \ar@<-1.4ex>[r] \ar[r]  \ar@<1.4ex>[l] \ar@<-1.4ex>[l] \ar[l] & #2 \ar@<.7ex>[r] \ar@<-.7ex>[r] \ar@<.7ex>[l] \ar@<-.7ex>[l] & #1 \ar[l]}}
\renewcommand{\mod}{\text{\rm{-mod}}}
\DeclareMathOperator{\DRng}{DRng}
\newcommand{\R}{\mathscr{R}}
\newcommand{\Cplt}{\mathbf{Cplt}}
\newcommand{\Reg}{\mathbf{Reg}}
\newcommand{\Coht}{\mathbf{Coh}}
\DeclareMathOperator{\Cohp}{{{\widehat\Coh}_+}}
\DeclareMathOperator{\DCohp}{{{\widehat\DCoh}_+}}
\DeclareMathOperator{\DCohm}{{{\widehat\DCoh}_{--}}}
\DeclareMathOperator{\DCohpm}{{\widehat{\DCoh}_\pm}}
\DeclareMathOperator{\Dualiz}{Dualiz}
\newcommand{\proper}{\mathit{prop}}
\newcommand{\bt}{\beta}
\newcommand{\Free@withargs}[1]{\mathrm{Free}^{{\mkern-2.5mu\scriptstyle{\mathrm{#1}}}}}
\newcommand{\Free@noargs}{\mathrm{Free}}
\DeclareDocumentCommand\Free { g }{%
    \IfNoValueT{#1}{\Free@noargs{}}%
    \IfNoValueF{#1}{\Free@withargs{#1}}%
}
\newcommand{\coFree@withargs}[1]{\mathrm{coFree}^{{\mkern-2.5mu\scriptstyle{\mathrm{#1}}}}}
\newcommand{\coFree@noargs}{\mathrm{coFree}}
\DeclareDocumentCommand\coFree { g }{%
    \IfNoValueT{#1}{\coFree@noargs{}}%
    \IfNoValueF{#1}{\coFree@withargs{#1}}%
}
\begin{document}

\makeatother

\title{Integral transforms for coherent sheaves}

\author{David Ben-Zvi} \address{Department of Mathematics\\University
  of Texas\\Austin, TX 78712-0257} \email{benzvi@math.utexas.edu}
\author{David Nadler} \address{Department of Mathematics\\University
  of California\\Berkeley, CA 94720-3840}
\email{nadler@math.berkeley.edu}
\author{Anatoly Preygel} 
\email{anatoly.preygel@gmail.com}

\begin{abstract}

The theory of integral, or Fourier-Mukai, transforms between derived categories of sheaves is a well established tool in noncommutative algebraic geometry. 
General ``kernel theorems" represent all reasonable linear functors
between categories of perfect complexes (or their ``large" version, quasi-coherent complexes) on schemes and stacks over some fixed base 
as integral kernels in the form of 
 complexes (of the same nature) on the fiber product.
However, for many applications in mirror symmetry and 
geometric representation theory one is interested instead in the bounded derived category of coherent sheaves (or its ``large" version, ind-coherent sheaves), 
which differs from perfect complexes (and quasi-coherent complexes) once the underlying variety is singular. 
In this paper, we prove general kernel theorems for linear functors between derived categories of coherent sheaves over a base
in terms of integral kernels on the fiber product.
Namely, we identify coherent kernels with functors taking perfect complexes to coherent complexes (an analogue of the 
classical Schwartz kernel theorem), and 
kernels which are coherent relative to the source with functors taking all coherent complexes to coherent complexes.
The proofs rely on key aspects of the ``functional analysis" of derived categories, namely the distinction between small and large
categories and its measurement using $t$-structures. These are used in particular to 
correct the failure of integral transforms on ind-coherent complexes to correspond to ind-coherent complexes on a fiber product.
The results are applied in a companion paper to the representation theory of the affine Hecke category, identifying 
affine character sheaves with the spectral geometric Langlands category in genus one.
\end{abstract}

\maketitle

\tableofcontents


\section{Introduction}

\subsection{Coherent sheaves and linear functors}

Integral transforms on derived categories of quasicoherent sheaves
have been intensely studied since Mukai introduced his analogue of the
Fourier transform for abelian varieties. The results of \cites{Orlov, BLL, Toen, BFN, LO}, 
a far from comprehensive but we hope nevertheless useful list of references,
give 
increasingly strong statements to the effect that all reasonable functors between derived categories of
perfect complexes or quasicoherent sheaves can be represented by integral kernels, once one works 
 in the appropriate homotopical and geometric  settings. (There are also recent developments
 devoted to the subtle behavior of functors between traditional as opposed to enhanced derived categories~\cites{Neeman, 
 RvdB}.)
 In this paper we extend this theory to coherent sheaves
 on singular varieties and stacks, in particular providing an analogue of the classical Schwartz kernel theorem in which 
perfect complexes play the role of test functions and coherent sheaves that of distributions. 
In a sequel \cite{BNP2}, these results are applied to calculate the categorical Hochschild invariants 
of the affine Hecke category. This is generalized in~\cite{BNglue} to derive a gluing paradigm
for coherent sheaves on character stacks.
 
\begin{remark}[Standing assumptions throughout the paper] Henceforth category will stand for pre-triangulated $k$-linear
 dg category or stable $k$-linear $\infty$-category, where $k$ is a field of characteristic zero. An important convention: we will
 use  homological grading (denoted by subscripts) for chain complexes and $t$-structures instead of the prevalent cohomological grading (denoted by superscripts).  
 
 Henceforth schemes, algebraic spaces and stacks are all  over $k$ and assumed to be derived unless explicitly called ``classical.''  We write $X_{cl}$ for the  classical scheme, algebraic space or stack underlying a given $X$.
\end{remark}

\begin{remark}[Standing assumptions throughout the introduction] For the purposes of the introduction,
 all schemes, algebraic spaces and stacks will be quasi-compact  and almost of finite presentation over $k$.   To simplify the  discussion outside of the formal statements of theorems,  we often assume all schemes, algebraic spaces and stacks are  geometric and perfect in the sense of \cite{BFN}.  Recall that if a stack $X$ is perfect, then
we can recover the ``large'' (presentable, in particular cocomplete) 
 category of quasicoherent sheaves $\QC(X)$
 and the ``small'' (small, idempotent-complete) category of perfect complexes $\Perf(X)$ from one another.  Namely, we recover $\QC(X) = \Ind \Perf(X)$ by ind-completing and $\Perf(X) = \QC(X)^{c}$ by taking compact objects.
All quasi-compact and separated algebraic spaces, all smooth geometric finitely-presented $k$-stacks, and most commonly occurring stacks in characteristic zero give examples of perfect stacks.
\end{remark}

Let $p_X:X\to S$, $p_Y:Y\to S$ be maps of perfect stacks.
Then \cite{BFN}*{Theorem 1.2} asserts that all $\QC(S)$-linear functors between quasicoherent sheaves on $X$ and $Y$ are represented 
by integral transforms:
\begin{equation}\label{eq qc fun equiv} 
\xymatrix{
\Phi_{(-)}:\QC(X \times_S Y) \ar[r]^-\sim & \Fun^L_{\QC(S)}(\QC(X), \QC(Y)) &
\Phi_\K( - ) = p_{Y*}(p_X^*(-) \otimes \K)
}
\end{equation}
Equivalence~(\ref{eq qc fun equiv}) is established by first showing that the external tensor product descends to an equivalence
\begin{equation}\label{eq qc tens equiv}
\xymatrix{
\QC(X)\otimes_{\QC(S)} \QC(Y) \ar[r]^-\sim &
\QC(X \times_S Y) &
}
\end{equation}
and then showing that $\QC(X)$ is self-dual as a $\QC(S)$-module.

Now let us focus on  small categories of perfect complexes.
By definition of the tensor product, equivalence~(\ref{eq qc tens equiv}) restricts to an equivalence on compact objects
\begin{equation}\label{eq perf tens equiv}
\xymatrix{
\Perf(X)\otimes_{\Perf(S)} \Perf(Y) \ar[r]^-\sim &
\Perf(X \times_S Y) &
}
\end{equation}
In contrast, the integral transform $\Phi_\P$ associated to a perfect kernel $\P \in \Perf (X \times_S Y)$ will not in general take perfect complexes to perfect complexes.  But if we assume that $p_X:X\to S$ is smooth and proper, then equivalence~(\ref{eq qc fun equiv}) restricts to an equivalence
\begin{equation}\label{eq perf fun equiv}
\xymatrix{
\Phi_{(-)}:\Perf(X \times_S Y) \ar[r]^-\sim & \Fun^{ex}_{\Perf(S)}(\Perf(X), \Perf(Y))
}
\end{equation}

On a singular stack, there are more bounded coherent complexes than perfect complexes, and they form
an intermediary\footnote{Recall that within the introduction, our standing assumptions imply that $X$ has finite Tor-dimension over $k$, so that perfect complexes are indeed coherent.} small stable category $\Perf(X)\subset \DCoh(X)\subset \QC(X)$. Here and throughout we write $\DCoh(X)$ for the enhanced analogue of the classical
bounded derived category $D^b(X)$.
Now suppose that $p_X:X\to S$ is proper  
but not necessarily smooth. Hence the fiber product $X \times_S Y$ is potentially singular and so carries more coherent than perfect complexes.

The first  goal of this paper is to answer the following natural question
 (with applications discussed below):
 \begin{center}{\em What kind of linear functors are given by coherent integral kernels?} \end{center}
The following theorem, the subject of \autoref{sec:perf}, shows that they give linear functors
on perfect complexes
with coherent as opposed to perfect values.

\begin{theorem} \label{intro thm int trans}
Let $S$ be a perfect stack, $p_X \colon X\to S$ a proper relative algebraic space, and $Y$ a locally Noetherian $S$-stack.

Then the integral transform construction provides an equivalence
  $$
  \xymatrix{
  \Phi : \DCoh(X \times_S Y) \ar[r]^-\sim &   \Fun^{ex}_{\Perf S}(\Perf X , \DCoh Y)
}  $$
\end{theorem}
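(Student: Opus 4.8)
The plan is to prove the equivalence by passing to the ambient presentable (``large'') categories, representing functors there by kernels via the self-duality of $\QC(X)$, and then descending to the small coherent categories by identifying compact objects through $t$-structure estimates. First I would dispose of well-definedness. Writing $q_X\colon X\times_S Y\to X$ and $q_Y\colon X\times_S Y\to Y$ for the two projections, the kernel $\K\in\DCoh(X\times_S Y)$ produces $\Phi_\K(\F)=q_{Y*}(q_X^*\F\otimes\K)$. For $\F\in\Perf(X)$ the pullback $q_X^*\F$ is again perfect, so $q_X^*\F\otimes\K\in\DCoh(X\times_S Y)$; since $p_X$ is proper its base change $q_Y$ is proper, and proper pushforward preserves coherence on the locally Noetherian stack $X\times_S Y$, so $\Phi_\K(\F)\in\DCoh(Y)$. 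Thus $\Phi$ is well-defined, and it is manifestly exact and $\Perf(S)$-linear.

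Second, I would reduce to a statement about large categories. Ind-completing source and target, every exact $\Perf(S)$-linear functor $\Perf(X)\to\DCoh(Y)$ extends uniquely (by left Kan extension) to a continuous functor $\QC(X)=\Ind\Perf(X)\to\Ind\DCoh(Y)$ into ind-coherent sheaves, giving a fully faithful embedding $\Fun^{ex}_{\Perf(S)}(\Perf(X),\DCoh(Y))\hookrightarrow\Fun^{L}_{\QC(S)}(\QC(X),\Ind\DCoh(Y))$ onto those continuous functors sending $\Perf(X)$ into $\DCoh(Y)$. Because $S$ is perfect, $\QC(X)$ is self-dual as a $\QC(S)$-module \cite{BFN}, so $\Fun^{L}_{\QC(S)}(\QC(X),\Ind\DCoh(Y))\simeq\QC(X)\otimes_{\QC(S)}\Ind\DCoh(Y)$, the equivalence sending a functor $\Phi$ to the kernel $(\id\boxtimes\Phi)(\Delta_*\O_X)$, where $\Delta\colon X\to X\times_S X$ is the diagonal. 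The key geometric input is then the identification $\QC(X)\otimes_{\QC(S)}\Ind\DCoh(Y)\simeq\Ind\DCoh(X\times_S Y)$: here one tensor factor is genuinely quasicoherent (arising from the perfect stack $X/S$ and acting by $q_X^*$), which is exactly what rescues the external product even though the analogous statement with $\Ind\DCoh(X)$ in place of $\QC(X)$ fails. Granting this, the theorem becomes the claim that, under $\Ind\DCoh(X\times_S Y)\simeq\Fun^{L}_{\QC(S)}(\QC(X),\Ind\DCoh(Y))$, the kernel $\K$ lies in $\DCoh(X\times_S Y)$ if and only if $\Phi_\K$ carries $\Perf(X)$ into $\DCoh(Y)$.

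Third, this last equivalence is a statement about compact objects: since $\DCoh(Z)=\Ind\DCoh(Z)^{c}$ for any locally Noetherian $Z$, the assertion is that the coherence-preserving functors are precisely those with compact kernel. The forward direction, ``compact kernel $\Rightarrow$ coherence-preserving,'' is the well-definedness of the first paragraph. The reverse direction is the crux, and is where the $t$-structure ``functional analysis'' enters: one recovers $\K=(\id\boxtimes\Phi_\K)(\Delta_*\O_X)$ from the \emph{coherent but non-perfect} class $\Delta_*\O_X\in\DCoh(X\times_S X)$, and cannot simply feed this through the hypothesis, which only controls $\Phi_\K$ on perfect inputs. The plan is to approximate $\Delta_*\O_X$ by perfect complexes $P_n\to\Delta_*\O_X$ agreeing with it in an increasing range of homological degrees (coherent complexes being approximated by perfect ones in any bounded range), to observe that $P_n\in\Perf(X\times_S X)=\Perf(X)\otimes_{\Perf(S)}\Perf(X)$ is built from external products $E\boxtimes F$ with $E,F\in\Perf(X)$, so that $(\id\boxtimes\Phi_\K)(P_n)$ is built from $E\boxtimes\Phi_\K(F)$ with $\Phi_\K(F)\in\DCoh(Y)$ and hence lies in $\DCoh(X\times_S Y)$, and finally to transfer these coherent approximations to $\K$ itself degree by degree.

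The main obstacle is precisely this transfer, i.e.\ controlling the cohomological amplitude of $\Phi_\K$. The source $\Perf(X)$ is proper but, for singular $X$, \emph{not} smooth over $\Perf(S)$, so $\Perf(X)$ is not dualizable among small categories and the naive ``compact objects of a dual'' formalism is unavailable; this is the structural reason the diagonal class is non-perfect and the argument cannot be purely formal. I expect the heart of the proof to be a uniform boundedness estimate---using properness of $p_X$, $\Perf(S)$-linearity, and a compact generator of $\Perf(X)$---showing that $\Phi_\K$ has finite amplitude, so that $(\id\boxtimes\Phi_\K)$ sends the bounded class $\Delta_*\O_X$ to a bounded object whose truncations agree with the coherent approximants $(\id\boxtimes\Phi_\K)(P_n)$. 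Boundedness together with coherence of every cohomology sheaf then yields $\K\in\DCoh(X\times_S Y)$, completing the reverse direction and hence the equivalence.
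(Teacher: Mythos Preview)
Your proposal has a genuine error in the second step.  The asserted equivalence
\[
\QC(X)\otimes_{\QC(S)}\Ind\DCoh(Y)\ \simeq\ \Ind\DCoh(X\times_S Y)
\]
is false in general.  Take $S=Y=\Spec k$ and $X$ a singular proper $k$-variety: the left side is $\QC(X)\otimes_k \Vect_k\simeq\QC(X)$, while the right side is $\QCsh(X)$, and these differ precisely when $X$ is singular.  (Swapping the roles of $X$ and $Y$ does not help; the analogous formula fails when $Y$ is singular.)  Mixed $\QC$/$\QCsh$ tensor formulas over a nontrivial base are exactly the kind of statement the paper warns is delicate, and the later section on $!$-integral transforms is devoted to measuring the failure of such formulas via $t$-structures.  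So the ambient ``large'' category you want is not $\Ind\DCoh(X\times_S Y)$.

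The paper instead stays entirely inside $\QC$: by \cite{BFN} one has $\QC(X\times_S Y)\simeq\Fun^{ex}_{\Perf S}(\Perf X,\QC Y)$, and it remains to identify the full subcategory of $\QC(X\times_S Y)$ consisting of those $\K$ with $\Phi_\K(\Perf X)\subset\DCoh Y$.  This is done by a geometric criterion (Prop.~3.0.9): $\K$ is coherent with support proper over $Y$ iff $\RGamma(\P\otimes\K)\in\DCoh$ for all perfect $\P$.  The proof of that criterion is the real work, carried out by reducing to $Y$ affine and then running a chain of reductions on $X\times_S Y\to Y$ (pass to classical, Nagata-compactify, Chow's lemma to get projective, reduce to $\PP^n$) ending with Beilinson's resolution of the diagonal on projective space.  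A separate lemma shows a single perfect generator detects $t$-boundedness, giving the finiteness you were hoping for, but only after these reductions.  Your diagonal-approximation idea is morally the Beilinson step, but it only works once you are on $\PP^n$; for a general separated algebraic space there is no reason $(\id\boxtimes\Phi_\K)(P_n)$ controls $\K$ in a fixed homological range without exactly the uniform amplitude bound you flagged as the main obstacle and did not supply.
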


\begin{remark}
The assumption that $p_X$ be proper can be weakened to separated by changing the source of $\Phi$ to consist
of those coherent complexes whose support is proper over $Y$ (see \autoref{thm: dcoh shriek}).
\end{remark}

\begin{remark} There is the following useful mnemonic for \autoref{intro thm int trans}.  
By analogy with ordinary commutative rings, when $p_X:X\to S$ is proper we could write $p_X^! = \Fun^{ex}_{\Perf S}(\Perf X, -)$ and 
think of it as a $!$-pullback, in contrast to the $*$-pullback $p_X^* = \Perf X \otimes_{\Perf S} (-)$.  
By equivalence (\ref{eq perf tens equiv}), we know that  $\Perf$ forms a presheaf under $p_X^*$, 
while \autoref{intro thm int trans} says that $\DCoh$ forms a presheaf under $p_X^!$.
\end{remark}

\begin{remark}\label{perf thm corollaries} Let us highlight some special cases:
  \begin{enumerate}
  \item  If $X \to S$ is \emph{smooth} and proper, then $\Perf X$ is self-dual over $\Perf S$ so that
  \[ \Fun_{\Perf S}^{ex}(\Perf X, \DCoh Y) \isom \Perf X \otimes_{\Perf S} \DCoh Y. \]
  It is known \cites{indcoh} that in this case the exterior tensor product induces an equivalence
  \[ \Perf X \otimes_{\Perf S} \DCoh Y \stackrel\sim\longrightarrow \DCoh(X \times_S Y) \]
  thereby recovering the theorem.

\item Consider the case of $Y = S$, with $p_X:X\to S$ proper. Then the theorem states that bounded linear functionals on perfect complexes are given by coherent complexes 
$$
  \xymatrix{
  \Phi : \DCoh(X) \ar[r]^-\sim &   \Fun^{ex}_{\Perf S}(\Perf X , \DCoh S)
}  $$

\item Suppose that $Y$ is regular so that $\DCoh(Y) \simeq \Perf(Y)$. Then the theorem states that linear functors on perfect complexes are given by integral transforms with coherent kernels
  $$
  \xymatrix{
  \Phi : \DCoh(X \times_S Y) \ar[r]^-\sim &   \Fun^{ex}_{\Perf S}(\Perf X , \Perf Y )
}  $$

\end{enumerate}
\end{remark}

\begin{remark}\label{Schwartz}[Schwartz Kernel Theorem] The identification $\DCoh(X)\simeq \Fun^{ex}_{\Perf S}(\Perf X , \DCoh S)$ for $X/S$ proper
supports an interpretation of perfect complexes as algebraic analogues of test functions and 
of coherent complexes as distributions. In this interpretation, Theorem~\ref{intro thm int trans} becomes (a relative version of)
the Schwartz kernel theorem, identifying continuous linear operators, from test functions on a manifold $X$ to distributions on another manifold
$Y$, with distributions on $X\times Y$.
\end{remark}


\subsection{Functors out of $\DCoh$}

Our second (and significantly more involved) main theorem provides a counterpart for \autoref{intro thm int trans} identifying functors out of categories of coherent sheaves as integral
kernels that are coherent {\em relative to the source}. 
In order to formulate this notion, we need to recall the notion of an \emph{almost perfect complex} \cite{LurieHA}; this is closely related to the classical notion of a {\em pseudo-coherent sheaf} \cite{illusie}.

\begin{defn} Suppose $X$ is Noetherian. Define $\DCohp(X)\subset \QC(X)$ to be the full subcategory   consisting of (homologically) bounded below complexes whose homology sheaves are coherent as $\O_{X_{cl}}$-modules.
\end{defn}

\begin{remark}

The notation is suggested by the fact that $\DCohp(X)$ is the left completion of $\DCoh(X)$ with respect to the standard $t$-structure.  

The objects of $\DCohp(X)$ admit another description, in line with the classical notion of a pseudo-coherent sheaf or Lurie's almost perfect complex. Recall that $\F \in \QC(X)$ is almost perfect if and only if $\tau_{\leq n} \F$ is a compact object of $\QC(X)_{\leq n}$, for all $n$.  If $X$ is Noetherian, one checks that this coincides with the above characterization.
\end{remark}

By construction, there is an inclusion $\DCoh(X) \subset \DCohp(X)$.  We can characterize the objects of $\DCoh(X)$ as those that are $t$-bounded in both directions, or alternatively as those objects of $\DCohp(X)$ that have finite Tor-dimension over the base field $k$.  This motivates the following definition:

\begin{defn} For $X\to S$, define the full subcategory category $\DCoh(X/S) \subset \DCohp(X)$ of complexes on $X$ that are {\em coherent relative to $S$} to
consist of complexes with finite Tor-dimension with respect to $S$.
\end{defn}

Thus we have $\DCoh(X/k)=\DCoh(X)$, while $\DCoh(X/X) = \Perf(X)$ by the well-known characterization of perfect complexes as those almost perfect complexes of finite Tor-dimension. Categories of relative coherent sheaves are used in \cite{lowrey} to define moduli stacks of objects of $\DCoh(X)$.

Now the following theorem is the main result of \autoref{sect: DCoh}:

\begin{theorem}\label{theorem:intro-fun-dcoh}
Let $S$ be a quasi-compact, geometric, smooth $k$-stack.  Let $p_X\colon X \to S$ be a proper relative $S$-algebraic space locally of finite presentation, and $p_Y\colon Y \to S$ a locally finitely presented $S$-stack.  Then the integral transform construction gives an equivalence
  \[ \Phi \colon 
  \DCoh(X \times_S Y/ X) \stackrel\sim\longrightarrow \Fun^{ex}_{\Perf S}(\DCoh X, \DCoh Y)  \]
 \end{theorem}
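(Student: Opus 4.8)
The plan is to deduce \autoref{theorem:intro-fun-dcoh} from the already-established functors-from-$\Perf$ statement of \autoref{intro thm int trans} by upgrading the source category from $\Perf X$ to $\DCoh X$ and tracking the corresponding cut-down of the kernel. Restriction along $\Perf X\hookrightarrow\DCoh X$ produces a functor
\[ R\colon \Fun^{ex}_{\Perf S}(\DCoh X,\DCoh Y)\longrightarrow \Fun^{ex}_{\Perf S}(\Perf X,\DCoh Y)\simeq \DCoh(X\times_S Y), \]
the last equivalence being \autoref{intro thm int trans}, and the whole content of the theorem becomes the assertion that $R$ is fully faithful with essential image exactly the relatively coherent kernels $\DCoh(X\times_S Y/X)\subset\DCoh(X\times_S Y)$; the integral transform construction is then the inverse $\Phi$ of the corestriction of $R$. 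Throughout I would use that $S$ is smooth, so that $\DCoh S=\Perf S$ and $\QC S=\Ind\Perf S=\Ind\DCoh S$: this pins down the base linear structure and guarantees that the only discrepancies in play are those of the fibres, measured by Tor-dimension over $X$ rather than over $S$.

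First I would verify that $R$ has the expected essential image by constructing functors directly: for a kernel $\K\in\DCoh(X\times_S Y/X)$ the transform $\Phi_\K(\F)=\pi_{Y*}(\pi_X^*\F\otimes\K)$ carries all of $\DCoh X$ into $\DCoh Y$. For $\F\in\Perf X$ this is \autoref{intro thm int trans}; the new point is that finite Tor-dimension of $\K$ over $X$ makes $\F\mapsto\pi_X^*\F\otimes\K$ preserve \emph{bounded} coherent complexes even when $\pi_X$ is not flat, since the relative Tor-amplitude uniformly bounds the homological spread of $\pi_X^*\F\otimes\K$, while properness of $\pi_Y$ (a base change of the proper $p_X$) makes $\pi_{Y*}$ preserve coherence. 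Thus $\Phi_\K$ genuinely extends the functor attached to $\K$, the triangle with $R$ commutes, and $\DCoh(X\times_S Y/X)$ lies in the image of $R$.

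The crux is full faithfulness of $R$, i.e. that an exact $\Perf S$-linear $F\colon\DCoh X\to\DCoh Y$ is determined, with its mapping spaces, by $F|_{\Perf X}$. This cannot come from a naive generation statement, because for singular $X$ the category $\DCoh X$ is \emph{not} generated by $\Perf X$ under finite colimits and retracts. Instead I would pass to the left-complete $t$-structures and use the almost-perfect approximation: each (say bounded-above) $\F\in\DCoh X$ is a colimit $\F\simeq\colim_n P_n$ with $P_n\in\Perf X$ and $\fib(P_n\to\F)$ increasingly connective. The key lemma to prove is that $F$ has \emph{bounded cohomological amplitude}, a way-out estimate forced by the target $\DCoh Y$ being bounded together with exactness and $\Perf S$-linearity; granting this, $F$ sends the highly connective fibres to highly connective objects, hence commutes with the approximating colimits in the left-complete $t$-structure of $\Ind\DCoh Y$. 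Writing $\K:=R(F)$, this identifies $F(\F)$ with $\colim_n\pi_{Y*}(\pi_X^*P_n\otimes\K)\simeq\pi_{Y*}(\pi_X^*\F\otimes\K)$ on all of $\DCoh X$, and the same argument recovers natural transformations. Establishing the uniform amplitude bound is where I expect the main difficulty, and where the ``functional analysis'' of the $t$-structure is indispensable; in the language of the large categories this step is precisely the correction of the discrepancy between $\Ind\DCoh X\otimes_{\QC S}\Ind\DCoh Y$ and $\Ind\DCoh(X\times_S Y)$.

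Finally I would show the image of $R$ is no larger than $\DCoh(X\times_S Y/X)$. The previous step gives $F(\F)\simeq\pi_{Y*}(\pi_X^*\F\otimes\K)$ for every $\F\in\DCoh X$, computed a priori in $\Ind\DCoh Y$; since $F$ takes values in the \emph{bounded} category $\DCoh Y$, the object $\pi_X^*\F\otimes\K$ must be uniformly bounded as $\F$ ranges over $\DCoh X$, and testing against sheaves that witness large Tor-dimension (structure sheaves of closed subschemes, or skyscrapers after base change) forces $\K$ to have finite Tor-dimension over $X$, i.e. $\K\in\DCoh(X\times_S Y/X)$. Combining well-definedness, full faithfulness, and this image computation shows $R$ corestricts to an equivalence $\Fun^{ex}_{\Perf S}(\DCoh X,\DCoh Y)\xrightarrow{\sim}\DCoh(X\times_S Y/X)$, whose inverse is the integral transform $\Phi$. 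A subsidiary point worth isolating is the reduction to this situation étale- or smooth-locally on $S$, which is harmless since all constructions are local over the smooth base $S$ and compatible with a presentation of $Y$.
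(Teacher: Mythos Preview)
Your high-level strategy---restrict along $\Perf X\hookrightarrow\DCoh X$, invoke \autoref{intro thm int trans}, and use almost-perfect approximation together with a $t$-boundedness lemma to recover $F$ from $F|_{\Perf X}$---is sound and is close to the reformulation the paper records in \autoref{cor:functors-dcoh-reformulation}. The paper's primary route is different: it works with the $!$-integral transform on $\QCsh$, establishes the regularization framework of \autoref{sect: shriek} (so that \autoref{thm:fun-regulariz} identifies $\QCsh(X\times_S Y)_{<\infty}$ with the left-$t$-bounded functors), proves the $t$-boundedness lemma (\autoref{lem:dcoh-bounded}) to force the functor category into this range, and then identifies the resulting subcategory of $\QCsh(X\times_S Y)_{<\infty}$ before translating back to the $*$-picture via Grothendieck duality. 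Both routes pivot on the same $t$-boundedness lemma; what your route saves is the $\Phish$/regularization apparatus, at the cost of having to justify carefully that the approximation argument recovers the full mapping spaces of functors and not just pointwise values (this is essentially the lemma preceding \autoref{prop:fun-dcoh-many}).

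There are, however, two genuine gaps. First, you flag the bounded-amplitude step as ``the main difficulty'' but give no mechanism; the paper's proof (\autoref{lem:dcoh-bounded}) is not formal. After reducing to $S=\Spec k$, it uses a single-generator result for $\DCoh X$ (\autoref{lem:very good2}, via Noetherian induction and generic smoothness over a perfect field) together with the absolute exterior-product equivalence $\DCoh X\otimes\DCoh X\simeq\DCoh(X^2)$ to build $\id_{\QCsh X}=\Phish_{\Delta_*\omega_X}$ in finitely many steps from the rank-one functor $\RHom_{\QCsh X}(\G,-)\otimes\G$; any $F$ is then built in finitely many steps from something manifestly left $t$-exact. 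Second, your essential-image argument---``testing against skyscrapers forces finite Tor-dimension over $X$''---is too quick: boundedness of $\pi_{Y*}(\pi_X^*\F\otimes\K)$ does not directly bound $\pi_X^*\F\otimes\K$, since $\pi_{Y*}$ can kill unbounded pieces. The paper (end of the proof of \autoref{prop:fun-dcoh}) instead first shows $\K\in\DCohm(X\times_S Y)$ with properly supported homology via \autoref{lem:tensor-product:DDAPerf} and \autoref{prop: push pscoh}, and then extracts finite Tor-dimension from the global $t$-boundedness of $\RGamma(Y,\Phish_\K(-))$ using \autoref{lem:tor-dimension} after passing to an affine cover of $X$.
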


\begin{remark}
The assumption that $p_X$ is proper can be weakened to separated if we change the source of $\Phi$ to consist
of kernels with support proper over $Y$.
\end{remark}

\begin{remark}\label{reflexive} Let us highlight an interesting special case of the theorem. 
Suppose $p \colon X \to S$ is a proper relative algebraic space, and that $S$ is a regular Noetherian stack (so that $\DCoh (S)=\Perf (S)$). 
Then \autoref{theorem:intro-fun-dcoh} and 
\autoref{perf thm corollaries} give the following ``dual" statements characterizing $\DCoh (X)$ and $\Perf (X)$ as bounded linear functionals on each other:
 \[ \Fun^{ex}_{\Perf S}(\DCoh X, \Perf S) \isom \Perf (X) \]
  \[ \Fun^{ex}_{\Perf S}(\Perf X, \Perf S) \isom \DCoh (X) \]
We could summarize this by saying that $\Perf (X)$ and $\DCoh (X)$ are {\em reflexive} $\Perf (S)$-linear categories.  Notice that they are not actually dualizable unless $X \to S$ is also smooth!

This weak duality between $\DCoh$ and $\Perf$ stands in striking contrast
to the situation with their ``large versions": the categories $\QC(X)=\Ind\Perf (X)$ and $\QCsh(X)=\Ind\DCoh (X)$ are each 
{\em self-dual}. Another divergence between the large and small worlds is highlighted in the next section.
\end{remark}


\subsection{Measuring categories}

We conclude the introduction by highlighting an aspect of the ``functional analysis" of categories that 
figures prominently in the statement and proof
of \autoref{intro thm int trans} and especially \autoref{theorem:intro-fun-dcoh}: the use of $t$-structures to modify ``growth properties" of objects, and 
thereby account for the distinction between small categories and their large cocomplete versions.
More specifically, in~\autoref{intro thm regularize} below, we describe how $t$-exactness properties of functors
on ind-coherent sheaves correspond to $t$-boundedness properties of their integral kernels.

Recall that for $X$ a perfect  stack,
the cocompletion of the  category $\Perf(X)$ of perfect complexes is the category  of quasicoherent complexes $\QC(X)\simeq \Ind\Perf(X)$. 
The cocompletion of the  category $\DCoh(X)$ of coherent complexes is the category  of ind-coherent complexes $\QCsh(X) \simeq \Ind\DCoh(X)$.  Each of these large categories carries a natural $t$-structure.

However, observe that the equivalence 
\[ \DCoh(X \times_S Y) \isom   \Fun^{ex}_{\Perf S}(\Perf X , \DCoh Y)\]
of \autoref{intro thm int trans} fails if we replace $\Perf$ by $\QC$ and $\DCoh$ by $\QC^!$.

To see this, let us place $\QC^!(X\times_S Y) = \Ind \DCoh(X \times_S Y)$ on the left hand side.
On the right hand side, since $\QC(X)$ is self-dual over $\QC(S)$, we find the identification
$$
\xymatrix{
  \Fun^{L}_{\QC(S)}(\QC(X) , \QC^!(Y)) \simeq \QC(X)\otimes_{\QC(S)} \QC^!(Y) 
  }
$$
Let us further assume that $X$ and $Y$ are smooth, so that $\QC^!(Y) \simeq \QC(Y)$, then by equivalence~(\ref{eq qc tens equiv}), we obtain a further equivalence
$$
\xymatrix{
 \Fun^{L}_{\QC(S)}\left(\QC(X)=\QCsh(X) , \QC(Y)=\QCsh(Y)\right) \simeq \QC(X\times_S Y)
}
$$
But with the current setup, $X \times_S Y$ need not be smooth, and hence in general $\QC(X\times_S Y)$ will not be equivalent
to $\QC^!(X \times_S Y)$. 
This precise setup arises in our motivating case of the affine Hecke category \cite{BNP2}.

We may look at this discrepancy another way. For quasi-coherent complexes, it is often the case that categories of functors coincide with quasi-coherent complexes on the fiber product. For ind-coherent complexes, this is often true over a point but very rarely true over a non-trivial base, as we have just seen even when source and target are smooth.  

The goal of \autoref{sect: shriek} is to ``fix'' this, or rather to show that the failure of the integral transform construction to give an equivalence can be precisely controlled by means of the $t$-structure.   More precisely, in \autoref{thm:fun-regulariz}, we show:

\begin{theorem} \label{intro thm regularize} Suppose that $S$ is a quasi-compact, geometric, finitely-presented $k$-stack; that $p \colon X \to S$ is a quasi-compact and separated $S$-algebraic space of finite-presentation; and, that $Y$ is an $S$-stack of finite presentation over $S$.  Then the $!$-integral transform 
  \[ \Phish\colon \QCsh(X \times_S Y) \longrightarrow \Fun^L_{\QCsh (S)}(\QCsh (X), \QCsh (Y)) \]
  need not be equivalence, but it \emph{does} induce an equivalence between the bounded-above objects and those functors which are left $t$-exact up to a shift:
  \[ 
  \Phish\colon \QCsh(X \times_S Y)_{<\infty} \stackrel{\sim}{\longrightarrow}  \left\{ F \in  \Fun^L_{\QCsh S}(\QCsh X, \QCsh Y) \colon \begin{gathered} F((\QCsh X)_{<0}) \subset (\QCsh X)_{<N(F)} \\ \text{ for some $N(F)$ depending on $F$} \end{gathered} \right\}
   \] 
\end{theorem}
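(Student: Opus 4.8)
The plan is to transport the statement into the quasicoherent world, where \cite{BFN} supplies the clean equivalence (\ref{eq qc fun equiv}), and to use the $t$-structure to pinpoint where the ind-coherent and quasicoherent theories coincide. The essential tool is the canonical functor $\Psi_Z \colon \QCsh(Z) \to \QC(Z)$, which is $t$-exact and restricts to an equivalence $\QCsh(Z)_{<\infty} \xrightarrow{\sim} \QC(Z)_{<\infty}$ on bounded-above objects, the entire discrepancy between the two theories being concentrated in the homologically-unbounded-above direction. Taking $Z = X \times_S Y$ identifies the source $\QCsh(X\times_S Y)_{<\infty}$ with the bounded-above quasicoherent kernels $\QC(X\times_S Y)_{<\infty}$. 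On the target side, since $\QCsh(X) = \Ind \DCoh(X)$ a colimit-preserving functor is the left Kan extension of its restriction to the compact generators $\DCoh(X)$, so that $\Fun^L_{\QCsh S}(\QCsh X, \QCsh Y) \simeq \Fun^{ex}_{\Perf S}(\DCoh X, \QCsh Y)$ and the condition ``left $t$-exact up to a shift'' becomes the condition that this restriction carry $\DCoh(X)_{<0}$ uniformly into $\QCsh(Y)_{<N}$.

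First I would check the forward direction, that $\Phish_K$ lands in the indicated subcategory when $K \in \QCsh(X\times_S Y)_{<\infty}$. Writing the transform through the projections $\pi_X, \pi_Y$ from $X\times_S Y$ as $\Phish_K(M) = \pi_{Y*}(\pi_X^! M \shotimes K)$, each constituent operation has bounded $t$-amplitude: $\pi_X^!$ because $\pi_X$ is of finite presentation, the $!$-tensor with $K$ because $K$ is bounded above, and $\pi_{Y*}$ because $\pi_Y$ is proper (or separated with proper support on $K$). Composing the amplitude bounds shows $\Phish_K(\QCsh(X)_{<0}) \subset \QCsh(Y)_{<N}$ with $N$ depending only on $K$, which is exactly the left-$t$-exactness-up-to-a-shift condition.

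For the equivalence I would treat fully-faithfulness and essential surjectivity separately. Fully-faithfulness follows from the quasicoherent case: morphisms between the colimit-preserving functors $\Phish_K$ and $\Phish_{K'}$ are computed as an end over the compact generators $\DCoh X$, and applying $\Psi$ --- which intertwines the $!$- and $*$-transforms precisely on the bounded-above objects where it is an equivalence --- identifies this end with the corresponding $\Hom$ in $\QC(X\times_S Y)$, an isomorphism by (\ref{eq qc fun equiv}). For essential surjectivity, such an $F$ is determined by its restriction $F|_{\DCoh X}\colon \DCoh X \to \QCsh(Y)_{<\infty}$. Truncating in the $Y$-direction reduces the target to bounded objects and thence, after $\Ind$-completion, to the small representation theorem \autoref{theorem:intro-fun-dcoh}, which realizes exact $\Perf S$-linear functors $\DCoh X \to \DCoh Y$ by kernels coherent relative to $X$. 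The uniform bound $N(F)$ ensures that the resulting system of kernels converges to a single bounded-above object of $\QCsh(X\times_S Y)$ rather than a genuine pro-object; combined with \autoref{intro thm int trans} and the $\Psi$-identification on bounded-above objects this yields a kernel $K \in \QCsh(X\times_S Y)_{<\infty}$ with $\Phish_K \simeq F$.

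The \emph{main obstacle} is the $t$-structure bookkeeping underlying every step: one must verify that $\pi^!$, the $!$-tensor, $\pi_*$ and the external product commute with $\Psi$ after restriction to bounded-above objects, and that the quantitative shift on the functor side is matched by the amplitude of the kernel. Equivalently --- and this is the conceptual core --- the external-product comparison $\QCsh(X)\otimes_{\QCsh S}\QCsh(Y) \to \QCsh(X\times_S Y)$, which fails to be an equivalence in general (exactly the failure exhibited before the theorem), becomes an equivalence once restricted to bounded-above objects; the unbounded-above tail that obstructs it is invisible to any finite shift and is precisely what the condition $F(\QCsh(X)_{<0}) \subset \QCsh(Y)_{<N(F)}$ discards.
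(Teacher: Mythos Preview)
Your essential surjectivity argument is circular: you invoke \autoref{theorem:intro-fun-dcoh}, but in the paper that theorem is proved \emph{using} the present result. Concretely, \autoref{cor:fun-dcoh-0} (the first step toward \autoref{theorem:intro-fun-dcoh}) combines \autoref{lem:dcoh-bounded} with \autoref{cor:dcoh-t-reg}, and the latter is a direct corollary of \autoref{thm:fun-regulariz}, the detailed form of the statement you are trying to establish. This dependency is intrinsic, not an accident of exposition: to identify which kernels represent functors $\DCoh X \to \DCoh Y$ one first needs to know that every such functor comes from a bounded-above $!$-kernel, and that is precisely the content here. Even setting circularity aside, the sketch ``truncate in the $Y$-direction and apply the small theorem'' has a gap: a left-$t$-exact-up-to-shift functor $\DCoh X \to \QCsh(Y)_{<\infty}$ need not land in $\DCoh Y$ after any truncation, since nothing forces the homology sheaves on $Y$ to be coherent, so \autoref{theorem:intro-fun-dcoh} does not apply.

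The paper's route avoids both issues by first establishing the \emph{absolute} case $S = \Spec k$ (\autoref{prop:absolute-fun}), where $\Phish$ is a genuine equivalence without any $t$-restriction, and then passing to general $S$ via the bar resolution: one writes $\Fun^L_{\QCsh S}(\QCsh X, \QCsh Y)$ as the totalization of absolute functor categories $\Fun^L(\QCsh(X)\otimes\QCsh(S)^{\otimes\bullet}, \QCsh Y)$, and compares with the augmented cosimplicial object $\QCsh(X \times S^{\times\bullet} \times Y)$. The coface maps are pushforwards along graphs and diagonals, hence $t$-exact; restricting to $(-)_{<0}$ and using your own observation $\QCsh(-)_{<0} \simeq \QC(-)_{<0}$ imports the totalization property from the $\QC$ side, where it holds by \cite{BFN}. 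Grothendieck duality (\autoref{cor:dcoh-t-dual}) supplies the quantitative $t$-bound in the absolute case, which then propagates through the totalization. Your intuition that the obstruction lives entirely in the unbounded-above direction is correct; the mechanism that makes it precise is this cosimplicial reduction to $S=\pt$, not a direct $\Psi$-comparison of $\Phish$ with $\Phi$ (which, as \autoref{cor:compare-phi-phish} shows, involves an $\omega_X$-twist and requires $S$ regular).
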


This is an essential ingredient in proof of \autoref{theorem:intro-fun-dcoh}, in effect reducing us to proving that any functor for the small categories is automatically left $t$-exact up to a shift.  

In the special case that $S$ is smooth, we can make this functional analysis of $t$-structures even more precise.  The appendix (\autoref{app:t-bdd}) provides a discussion of operations on categories with reasonably behaved $t$-structures.  Such a category $\C$ comes equipped with a stable subcategory $\Coh(\C)$ of ``coherent'' objects, 
and the {\em regularization} of $\C$ is the corresponding ind-coherent category \[ \R(\C) \eqdef \Ind\Coh(\C)\to \C \]
A dual notion to regularization is the (left) $t$-completion of $\C$, which is the limit 
\[ \C\to \oh{\C} \eqdef \ilim \C_{<n} \]  If we restrict ourselves to considering $t$-exact functors, the theories of complete and regular categories with $t$-structure are \emph{equivalent} (see \autoref{thm:reg-coh-equiv}).   

For a geometric stack of finite type
over $k$, the natural functor $\QCsh(X)\to \QC(X)$ presents $\QCsh(X)$ as the regularization of $\QC(X)$, and $\QC(X)$ as the completion of $\QCsh(X)$.   Thus in cases where a reasonable theorem -- involving only functors left $t$-exact up to a shift -- holds for $\QC$ but not $\QCsh$, we can expect that we should be able to fix this defect by regularizing.  

\begin{theorem} Suppose $S$ is a quasi-compact, geometric, smooth $k$-stack; that $p \colon X \to S$ is a quasi-compact and separated relative $S$-algebraic space of finite-presentation; and that $Y$ is an $S$-stack of finite presentation.
 
Then there is a $t$-structure on the functor category so that
  \[ \Fun^L_{\QCsh (S)}(\QCsh (X), \QCsh (Y))_{\leq 0} = \left\{ \text{left $t$-exact functors} \right\} \]
and the $!$-integral transform
  \[ \Phish \colon \QCsh(X \times_S Y) \longrightarrow \Fun^L_{\QCsh (S)}(\QCsh (X), \QCsh (Y)) \] is left $t$-exact up to a shift and  exhibits $\QCsh(X \times_S Y)$ as 
a regularization of $\Fun^L_{\QCsh S}(\QCsh X, \QCsh Y)$.  (In particular, it induces an equivalence on bounded above objects, yielding a special case of \autoref{intro thm regularize}.)
\end{theorem}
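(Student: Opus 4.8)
The plan is to obtain this refinement by combining the bounded-above equivalence of \autoref{intro thm regularize} with the regularization formalism of \autoref{app:t-bdd}, using smoothness of $S$ to guarantee that every category in sight carries a reasonably behaved $t$-structure. First I would record the consequences of the hypothesis that $S$ is smooth over $k$: then $S$ is regular, so $\QCsh(S) = \QC(S)$ and the $\QCsh(S)$-linear structure coincides with the $\QC(S)$-linear one. Moreover each of $\QCsh(X)$, $\QCsh(Y)$, and $\QCsh(X \times_S Y)$ is the regularization of its $\QC$-counterpart and carries the natural $t$-structure of \autoref{app:t-bdd}, with coherent objects $\Coh(\QCsh(X \times_S Y)) = \DCoh(X \times_S Y)$ and $\QCsh(X \times_S Y) = \Ind\DCoh(X \times_S Y)$. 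This places us squarely in the setting where the appendix machinery applies.

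Next I would install the $t$-structure on $\C \eqdef \Fun^L_{\QCsh(S)}(\QCsh X, \QCsh Y)$. The category $\C$ is presentable, being colimit-preserving $\QCsh(S)$-linear functors between presentable categories, so a $t$-structure is determined by its coaisle (equivalently its aisle); I would declare the coconnective part $\C_{\leq 0}$ to be the left $t$-exact functors, evaluated objectwise against the $t$-structures on source and target. The content is that this collection really is the coaisle of a $t$-structure: it is closed under limits and extensions and the shift $[-1]$ because the corresponding coconnective subcategory of the target $\QCsh(Y)$ is, and one must produce the connective part as its left orthogonal and verify accessibility. Here the smoothness of $S$ is what makes the target's $t$-structure interact well enough with the $\QCsh(S)$-linear structure for this to go through cleanly.

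I would then verify that $\Phish$ is left $t$-exact up to a shift by a bounded-amplitude computation on the pull-tensor-push operations defining it: the pullback and the tensor product have bounded $t$-amplitude since $p_X$ is separated, of finite presentation, and $S$ is smooth of finite cohomological dimension, while the $\QCsh$-pushforward along the separated finitely-presented morphism $p_Y$ has bounded amplitude as well. Combining these bounds shows that $\Phish_\K$ carries $(\QCsh X)_{<0}$ into $(\QCsh Y)_{<N}$ for a shift $N$ controlled by the amplitude of the kernel $\K$. In particular $\Phish$ is left $t$-exact up to a shift for the $t$-structure just constructed, and it sends bounded-above kernels to bounded-above (indeed left $t$-exact up to a shift) functors, hence $t$-bounded kernels to $t$-bounded functors.

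Finally I would assemble the regularization statement. By \autoref{intro thm regularize}, $\Phish$ restricts to an equivalence between $\QCsh(X \times_S Y)_{<\infty}$ and the functors left $t$-exact up to a shift, and the latter are precisely the bounded-above objects $\C_{<\infty}$ of the $t$-structure above; intersecting with the $t$-bounded objects yields an equivalence $\DCoh(X \times_S Y) = \Coh(\QCsh(X\times_S Y)) \isom \Coh(\C)$ compatible with the $t$-structures. Ind-completing and applying the criterion of \autoref{app:t-bdd} --- that a functor which is left $t$-exact up to a shift and induces such an equivalence on coherent objects exhibits its source as the $\Ind$ of the target's coherent objects --- I conclude that $\Phish$ exhibits $\QCsh(X \times_S Y) = \Ind\DCoh(X \times_S Y)$ as the regularization $\R(\C) = \Ind\Coh(\C)$, with the bounded-above equivalence a formal consequence. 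The main obstacle I anticipate is the second step: checking that the left $t$-exact functors form the coaisle of an honest $t$-structure on $\C$ and, correlatively, that the abstractly defined coherent objects $\Coh(\C)$ are exactly the integral transforms by $t$-bounded coherent kernels. This matching of the abstract $t$-structure data with the geometry is where smoothness of $S$ and the appendix's comparison (\autoref{thm:reg-coh-equiv}) between complete and regular categories do the real work.
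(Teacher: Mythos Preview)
Your outline is broadly on target, but the step you correctly flag as the obstacle---constructing the $t$-structure on $\C = \Fun^L_{\QCsh(S)}(\QCsh X, \QCsh Y)$---is left as a hope rather than an argument, and the paper's mechanism for it is not the one you gesture at. You propose to declare $\C_{\leq 0}$ to be the left $t$-exact functors and then ``verify accessibility''; the paper instead builds the $t$-structure from the outside in. In the absolute case $S = \pt$ one has, via Grothendieck duality, the dual $t$-structure $\ol{\QCsh}(X)$ on $\QCsh(X)$ (\autoref{cor:dcoh-t-dual}), and hence a tensor-product $t$-structure on $\Fun^L(\QCsh X, \QCsh Y) \simeq \ol{\QCsh}(X) \otimes \QCsh(Y)$ whose coconnective part is exactly the left $t$-exact functors. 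For general $S$, the $\QCsh(S)$-linear functor category sits inside the absolute one via the conservative forgetful functor $\theta$, and the $t$-structure descends precisely when $\theta$ preserves limits. The paper shows (in the Remark preceding \autoref{cor:dualiz-qcsh-with-t}) that this holds if and only if the pushforwards along the various graphs and diagonals preserve limits, which happens if and only if those maps are of finite Tor dimension, which in turn is equivalent to $S$ being smooth. So smoothness enters not through an abstract interaction with the target's $t$-structure, but through this concrete limit-preservation criterion; and \autoref{thm:reg-coh-equiv} is not what does the work here.

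Your final step, matching $\Coh$ on both sides and Ind-completing, is a valid route but more laborious than necessary, and it implicitly assumes that $\C$ carries a \emph{coherent} $t$-structure in the sense of \autoref{defn:t-coh}, which you have not checked. The paper's argument is shorter: once the $t$-structure on $\C$ exists, \autoref{thm:fun-regulariz} already gives an equivalence $\QCsh(X \times_S Y)_{<\infty} \simeq \C_{<\infty}$ (indeed on each $\C_{<n}$ up to a uniform shift), and since the regularization $\R(\C)$ depends only on the tower $\{\C_{<n}\}$ (see the proposition characterizing $\R(\C) \to \C$ as an equivalence on coconnective objects), the identification $\QCsh(X \times_S Y) \simeq \R(\C)$ follows immediately. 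No separate identification of $\Coh(\C)$ is required.
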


\subsection{Acknowledgements}
We gratefully acknowledge the support of NSF grants DMS-1103525 (DBZ), DMS-1319287 (DN),
and an NSF Postdoctoral Fellowship (AP).

\section{Preliminaries}
We adopt the functor of points viewpoint.  Let $\DRng$ denote the $\infty$-category of connective $E_\infty$-algebras.  A pre-stack will be a functor 
$\DRng^{\op} \to \sSet$.  For any such, we may define $\QC(\X)$ by Kan extension from the the case of affines.  
For $\pi \colon \X \to \Y$ a map of prestacks, there is a pullback functor $\pi^*$ 
defined by restriction of indexing diagram -- we define $\pi_*$ to be the right adjoint to this functor, which exists by general nonsense on presentable $\infty$-categories.


As is well-known, pushforwards for arbitrary quasi-coherent complexes on stacks are problematic.  However, for (homologically) bounded above complexes this is not an issue:
\begin{lemma}\label{lem:push-bdd-above} Suppose that $\pi\colon \X \to S$ is a quasi-compact and quasi-separated morphism of stacks.  Then,
  \begin{enumerate}
      \item For any (homologically) bounded above object $\F \in \QC(\X)_{<\infty}$, the base-change formula holds with respect to maps of finite Tor dimension.
      \item $R\pi_*$ preserves filtered colimits (equivalently, infinite sums) on (homologically) uniformly bounded above objects.
    \end{enumerate}
\end{lemma}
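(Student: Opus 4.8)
The plan is to reduce both assertions to the case of an \emph{affine} morphism, where $R\pi_*$ is simply restriction of scalars, and then to reassemble via smooth descent, using the bounded-above hypotheses to keep the resulting totalization under control. First I would record the affine case: if $\pi$ is an affine morphism, then $R\pi_*$ is the forgetful functor on module categories, so it is $t$-exact, preserves all limits and colimits, and satisfies base change against an arbitrary map with no hypotheses whatsoever (both sides compute the same underlying relative tensor product). Thus for affine $\pi$ the statements (i) and (ii) hold trivially and without any boundedness assumption.

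Next comes the descent step. Since $\pi$ is quasi-compact and quasi-separated, I would choose a smooth atlas of $\X$ and form its \v{C}ech nerve $a_\bullet \colon U_\bullet \to \X$: quasi-compactness makes each $U_n$ quasi-compact, and quasi-separatedness keeps the iterated fiber products quasi-compact, so a second \v{C}ech resolution by affines reduces the maps $p_n = \pi\circ a_n \colon U_n \to S$ to affine morphisms (passing first from stacks to qcqs schemes, then from schemes to affines). Flat descent for $\QC$ then yields
$$ R\pi_* \F \;\simeq\; \Tot\!\big( [n] \mapsto R(p_n)_* \, a_n^* \F \big), $$
expressing $R\pi_*$ as a totalization of affine pushforwards applied to the smooth (hence $t$-exact) pullbacks $a_n^* \F$.

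The heart of the argument is the convergence of this totalization. Because $a_n^*$ and the affine $R(p_n)_*$ are $t$-exact, the cosimplicial object $[n]\mapsto R(p_n)_* a_n^* \F$ is uniformly bounded above by the same $N$ as $\F$. The partial-totalization tower $\{\Tot_m\}$ then has fibers shifted down by $m$ in homological degree, so in each fixed homological degree the tower is eventually constant; hence the totalization is the honest limit, is bounded above by $N$, and on each truncation $\tau_{\geq i}$ it already agrees with a finite partial totalization $\Tot_m$ for $m \gg 0$. This degreewise finiteness is precisely what allows exact functors to pass through the otherwise infinite totalization.

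Finally I would deduce (i) and (ii). Both a finite-Tor-dimension base-change pullback $g^*$ and a filtered colimit of uniformly bounded-above objects are exact, hence commute with each finite $\Tot_m$ and with truncations; the only real point is to interchange them with the infinite limit $\lim_m \Tot_m$. For (ii) this is immediate from the degreewise finiteness above, since filtered colimits respect the eventually-constant towers degree by degree. For (i), the finite-Tor-dimension hypothesis is used exactly here: it gives $g^*$ bounded homological amplitude, so $g^*$ carries the degreewise-eventually-constant tower $\{\Tot_m\}$ to another such tower and therefore commutes with $\lim_m$. The main obstacle is precisely this last interchange --- taming an a priori infinite totalization over the \v{C}ech nerve of a stack that may have infinite cohomological dimension --- and the bounded-above (respectively uniformly bounded-above) and finite-Tor-dimension hypotheses are exactly the inputs that make the totalization degreewise finite and the interchange valid.
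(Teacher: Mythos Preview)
The paper states this lemma without proof; the very next sentence simply moves on to the stronger \autoref{prop:push-CD} (whose sketch cites \cite{DrinfeldGaitsgory}). So there is nothing to compare against beyond noting that your argument is the standard one.

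Your approach is correct. The key mechanism---\v{C}ech descent along a smooth atlas expresses $R\pi_*\F$ as a totalization of $t$-exact functors applied to $\F$, the uniform bound above forces the partial-totalization tower to stabilize in each homological degree, and then any functor of bounded $t$-amplitude (finite-Tor pullback for (i), filtered colimit for (ii)) commutes with the limit---is exactly how one proves this. One small point worth tightening: both assertions are smooth-local on the base $S$, so you may freely assume $S$ affine before choosing the atlas; this is what guarantees that the $U_n$ can be taken affine and hence that the $p_n$ are genuinely affine morphisms, which is a bit more direct than the ``second \v{C}ech resolution'' you describe.
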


If $\X$ is assumed to be of \emph{finite (quasi-coherent) cohomological dimension} then this is not an issue:
\begin{prop}\label{prop:push-CD} Suppose that $\pi \colon \X \to S$ is a quasi-compact and quasi-separated morphism of stacks, and that $\pi$ has quasi-coherent cohomological dimension universally bounded by $d$ i.e., that there exists an integer $d$ such that for any base-change of $\pi$ we have $\pi_* \F \in \QC(S)_{>-d}$ for $\F \in \QC(\X)_{>0}$.  Then:
  \begin{enumerate}
      \item $\pi_*\colon \QC(\X) \to \QC(S)$ preserves filtered colimits;
      \item $\pi_*$ and $\pi^*$ satisfy the projection formula;
      \item the formation of $\pi_*$ is compatible with arbitrary base-change.
    \end{enumerate}
\end{prop}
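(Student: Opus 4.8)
The plan is to treat the cohomological-dimension bound as a bootstrapping device: it will let me reduce every assertion about a general complex $\F \in \QC(\X)$ to the corresponding assertion about its bounded-above truncations, where \autoref{lem:push-bdd-above} already applies. I would prove the three parts in the order (i), (iii), (ii), deducing the projection formula from base change. The one point to emphasize at the outset is that the hypothesis makes the bound $d$ \emph{universal}: every base change $\pi' \colon \X' \to S'$ of $\pi$ again has cohomological dimension $\le d$, so the same truncation estimates apply simultaneously to $\pi$ and to all of its base changes --- this is exactly what will license the bootstrapping in part (iii).

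For part (i), the key is a truncation estimate. For $\G \in \QC(\X)$ and an integer $m$, pushing forward the fiber sequence $\tau_{>m}\G \to \G \to \tau_{\le m}\G$ and using that $\tau_{>m}\G \in \QC(\X)_{>m}$ forces $\pi_* \tau_{>m}\G \in \QC(S)_{>m-d}$, I obtain
\[ H_n(\pi_* \G) \;\cong\; H_n\bigl(\pi_* \tau_{\le m}\G\bigr) \qquad \text{whenever } n \le m - d. \]
Hence each $H_n(\pi_*\G)$ depends only on the bounded-above object $\tau_{\le n+d}\G$. Given a filtered colimit $\F = \colim_i \F_i$, truncation commutes with filtered colimits, so $\tau_{\le n+d}\F = \colim_i \tau_{\le n+d}\F_i$ is a filtered colimit of uniformly bounded-above objects; part (2) of \autoref{lem:push-bdd-above}, together with the fact that $H_n$ commutes with filtered colimits, then yields
\[ H_n(\pi_*\F) = H_n(\pi_* \tau_{\le n+d}\F) = \colim_i H_n(\pi_* \tau_{\le n+d}\F_i) = \colim_i H_n(\pi_*\F_i) = H_n\bigl(\colim_i \pi_*\F_i\bigr). \]
Since $n$ is arbitrary and $\pi_*$ is exact (being a right adjoint between stable categories), it preserves filtered colimits, hence all colimits.

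For part (iii), I would first prove the base-change equivalence $g^* \pi_* \F \xrightarrow{\sim} \pi'_* g'^* \F$ for bounded-above $\F$, then remove boundedness by the truncation estimate of part (i), applied to both $\pi$ and $\pi'$ (legitimate by universality of $d$, and using that $g^*, g'^*$ are right $t$-exact, hence preserve bounded-aboveness). When $g$ has finite Tor dimension the bounded-above case is part (1) of \autoref{lem:push-bdd-above}. The main obstacle is an \emph{arbitrary} $g$, which need not have finite Tor dimension. Here I would pass, via smooth descent, to a simplicial affine presentation of the square, reducing to the case of an affine morphism --- for which base change against any $g$ holds tautologically, restriction of scalars commuting with arbitrary extension of scalars --- so that $\pi_*\F$ is exhibited as a totalization of such well-behaved pieces. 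The point is that $g^*$, being exact, commutes with finite limits but not with arbitrary totalizations; the cohomological-dimension bound is precisely what guarantees that, in each homological degree, the relevant partial totalization is already a finite limit, so $g^*$ commutes with it in the range computing any given $H_n$. This interchange is the crux of the whole proposition and the only place where finiteness of the cohomological dimension is truly indispensable.

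Finally, for part (ii), consider the natural projection-formula map $\pi_*\F \otimes_{\O_S} \G \to \pi_*(\F \otimes_{\O_\X} \pi^* \G)$. For fixed $\F$, both sides are functors of $\G$ that preserve all colimits, the right-hand side because $\pi^*$ and $\otimes$ preserve colimits while $\pi_*$ does so by part (i). To check the map is an equivalence it suffices to do so after pulling back along a flat affine atlas $u \colon U \to S$: the functor $u^*$ is conservative, and since it is symmetric monoidal and (by part (iii)) compatible with $\pi_*$, the pulled-back map is identified with the projection-formula map for the base change $\pi_U \colon \X_U \to U$. Over the affine base $U$ the category $\QC(U)$ is generated under colimits and shifts by $\O_U$, for which the map is manifestly an equivalence; colimit-preservation in $\G$ then propagates this to all $\G$.
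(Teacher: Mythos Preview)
Your strategy is exactly the paper's: its proof is the single sentence ``via the bounded case and convergence for each homology sheaf by the boundedness of cohomological dimension, c.f.\ \cite{DrinfeldGaitsgory},'' and you have spelled out precisely that reduction.

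Two small corrections to the write-up, neither of which breaks the argument. First, the parenthetical ``$g^*, g'^*$ are right $t$-exact, hence preserve bounded-aboveness'' is backwards: right $t$-exact means preserving $(-)_{\ge 0}$, i.e.\ bounded-\emph{below}ness, and indeed $g'^*$ need not preserve bounded-aboveness when $g$ has infinite Tor dimension. What you actually need, and what right $t$-exactness does give, is that $g'^* \tau_{>m}\F \in \QC(\X')_{>m}$, so that $\pi'_*$ of this fiber lands in $\QC(S')_{>m-d}$; this is what forces $H_n(\pi'_* g'^* \F) \cong H_n(\pi'_* g'^* \tau_{\le m}\F)$ for $n \le m-d$. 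Bounded-aboveness of $g'^* \tau_{\le m}\F$ is neither true nor required. Second, in your \v{C}ech argument for bounded-above $\F$ and arbitrary $g$, what makes the totalization effectively finite in each homological degree is the bounded-aboveness of $\F$ together with the $t$-exactness of smooth pullback and affine pushforward, not the cohomological-dimension bound on $\pi$; the latter is only used (as you correctly do elsewhere) to pass from bounded-above $\F$ to arbitrary $\F$.
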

\begin{proof}[Sketch] The proof of (i)-(iii) is via the bounded case and convergence for each homology sheaf by the boundedness of cohomological dimension,  c.f. \cite{DrinfeldGaitsgory}
\end{proof}

Any morphism which is a quasi-compact qusi-separated relative algebraic space, and many stacky maps in characteristic zero by by \cite{DrinfeldGaitsgory}, satisfy the hypothesis of the previous proposition.  Thus the previous proposition will apply to every pushforward we take in this paper.


\section{Functors out of $\Perf$}\label{sec:perf}
\begin{na} The standing assumptions for this section, unless otherwise stated, are: $S$ is a perfect derived stack; $X \to S$ is a quasi-compact and separated (derived) $S$-algebraic space locally of finite presentation; $Y \to S$ is a locally Noetherian (derived) $S$-stack.
\end{na}

Let $p_X \colon X\to S$, $p_Y\colon Y \to S$ be maps of derived stacks, and suppose that $X$ and $S$ are perfect. Recall that linear functors for quasi-coherent complexes are given by $*$-integral transforms
$$
\xymatrix{
\Phi\colon\QC(X \times_S Y) \ar[r]^-\sim & \Fun^L_{\QC(S)}(\QC(X), \QC(Y)) &
\Phi_\K(\F)= p_{Y*}(p_X^*(\F) \otimes \K)
}
$$
Since $X$ and $S$ are perfect, there is a natural induction equivalence
$$
\xymatrix{
\Fun^{ex}_{\Perf S}(\Perf X, \QC(Y)) \ar[r]^-\sim & \Fun^L_{\QC(S)}(\QC(X), \QC(Y)) 
}$$
Thus we could reformulate the above as an equivalence
$$
\xymatrix{
\Phi\colon\QC(X \times_S Y) \ar[r]^-\sim & \Fun^{ex}_{\Perf(S)}(\Perf(X), \QC(Y)) &
}
$$

Let us restrict to the full subcategory of integral kernels
$$
\DCoh_{\proper/Y}(X \times_S Y) \subset \QC(X \times_S Y)
$$
that are coherent with  support proper over $Y$.

The following is the main result of this subsection.

\begin{theorem}\label{thm: dcoh shriek}
Suppose that $S$ is a perfect stack; that $p_X \colon X \to S$ is a quasi-compact and separated $S$-algebraic space locally of finite presentation; and, that $Y$ is a locally Noetherian $S$-stack.

The $*$-integral transform construction provides an equivalence
  $$
  \xymatrix{
  \Phi \colon \DCoh_{\proper/Y}(X \times_S Y) \ar[r]^-\sim &   \Fun^{ex}_{\Perf S}(\Perf X , \DCoh Y)
}  $$

\end{theorem}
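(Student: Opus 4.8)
The plan is to bootstrap from the $\QC$-level equivalence recalled just above, namely $\Phi\colon \QC(X\times_S Y)\xrightarrow{\sim}\Fun^{ex}_{\Perf S}(\Perf X,\QC Y)$ (this is \cite{BFN} combined with the induction equivalence), and to identify, inside it, exactly which kernels produce functors valued in the \emph{small} category $\DCoh Y\subset\QC Y$. Because $\Phi$ is already an equivalence, its restriction to any full subcategory is fully faithful; hence it suffices to match essential images, i.e.\ to establish the containment
\[ \Phi\bigl(\DCoh_{\proper/Y}(X\times_S Y)\bigr)\subseteq\Fun^{ex}_{\Perf S}(\Perf X,\DCoh Y) \]
together with the converse that any kernel $\K$ with $\Phi_\K(\Perf X)\subset\DCoh Y$ already lies in $\DCoh_{\proper/Y}(X\times_S Y)$.

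The forward containment I would prove directly from $\Phi_\K(\F)=\pi_{Y*}(\pi_X^*\F\otimes\K)$. For $\F\in\Perf X$ the pullback $\pi_X^*\F$ is perfect, so $\pi_X^*\F\otimes\K$ is coherent with support contained in $\supp\K$ and therefore proper over $Y$; coherence of the pushforward of a coherent complex with proper support (together with the base-change and boundedness properties of \autoref{prop:push-CD}) then gives $\Phi_\K(\F)\in\DCoh Y$. This already exhibits $\Phi$ as a fully faithful functor $\DCoh_{\proper/Y}(X\times_S Y)\to\Fun^{ex}_{\Perf S}(\Perf X,\DCoh Y)$, so the whole theorem reduces to essential surjectivity.

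For the converse I would recover the kernel from the functor by the standard Morita/Eilenberg--Watts reconstruction: since $X/S$ is separated, the diagonal $\Delta_X\colon X\to X\times_S X$ is a closed immersion, $\Delta_{X*}\O_X$ is the kernel of $\id_{\QC X}$, and the kernel attached to $\Phi_\K$ is obtained by applying $\Phi_\K$ in the second $X$-variable, $\K\simeq(\id\otimes\tilde\Phi_\K)(\Delta_{X*}\O_X)$, where $\tilde\Phi_\K$ is the colimit-preserving extension of $\Phi_\K$ to $\QC X$. This is not circular: $\Phi_\K$ is given to us as a $\DCoh Y$-valued functor, and the formula computes $\K$ from it. I would then reduce to $S$ affine Noetherian, since coherence of $\K$ and properness of its support are local on $S$ and pushforwards and base change are governed by \autoref{prop:push-CD}, and analyze $\K$ through this reconstruction.

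The main obstacle is precisely this converse, and its sharp point is that $\Delta_{X*}\O_X$ is only \emph{coherent}, not perfect relative to the second projection (the diagonal of $X/S$ need not be lci), so one cannot feed it directly into the hypothesis ``$\Phi_\K$ sends perfect to coherent.'' The remedy is to approximate $\Delta_{X*}\O_X$ in the second variable by perfect complexes to arbitrarily high connectivity, and to exploit that $\tilde\Phi_\K$ has \emph{bounded} cohomological amplitude---a consequence of the uniform cohomological-dimension bound in \autoref{prop:push-CD} and the bounded-above pushforward of \autoref{lem:push-bdd-above}---so that in each homological degree the corresponding homology sheaf of $\K$ agrees with that of the image of a perfect approximation and is hence coherent; this yields $\K\in\DCoh(X\times_S Y)$. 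Finally, for properness of $\supp\K$ over $Y$ I would test the hypothesis on $\F=\O_X$ and its perfect twists: a failure of properness would force $\pi_{Y*}(\pi_X^*\F\otimes\K)$ to be non-coherent for a suitable $\F$ (by a valuative/Noetherian-approximation argument detecting the escaping part of the support), contradicting the hypothesis. Note that when $p_X$ is proper, as in \autoref{intro thm int trans}, this last condition is automatic, so that there only the coherence of $\K$ is at issue.
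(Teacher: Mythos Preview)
Your forward containment and full faithfulness arguments are correct and essentially match the paper's \autoref{lem:fully-faithful}.  The gap is in the converse.

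Your reconstruction formula $\K\simeq(\id\otimes\tilde\Phi_\K)(\Delta_{X*}\O_X)$ is fine, but the approximation argument that follows is circular.  You assert that $\tilde\Phi_\K$ (or rather $\id\otimes\tilde\Phi_\K$) has bounded cohomological amplitude as a consequence of \autoref{prop:push-CD}, but that proposition only bounds the amplitude of pushforward; the amplitude of $\F\mapsto p_{Y*}(p_X^*\F\otimes\K)$ also depends on $\K$, and in particular this functor is right $t$-exact up to a finite shift only if $\K$ is bounded above.  That is exactly part of what you are trying to prove.  Without it, the cone $(\id\otimes\tilde\Phi_\K)(C_n)$ of your perfect approximation need not be highly connective, so you cannot conclude that $H_i(\K)$ agrees with $H_i$ of the approximation.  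Your proper-support argument is also only a sketch: ``a valuative/Noetherian-approximation argument detecting the escaping part of the support'' does not name a mechanism that actually forces $\Phi_\K(\F)$ to be non-coherent for some perfect $\F$.

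The paper handles both issues differently.  After localizing to $Y$ affine (so that $p_1\colon X\times_S Y\to X$ is affine and $p_1^*\Perf X$ generates $\Perf(X\times_S Y)$), it first proves that $\QC(X)$ admits a \emph{single} perfect generator $G$ via an excision-square argument (\autoref{lem:very good}), and then shows that $\RGamma((p_1^*G)^\vee\otimes -)$ detects boundedness (\autoref{lem:fake-lazard}); since this equals $\RGamma(Y,\Phi_\K(G^\vee))$, the hypothesis immediately gives $\K$ bounded.  Coherence and proper support are then proved \emph{together} in \autoref{prop: push aperf} and \autoref{prop: push dcoh} by a chain of geometric reductions---Nagata compactification to reduce to a proper map, Chow's lemma to reduce to projective, and finally Beilinson's resolution of the diagonal on $\PP^n_S$, where your approximation-of-the-diagonal idea does apply because the resolution is by finitely many exterior products of perfects.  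In effect, the paper earns the right to run your diagonal argument by first reducing to the one situation (projective space) where the diagonal is genuinely built in finitely many steps from split objects, rather than merely approximated.
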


\begin{corollary}\label{cor: dcoh shriek}
 Suppose that $X$, $S$, and $Y$ are as in the previous Theorem, and furthermore that $Y$ is regular.

The $*$-integral transform construction provides an equivalence
  $$
  \xymatrix{
  \Phi : \DCoh_{prop/Y}(X \times_S Y) \ar[r]^-\sim &   \Fun^{ex}_{\Perf S}(\Perf X , \Perf Y)
}  $$
\end{corollary}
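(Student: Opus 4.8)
The plan is to deduce the corollary directly from \autoref{thm: dcoh shriek}, observing that the regularity hypothesis on $Y$ collapses the distinction between coherent and perfect complexes, so that the target functor category is literally unchanged. No new analysis of the integral transform itself is required: the source $\DCoh_{\proper/Y}(X \times_S Y)$ and the construction of $\Phi$ are exactly as in the theorem, and only the codomain of the test functors needs to be reinterpreted.

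First I would recall the characterization underlying the definitions: on a locally Noetherian stack $Y$, the subcategory $\Perf(Y) \subset \QC(Y)$ consists precisely of the almost perfect complexes of finite Tor-dimension, while $\DCoh(Y) \subset \QC(Y)$ consists of the $t$-bounded objects with coherent homology. Both conditions are local on $Y$. When $Y$ is regular, each local ring has finite global dimension, hence finite Tor-dimension, so every $t$-bounded complex with coherent homology automatically has finite Tor-dimension and is therefore perfect; conversely a perfect complex on a Noetherian stack is always $t$-bounded with coherent homology. Thus the two full subcategories of $\QC(Y)$ literally coincide, $\DCoh(Y) = \Perf(Y)$. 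Since the functor-category construction $\Fun^{ex}_{\Perf S}(\Perf X, -)$ depends only on its target category, this equality yields
\[ \Fun^{ex}_{\Perf S}(\Perf X, \DCoh Y) = \Fun^{ex}_{\Perf S}(\Perf X, \Perf Y), \]
and substituting this identification into \autoref{thm: dcoh shriek} gives the asserted equivalence $\Phi \colon \DCoh_{\proper/Y}(X \times_S Y) \stackrel{\sim}{\longrightarrow} \Fun^{ex}_{\Perf S}(\Perf X, \Perf Y)$, with the same $*$-integral-transform description of $\Phi$.

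The only point requiring care, and hence the main (if modest) obstacle, is the identification $\DCoh(Y) = \Perf(Y)$ in the present derived, stacky generality. One must ensure that \emph{regular} is understood so that there is no higher structure contributing to Tor-dimension (e.g.\ that $Y$ is classical with regular local rings), and that the finite-Tor-dimension condition can be checked locally, where regularity supplies the needed bound. Because perfectness is a local condition and the source of $\Phi$ is untouched, once $\DCoh(Y) = \Perf(Y)$ is established the corollary is immediate from the theorem.
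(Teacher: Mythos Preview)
Your proposal is correct and matches the paper's approach exactly: the paper states the corollary immediately after \autoref{thm: dcoh shriek} without a separate proof, treating it as the evident consequence of the identification $\DCoh(Y)=\Perf(Y)$ for regular $Y$ (c.f.\ also \autoref{perf thm corollaries}(iii) in the introduction). Your discussion of why $\DCoh(Y)=\Perf(Y)$ holds---via the characterization of perfect complexes as almost perfect complexes of finite Tor-dimension, together with the finite global dimension furnished by regularity---is the standard justification and is all that is needed.
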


Before giving the proof of the theorem, we make some remarks and discuss an alternate formulation.
\begin{remark} It is possible to relax the assumptions on $p_X$ a little,
  \footnote{For instance, it should be possible to prove the Theorem for $p_X$ a relative tame DM stack. The key extra input, due to Abramovich-Olsson-Vistoli, is the following: If $q\colon X \to X'$ is the coarse moduli space, then $q_*$ is $t$-exact and \'etale locally on $X'$ a global quotient by a finite flat linearly reductive group scheme.  One can use this to reduce \autoref{prop: push dcoh} for $X$ to it for $X'$, analogous to how \autoref{lem: aff aperf} is used elsewhere.}
  but it seems difficult to make a general statement if $X$ is allowed to be genuinely stacky.  To see why, consider already the case of $S = Y = \pt$ and $X = B\GG_m$.  Then, 
  \[ \Perf X  \isom \bigoplus_{\ZZ} \Perf k \qquad \text{so that} \Fun^{ex}(\Perf X, \Perf k) = \prod_{\ZZ} \Perf k \]
That is, a functor $F$ is determined by the (arbitrary) collection of complexes $F(\O(n))$ where $\O(n)$ is line bundle corresponding to the degree $n$ character on $\GG_m$.  Note that
\[ F = \Phi_{\K} \qquad \text{for} \quad \K = \bigoplus_n \O(-n) \otimes F(\O(n)) \]
Unfortunately, there does not seem to be an existing ``geometric'' name for the finiteness condition enjoyed by this $\K$.  In particular, $\K$ need not be $t$-bounded either above or below in general.
\end{remark}

%
%

\begin{remark}
  Suppose that $S$ admits a dualizing complex and that $Y \to S$ is also locally of finite presentation.  Then, $X, Y, X \times_S Y$ also admit dualizing complexes given by $!$-pullback. In this situation we may alternatively let  
  \[ \omega_X \otimes \Perf X = \left\{ \omega_X \otimes P \in \QCsh(X) : P \in \Perf X \right\} \subset \QCsh(X) \]
  Equivalently, this is the essential image of the Grothendieck duality functor on $\Perf X$.

  Then, one can show that the $!$-integral transform gives an equivalence
  \[ \Phish\colon \DCoh_{\proper/Y}(X \times_S Y) \stackrel\sim\longrightarrow    \Fun^{ex}_{\Perf Y}(\omega_X \otimes \Perf X , \DCoh Y) \]

  More precisely, the following Lemma gives rise to a commutative diagram
  \[\xymatrix{
  \DCoh_{\proper/Y}(X \times_S Y) \ar[d]_{\sim}^{\DD} \ar[r]^-{\Phi} & \ar[d]_{\sim}^{\DD \circ - \circ \DD}  \Fun^{ex}_{\Perf Y}(\Perf X , \DCoh Y) \\
  \DCoh_{\proper/Y}(X \times_S Y)^{op} \ar[r]_-{\Phish} &   \Fun^{ex}_{\Perf Y}(\omega_X \otimes \Perf X , \DCoh Y)^{op}
  } \]
  where the vertical arrows are equivalences by Grothendieck duality and the top arrow is the equivalence of \autoref{thm: dcoh shriek}.
\end{remark}

\begin{lemma}\label{lem:groth-dual-functors}  \begin{enumerate}
    \item The Grothendieck duality functor restricts to an equivalence
      \[ \DD \colon \Perf(X)^{op} \stackrel\sim\longrightarrow \omega_X \otimes \Perf X \]
    \item Suppose that $\F \in \DCohp(X)$ and $\K \in \DCohp(X \times_S Y)$. Then, there is a natural equivalence
     \[ (p_1)^! (\DD \F) \shotimes \DD(\K)\isom \DD\left((p_1)^* \F \otimes \K\right)  \]
     \item Suppose that $\F$, $\K$ as in (ii) and that each homology sheaf of $(p_1)^* \F \otimes \K$ has support proper over $Y$.  Then, there is a natural equivalence
       \[ \Phish_{\DD\K}(\DD\F)\isom \DD\Phi_{\K}(\F) \]
  \end{enumerate}
\end{lemma}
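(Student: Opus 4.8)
The three parts are linked, and the plan is to treat (i) as a direct dualizability computation, (ii) as the composition of two standard compatibilities of Grothendieck duality, and (iii) as (ii) fed into Serre duality for the second projection. For (i), I would start from $\DD P = \RHom_X(P,\omega_X)$ and use that a perfect complex $P$ is dualizable in $\QC(X)$, so $\RHom_X(P,\omega_X)\isom P\dual\otimes\omega_X$ with $P\dual=\RHom_X(P,\O_X)\in\Perf(X)$. This identifies the essential image of $\DD|_{\Perf X}$ with $\omega_X\otimes\Perf X$ on the nose. That $\DD$ is an \emph{equivalence} onto this image is then biduality: for $X$ admitting a dualizing complex, $\DD$ is a contravariant self-equivalence of $\DCoh(X)$ with $\DD\circ\DD\isom\id$, hence in particular fully faithful on the subcategory $\Perf(X)\subset\DCoh(X)$.

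For (ii), I would split the asserted equivalence into two pieces and compose. First, the compatibility of duality with $*$- and $!$-pullback gives $p_1^!(\DD\F)\isom\DD(p_1^*\F)$ on $X\times_S Y$; this uses $p_1^!\omega_X\isom\omega_{X\times_S Y}$ together with the natural map $f^!\RHom(\F,\omega)\to\RHom(f^*\F,f^!\omega)$, which is an equivalence for $\F$ almost perfect. Second, Grothendieck duality exchanges the $*$-tensor action of $\QC$ with the $!$-tensor product on $\QCsh$, giving $\DD(A)\shotimes\DD(B)\isom\DD(A\otimes B)$ (note $\DD(\O)=\omega$ matches the $\otimes$-unit with the $\shotimes$-unit). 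Applying this with $A=p_1^*\F$ and $B=\K$ and substituting the first identity yields the claim. Both compatibilities can be cited from \cite{indcoh} or verified by reduction to the affine case, where $\omega$ is explicit; the one point requiring care is the bookkeeping of homological degrees, since $\DD$ sends the bounded-below objects of $\DCohp$ to bounded-above coherent complexes, so the tensor and pullback operations must be known to behave on these unbounded-above objects.

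For (iii), I would unwind the definitions $\Phi_\K(\F)=p_{2*}(p_1^*\F\otimes\K)$ and $\Phish_{\DD\K}(\DD\F)=(p_2)_*(p_1^!(\DD\F)\shotimes\DD\K)$, where the latter pushforward is the ind-coherent one. Rewriting the integrand of the latter by (ii) turns it into $\DD(p_1^*\F\otimes\K)$, so it remains to commute $\DD$ past the second projection. This is Grothendieck–Serre duality: duality intertwines the ind-coherent pushforward with the ordinary $*$-pushforward, $\DD_Y\circ (p_2)_*\isom p_{2*}\circ\DD$, so that together with biduality the ind-coherent pushforward of $\DD(p_1^*\F\otimes\K)$ is $\DD\bigl(p_{2*}(p_1^*\F\otimes\K)\bigr)=\DD\Phi_\K(\F)$. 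This is exactly where the hypothesis on supports enters: in the general setting of \autoref{thm: dcoh shriek} the map $p_2$ is only separated, and the condition that each homology sheaf of $p_1^*\F\otimes\K$ has support proper over $Y$ is what lets us treat the pushforward as a proper one and invoke Serre duality.

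The hard part will be establishing the duality–tensor identity of (ii) and the duality–pushforward identity of (iii) at the level of \emph{unbounded} (bounded-above) coherent complexes with proper support, rather than for honest bounded coherent sheaves: one must confirm that the $!$-tensor product, the $!$-pullback, and the (support-)proper pushforward are all compatible with the contravariant duality equivalence on these larger categories, and that the properness-of-support condition propagates correctly through $p_1^*$, through $\otimes$, and through $p_{2*}$. Once these compatibilities are in place, all three statements follow by formal composition of the identities above.
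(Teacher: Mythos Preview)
Your proposal is correct and for parts (ii) and (iii) matches the paper's argument essentially verbatim: (ii) is the composition of the duality/pullback exchange $p_1^!\DD\isom\DD p_1^*$ with the duality/tensor exchange $\DD A\shotimes\DD B\isom\DD(A\otimes B)$ (the paper phrases this as ``an immediate consequence of the definition of $!$-pullback and Grothendieck duality for $\DCohpm$''), and (iii) is exactly (ii) followed by Grothendieck--Serre duality for the properly supported pushforward along $p_2$.

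For (i) there is a minor but genuine difference in route. You deduce full faithfulness of $\DD$ on $\Perf X$ from the full biduality equivalence on $\DCoh X$ (or $\DCohpm X$), which is a correct black-box argument once one has \autoref{prop:groth-duality}. The paper instead gives a self-contained argument that does not invoke biduality on all of $\DCoh$: after identifying $\DD\P\isom\P^\vee\otimes\omega_X$, it checks that the natural map $\RHom_X(\P,\P')\to\RHom_X(\DD\P,\DD\P')$ is an equivalence by reducing to $X$ affine (the claim being smooth-local), observing that the locus of pairs $(\P,\P')$ where the map is an equivalence is thick in each variable, and reducing to the single pair $(\O_X,\O_X)$, where it is the defining condition $A\stackrel\sim\to\RHom_A(\omega,\omega)$ of a dualizing complex. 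Your approach is shorter given the biduality theorem as input; the paper's approach is more elementary and also sidesteps the need to know that $\Perf X\subset\DCoh X$ (which requires $\O_X$ bounded).
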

\begin{proof} 
 For (i): Note that $\DD(\P) = \P^\dual \otimes \omega_X$ by dualizability of $\P$, so that it suffices to prove that $\DD$ is fully-faithful on $\Perf X$.  We wish to show that the natural map 
  \[ \RHom_{X}(\P, \P') \to \RHom_{X}(\DD\P, \DD\P') \] is an equivalence.
  The claim is smooth local on $X$ (taking care that the restriction of a dualizing complex along a smooth map is again a dualizing complex!), so we may suppose that $X$ is affine.  The subcategory of $\Perf X^{op} \times \Perf X$ consisting of those pairs $(\P, \P')$ for which this is an equivalence is closed under finite limits, finite colimits, and retracts in each variable.  Thus, it suffices to show that it contains $(\O_X, \O_X)$.  This is part of the definition of a dualizing complex (note that the restriction of a dualizing complex to.

  For (ii), it is either a chase of well-definedness or an immediate consequence of the definition of $!$-pullback and Grothendieck duality for $\DCohpm$ (\autoref{prop:groth-duality}).  For (iii), it follows from (ii) and Grothendieck-Serre duality (i.e., the compatibility of properly supported pushforward with duality).
\end{proof}

The proof of \autoref{thm: dcoh shriek} occupies the rest of this section.  We will begin with several preliminary results.  The following Lemma tells us that we looking for a subcategory of $\QC(X \times_S Y)$:

\begin{lemma} Suppose that $i \colon \C \subset \QC(Y)$ is the inclusion of a full subcategory closed under finite colimits, retracts, and tensoring by objects of $\Perf S$.  Then,  \label{lem:fully-faithful} \begin{enumerate}
    \item  The natural functor
      \[ i_* \colon \Fun^{ex}_{\Perf S}(\Perf X, \C) \longrightarrow \Fun^{ex}_{\Perf S}(\Perf X, \QC(Y)) \]
      is fully faithful.
    \item The star-integral transform restricts to an equivalence
      \[ \left\{ \K \in \QC(X \times_S Y) : \Phi_{\K}(\Perf X) \subset \C \right\} \stackrel{\sim}\longrightarrow \Fun^{ex}_{\Perf S}(\Perf X, \C) \]
\item The $*$-integral transform construction restricts to a fully faithful functor
  $$
  \xymatrix{
  \Phi \colon \DCoh_{\proper/Y}(X \times_S Y) \ar[r] &   \Fun^{ex}_{\Perf S}(\Perf X , \DCoh Y)
}  \qedhere $$
\end{enumerate}
\end{lemma}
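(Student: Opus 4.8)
All three parts will be deduced from the global equivalence $\Phi\colon\QC(X\times_S Y)\xrightarrow{\sim}\Fun^{ex}_{\Perf S}(\Perf X,\QC(Y))$ recalled above, so that the only genuine content is one coherence estimate for the pushforward in part (iii). For (i), I would first verify that $i_*$ is well defined, i.e.\ that $i\circ F$ lies in $\Fun^{ex}_{\Perf S}(\Perf X,\QC Y)$ whenever $F\in\Fun^{ex}_{\Perf S}(\Perf X,\C)$. Since $\C$ is closed under finite colimits in $\QC(Y)$, finite colimits in $\C$ agree with those computed in $\QC(Y)$, so the inclusion $i$ preserves finite colimits and $i\circ F$ is again exact; similarly, closure of $\C$ under tensoring by $\Perf S$ makes $i$ compatible with the $\Perf S$-action, so $i\circ F$ remains $\Perf S$-linear. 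Full faithfulness of $i_*$ is then the standard fact that postcomposition with a fully faithful functor is fully faithful on functor $\infty$-categories: on mapping spaces it is the map of ends $\int_{P}\Map_\C(FP,GP)\to\int_{P}\Map_{\QC Y}(iFP,iGP)$, a limit of the equivalences supplied by full faithfulness of $i$, and this passes to the full subcategories cut out by exactness and $\Perf S$-linearity.

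For (ii), observe that by (i) the functor $i_*$ identifies $\Fun^{ex}_{\Perf S}(\Perf X,\C)$ with the full subcategory of $\Fun^{ex}_{\Perf S}(\Perf X,\QC Y)$ spanned by those $F$ with $F(\Perf X)\subset\C$ (a functor into a full subcategory factors through it iff each of its values does). Pulling this full subcategory back along the equivalence $\Phi$, its preimage is by definition $\{\K\in\QC(X\times_S Y):\Phi_\K(\Perf X)\subset\C\}$, and the restriction of an equivalence to the preimage of a full subcategory is again an equivalence onto that subcategory. This is precisely the asserted equivalence.

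Finally, for (iii) I would apply (ii) with $\C=\DCoh(Y)$, after noting that $\DCoh(Y)\subset\QC(Y)$ is closed under finite colimits, retracts, and tensoring by $\Perf S$ (the last because the pullback of a perfect complex is perfect and a perfect complex tensored with a coherent one is coherent). It then suffices to exhibit $\DCoh_{\proper/Y}(X\times_S Y)$ as a \emph{full} subcategory of $\{\K:\Phi_\K(\Perf X)\subset\DCoh Y\}$, since then $\Phi$ restricted to it is the composite of a fully faithful inclusion with the equivalence of (ii), hence fully faithful. This containment is the one genuinely geometric step, and I expect it to be the main obstacle: for $P\in\Perf X$ and $\K$ coherent with support proper over $Y$, the complex $p_X^*P\otimes\K$ is coherent with $\supp(p_X^*P\otimes\K)\subset\supp\K$ proper over $Y$, so its pushforward $\Phi_\K(P)=p_{Y*}(p_X^*P\otimes\K)$ is coherent by the coherence of proper pushforward (\autoref{prop: push dcoh}). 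The properness of the support is exactly what licenses this last step, and is the reason the kernels must be restricted to $\DCoh_{\proper/Y}$ rather than to all coherent complexes.
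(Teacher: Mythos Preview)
Your proof is correct and the overall architecture matches the paper's: both parts (ii) and (iii) are argued exactly as the paper does, by pulling back a full subcategory across the global $\QC$ equivalence and then checking that properly-supported coherent kernels land in $\DCoh Y$ via the proper pushforward theorem.

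The one genuine difference is in (i). You argue directly that postcomposition with a fully faithful $i$ is fully faithful on functor $\infty$-categories, via the end description of mapping spaces. The paper instead passes to Ind-completions: it observes that $\Ind(i)\colon\Ind\C\to\QC(Y)$ is fully faithful with a colimit-preserving right adjoint, and that $\Fun^L(\Ind(\Perf X),-)$ preserves this situation. Your route is more elementary and perfectly valid; one small caveat is that $\Perf S$-linearity is structure rather than a property in the $\infty$-categorical setting, so it does not literally cut out a \emph{full} subcategory of $\Fun^{ex}$. The conclusion is nonetheless correct, since $\Fun^{ex}_{\Perf S}(\Perf X,-)$ is a limit (totalization) of ordinary functor categories $\Fun^{ex}(\Perf S^{\otimes\bullet}\otimes\Perf X,-)$, and a limit of fully faithful functors is fully faithful. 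The paper's Ind-completion argument sidesteps this by working with presentable module categories from the start. Also, your citation of \autoref{prop: push dcoh} in (iii) is slightly off: that proposition is a characterization result over an affine base, whereas what you actually need (and what the paper invokes without citation) is simply that pushforward along a proper map of finite cohomological dimension preserves $\DCoh$.
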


\begin{proof} \begin{enumerate}
  \item This is a general fact about exact functors between stable idempotent complete categories.   That is, we reduce to the assertion that if $\D, \E, \E'$ are stable, idempotent complete, and $i \colon \E \to \E'$ exact, then $i_* \colon \Fun^{ex}(\D, \E) \to \Fun^{ex}(\D, \E')$ is fully faithful.  Ignoring set-theoretic issues, we prove this as follows: By taking $\Ind$ and identifying right exact functors with compact objects and colimit preserving functors, we reduce to showing that $\Fun^L(\Ind \D, -)$ preserves that $\Ind(i)$ is a monomorphism.  This follows by noting that $\Ind(i)$ admits a \emph{colimit preserving} right adjoint exhibiting it as fully faithful, and this is preserved by $\Fun^L(\Ind \D, -)$.
    \item  Recall that $X \to S$ is a perfect morphism by \autoref{lem:very good}, so that \cite{BFN}*{Theorem~4.14} implies that
  \[ \Phi \colon \QC(X \times_S Y) \stackrel{\sim}\longrightarrow \Fun^L_{\QC S}(\QC X, \QC Y) = \Fun^{ex}_{\Perf S}(\Perf X, \QC Y) \]
      so that (ii) follows from (i) and the previous displayed equation.
    \item It suffices to show that given $\K \in \DCoh_{prop/Y}(X \times_S Y)$ and 
  $\P\in \Perf X$ that
$
\Phi_\K(\P) = p_{2*}(p_1^*(\P) \otimes \K)
$
is coherent.
Observe that $p_1^* (\P) \otimes \K $ is coherent (since $p_1^* (\P)$ is perfect and $\K$ is coherent) with support proper over $Y$, and properly supported pushforward preserves coherence under our hypothesis of finite cohomological dimension.\qedhere
\end{enumerate}
\end{proof}

It remains to identify this subcategory.  As a first step, we begin with the following strong generation result in the spirit of \cite{BvdB}*{Theorem~3.1.1}.  Like op.cit. it is based on the extension result of Thomason-Trobaugh \cite{TT}, adjusted to algebraic spaces by replacing the use of Mayer-Vietoris squares with Nisnevich-type ``excision squares'' as in \cite{DAG-XII}.
\begin{lemma}\label{lem:very good} Suppose that $\pi \colon X \to S$ is a quasi-compact and quasi-separated relative algebraic space with $S$ quasi-compact.  Then, $\pi$ is of finite cohomological dimension.  Furthermore, if $S = \Spec A$ is affine then there exists a single perfect $G$ that generates $\QC(X)$.
\end{lemma}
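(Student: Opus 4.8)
The plan is to run a single induction over a Nisnevich \emph{scallop decomposition} of $X$, treating both assertions at once. First I would reduce the cohomological-dimension statement to the case $S = \Spec A$ affine: a smooth surjection $\Spec A \to S$ is of finite Tor-dimension, so \autoref{lem:push-bdd-above}(i) gives base change for bounded-above complexes, and faithful flatness lets one detect the condition $\pi_*\F \in \QC(S)_{>-d}$ on the cover; since ``qcqs relative algebraic space'' is stable under base change, $X_A := X \times_S \Spec A$ is then a genuine qcqs algebraic space over $\Spec A$. By \cite{DAG-XII} any such $X_A$ admits a finite filtration by quasi-compact opens $\emptyset = U_0 \subset U_1 \subset \cdots \subset U_n = X_A$ in which each inclusion sits in an elementary Nisnevich (excision) square with $V_i$ affine, $j_i \colon V_i \to U_i$ \'etale and an isomorphism over the reduced complement $Z_i = U_i \setminus U_{i-1}$, and $W_i = j_i^{-1}(U_{i-1})$. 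This decomposition base-changes to elementary Nisnevich squares, which is what will make the bounds below \emph{universal}.

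For finite cohomological dimension I would induct on $n$. The base case $U_1 = V_1$ is affine over affine, so $\pi_*$ is $t$-exact (dimension $0$). For the inductive step, Nisnevich descent identifies $\QC(U_i) \stackrel\sim\to \QC(U_{i-1}) \times_{\QC(W_i)} \QC(V_i)$, yielding for each $\F$ a Mayer--Vietoris triangle
\[ \pi_{U_i *}\F \longrightarrow \pi_{U_{i-1}*}(\F|_{U_{i-1}}) \oplus \pi_{V_i*}(\F|_{V_i}) \longrightarrow \pi_{W_i*}(\F|_{W_i}). \]
Here $V_i$ is affine (dimension $0$) and $W_i$ is a quasi-compact open of $V_i$, hence quasi-affine and of finite cohomological dimension (a \v{C}ech argument over finitely many principal opens), while $U_{i-1}$ is controlled by induction. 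Thus the cohomological dimension of $\pi|_{U_i}$ exceeds the maximum of the three contributions by at most $1$; after $n$ steps it is finite, and because the entire picture base-changes, the bound is universal.

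For the compact generator (again with $S = \Spec A$) I would build perfect generators $G_i$ of $\QC(U_i)$ inductively, gluing along the localization sequence $\QC_{Z_i}(U_i) \to \QC(U_i) \xrightarrow{j^*} \QC(U_{i-1})$. Nisnevich excision gives an equivalence $\QC_{Z_i}(U_i) \simeq \QC_{Z_i'}(V_i)$ with $Z_i' = j_i^{-1}(Z_i)$; on the affine $V_i$ the closed set $Z_i'$ has quasi-compact complement, so it is cut out by finitely many functions $f_1,\dots,f_m$ and the Koszul complex $\Kos(f_1,\dots,f_m)$ is a perfect compact generator of $\QC_{Z_i'}(V_i)$, which transports to a perfect $\tilde K$ generating $\QC_{Z_i}(U_i)$. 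By the extension theorem of Thomason--Trobaugh \cite{TT} (adapted to algebraic spaces through the same Nisnevich squares, following \cite{DAG-XII}) the compact generator $G_{i-1}$ of the quotient $\QC(U_{i-1})$ lifts to a perfect complex $\tilde G$ on $U_i$, after replacing $G_{i-1}$ by $G_{i-1} \oplus G_{i-1}[1]$ to annihilate the $K_0$-obstruction. The standard gluing principle for localization sequences of compactly generated categories (in the spirit of \cite{BvdB}*{Theorem~3.1.1}) then shows that $G_i := \tilde G \oplus \tilde K$ is a perfect compact generator of $\QC(U_i)$, and $G := G_n$ proves the claim.

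The main obstacle is the extension/lifting step: producing a perfect complex on $U_i$ whose restriction to $U_{i-1}$ is the chosen generator. Over schemes this is exactly Thomason--Trobaugh, but here one must run their argument across Nisnevich excision squares (rather than Zariski Mayer--Vietoris squares) in the derived algebraic-space setting, which is precisely the adaptation signalled in the text; establishing the accompanying excision equivalence $\QC_{Z_i}(U_i) \simeq \QC_{Z_i'}(V_i)$ with supports, and verifying that compactness is preserved throughout, is the other delicate point. Everything else reduces to finite bookkeeping over the $n$ strata.
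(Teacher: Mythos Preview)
Your proposal is correct and follows essentially the same route as the paper: both reduce to $S$ affine, invoke the Nisnevich excision squares of \cite{DAG-XII} for qcqs algebraic spaces, and build the single perfect generator by combining a Koszul-type generator for the closed-support piece with a Thomason--Trobaugh lift of the inductive generator on the open piece (after the $G_U \oplus G_U[1]$ trick). The paper simply cites \cite{DAG-XII} for finite cohomological dimension rather than writing out the Mayer--Vietoris induction you sketch, and it verifies that $G \oplus K$ generates by a direct $\RHom$-vanishing argument on the fiber sequence $\F_Z \to \F \to j_* j^* \F$ rather than by appeal to a general localization-sequence principle, but the content is the same.
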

\begin{proof} Since $S$ is assumed quasi-compact, we can reduce to the case of $S$ affine.  The assertion on cohomological dimension, and the fact that $\QC(X)$ is generated by $\Perf(X)$, now follows from \cite{DAG-XII}*{Corollary~1.3.10, Corollary~1.5.12}.  For the last assertion we must obseve that the proof of Theorem~1.5.10 in op.cit. can be modified to show that a \emph{single} perfect object $G$ generates: This is certainly true on affines, and one uses the Thomason-Trobaugh lifting argument to show that this property glues under excision squares as follows.  More precisely, suppose that 
  \[ \xymatrix{
  U' = \Spec R \times_X U \ar[r]^--{j'} \ar[d]^{\eta'} & \Spec R \ar[d]^{\eta} \\ 
U \ar[r]_j & X \\
  } \]
  is an excision square as in op.cit.  Note that in this case there is a pullback diagram of categories
  \[ \QC(X) \stackrel\sim\longrightarrow \QC(U) \times_{\QC(U')} \QC(\Spec R). \]

  \medskip

  {\noindent}{\bf Two preliminary constructions:}\\
  A first preliminary construction: Note that $j'$ is a quasi-compact open immersion so that we may pick $f_1, \ldots, f_r \in H_0 R$ that cut out the closed complement $Z'$; let $K' \in \Perf_{Z'}(\Spec R)$ be a single perfect complex which generates $\QC_{Z'}(\Spec R)$ under shifts and colimits, for instance the Koszul complex of the $f_i$.  Finally, since $(j')^* K' = 0$ we may, using the above pullback diagram, uniquely lift $K'$ to an object $K \in \Perf X$ satisfying $\eta^* K = K'$ and $j^* K = 0$.

  A second preliminary construction: We may suppose that there exists $G_U \in \Perf U$ that generates $\QC U$ under shifts and colimits.  We claim that there exists some $G \in \Perf X$ such that $j^* G \isom G_U \oplus G_U[+1]$: By the previously displayed pullback diagram, we are reduced to lifting $\eta'^* G_U \oplus \eta'^* G_U[+1]$ to an object in $\Perf(\Spec R)$.  This can be done by the Thomason-Trobaugh lifting trick  (c.f., \cite{DAG-XI}*{Lemma~6.19}).

  \medskip

  {\noindent}{\bf The generator:}\\
  We now claim that $G \oplus K$ generates $\QC(X)$ under shifts and colimits.  It is enough to show that $\RHom_X(G \oplus K, \F) = 0$ implies that $\F = 0$.  Suppose that $\RHom_X(G \oplus K, \F) = 0$.  Note first that by construction of $K$ we have
  \begin{equation}\label{eqn:closed}
    \RHom_X(K, \F) = \RHom_U(j^* K, j^* \F) \times_{\RHom_{U'}(\eta'^* j^* K, \eta'^* j^* \F)} \RHom_{\Spec A}(\eta^* K, \eta^* \F) = \RHom_{\Spec A}(K', \eta^* \F) \end{equation}
  and that
  \begin{equation}\label{eqn:open} \RHom_X(G, j_* j^* \F) = \RHom_U(j^* G, j^* \F) \text{ contains } \RHom_U(G_U, j^* \F) \text{ as a retract}. \end{equation}  We will use the first of these to show that our assumption implies that $\F = j_* j^* \F$, and the second to conclude by showing that $j_* j^* \F = 0$.

    \medskip

    {\noindent}{\it Reducing to $\F = j_* j^* \F$:}\\
  Consider the fiber sequence
  \[ \F_Z \longrightarrow \F \to j_* j^* \F \]
  where $\F_Z$ is supported on the closed complement $Z = X \setminus U$.  We will first show that $\F_Z = 0$.  Since $j^* \F_Z = 0$ it is enough to show that $\eta^* \F_Z = 0$. Note that $\F_Z$ is supported on $Z$, so that $\eta^* \F_Z$ is supported on $Z'$.  Applying \autoref{eqn:closed} we conclude that
  \[ 0 = \RHom_X(K, \F) = \RHom_X(K, \F_Z) = \RHom_{\Spec A}(K', \eta^* \F_Z) \]
  so that by the choice of $K'$ we have $\eta^* \F_Z = 0$ and hence $\F_Z = 0$.

    \medskip
    {\noindent}{\it Showing $j^* \F = 0$:}\\
  Consequently $\F = j_* j^* \F$.  By our hypothesis and \autoref{eqn:open} we conclude that
\[ 0 = \RHom_X(G, \F) = \RHom_X(G, j_* j^* \F) = \RHom_U(j^* G, j^* \F) \]
and since $G_U$ is a retract of $j^* G$ this implies $\RHom_U(G_U, j^* \F) = 0$.  By assumption on $G_U$, this implies $j^* \F = 0$.  We conclude that $j^* \F = 0$ and so $\F = j_* j^* \F = 0$.  This completes the proof.
\end{proof}

%
%

Finally we embark on identifying the subcategory of interest. (The space $Z$ below will play the role of $X\times_S Y$ in the argument.)
\begin{prop}\label{prop: push aperf} Suppose that $p\colon Z \to S = \Spec A$ is a Noetherian separated $S$-algebraic space over a Noetherian affine base $S$.  Then the following conditions on $\F \in \QC(Z)$ are equivalent:
\begin{enumerate}
      \item $\F \in \DCohp(Z)$ and each homology sheaf $H_i(\F)$ has support proper over $S$;
      \item $\F \in \QC(Z)_{>-N}$ for some $N$, and $\RGamma(Z, \H \otimes \F) \in \DCohp(A)$ for all $\H \in \DCoh(Z)$;
      \item $\RGamma(Z, \H \otimes \F) \in \DCohp(A)$ for all $\H \in \DCohp(Z)$.
\end{enumerate}
\end{prop}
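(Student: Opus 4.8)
The plan is to prove the cyclic chain $(i)\Rightarrow(ii)\Rightarrow(iii)\Rightarrow(i)$. Throughout I would write $\RGamma(Z,-)=p_*$ and lean on two consequences of \autoref{lem:very good}: first, that $p$ has finite cohomological dimension, say $d$, so that by \autoref{prop:push-CD} the pushforward commutes with filtered colimits and each $H_i(\RGamma(Z,\G))$ is insensitive to truncating $\G$ above degree $i+d$; and second, since $S=\Spec A$ is affine, that $\QC(Z)$ admits a single perfect generator $G$, with $G^\dual\in\Perf(Z)\subset\DCoh(Z)\subset\DCohp(Z)$.

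The implication $(i)\Rightarrow(ii)$ is the soft direction. An object of $\DCohp(Z)$ is bounded below by definition, giving $\F\in\QC(Z)_{>-N}$. For $\H\in\DCoh(Z)$ the tensor product $\H\otimes\F$ lies in $\DCohp(Z)$ and, as $\supp H_\ast(\H\otimes\F)\subset\supp H_\ast(\F)$, has homology sheaves with support proper over $S$. To see $\RGamma(Z,\H\otimes\F)\in\DCohp(A)$ I would argue degree by degree: finite cohomological dimension gives $H_i(\RGamma(Z,\H\otimes\F))=H_i(\RGamma(Z,\tau_{\leq i+d}(\H\otimes\F)))$, where $\tau_{\leq i+d}(\H\otimes\F)$ is now a bounded coherent complex with proper support, so its properly-supported pushforward is coherent by Grothendieck's finiteness theorem (the input already used in \autoref{lem:fully-faithful}(iii)). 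For $(ii)\Rightarrow(iii)$ the point is to reduce testing against all of $\DCohp(Z)$ to testing against $\DCoh(Z)$. Fixing $\H\in\DCohp(Z)$ and a degree $i$, I would truncate the output via $H_i(\RGamma(Z,\H\otimes\F))=H_i(\RGamma(Z,\tau_{\leq i+d}(\H\otimes\F)))$; then, since $\F\in\QC(Z)_{>-N}$, the tail $\tau_{>n}\H\otimes\F$ is concentrated in degrees $>i+d$ once $n\geq i+d+N$, so $\tau_{\leq i+d}(\H\otimes\F)=\tau_{\leq i+d}(\tau_{\leq n}\H\otimes\F)$. As $\tau_{\leq n}\H\in\DCoh(Z)$, hypothesis (ii) makes $H_i$ coherent, while boundedness below of $\RGamma(Z,\H\otimes\F)$ is automatic from finite cohomological dimension.

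The substantial direction is $(iii)\Rightarrow(i)$, which I would break into boundedness, coherence, and proper support. Boundedness below comes from the generator: taking $\H=G^\dual$, condition (iii) gives $\RHom_Z(G,\F)=\RGamma(Z,G^\dual\otimes\F)\in\DCohp(A)$, in particular bounded below. Because $G$ is a compact generator and is perfect of finite Tor-amplitude, the equivalence $\QC(Z)\simeq\Mod_{\End(G)}$ carries the standard $t$-structure to within a bounded shift of the one pulled back from $A$; hence $\F$ is bounded below. With $\F$ now bounded below I would devissage on its lowest homological degree $m$: the cofiber sequence $\tau_{\geq m+1}\F\to\F\to H_m(\F)[m]$ shows that, once the base case below is known for the honest sheaf $H_m(\F)$, property (iii) is closed under fibers and cofibers and so passes to $\tau_{\geq m+1}\F$, letting me peel off homology sheaves one at a time.

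The main obstacle is this base case: to show that a single quasicoherent sheaf $M=H_m(\F)$ all of whose twisted global sections $\RGamma(Z,\H\otimes M)$ (for $\H\in\DCohp(Z)$) are coherent over $A$ must itself be coherent with support proper over $S$. Here I would again use the perfect generator to reduce coherence of $M$ to finiteness of $\RHom_Z(G,M)=\colim_\alpha\RHom_Z(G,M_\alpha)$ over the coherent subsheaves $M_\alpha\subset M$, forcing the colimit to stabilize; and I would detect properness by contradiction, using that if $\supp M$ failed to be proper over $S$ there would be a coherent subquotient supported on a non-proper closed subscheme whose pushforward to $\Spec A$ is not finitely generated (the model being $\RGamma(\AA^1_A,\O)=A[t]$), contradicting (iii). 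Making this detection precise for a separated algebraic space is the technical heart; I expect to need a Nagata-type compactification to isolate the locus escaping properness together with a careful choice of test kernel $\H$ supported there, and this is exactly the step where the hypothesis that $\H$ range over all of $\DCohp(Z)$, rather than merely over $\Perf(Z)$, is indispensable.
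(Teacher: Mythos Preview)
Your easy directions $(i)\Rightarrow(ii)\Rightarrow(iii)$ and the extraction of boundedness below from (iii) via the perfect generator are essentially the paper's arguments, and your d\'evissage to the lowest homology sheaf is exactly what the paper does at the start of its ``Reduction to $Z$ and $A$ classical'' step. So the overall architecture matches.

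The genuine gap is your treatment of the base case, a single sheaf $M$ satisfying (iii). You propose to separate the conclusion into ``$M$ is coherent'' and ``$\supp M$ is proper over $S$,'' handling the first by the claim that finiteness of $\RHom_Z(G,M)=\colim_\alpha\RHom_Z(G,M_\alpha)$ forces the filtered system of coherent subsheaves to stabilize. This step does not work. The functor $\RHom_Z(G,-)$ is conservative and $t$-bounded, but over a non-proper $Z$ it does not detect coherence: already for $Z=\AA^1_k$, $G=\O_Z$, the coherent sheaf $\O_Z$ has $\RHom_Z(G,\O_Z)=k[t]$, which is \emph{not} in $\DCohp(k)$. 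So finiteness of $\RHom_Z(G,M)$ over $A$ encodes exactly the combined condition ``coherent with proper support,'' never coherence alone, and there is no mechanism by which finiteness of the colimit of $A$-modules $\RHom_Z(G,M_\alpha)$ forces the $M_\alpha$ themselves to stabilize (the transition maps on higher Ext groups need not be injective, and none of the individual $\RHom_Z(G,M_\alpha)$ need lie in $\DCohp(A)$). Your contradiction argument for proper support then also stalls, since it presupposes coherence of $M$.

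The paper handles the base case quite differently: after reducing to $Z$, $A$ classical it pushes forward along an affine map into a Nagata compactification $\overline Z$ (using \autoref{lem: aff aperf}, which is the affine case of the proposition) to reduce to $\overline Z\to S$ \emph{proper}; then Chow's lemma plus Noetherian induction reduces to $Z$ projective; a closed immersion reduces to $Z=\PP^n_S$; and finally Beilinson's resolution of the diagonal lets one rebuild $\F$ from finitely many objects $\P'\otimes_A\RGamma(Z,\P\otimes\F)$ with $\P,\P'\in\Perf Z$, which are visibly in $\DCohp$ by hypothesis (iii). The point is that only after compactifying does the generator argument you have in mind become available, and even then one needs the explicit finite resolution on projective space rather than an abstract stabilization claim. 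Your mention of Nagata is in the right spirit, but you invoke it only for the support half; it is really needed up front, and it must be followed by Chow and Beilinson, neither of which appears in your sketch.
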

\begin{proof}
  Note that (i) implies (ii) and (iii) by the proper pushforward theorem.  Furthermore (ii) implies (iii) by an approximation argument using that $\F$ is bounded below and that $Z$ has bounded cohomological dimension (see the proof of \autoref{prop: push dcoh} for a similar argument).  Conversely, (iii) implies (ii) by applying \autoref{lem:fake-lazard} to the functor $\RGamma(Z, \G^* \otimes -)$ where $\G^*$ is a generator guaranteed to exist by \autoref{lem:very good}.  It remains to show that (ii) and (iii) together imply (i), and we do this through a series of reductions.\footnote{Note that (ii) is only used in the first reduction.  If we could eliminate it, this proof would be cleaner!}

  \medskip

  {\noindent}{\it Reduction to $Z$ and $A$ classical (i.e., underived):} By (ii) we have that $\F$ is bounded below, so it suffices to prove that it has coherent homology sheaves with support proper over $S$.  Since we have proven that (i) implies (ii) and (iii), it suffices to prove that the lowest degree homology sheaf has this property:  applying this iteratively to the fibers of the natural maps will prove the result.  
  
  Without loss of generality suppose that $\F \in \QC(Z)_{\geq 0}$ and we will show that $H_0(\F)$ is coherent and properly supported over $S$.  Suppose now that $i \colon Z_{cl} \to Z$ is the inclusion of the underlying classical algebraic space.  Then, 
  \[ i_* i^* \F \to \F \] 
  induces an isomorphism on $H_0(\F)$.  Note that $i_*$ is $t$-exact and induces an equivalence on hearts, and that the proper support condition is topological, so that it suffices to show that $H_0(i^* \F)$ is coherent on $Z_{cl}$ with proper support over $S_{cl}$.  Note that
  \[ \RGamma(Z_{cl}, \H \otimes i^* \F) = \RGamma(Z, i_*(\H) \otimes \F) \in \DCohp(A) \] since $i_*(\H) \in \DCohp(\H)$, and consequently 
  \[ \RGamma(Z_{cl}, \H \otimes i^* \F) \in \DCohp(H_0 A) \] since the inclusion detects the property of being bounded below with coherent homologies.
  
  Thus, we may assume that $Z$ and $S$ are classical.
  
  \medskip

  {\noindent}{\it Reduction to $p$ finite-type proper: } Let $i \colon Z \to Z'$ be affine with $Z'$ a finite-type and separated algebraic space over $A$: such a morphism exists by relative ``Noetherian'' approximation for separated algebraic spaces.  Let $\ol{p} \colon \ol{Z} \to S$ be a proper morphism and $j \colon Z' \to \ol{Z}$ an open immersion; such data exists by the Nagata compactification theorem for algebraic spaces \cite{CLO-Nagata}.  Blowing up $\ol{Z}$ along the reduced induced structure on $\ol{Z} \setminus Z'$, we may suppose that $j$ is affine.  Since $j$ was quasi-compact, the blowup remains of finite-type.  Consequently $j \circ i \colon Z \to \ol{Z}$ is an affine map to a finite-type and proper algebraic space over $A$.  By \autoref{lem: aff aperf} it suffices to show that $j_* i_* \F \in \DCohp(\ol{Z})$ with propertly supported homology sheaves.  Note that for any $\H' \in \DCohp \ol{Z}$ we have
  \[ \RGamma(\ol{Z}, \H' \otimes (j\circ i)_* \F) = \RGamma(\ol{Z}, (j \circ i)_*((j\circ i)^*\H' \otimes \F)) = \RGamma(Z, (j\circ i)^*\H' \otimes \F) \in \DCohp(A) \] since $i^* j^*\H' \in \DCohp(\ol{Z})$.   Thus, we may assume that $Z$ is proper over $S$ and it suffices to show that $\F \in \DCohp(Z)$.

  \medskip

  {\noindent}{\it Reduction to $p$ finite-type projective: } By Chow's Lemma for algebraic spaces \cite{Knutson-AlgSp}*{IV.3.1} there exists $q\colon \sq{Z} \to Z$ with $\sq{Z}$ projective and birational over $Z$.  Thus, 
  \[ \cone(q_* q^* \F \to \F) \]
  is supported on a proper closed subset of $Z$.  By Noetherian induction, it suffices to show that $q_* q^* \F \in \DCohp(Z)$.  By the proper pushforward theorem, it suffices to show that $q^* \F \in \DCohp(\sq{Z})$.  Note that for any $\H' \in \DCohp(\sq{Z})$
  \[ \RGamma(\sq{Z}, \H' \otimes q^* \F) = \RGamma(Z, q_*(\H' \otimes q^* \F)) = \RGamma(Z, q_*(\H') \otimes \F) \in \DCohp(A) \] since $q_*(\H') \in \DCohp(Z)$ by the proper pushforward theorem.

  Thus, we may assume that $Z$ is finite-type projective over $S$.

  \medskip

  {\noindent}{\it Reduction to $p$ a projective space: } Since $Z$ is finite-type projective, and the claim is local on $S$, we may suppose that there is a closed immersion $i \colon Z \to \PP_S^n$.  This morphism is affine, so as before, we reduce to showing that $i_* \F \in \DCohp(\PP_S^n)$.

  \medskip

  {\noindent}{\it Proof in the case of $Z = \PP_S^n$: } Let $\Delta\colon Z \to Z\times_S Z$ be the relative diagonal.  Using Beilinson's method of resolution of the diagonal, the diagonal $\Delta_* \O_Z$ can be built in finitely many steps by taking cones, shifts, and retracts from objects of the form $\P \boxtimes_S \P'$, with $\P, \P' \in \Perf Z$. 

Given $\F\in \QC(Z)$, we can express it in the form
$$
\F \simeq (p_2)_*\left(\Delta_* \O_Z \otimes (p_1)^* \F\right)
$$
And consequently it may be built in  finitely steps by taking cones, shifts, and retracts from objects of the form 
$$ 
\xymatrix{
 (p_2)_*\left((\P \boxtimes_S \P') \otimes (p_1)^* \F\right)  \simeq  \P' \otimes_A \RGamma(Z,\P \otimes \F)  &
 \P, \P' \in \Perf Z
 }
 $$
If  $\F$ satisfies  condition (ii), then we have $\RGamma(Z, \P \otimes \F) \in \DCohp(A)$, hence $\P' \otimes_A \RGamma(Z, \P \otimes \F) \in \DCohp(Z)$.  Consequently, $\F \in \DCohp(Z)$ as desired.  Note that since $\pi$ is projective, the support condition is vacuous.
  \end{proof}

\begin{lemma} \label{lem: aff aperf}
Suppose $p\colon Z \to S$ is an affine map of Noetherian algebraic spaces.  Then the following conditions on $\F \in \QC(Z)$ are equivalent:
  \begin{enumerate}
      \item $\F \in \DCohp(Z)$ and $H_i(\F)$ has support finite over $S$ for all $i$;
      \item $\F \in \DCohp(Z)$ and $H_i(\F)$ has support proper over $S$ for all $i$;
      \item $p_* \F \in \DCohp(S)$.
    \end{enumerate}
  \end{lemma}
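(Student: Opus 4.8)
The plan is to leverage the two basic features of an affine morphism $p\colon Z\to S$: the pushforward $p_*\colon \QC(Z)\to\QC(S)$ is $t$-exact and conservative. The $t$-exactness supplies, for every $\F$, a functorial identification $H_i(p_*\F)\isom p_*^\heart H_i(\F)$, where $p_*^\heart$ is the underived (hence exact) pushforward on hearts; conservativity then shows that $\F$ is bounded below if and only if $p_*\F$ is. Consequently all three conditions are insensitive to truncation, and I would at once reduce to a statement about a single homology sheaf $N := H_i(\F)$, a quasi-coherent sheaf on $Z_{cl}$. Since coherence, boundedness, and finiteness of a morphism are all local on the target, I would localize to the case $S=\Spec A$ affine; because $p$ is affine this forces $Z=\Spec B$ to be affine as well, with $A,B$ Noetherian (working throughout on the classical truncations $\pi_0$). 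The three conditions then become purely algebraic assertions about the $B$-module $N$ and about $p_*^\heart N = N$ regarded as an $A$-module.

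The equivalence (i)$\Leftrightarrow$(ii) is formal and needs none of this. The support $W$ of a homology sheaf is a closed subspace of $Z_{cl}$, and $W\to S$ is affine, being the composite of the closed immersion $W\hookrightarrow Z_{cl}$ with the affine map $p_{cl}$. A morphism that is simultaneously affine and proper is finite, while finite morphisms are always proper; hence ``support proper over $S$'' and ``support finite over $S$'' coincide.

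It remains to prove the algebraic equivalence (i)$\Leftrightarrow$(iii): for a $B$-module $N$ one has [$N$ finitely generated over $B$ with $\Supp_B N$ finite over $\Spec A$] if and only if [$N$ is finitely generated over $A$]. For ($\Rightarrow$): finiteness of $\Supp_B N$ over $\Spec A$ means precisely that $B/\Ann_B N$ is module-finite over $A$, and since $N$ is finite over $B/\Ann_B N$ it is finite over $A$. For the converse, if $N$ is finite over $A$ it is a fortiori finite over $B$, so $\Supp_B N = V(\Ann_B N)$ and it remains only to recover the support condition. Here the key point is that $B/\Ann_B N$ acts faithfully on $N$ by $A$-linear endomorphisms, giving an injection of $A$-modules $B/\Ann_B N\hookrightarrow \End_A(N)$. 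As $A$ is Noetherian and $N$ a finite $A$-module, $\End_A(N)$ is itself a finite $A$-module, whence the $A$-submodule $B/\Ann_B N$ is finite over $A$; thus $\Spec(B/\Ann_B N)\to\Spec A$ is finite, which is exactly the support condition. I expect this embedding-into-$\End_A(N)$ argument to be the only genuinely nontrivial step; the rest is bookkeeping — verifying $t$-exactness and conservativity of affine pushforward in the derived setting, and checking that localization on $S$ and passage to $\pi_0$ are compatible with each of the three conditions.
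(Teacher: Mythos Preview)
Your proof is correct and follows essentially the same route as the paper: both reduce (i)$\Leftrightarrow$(ii) to ``affine + proper = finite,'' and for (iii)$\Rightarrow$(i) both localize to $S=\Spec A$, $Z=\Spec B$, use $t$-exactness and conservativity of $p_*$ to reduce to a single module $N$, and then prove the commutative-algebra statement that $N$ finite over $A$ forces $B/\Ann_B N$ finite over $A$. The only cosmetic difference is in this last step: you embed $B/\Ann_B N \hookrightarrow \End_A(N)$, while the paper picks generators $m_1,\ldots,m_n$ and uses $B/\Ann_B(m_i)\cong B\cdot m_i \subset N$ together with $\Ann_B N=\bigcap_i \Ann_B(m_i)$; unwinding, both are the embedding $b\mapsto (bm_1,\ldots,bm_n)$ into $N^n$.
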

  \begin{proof}
  Let $\supp_Z(\F)$ denote the support of $\F$.
    Observe (i) $\iff$ (ii) since $\supp_Z(H_i(\F)) \to Z$ is affine, $p$ is affine, and proper + affine = finite.
    
    Note that (i) $\implies$ (iii) by the proper pushforward theorem (in the easy finite case).

    To show that (iii) $\implies$ (i), without loss of generality, we may assume that $S$, and hence $Z$ as well, is affine.  Since $p_*$ is conservative and $t$-exact, it is clear that $H_i \F = 0$ for $i \ll 0$, since this is so for $p_* H_i \F$. Hence it remains to show that each $H_i \F$ is coherent over $H_0 \O_X$ with support finite over $H_0 \O_S$.  
     We are thus reduced to the proving the following statement in commutative algebra: Suppose given a map of classical rings $\phi\colon A \to B$ and a $B$-module $A$ such that $B$ is coherent as an $A$-module, then $M$ is coherent as $B$-module and $B/\mathrm{Ann}_B(M)$ is finite over $\Spec A$.  
    
    To prove this statement, (using our standing Noetherian hypotheses) we may replace ``coherent'' with ``finitely-generated'' in the assertion.  But now, if $m_1, \ldots, m_n$ generate $M$ as an $A$-module, then they also generate $M$ as a $B$-module since $\sum a_i m_i = \sum \phi(a_i) m_i$ by definition.  Furthermore, in this case $\mathrm{Ann}_B(M) = \cap_i \mathrm{Ann}_B(m_i)$ so that it suffices to show that $B/\mathrm{Ann}_B(m_i)$ is finite over $A$ for each $i$.  This is isomorphic to the sub-$B$-module of $M$ generated by $m_i$, and is again finite  as a submodule of the finite $A$-module $M$ (using again our Noetherian hypotheses).
  \end{proof}

We can use the above to deduce:
\begin{prop} \label{prop: push dcoh} Suppose $p\colon Z \to S = \Spec A$ is a  Noetherian separated $S$-algebraic space over a Noetherian affine base $S$.  Then the following conditions on $\F \in \QC(Z)$ are equivalent:
  \begin{enumerate}
      \item $\F \in \DCoh(Z)$ with support proper over $S$;
      \item $\F$ is homologically bounded, $\F\in \DCohp(Z)$, and $H_i(\F)$ has support proper over $S$ for all $i$;
      \item $\F$ is homologically bounded, and $\RGamma(Z, \H \otimes \F) \in \DCohp(Z)$ for all $\H \in \DCohp(Z)$;
      \item $\RGamma(Z, \P \otimes \F) \in \DCoh(Z)$ for all $\P \in \Perf(Z)$.
    \end{enumerate}
\end{prop}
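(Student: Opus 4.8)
The plan is to read this proposition as the homologically bounded refinement of \autoref{prop: push aperf}, proving the easy equivalences by direct appeal to that result and reserving the work for the single implication $(iv)\Rightarrow(ii)$. First, $(i)\Leftrightarrow(ii)$ is immediate from the description of $\DCoh(Z)$ as the homologically bounded part of $\DCohp(Z)$, the proper support conditions being literally the same. For $(ii)\Leftrightarrow(iii)$, once $\F$ is assumed homologically bounded the remaining clauses of $(ii)$ and $(iii)$ are exactly conditions $(i)$ and $(iii)$ of \autoref{prop: push aperf} (with values in $A$-modules, $S=\Spec A$), so the equivalence is that proposition verbatim. Finally $(iii)\Rightarrow(iv)$ is a one-line check: for $\P\in\Perf(Z)\subset\DCohp(Z)$ condition $(iii)$ gives $\RGamma(Z,\P\otimes\F)\in\DCohp(A)$, and since $\F$ is bounded, $\P$ is perfect, and $p$ has finite cohomological dimension (\autoref{lem:very good}), this complex is also bounded above, hence lies in $\DCoh(A)$; the same computation gives $(ii)\Rightarrow(iv)$ through the proper pushforward theorem.

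The substance is $(iv)\Rightarrow(ii)$, which I would approach in two preliminary stages before the final boundedness argument. Stage one establishes that $\F$ is bounded below: let $G$ be the single perfect generator of $\QC(Z)$ furnished by \autoref{lem:very good} (here $S$ is affine) and test $(iv)$ against $\P=G^\dual$. Then $\RGamma(Z,G^\dual\otimes\F)\isom\RHom_Z(G,\F)$ lies in $\DCoh(A)$, so is in particular bounded below, and applying the fake Lazard lemma (\autoref{lem:fake-lazard}) to the conservative functor $\RHom_Z(G,-)$ yields $\F\in\QC(Z)_{>-N}$ for some $N$, exactly as in the step $(iii)\Rightarrow(ii)$ of \autoref{prop: push aperf}. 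Stage two upgrades the perfect-test hypothesis $(iv)$ to condition $(iii)$ of \autoref{prop: push aperf}: given $\H\in\DCohp(Z)$ and an integer $N$, the perfect generator lets me choose $\P_N\in\Perf(Z)$ with a map $\P_N\to\H$ whose cone lies in $\QC(Z)_{>N}$. Tensoring with the now bounded-below $\F$ and applying $\RGamma(Z,-)$, the finite cohomological dimension of $p$ forces $\cone\bigl(\RGamma(Z,\P_N\otimes\F)\to\RGamma(Z,\H\otimes\F)\bigr)$ into $\QC(A)_{>N-C}$ for a constant $C$ independent of $N$; since the source lies in $\DCoh(A)$ by $(iv)$, the target has coherent homology in all degrees below $N-C$. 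Letting $N\to\infty$ and using that $\RGamma(Z,\H\otimes\F)$ is bounded below gives $\RGamma(Z,\H\otimes\F)\in\DCohp(A)$. This is the approximation argument promised in the proof of \autoref{prop: push aperf}; by that proposition $\F\in\DCohp(Z)$ with homology sheaves of proper support over $S$.

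It remains to promote $\DCohp(Z)$ to $\DCoh(Z)$, i.e. to show $\F$ is bounded above, and for this I would run the geometric reductions from the proof of \autoref{prop: push aperf}: to $Z$ and $A$ classical; then, via relative Noetherian approximation and Nagata compactification, to an affine map into a finite-type proper algebraic space; then, via Chow's lemma and Noetherian induction on supports, to the projective case; and finally, via a closed immersion, to $Z=\PP^n_S$. In the base case Beilinson's resolution of the diagonal expresses $\F\isom(p_2)_*(\Delta_*\O_Z\otimes p_1^*\F)$ as a \emph{finite} iteration of cones, shifts, and retracts of objects $\P'\otimes_A\RGamma(Z,\P\otimes\F)$ with $\P,\P'\in\Perf(Z)$ (the diagonal $\Delta_*\O_Z$ being so built from kernels $\P\boxtimes_S\P'$); by $(iv)$ each $\RGamma(Z,\P\otimes\F)\in\DCoh(A)$ is bounded, so each building block lies in $\DCoh(Z)$, and a finite iteration of them remains in $\DCoh(Z)$. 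Hence $\F\in\DCoh(Z)$, which together with the proper support from the previous stage yields $(i)$ and closes the cycle.

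I expect this last stage to be the main obstacle, and specifically the transport of the \emph{boundedness} refinement through the reduction to projective space. In \autoref{prop: push aperf} the target is only $\DCohp$, so no upper bound is at stake and the reductions propagate freely; here the delicate point is Chow's lemma, whose structure map $q$ is proper but not affine, so that $q_*$ of a perfect complex is merely coherent and the hypothesis $(iv)$ — a statement about perfect kernels — does not visibly descend along $q$ with its boundedness intact. The organisation above is designed precisely to sidestep this: coherence and proper support are obtained globally in the first two stages, where only the $\DCohp$ conclusion of \autoref{prop: push aperf} is needed and Chow is unproblematic, while bounded-above-ness is extracted only at the very end on $\PP^n_S$, where the relative Beilinson kernels are genuinely perfect so that $(iv)$ applies directly and the finiteness of the resolution converts the bounded $A$-complexes $\RGamma(Z,\P\otimes\F)$ into a bounded $\F$. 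The residual care is bookkeeping: verifying that affine pushforwards (being $t$-exact, \autoref{lem: aff aperf}) and the proper pushforwards of the already-coherent pieces reflect bounded-above-ness back down the reduction tower.
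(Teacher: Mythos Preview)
Your treatment of $(i)\Leftrightarrow(ii)$, $(ii)\Leftrightarrow(iii)$, and $(iii)\Rightarrow(iv)$ matches the paper exactly, and your Stages~1 and~2 for $(iv)\Rightarrow(iii)$ are essentially the paper's argument. But you have overlooked the full strength of Stage~1, and this oversight is what drives you into the unnecessary and delicate Stage~3.

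In Stage~1 you test $(iv)$ against $\P=G^\dual$ and obtain $\RGamma(Z,G^\dual\otimes\F)\in\DCoh(A)$. You then extract only that this complex is bounded \emph{below}, and invoke \autoref{lem:fake-lazard} to conclude $\F$ is bounded below. But $\DCoh(A)$ means bounded in \emph{both} directions, and \autoref{lem:fake-lazard} explicitly says that $\RGamma(Z,G^\dual\otimes-)$ detects bounded above as well as bounded below. So Stage~1 already yields that $\F$ is homologically bounded, full stop. This is precisely what the paper does: one line, citing \autoref{lem:very good} and \autoref{lem:fake-lazard}, gives $\F$ bounded; then the approximation argument (your Stage~2) finishes $(iv)\Rightarrow(iii)$.

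Your entire Stage~3 is therefore superfluous, and it is also the part you yourself flag as the main obstacle. The concern you raise about Chow's lemma---that $q_*$ of a perfect complex is only coherent, so hypothesis $(iv)$ does not descend---is real, and your proposed workaround (extract boundedness only at the final $\PP^n_S$ stage, then reflect it back) is sketched rather than argued. In particular, the reduction to classical $Z$ in \autoref{prop: push aperf} works one homology sheaf at a time and is not obviously adaptable to a global bounded-above statement. None of this needs to be resolved: simply read off bounded-above from Stage~1 and delete Stage~3.
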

\begin{proof} 
  Note that (i) is equivalent to (ii) by definition, and (ii) is equivalent to (iii) by \autoref{prop: push aperf}.  Then, (iii) implies (iv) since perfect complexes have finite Tor-amplitude and $\RGamma(Z, -)$ is left $t$-exact.
  
  It suffices to show that (iv) implies (iii).  Suppose that $\F$ satisfies the conditions of (iv).  Note that \autoref{lem:very good} and \autoref{lem:fake-lazard} imply that $\F$ is homologically bounded since $\RGamma(Z, \G^\dual \otimes \F)$ is homologically bounded for $\G$ a single perfect complex generating $\QC(X)$.  Now, without loss of generality we may suppose that $\F$ is connective, i.e., $\F \in \QC(X)_{\geq 0}$.  
  
  Suppose $\H \in \DCohp(Z)$.  We must show that $\RGamma(Z, \H \otimes \F)$ is almost perfect as an $A$-module. Note that $\RGamma(Z, \H \otimes \F)$ is bounded-below by the finite cohomological dimension of $p$, so that it suffices to show that each of its homology modules is coherent over $A$.  Let $d$ be the cohomological dimension of $p$.  Since $\H$ is almost perfect and $Z$ is perfect, there exists a perfect complex $\P$ and a map $\P \to \H$ whose cone is in $\C_{> d-i}$.  Since $\F$ is connective, the cone of the map $\P \otimes \F \to \H \otimes \F$ also lies in $\C_{> d-i}$.  By the boundedness of cohomological dimension, $H^i$ and $H^{i-1}$ of this cone vanish -- so, the natural map
  \[ H^i(Z, \H \otimes \F) \to H^i(Z, \P \otimes \F) \]
  is an isomorphism.  Since the right hand side was coherent by assumption, we are done.
 \end{proof}

\begin{lemma}\label{lem:fake-lazard} Suppose that $\pi \colon X \to S$ is a quasi-compact and separated relative algebraic space, and that $S = \Spec A$ is affine.
  
  If $G$ is a connective perfect complex generating $\QC(X)$,  then
  \[ \RGamma(G^* \otimes -)\colon \QC(X) \to A\mod \]
  is conservative, left $t$-exact up to a shift, and right $t$-exact up to a shift.

  In particular, it detects the properties of being bounded below and bounded above.
\end{lemma}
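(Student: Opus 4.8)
The plan is to work throughout with the functor $F = \RGamma(G^* \otimes -) \simeq \RHom_X(G,-)$, using that $G$ is perfect so $G^* \otimes \F \simeq \HHom(G,\F)$ and $\RGamma\,\HHom(G,\F) = \RHom_X(G,\F)$; its homotopy groups are $\pi_i F(\F) = \Hom_{\QC(X)}(G[i],\F)$. Conservativity is then immediate and is exactly the generation hypothesis: an exact functor of stable categories is conservative iff it detects zero objects, and $F(\F) \simeq 0$ means $\F$ is right-orthogonal to every shift $G[i]$, hence $\F \simeq 0$ since $G$ generates. For the amplitude bounds I would combine two inputs. Since $G$ is connective and perfect, $G^*$ is perfect of Tor-amplitude $[-m,0]$ for some $m \geq 0$, so $G^* \otimes -$ is left $t$-exact and right $t$-exact up to the shift $m$. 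And $\pi = p_X$ is a quasi-compact, quasi-separated relative algebraic space over the quasi-compact $S$, hence of finite cohomological dimension $d$ by \autoref{lem:very good}; as a right adjoint $\pi_* = \RGamma$ is left $t$-exact, and finite cohomological dimension makes it right $t$-exact up to the shift $d$. Composing, $F$ is left $t$-exact and right $t$-exact up to $s := m+d$.

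The two forward halves of the detection statement are then immediate: if $\F$ is bounded above then $F(\F)$ is bounded above by left $t$-exactness, and if $\F$ is bounded below then $F(\F)$ is bounded below by right $t$-exactness up to $s$. The reverse implication for bounded-above objects I would get cleanly from the interaction of the connective compact generator $G$ with $t$-structures. The $t$-structure generated by $G$ (in Lurie's sense) has coconnective part exactly $\{\F : F(\F) \in (A\mod)_{\le m}\}$, since this is the right-orthogonal of $\{G[j] : j \ge m+1\}$ and $\Map_{\QC(X)}(G[j],\F) \simeq 0 \iff F(\F) \in (A\mod)_{\le j-1}$. Comparing this generated $t$-structure with the ambient one — they agree up to the shift $s$ on coconnective objects, the non-formal containment being that any $\F \in \QC(X)_{\ge s}$ is built from $\{G[j] : j \ge 0\}$ under colimits and extensions by the standard cellular/monadic approximation (the cells lie in degrees $\geq 0$ because $F(\F)$ is then connective) — yields $F(\F)\in (A\mod)_{\le m} \Rightarrow \F \in \QC(X)_{\le m+s}$. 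Thus $F$ reflects coconnectivity up to a shift, and together with the forward half this gives detection of boundedness above.

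The remaining reverse implication, that $F(\F)$ bounded below forces $\F$ bounded below, is the main obstacle, and it is genuinely not formal: $F$ does not reflect connectivity through orthogonality (the endomorphism algebra $\RHom_X(G,G)$ need not be connective), and a naive truncation argument fails because homology of $\F$ in very negative degrees could a priori cancel in the spectral sequence of the canonical filtration. I would prove it by a dévissage to the bottom homology sheaf, in the spirit of \autoref{lem: aff aperf}, arguing contrapositively: if $\F$ has nonzero homology in arbitrarily negative degrees I must produce the same for $F(\F)$. The key point is that the lowest surviving contribution cannot be cancelled from below. Concretely, for $\F \in \QC(X)_{\ge n}$ with $H_n(\F) \neq 0$, the generation hypothesis — a Nakayama-type input, using that $G$ is a perfect generator and reducing to the underlying classical situation — forces a well-defined associated-graded piece at the bottom of the canonical filtration to be nonzero and to inject into the homology of $F(\F)$ at a degree within $s$ of $n$. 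Applying this to the truncations $\tau_{\ge n}\F$ as $n \to -\infty$, each nonvanishing homology degree of $\F$ produces a nonvanishing homology degree of $F(\F)$ of comparable depth, so $F(\F)$ is unbounded below.

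Finally, the closing assertion is just the conjunction: detection of boundedness above is the forward left-$t$-exactness bound together with the coconnectivity reflection of the second paragraph, and detection of boundedness below is the forward right-$t$-exactness bound together with the bottom-homology reflection of the third paragraph. I expect the Nakayama/bottom-homology step to be where essentially all the real work lies, the rest being formal consequences of the amplitude bounds and of $G$ being a connective perfect generator.
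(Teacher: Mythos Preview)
Your treatment of conservativity and the $t$-exactness bounds is correct and essentially identical to the paper's: identify $F \simeq \RHom_X(G,-)$, get conservativity from generation, left $t$-exactness from the Tor-amplitude of $G^*$, and right $t$-exactness up to a shift from $G^*$ being bounded below together with the finite cohomological dimension of $\pi$ (\autoref{lem:very good}).

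Where you diverge from the paper is in the detection statement, and here you are working much harder than necessary and leaving a genuine gap. The paper does not treat ``bounded above'' and ``bounded below'' asymmetrically at all. Instead it observes one further structural fact you never invoke: $F = \RHom_X(G,-)$ preserves \emph{limits} as well as colimits (it is a mapping-space functor in the second variable; colimit-preservation comes from $G$ being compact). Since the $t$-structures on $\QC(X)$ and $A\mod$ are both left and right complete, one can then appeal to the general principle recorded immediately after the lemma: any conservative, limit- and colimit-preserving functor that is left and right $t$-exact up to a shift between categories with left- and right-complete $t$-structures detects boundedness in both directions. The argument is symmetric and uses only that $F$ commutes with the truncation towers up to controlled error; no comparison of $t$-structures, no Nakayama, no d\'evissage.

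Your proposed ``bounded below'' argument has a real gap. Even granting your bottom-homology claim (if $\F \in \QC(X)_{\ge n}$ with $H_n(\F)\ne 0$ then $F(\F)$ has a nonzero homology group within $s$ of $n$), conservativity only tells you $F(\tau_{\ge n}\F)\ne 0$ and $F(\tau_{\ge n}\F)\in \D_{\ge n-s}$; the nonzero homology could sit near the \emph{top} of $\F$, not near $n$. And even if the claim held, passing from $F(\tau_{\ge n}\F)$ back to $F(\F)$ costs you exactly the range of degrees where $F(\tau_{<n}\F)$ lives, which overlaps the window you care about. So ``each nonvanishing homology degree of $\F$ produces a nonvanishing homology degree of $F(\F)$ of comparable depth'' is not justified by what you have written. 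The fix is not to repair this d\'evissage but to drop it entirely in favor of the limit-preservation/completeness argument above.
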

\begin{proof}  Identifying
  \[ \RGamma(X, G^* \otimes -) = \RHom_X(G, -) \] we see that it is conservative by the assumption that $G$ generates.  If $d$ is the cohomological dimension of $\pi$ and $n$ is such that $G^* \in \QC(X)_{>-n}$ then this functor is right $t$-exact up to a shift by $n+d$.  If $a$ is the Tor amplitude of $G^*$ then this functor is left $t$-exact up to a shift by $a$.

  The result then follows by the following Lemma.
\end{proof}

{\it We warn the reader that the following Lemma is false, requiring a different proof of \autoref{thm: dcoh shriek}. Please see \autoref{prop correction} of \autoref{s:correction} for a corrected statement to stand in its place.}

\begin{lemma} Suppose that $\C, \D$ have all limits and colimits and carry $t$-structures which are both left- and right-complete, and that $F \colon \C \to \D$ is conservative, limit and colimit preserving, left $t$-exact up to a shift, and right $t$-exact up to a shift.  Then, $F$ detects the bounded below / bounded above objects.
\end{lemma}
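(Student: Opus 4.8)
The plan is to first notice that the statement bundles two dual assertions. Detection of bounded below and of bounded above objects are exchanged by passing to opposite categories: $F^{\op}\colon \C^{\op}\to\D^{\op}$ again preserves all limits and colimits and is conservative, the opposite $t$-structures are again left- and right-complete, and ``left $t$-exact up to a shift'' becomes ``right $t$-exact up to a shift'' (and ``bounded above'' becomes ``bounded below'') under this dictionary. So I would treat only one case, say that $F$ detects bounded below objects. One implication is immediate: if $X\in\C_{\geq k}$ then right $t$-exactness up to the shift $M$ gives $FX\in\D_{\geq k-M}$. The content is the converse.

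Assume $FX\in\D_{\geq m}$. First I would reduce to $X$ bounded above by setting $X' = \tau_{\leq m+M}X$: then $X$ is bounded below iff $X'$ is (their difference $\tau_{\geq m+M+1}X$ is already bounded below), and the fibre sequence $\tau_{\geq m+M+1}X\to X\to X'$ together with $F(\tau_{\geq m+M+1}X)\in\D_{\geq m+1}$ shows $FX'\in\D_{\geq m}$. So I may assume $X\in\C_{\leq c}$ and must show $X$ is bounded. The key structural input is that $F$ has finite $t$-amplitude: the two shift bounds send $\C_{[a,b]}$ into $\D_{[a-M,\,b+N]}$. Splicing the two truncation fibre sequences then yields the window identity $H_d(FX)\cong H_d\bigl(F\,\tau_{[d-N,\,d+M]}X\bigr)$, so each homology group of $FX$ is computed from a fixed finite band of $X$.

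Next I would use right completeness to write $X=\colim_n \tau_{\geq -n}X$, and the fact that $F$ and $\tau_{\leq m-1}$ commute with this filtered colimit, to translate the hypothesis $FX\in\D_{\geq m}$ into $\colim_n P_n = 0$, where $P_n=\tau_{\leq m-1}F\tau_{\geq -n}X$; for $n\gg0$ the transition maps fit into cofibre sequences $P_n\to P_{n+1}\to F\bigl(H_{-(n+1)}X\bigr)[-(n+1)]$. Applying conservativity to the heart identifies exactly when the cofibres vanish: $F(H_{-(n+1)}X)=0$ iff $H_{-(n+1)}X=0$. So it remains to show that $\colim_n P_n=0$ forces $H_{-(n+1)}X=0$ for all $n\gg0$, which is precisely the assertion that $X$ is bounded below.

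The hard part will be this last step. The $P_n$ are uniformly bounded above (by $m-1$), but a priori carry homology in a band of fixed width $M+N$ that slides to $-\infty$; such a ``drifting'' tower can have vanishing colimit without being eventually zero, so continuity and the upper bound alone do not conclude. To defeat the drift I expect to need the remaining hypotheses essentially: left-completeness of $\D$ (so objects are detected by homology), preservation of \emph{limits} (dual to the colimit just used), and the finiteness of the amplitude, in order to force the cofibres $F(H_{-(n+1)}X)[-(n+1)]$—hence, by conservativity, the groups $H_{-(n+1)}X$—to vanish for large $n$. An attractive alternative that packages all of this, valid when $F$ admits adjoints (as in our applications), is descent: since $F$ is conservative and preserves all limits and colimits it is comonadic, exhibiting $\C$ as comodules over the comonad $F F^{L}$ on $\D$; this comonad inherits finite $t$-amplitude from $F$, and the forgetful functor off comodules over a bounded-amplitude comonad detects boundedness because the recovering totalization converges with a bounded shift. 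Either way, the crux is controlling the bounded-width sliding window produced by the finite amplitude of $F$.
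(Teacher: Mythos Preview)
Your setup and reductions match the paper's closely. The paper's proof records the two maps
\[ F(\tau_{>N} c)\longrightarrow \tau_{>N-r}F(c),\qquad \tau_{\leq N+\ell}F(c)\longrightarrow F(\tau_{\leq N} c), \]
observes that each becomes an equivalence after a further truncation (your ``window identity''), and then concludes with the single line ``The result now follows by noting the standard fact that the truncation above and below functors commute.'' So through the window identity your approach and the paper's are the same.

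The difference is that you correctly \emph{flag} the remaining step as the hard part, whereas the paper simply asserts it. Your drifting-window objection is exactly right: from $F(\tau_{\leq N}c)\in\D_{(N-r,\,N+\ell]}$ conservativity does not force $\tau_{\leq N}c=0$, and one can write down towers $(P_n)$ with $\colim P_n=0$, each $P_n$ in a fixed-width window sliding to $-\infty$, and all transition cofibres nonzero (e.g.\ $P_n=k[-n]\oplus k[-n-1]$ in $D(k)$ with the obvious shift maps). The paper's final sentence does not obviously defeat this; it is at the same level of incompleteness as your proposal.

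Your suggested remedies do not yet close the gap either. The (co)monadic argument needs not just that $F$ have adjoints, but that the adjoint itself be $t$-bounded---which the Lemma's hypotheses do not give (right $t$-exactness up to a shift of $F$ bounds $F^L$ from \emph{below} but not from above, and dually for $F^R$)---and even then one must check that the degradation from iterating $F^LF$ is beaten by the skeletal shift, which is not automatic when the total amplitude exceeds~$1$. That said, in the paper's sole use of this Lemma the functor is $F=\RHom_X(G,-)$ for a perfect generator $G$, so $F^L=G\otimes_A(-)$ \emph{is} $t$-bounded and more direct arguments are available there. In short: you have located a genuine gap that is present in the paper's own proof; neither your sketch nor the paper's sentence resolves it at the stated level of generality.
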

\begin{proof} Suppose $c \in \C$ is bounded below / bounded above.  The exactness, up to a shift, of $F$ guarantees that $F(c)$ is bounded below / bounded above.

  We now prove the converse implication:

  Let $\ell$ be the error of left $t$-exactness, and $r$ the error of right $t$-exactness.  For any $c \in \C$, there is a natural map
  \[ F(\tau_{>N} c) \longrightarrow \tau_{>N-r} F(c) \]
  whose cofiber is $F(\tau_{\leq N} c) \in k\mod_{\leq N+\ell}$ so that the map is an equivalence on $\tau_{>N+\ell}$.

  Similarly, there is a natural map
  \[ \tau_{\leq N+\ell} F(c) \longrightarrow F(\tau_{\leq N} c) \]
  whose fiber $F(\tau_{>N} c) \in k\mod_{>N-r}$ so that the map is an equivalence on $\tau_{\leq N-r}$.

  The result now follows by noting the standard fact that the truncation above and below functors commute.
\end{proof}

Finally, we are in a position to complete the proof of the theorem:
\begin{proof}[Completing the Proof of \autoref{thm: dcoh shriek}]
To complete the proof of the Theorem, it suffices to see that if $p_{2*}(p_1^*(\F) \otimes \K)\in \DCoh(Y)$ for all $\F\in \Perf(X)$, then in fact   $\K\in \DCoh_{\proper/Y}(X \times_S Y)$.

This is local in $Y$ so we may assume $Y$ is affine and Noetherian, and hence $p_1 \colon X\times_S Y\to X$ is affine. Thus pullbacks $p_1^*(\F)$ generate $\Perf(X \times_S Y)$, and so it suffices to show
that if $p_{2*}(\P \otimes \K)\in \DCoh(Y)$ for all $\P\in \Perf(X)$, then   $\K\in \DCoh_{\proper/Y}(X \times_S Y)$.
This will follow from applying \autoref{prop: push dcoh} above with $Z=X\times_S Y$, $S=Y$, and $p=p_2$.
\end{proof}

%
%

\section{Shriek integral transforms revisited}\label{sect: shriek}

\begin{na} For quasi-coherent complexes, it is often the case that categories of functors coincide with quasi-coherent complexes on the fiber product.  For ind-coherent complexes, this is often true over a point but very rarely true over a non-trivial base.  The goal of the present section is to ``fix'' this, or rather to show that the failure of the shriek integral transform to give an equivalence can be precisely controlled by means of the $t$-structure.  
\end{na}

In order to do this, we will need to consider some constructions involving categories equipped with reasonably behaved $t$-structures: for this we refer the reader to \autoref{app:t-bdd}.  Let us recall the highlights:
\begin{na} In the Appendix, the reader will find a definition of an $\infty$-categorical notion of a (left) \demph{coherent $t$-category}, and a (left) $t$-exact functor between such. The theory of coherent $t$-categories, with left $t$-exact functors, admits a localization ((left) \demph{complete $t$-categories}) and a co-localization ((left) \demph{regular $t$-categories}).  Though we do not explicitly need to use this, the two resulting theories are equivalent by \autoref{thm:reg-coh-equiv}.\footnote{This sort of phenomenon, where a localiation is equivalent to a colocalization, has several more familiar examples.  For instance suppose that $R$ is a commutative ring  and $I \subset H_0 R$ a finitely-generated ideal; then, there is the Greenless-May equivalence \cite{DAG-XII}*{Prop.~4.2.5} between locally $I$-torsion $R$-modules, and $I$-complete $R$-modules.}  The examples of interest to us are:
  \begin{itemize}
    \item For a Noetherian geometric stack, $\QC(X)$ is \demph{complete} by \autoref{prop:qc-t-cplt};
    \item For a geometric stack of finite-type over a char. $0$ field, $\QCsh(X)$ is \demph{regular} by \autoref{prop:qcsh-t-reg}.
  \end{itemize}
  Given $\C, \D$ presentable categories with $t$-structures, one can put a $t$-structure on $\C \otimes \D$ and $\Fun^L(\C, \D)$. The formation of tensor/functors does not generally preserve complete/regular (or often even coherent) categories, giving -- in light of the above -- an obstruction to ``functor theorems.''  
\end{na}

\begin{na} In this section, all stacks will be locally of finite presentation over a perfect field $k$.  This is for three reasons: first, we work finite-type over a reasonable base so that we may use the formalism of Grothendieck duality; second, we want tensoring over the base to be left $t$-exact, so we require it to be a field; finally, we rely on generic smoothness results so we require that $k$ be a perfect.
\end{na}

\begin{na}
Finally, we will have one more \emph{unspoken} assumption: For all $X$ that occur, we will ask that $X$ have finite cohomological dimension and that $\QCsh X = \Ind(\DCoh X)$.  The reason we leave it unspoken is that if $k$ has characteristic zero, then this will be automatic for all the stacks we consider by \cite{DrinfeldGaitsgory}.
\end{na}

The goal of this section is to prove:
\begin{theorem}\label{thm:fun-regulariz} Suppose that $S$ is a quasi-compact, geometric, finitely-presented $k$-stack; that $\pi \colon X \to S$ is a quasi-compact and separated $S$-algebraic space of finite presentation; and, that $Y$ is an $S$-stack almost of finite presentation.  Then the $!$-integral transform 
  \[ \Phish\colon \QCsh(X \times_S Y) \longrightarrow \Fun^L_{\QCsh S}(\QCsh X, \QCsh Y) \]
  has the following properties:
  \begin{enumerate}
    \item There exists an integer $N$ such that the image of $\QCsh(X \times_S Y)_{<N}$ lies in the left $t$-exact functors.
    \item It is fully-faithful on the full subcategory $\QCsh(X \times_S Y)_{<0}$.
    \item There exists an integer $M$ such that any left $t$-exact functor lies in the essential image of $\QCsh(X \times_S Y)_{<M}$.
  \end{enumerate}
\end{theorem}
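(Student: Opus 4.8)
The three assertions are all statements about $t$-amplitude, and the organizing principle is that on homologically bounded-above objects the difference between $\QCsh$ and $\QC$ evaporates: the natural functor $\Psi\colon \QCsh(W)\to\QC(W)$ is $t$-exact and restricts to an equivalence $\QCsh(W)_{<\infty}\xrightarrow{\sim}\QC(W)_{<\infty}$ for every stack $W$ in play. This will let me import the $\QC$-theoretic equivalence $\Phi\colon \QC(X\times_S Y)\xrightarrow{\sim}\Fun^L_{\QC S}(\QC X,\QC Y)$ of \autoref{lem:fully-faithful}(ii) (i.e.\ \cite{BFN}*{Theorem~4.14}) once every Hom- and evaluation-computation has been pushed into the bounded-above world. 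As all three claims are local on $Y$, I would first reduce to $Y$ affine and Noetherian, so that $p_Y\colon X\times_S Y\to Y$ is a separated algebraic space of finite cohomological dimension by \autoref{lem:very good}.

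For (i), I would track $t$-amplitudes through the three operations comprising $\Phish_\K(\F)=(p_Y)_*\bigl(p_X^!\F\shotimes\K\bigr)$. Each is $t$-exact up to a shift by a constant depending only on the geometry: $(p_Y)_*$ is left $t$-exact and right $t$-exact up to the cohomological dimension of $p_Y$ (\autoref{lem:very good}, \autoref{prop:push-CD}); $p_X^!$ is $t$-exact up to a shift fixed by the finite presentation of $Y\to S$; and $\shotimes\K$ alters amplitude by an amount governed by the coconnective bound of $\K$. Choosing $N$ to absorb the two geometric shifts, any $\K\in\QCsh(X\times_S Y)_{<N}$ produces a functor carrying coconnective objects to coconnective objects, hence a left $t$-exact functor.

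For (ii) and (iii) I would transfer the $\QC$-equivalence through $\Psi$. Because the functors involved preserve colimits and $\QCsh X$ is compactly generated by $\DCoh X$, the space of natural transformations $\Map(\Phish_\K,\Phish_{\K'})$ is the end $\int_{\P\in\DCoh X}\Map_{\QCsh Y}\bigl(\Phish_\K(\P),\Phish_{\K'}(\P)\bigr)$; for $\K,\K'\in\QCsh(X\times_S Y)_{<0}$ and $\P\in\DCoh X$ bounded, part (i) forces the values into $\QCsh Y_{<\infty}\simeq\QC Y_{<\infty}$, so the whole end lives in the bounded-above world. There, Grothendieck--Serre duality converts the $!$-transform into the $*$-transform up to a twist by the relative dualizing complex (cf.\ \autoref{lem:groth-dual-functors}), and the end is identified with the natural transformations computed by the equivalence $\Phi$, which equal $\Hom_{\QC(X\times_S Y)}(\K,\K')=\Hom_{\QCsh(X\times_S Y)}(\K,\K')$; this gives (ii). For (iii), given a left $t$-exact $F$, I would restrict it to $\DCoh X\subset\QCsh X_{<\infty}$, where left $t$-exactness forces $F(\DCoh X)\subset\QCsh Y_{<\infty}\simeq\QC Y_{<\infty}$; transporting through $\Psi$ and applying $\Phi^{-1}$ (with the inverse dualizing twist) yields a $\QC$-kernel of coconnective amplitude bounded by a geometric constant, hence a kernel $\K\in\QCsh(X\times_S Y)_{<M}$ with $M$ independent of $F$. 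Since $\Phish_\K$ and $F$ are colimit-preserving and agree on the generators $\DCoh X$ by construction, they agree on all of $\QCsh X$.

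The main obstacle is the bookkeeping hidden in (ii)--(iii): one must check that the $\QCsh S$-linear functor category really is probed, on bounded-above inputs, by the $\QC S$-linear equivalence of \cite{BFN}, tracking precisely the relative dualizing-complex twist that separates the $!$- and $*$-transforms and extracting the \emph{uniform} integers $N$ and $M$ from the amplitude estimates. Conceptually this is exactly the place where the failure of the external product $\QCsh X\otimes_{\QCsh S}\QCsh Y\to\QCsh(X\times_S Y)$ to be an equivalence is repaired by the $t$-structure: the comparison becomes an equivalence only after truncation, and the theorem asserts that this truncation is controlled by a single geometric constant.
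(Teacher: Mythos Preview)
Your treatment of (i) is essentially the paper's: track $t$-amplitude through $(p_Y)_*$, $\shotimes$, and $p_X^!$, each of which is left $t$-exact up to a geometric shift.

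For (ii) and (iii), however, there is a genuine gap. Your end formula
\[
\Map(\Phish_\K,\Phish_{\K'}) \;=\; \int_{\P\in\DCoh X}\Map_{\QCsh Y}\bigl(\Phish_\K(\P),\Phish_{\K'}(\P)\bigr)
\]
computes maps in $\Fun^L(\QCsh X,\QCsh Y)$, not in $\Fun^L_{\QCsh S}(\QCsh X,\QCsh Y)$. The forgetful functor between these is conservative but \emph{not} fully faithful, so knowing the mapping spaces after forgetting $\QCsh S$-linearity does not give them back. Similarly, in (iii) you produce a kernel $\K$ with $\Phish_\K$ agreeing with $F$ on generators as a $k$-linear functor; this does not show they agree as $\QCsh S$-linear functors, which is additional structure. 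You flag this as ``bookkeeping,'' but it is the actual content of the theorem, and the end-plus-duality-twist maneuver does not touch it. (There is also a circularity risk: the clean comparison between $\Phish$ and $\Phi$ via the dualizing twist, \autoref{cor:compare-phi-phish}, is a \emph{consequence} of this theorem.)

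The paper's argument handles the $\QCsh S$-linearity by a bar resolution over $S$. One writes
\[
\Fun^L_{\QCsh S}(\QCsh X,\QCsh Y)\;\simeq\;\Tot\bigl\{\Fun^L(\QCsh X\otimes\QCsh S^{\otimes\bullet},\QCsh Y)\bigr\},
\]
and on the kernel side considers the augmented cosimplicial diagram $\QCsh(X\times S^{\times\bullet}\times Y)$ with pushforwards along diagonals and graphs, which are $t$-exact. In each fixed cosimplicial degree one is over $\Spec k$, where $\Phish$ \emph{is} an equivalence (\autoref{prop:absolute-fun}); for (iii) one also needs the absolute duality statement \autoref{cor:dcoh-t-dual} to convert left $t$-exactness of the functor into a bound on the kernel. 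The remaining point, that the top row is a totalization on bounded-above objects, is exactly where your ``$\QCsh_{<\infty}\simeq\QC_{<\infty}$'' observation is used: compare with the corresponding diagram for $\QC$, which is a totalization by the $\QC$-functor theorem. So the reduction-to-$\QC$ idea is right, but it has to be applied to the cosimplicial comparison, not to an end over $\DCoh X$.
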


From this, we will deduce:
\begin{corollary}\label{cor:dcoh-t-reg} Suppose $S, X, Y$ are as in \autoref{thm:fun-regulariz} and furthermore that $S$ is regular.  Then, there is a $t$-structure on the functor category with
  \[ \Fun^L_{\QCsh S}(\QCsh X, \QCsh Y)_{\leq 0} \eqdef \left\{ F \in \Fun^L_{\QCsh S}(\QCsh X, \QCsh Y) : \text{$F$ is left $t$-exact} \right\} \]

With this $t$-structure, and the ordinary $t$-structure on $\QCsh(X \times_S Y)$, the $!$-integral transform is left $t$-exact up to a shift and exhibits $\QCsh(X \times_S Y)$ as the regularization of the functor category.
\end{corollary}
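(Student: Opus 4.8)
The plan is to bootstrap the corollary from \autoref{thm:fun-regulariz} together with the general theory of regular and complete $t$-categories developed in \autoref{app:t-bdd}. I would organize the argument into three movements: first install the $t$-structure on the functor category, then upgrade the three numbered conclusions of \autoref{thm:fun-regulariz} into an equivalence of bounded-above subcategories, and finally recognize that equivalence as exhibiting the regularization.

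First, the $t$-structure. Since $S$ is regular we have $\QCsh S = \QC S$, and tensoring over $\QCsh S$ is left $t$-exact, so the general construction of a $t$-structure on a functor category between presentable $t$-categories (recalled in \autoref{app:t-bdd}) applies to $\Fun^L_{\QCsh S}(\QCsh X, \QCsh Y)$. I would then verify that its coconnective part is exactly the left $t$-exact functors. Concretely, the left $t$-exact functors form a presentable subcategory closed under limits and desuspension: closure under limits and extensions is immediate from the corresponding closure of the coconnective parts of $\QCsh X$ and $\QCsh Y$, closure under $[-1]$ is formal, and presentability follows from the accessibility of the left $t$-exactness condition. Lurie's recognition criterion for presentable $t$-structures then yields a $t$-structure with $\Fun_{\leq 0}$ the left $t$-exact functors. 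Regularity of $S$ is precisely what guarantees that the $\QCsh S$-linearity constraint is compatible with this $t$-structure.

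Second, left $t$-exactness up to a shift is read off directly from \autoref{thm:fun-regulariz}(i): since $\Phish$ carries $\QCsh(X\times_S Y)_{<N}$ into the left $t$-exact functors, i.e.\ into $\Fun_{\leq 0}$, a fixed shift of $\Phish$ is left $t$-exact. Combining (i), (ii), and (iii) then shows that $\Phish$ restricts to an equivalence of bounded-above subcategories $\QCsh(X\times_S Y)_{<\infty} \stackrel{\sim}{\longrightarrow} \Fun^L_{\QCsh S}(\QCsh X, \QCsh Y)_{<\infty}$: full faithfulness on $\QCsh(X\times_S Y)_{<0}$ propagates to all bounded-above objects by shifting and taking the colimit over the truncations, while (i) and (iii) pin down the essential image as the bounded-above functors, namely those left $t$-exact up to a shift. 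In particular $\Phish$ identifies the coherent objects of the two sides, matching $\DCoh(X\times_S Y)$ with the coherent objects $\Coh$ of the functor category. To conclude that $\Phish$ exhibits $\QCsh(X\times_S Y)$ as the regularization $\R=\Ind\Coh$ of the functor category, I would invoke \autoref{prop:qcsh-t-reg}: since $X\times_S Y$ is geometric and of finite type over $k$ (inherited from $X$, $Y$, $S$), $\QCsh(X\times_S Y)$ is regular, so $\QCsh(X\times_S Y)\simeq \Ind\DCoh(X\times_S Y)=\Ind\Coh\bigl(\QCsh(X\times_S Y)\bigr)$. As $\Phish$ induces an equivalence on coherent objects and has regular source, it presents the source as $\Ind$ of the coherent objects of the target, which is exactly the regularization map; this is the recognition statement for regularizations in \autoref{app:t-bdd}. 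The final parenthetical assertion, that $\Phish$ induces an equivalence on bounded-above objects, is exactly the equivalence of the previous step and recovers the relevant case of \autoref{intro thm regularize}.

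I expect the main obstacle to be the first movement: verifying that the abstract $t$-structure on the $\QCsh S$-linear functor category has the left $t$-exact functors as its coconnective part, and — relatedly — confirming that ``bounded above for this $t$-structure'' coincides with ``left $t$-exact up to a shift,'' so that the coherent objects are correctly identified. This is where regularity of $S$ and the left $t$-exactness of the relative tensor product are essential, and where the accessibility and presentability bookkeeping demanded by Lurie's criterion must be carried out with care.
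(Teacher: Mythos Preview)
Your three-movement outline matches the paper's treatment (the corollary is not proved in detail there either; the argument is sketched in the Remark following the proof of \autoref{thm:fun-regulariz} and in \autoref{cor:dualiz-qcsh-with-t}). The second and third movements are fine: the equivalence on bounded-above objects is exactly \autoref{cor:shriek-bounded}, and your recognition of the regularization via $\QCsh(X\times_S Y)\simeq\Ind\DCoh(X\times_S Y)$ from \autoref{prop:qcsh-t-reg} is the intended one.

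The first movement is where the content lies, and your sketch does not correctly locate how regularity of $S$ enters. It is not via ``left $t$-exactness of tensoring over $\QCsh S$.'' The paper's mechanism is as follows. One uses the conservative, colimit-preserving forgetful functor $\theta$ to the non-$S$-linear functor category $\Fun^L(\QCsh X,\QCsh Y)$, which already carries a $t$-structure with coconnective part the left $t$-exact functors by \autoref{cor:dcoh-t-dual}. To pull this back to a $t$-structure on the $S$-linear functor category one needs $\theta$ to also commute with (filtered) limits. Writing the $S$-linear functor category as the totalization of the cosimplicial diagram with terms $\Fun^L(\QCsh(X\times S^{\bullet}),\QCsh Y)$ and identifying these with $\QCsh(X\times S^{\bullet}\times Y)$ via the absolute $\Phish$, the coface maps become pushforwards along the graphs $X\times Y\to X\times S\times Y$ and along diagonals of $S$. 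On $\QCsh$, such a pushforward preserves limits precisely when it admits a left adjoint $(-)^*$, i.e., when the map has finite Tor dimension; this holds for all the relevant graphs and diagonals exactly when $S$ is smooth. That is the role of regularity of $S$. Once $\theta$ preserves limits and colimits, the $t$-structure restricts, is compatible with filtered colimits, and your remaining argument proceeds. You should also note that coherence of the resulting $t$-structure (needed for $\R$ to be defined) is inherited, since it depends only on $(\,\cdot\,)_{<0}$, which $\Phish$ identifies with $\QCsh(X\times_S Y)_{<0}$.
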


Before giving a proof of the Theorem and its Corollary, let us note two more concrete consequence:
\begin{corollary}\label{cor:shriek-bounded} Under the hypotheses of \autoref{thm:fun-regulariz}, the $!$-integral transform induces an equivalence
  \[ \QCsh(X \times_S Y)_{<\infty} \stackrel{\Phish}\longrightarrow \Fun^{L, t<\infty}_{\QCsh(S)}(\QCsh(X), \QCsh(Y)) \]
  where the superscript ``$t<\infty$'' denotes the full subcategory of functors which are left $t$-exact up to a finite shift.\end{corollary}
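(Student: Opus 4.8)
The plan is to deduce the Corollary from \autoref{thm:fun-regulariz} by a bootstrapping argument that propagates its three assertions — each stated only for a fixed truncation range — across the entire bounded-above subcategory. The one structural input is that the $!$-integral transform is compatible with shifts of the kernel: since $\Phish_\K$ is assembled from the ($k$-linear) exact operations of pullback, $\shotimes$, and pushforward, one has $\Phish_{\K[n]} \simeq \Phish_\K[n]$ for every $n \in \ZZ$. I will use this repeatedly, together with two elementary observations: that $\QCsh(X \times_S Y)_{<\infty} = \bigcup_n \QCsh(X \times_S Y)_{<n}$ is an increasing union of full subcategories, and that a colimit-preserving exact functor $F$ is left $t$-exact up to a finite shift precisely when $F[-a]$ is honestly left $t$-exact for some integer $a$ (for exact functors the defining inclusion at level $0$ propagates to all levels with a uniform shift).

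First I would check that $\Phish$ actually lands in the asserted target. Given $\K \in \QCsh(X \times_S Y)_{<\infty}$, choose $n$ with $\K \in \QCsh(X \times_S Y)_{<n}$; then $\K[N-n] \in \QCsh(X \times_S Y)_{<N}$, so by part (i) of \autoref{thm:fun-regulariz} the functor $\Phish_{\K[N-n]} \simeq \Phish_\K[N-n]$ is left $t$-exact. Hence $\Phish_\K$ is left $t$-exact up to the shift $n-N$, i.e.\ it lies in $\Fun^{L,t<\infty}_{\QCsh(S)}(\QCsh(X),\QCsh(Y))$. This shows $\Phish$ restricts to a functor between the two categories appearing in the statement.

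For full faithfulness I would reduce to part (ii): any two kernels $\K,\K' \in \QCsh(X \times_S Y)_{<\infty}$ lie in a common $\QCsh(X \times_S Y)_{<n}$, whose mapping spaces agree with those of $\QCsh(X \times_S Y)_{<\infty}$ since it is a full subcategory. Shifting down by $n$ places $\K[-n],\K'[-n]$ into $\QCsh(X \times_S Y)_{<0}$, where $\Phish$ is fully faithful by (ii); as shifting is an autoequivalence on both sides and commutes with $\Phish$, full faithfulness on $\QCsh(X \times_S Y)_{<n}$ follows, and then on the union. For essential surjectivity I would reduce to part (iii): given $F$ left $t$-exact up to a shift $a$, the functor $F[-a]$ is honestly left $t$-exact, so by (iii) it is of the form $\Phish_\K$ with $\K \in \QCsh(X \times_S Y)_{<M}$; then $F \simeq \Phish_\K[a] \simeq \Phish_{\K[a]}$ with $\K[a] \in \QCsh(X \times_S Y)_{<M+a} \subset \QCsh(X \times_S Y)_{<\infty}$, so $F$ lies in the essential image.

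The argument is essentially formal once \autoref{thm:fun-regulariz} is in hand; the only points needing care are the shift-compatibility of $\Phish$ and the bookkeeping of the homological-grading conventions relating the truncation index of a kernel to the size of the shift by which the associated functor fails to be left $t$-exact. The genuine content — producing the fixed integers $N$ and $M$ and establishing full faithfulness in the base range — resides entirely in \autoref{thm:fun-regulariz}, so I do not expect a substantive obstacle at the level of the Corollary itself.
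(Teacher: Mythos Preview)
Your proof is correct and follows essentially the same approach as the paper, which simply states ``It is well-defined by (i), fully-faithful by (ii), and essentially surjective by (iii).'' You have spelled out the shift-bootstrapping argument that the paper leaves implicit, using the compatibility $\Phish_{\K[n]} \simeq \Phish_{\K}[n]$ to transport each of the three fixed-level assertions of \autoref{thm:fun-regulariz} across all of $\QCsh(X \times_S Y)_{<\infty}$.
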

\begin{proof} It is well-defined by (i), fully-faithful by (ii), and essentially surjective by (iii).
\end{proof}

\begin{corollary}\label{cor:compare-phi-phish} Suppose furthermore that $S$ is regular and that $X$ is of finite Tor dimension over $S$.  In this case, $\omega_X \otimes - $ takes $\Perf X$ into $\DCoh X$.  Let $R$ denote the functor $\QCsh(-) \to \QC(-)$.   Then, one has a commutative diagram
  \[ \xymatrix{
  \QCsh(X \times_S Y)_{<\infty} \ar@{^{(}->}[d]_R \ar_-{\sim}[r]^-{\Phish} &   \Fun^{ex, t<\infty}_{\Perf(S)}(\DCoh X, \QCsh Y) \ar[d]^r \\
  \QC(X \times_S Y) \ar[r]^-{\sim}_-{\Phi} &  \Fun^{ex}_{\Perf(S)}(\Perf X, \QC Y) 
  } \]
  where \[ r(F) = R \circ F \circ (\omega_X \otimes -) \]  
  
  In particular, $r$ is fully faithful.  The essential image of $r$ consists of those functors whose Kan extension to functors $ F \in \Fun^L_{\QC S}(\QC X, \QC Y)$ satisfy the following condition: There exists a constant $N$ such that if $\G \in \QC X$ has Tor amplitude at most $a$, then $F(\G) \in \QC Y_{<N+a}$.   
\end{corollary}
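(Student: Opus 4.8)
The plan is to deduce everything from the commutativity of the square, together with the two horizontal equivalences ($\Phish$ from \autoref{cor:shriek-bounded}, after identifying $\QCsh S = \QC S$ and $\Fun^L_{\QCsh S}(\QCsh X,\QCsh Y)\isom\Fun^{ex}_{\Perf S}(\DCoh X,\QCsh Y)$ using that $S$ is regular; and $\Phi$ from \cite{BFN}) and the fact that the left vertical $R=\Psi_{X\times_S Y}$ restricts to a fully faithful functor on bounded-above objects with essential image exactly $\QC(X\times_S Y)_{<\infty}$. Granting that the square commutes, $r=(\Phi\circ R)\circ\Phish^{-1}$ is a composite of fully faithful functors with an equivalence, hence fully faithful; and its essential image is carried by $\Phi^{-1}$ to the essential image of $R$, i.e.\ to $\QC(X\times_S Y)_{<\infty}$. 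Thus $F\in\Fun^{ex}_{\Perf S}(\Perf X,\QC Y)$ lies in the image of $r$ iff its kernel $\L=\Phi^{-1}(F)$ is bounded above, and it remains only to translate this into the stated Tor-amplitude condition.

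For the commutativity, fix $\K\in\QCsh(X\times_S Y)_{<\infty}$ and $\P\in\Perf X$; I must produce a natural equivalence $R\,\Phish_\K(\omega_X\otimes\P)\isom\Phi_{R\K}(\P)$. Using that $\omega_{X\times_S Y}=p_1^!\omega_X$ and the projection formula for $!$-pullback against a perfect complex (\autoref{lem:groth-dual-functors}(ii)), one gets $p_1^!(\omega_X\otimes\P)\isom p_1^*\P\otimes\omega_{X\times_S Y}$; since $\omega_{X\times_S Y}$ is the $\shotimes$-unit and the $\QC$-action is compatible with $\shotimes$, this yields $p_1^!(\omega_X\otimes\P)\shotimes\K\isom p_1^*\P\otimes\K$. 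Applying the ($!$-)pushforward $p_{2*}$ and then $R=\Psi_Y$, and using that $\Psi$ commutes with pushforward and with the $\QC$-action on bounded-above objects, gives
\[ R\,\Phish_\K(\omega_X\otimes\P)\isom p_{2*}(p_1^*\P\otimes R\K)=\Phi_{R\K}(\P), \]
naturally in $\P$ and $\K$. (All objects in sight remain bounded above because $\P$ is perfect, so these IndCoh-formalism identities are available.)

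It remains to show, for $\L\in\QC(X\times_S Y)$, that $\L$ is bounded above iff $\Phi_\L$ (equivalently its colimit-preserving Kan extension to $\QC X$) satisfies: there is $N$ with $\Phi_\L(\G)\in\QC Y_{<N+a}$ whenever $\G$ has Tor-amplitude $\le a$. For the forward direction, suppose $\L\in\QC(X\times_S Y)_{<b}$; pullback preserves Tor-amplitude, tensoring a bounded-above complex by something of Tor-amplitude $\le a$ raises the bound by $a$, and $p_2$ has finite cohomological dimension $d$ (\autoref{lem:very good}, \autoref{prop:push-CD}), so $\Phi_\L(\G)=p_{2*}(p_1^*\G\otimes\L)\in\QC Y_{<b+a+d}$; take $N=b+d$.

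The converse is the crux. Boundedness of $\L$ is local on $X\times_S Y$, so after localizing on target and base I may assume $S=\Spec A$ and $Y=\Spec C$ are affine and Noetherian and $X\to\Spec A$ is a quasi-compact separated algebraic space of finite presentation; then $p_1\colon X\times_S Y=X\times_{\Spec A}\Spec C\to X$ is affine. Let $G$ be a single connective perfect generator of $\QC(X)$ (\autoref{lem:very good}); since $p_1$ is affine with conservative $t$-exact $p_{1*}$, the pullback $p_1^*G$ generates $\QC(X\times_S Y)$. By \autoref{lem:fake-lazard} applied over $\Spec C$, the functor $\RHom_{X\times_S Y}(p_1^*G,-)$ detects bounded-above objects, and
\[ \RHom_{X\times_S Y}(p_1^*G,\L)=\RGamma\!\big(X\times_S Y,\,p_1^*(G^\dual)\otimes\L\big)=\RGamma\!\big(\Spec C,\,\Phi_\L(G^\dual)\big). \]
As $G^\dual$ is perfect of some finite Tor-amplitude $a_0$, the hypothesis gives $\Phi_\L(G^\dual)\in\QC(\Spec C)_{<N+a_0}$; since global sections on an affine are $t$-exact, the right-hand side is bounded above, whence so is $\L$. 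I expect the main obstacle to be exactly this reduction to the affine-local picture together with the identification of $\RHom(p_1^*G,\L)$ with a single value of $\Phi_\L$; once that bookkeeping is arranged, \autoref{lem:fake-lazard} does the work.
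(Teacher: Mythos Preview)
Your proposal is correct and follows essentially the same approach as the paper's proof: establish commutativity of the square by unwinding $p_1^!(\omega_X\otimes\P)\shotimes\K$ via the $\QC$-module structure on $\QCsh$, deduce full faithfulness of $r$ from the two horizontal equivalences and the fact that $R$ is an equivalence onto $\QC(X\times_S Y)_{<\infty}$, and then characterize bounded-above kernels by testing $\Phi_\L$ on a single perfect generator and invoking \autoref{lem:fake-lazard}. The only cosmetic difference is that you also localize $Y$ to an affine before applying \autoref{lem:fake-lazard}, whereas the paper keeps $Y$ general and applies the lemma with target $\QC(Y)$; your extra reduction is harmless (the Tor-amplitude hypothesis is stable under flat affine base change on $S$ and $Y$, as one checks via the projection formula), and you are slightly more explicit about the easy forward direction.
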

\begin{proof} The commutativity of the diagram is provided by the module structure for $\QCsh$ over $\QC$:
  \begin{align*} R \circ \Phish_{\K}(\omega_X \otimes \P) &= R \circ p_{2_*}\left( p_1^!(\omega_X \otimes \P) \shotimes \K\right) \\
    &= p_{2_*}\left( R \left((\omega_{X \times_S Y} \otimes p_1^*(\P)) \shotimes \K\right) \right) \\
    &= p_{2_*}\left( R \left( \omega_{X \times_S Y} \shotimes (p_1^*(\P) \otimes \K) \right)\right) \\
    &= p_{2_*}\left( R \left( p_1^*(\P) \otimes \K \right) \right)  \\
    &= p_{2_*}\left( p_1^*(\P) \otimes R(\K) \right) = \Phi_{R(\K)}(\P) 
  \end{align*}
  where $\K \in \QCsh(X \times_S Y)_{<\infty}$ and $\P \in \Perf X$.

  The  equivalence and fully-faithfulness annotations follow from the Theorem, the functor theorem for $\QC$, and the fact that $R$ induces an equivalence $\QCsh(-)_{<\infty} \isom \QC(-)_{<\infty}$.

It remains to identify the essential image:  By the previous diagram, we must show that for $\K \in \QC(X \times_S Y)$ we have $\K \in \QC(X \times_S Y)_{<\infty}$ iff $\Phi_\K$ has the given property.  We may suppose that $S$ is affine, by considering tensoring by the (flat) object $\O_U \in \QC(S)$ associated to a flat affine cover $U \to S$.  Then, we may suppose that $\Perf X$ admits a single generator $\G$, so that $p_{2*} \RHom_{X \times_S Y}(p_1^* \G, -) \colon \QC(X \times_S Y) \to \QC(Y)$ satisfies the hypotheses of \autoref{lem:fake-lazard}.  Since $\G$ is perfect, $\G^*$ has bounded Tor amplitude so that $\Phi_{\K}(\G^*)$ is left bounded; this completes the proof.
\end{proof}

\begin{remark}
In the description above, one could weaken the condition to $\G \in \Perf X$ of Tor amplitude at most $a$ -- and restrict the domain of definition of $F$ to $\Perf X$ -- provided that a weak form of Lazard's Theorem held for $X$: One requires that any $\G \in \QC X$ which is flat is a filtered colimit of perfect complexes of uniformly bounded Tor amplitude.  We have not considered when this holds.
\end{remark}


The strategy will be three-fold: we prove an (absolute, i.e., $S = \Spec k$) ``tensor'' rest; we prove an (absolute) ``duality'' result; and then we pass from the absolute case to the relative case.

We start by recording the following ``absolute'' tensor statement:
\begin{prop}\label{prop:absolute-fun} Suppose that $S = \Spec k$ for a perfect field $k$, that $X \to S$ is a quasi-compact and separated $S$-algebraic space of finite presentation, and that $Y \to S$ is an arbitrary $S$-stack of finite presentation.  Then,
  \begin{enumerate}
      \item The exterior product
        \[ \QCsh(X) \otimes_{\QC S} \QCsh(Y) \longrightarrow \QCsh(X \times_S Y) \]
        is well-defined and is an equivalence.
      \item The $!$-integral transform gives an equivalence
        \[ \Phish \colon \QCsh(X \times_S Y) \longrightarrow \Fun^L_{\QC(S)}(\QCsh(X), \QCsh(Y)). \]
  \end{enumerate}
\end{prop}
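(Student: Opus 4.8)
The plan is to establish the two parts in order, deducing (ii) from (i) together with Grothendieck--Serre self-duality and an identification of the resulting abstract equivalence with the geometric transform $\Phish$. Since $S = \Spec k$, the external product is $k$-linear and colimit-preserving, and both sides are compactly generated: $\QCsh(X) = \Ind\DCoh(X)$ and $\QCsh(Y) = \Ind\DCoh(Y)$, so that $\QCsh(X)\otimes_k \QCsh(Y)\isom \Ind(\DCoh X \otimes_k \DCoh Y)$ with compact objects the idempotent completion of $\DCoh X \otimes_k \DCoh Y$, while $\QCsh(X\times_k Y) = \Ind\DCoh(X\times_k Y)$. Thus for (i) it suffices to show that the external product functor $\DCoh X \otimes_k \DCoh Y \to \DCoh(X\times_k Y)$ is fully faithful with thick (dense) essential image. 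For fully faithfulness I would compute mapping complexes by a Künneth formula, $\Map_{X\times_k Y}(\F\boxtimes\G,\F'\boxtimes\G')\isom \Map_X(\F,\F')\otimes_k\Map_Y(\G,\G')$, which reduces to the internal-hom identity $\HHom(\F\boxtimes\G,\F'\boxtimes\G')\isom \HHom(\F,\F')\boxtimes\HHom(\G,\G')$ together with $\RGamma(X\times_k Y,\mathcal A\boxtimes\mathcal B)\isom\RGamma(X,\mathcal A)\otimes_k\RGamma(Y,\mathcal B)$. The latter holds because $k$ is a field, so $\otimes_k$ is exact, and because $X$ (hence each projection) has finite cohomological dimension by \autoref{lem:very good} and \autoref{prop:push-CD}, guaranteeing the requisite convergence and base change.

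The crux of (i) is generation: one must show that the external products $\F\boxtimes\G$ generate $\QCsh(X\times_k Y)$ under colimits, i.e.\ that their right orthogonal in $\QCsh(X\times_k Y)$ vanishes. Here the hypothesis that $X$ is a quasi-compact separated algebraic space is essential: \autoref{lem:very good} furnishes a single perfect generator $G$ of $\QC(X)$, so that along the $X$-direction one may reduce to perfect complexes via $p_1^* G$, while Grothendieck duality converts between $\Perf$ and the coherent objects needed to reach all of $\DCoh$. Concretely, I would show that an object right orthogonal to every $p_1^* \P \shotimes p_2^! \G$ (with $\P\in\Perf X$, $\G\in\DCoh Y$ --- a form of coherent external product differing from $\P\boxtimes\G$ by a dualizing twist on the $X$-factor) must vanish, using the conservativity and boundedness controls of \autoref{lem:very good} applied fibrewise along the base-changed projection $p_2\colon X\times_k Y\to Y$. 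The arbitrary stackiness of $Y$ is harmless, since we never require a generator for $Y$; the $Y$-direction simply contributes all of $\DCoh Y$.

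For (ii), I would first observe that $\QCsh(X) = \Ind\DCoh(X)$ is dualizable with dual $\Ind(\DCoh(X)^{\op})$. As $X$ is of finite presentation over the field $k$, it carries a dualizing complex, and Grothendieck--Serre duality gives an equivalence $\DD\colon \DCoh(X)^{\op}\xrightarrow{\sim}\DCoh(X)$ (cf.\ \autoref{lem:groth-dual-functors}), yielding a self-duality $\QCsh(X)^\dual\isom\QCsh(X)$. Combining dualizability with part (i),
\[ \Fun^L_{k}(\QCsh X,\QCsh Y)\isom \QCsh(X)^\dual\otimes_k\QCsh Y\isom \QCsh X\otimes_k\QCsh Y\isom\QCsh(X\times_k Y). \]
It then remains to identify this composite with $\Phish_\K(\mathcal H)=p_{2*}(p_1^!\mathcal H\shotimes\K)$. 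I would verify this on generators $\K=\F\boxtimes\G$: projection formula and base change over the field identify $\Phish_{\F\boxtimes\G}(\mathcal H)$ with $\RGamma(X,\mathcal H\shotimes\F)\otimes_k\G$, and Grothendieck--Serre duality identifies the first factor with the self-duality pairing $\ip{\DD\F}{\mathcal H}$, matching the functor attached to $\F\boxtimes\G$ under the abstract equivalence.

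I expect the generation step of (i) to be the main obstacle: fully faithfulness and the formal manipulations of (ii) are routine once Künneth and self-duality are in hand, but proving that coherent external products generate $\QCsh(X\times_k Y)$ genuinely requires exploiting the algebraic-space hypothesis on $X$---via the single perfect generator and finite cohomological dimension of \autoref{lem:very good}---to reduce the $X$-direction to perfect complexes, while carefully verifying that the relevant $!$-adjunctions and boundedness estimates survive for the non-proper projection $p_2$ and for arbitrary stacky $Y$. A secondary subtlety is confirming that the abstractly constructed equivalence of (ii) really coincides with the geometric transform $\Phish$, which I would settle by the generator computation indicated above together with the duality compatibilities of \autoref{lem:groth-dual-functors}.
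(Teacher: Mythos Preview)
Your overall strategy is sound and matches what the cited references (to which the paper's own proof simply defers) carry out: reduce (i) to compact objects, prove full faithfulness by K\"unneth over the field $k$, and deduce (ii) from (i) together with the self-duality $\QCsh(X)^\dual\isom\QCsh(X)$ furnished by Grothendieck duality on $\DCoh(X)$. The paper's proof is essentially a citation, noting only that \autoref{lem:very good2} supplies the needed input, so you are in fact writing out more than the paper does.

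There is, however, a real gap in your generation argument for (i). You invoke \autoref{lem:very good}, which produces a single \emph{perfect} generator of $\QC(X)$; but here you are working in $\QCsh(X)=\Ind\DCoh(X)$, and a perfect generator of $\QC(X)$ does not generate $\DCoh(X)$ when $X$ is singular. Your proposed remedy---``Grothendieck duality converts between $\Perf$ and the coherent objects needed to reach all of $\DCoh$''---does not work: $\DD$ carries $\Perf X$ only to $\omega_X\otimes\Perf X$, which is still a proper thick subcategory of $\DCoh X$ for singular $X$. Concretely, the objects $p_1^*\P\shotimes p_2^!\G$ you propose to test against are, up to a twist, exterior products of the form $(\omega_X\otimes\P)\boxtimes\G$ with $\P$ perfect, and their right-orthogonal in $\QCsh(X\times_k Y)$ need not vanish. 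What you actually need is \autoref{lem:very good2}, which produces a single $\G\in\Coh(X)^{\heart}$ generating $\QCsh(X)$. That lemma is also precisely where the hypothesis that $k$ be a \emph{perfect} field enters (via generic smoothness and Noetherian induction, reducing to the regular locus where $\DCoh=\Perf$); your sketch never invokes perfectness of $k$, which is a sign that the argument as written is incomplete. Once \autoref{lem:very good2} replaces \autoref{lem:very good}, the dualizability of $\QCsh(X)$ and the remainder of your outline go through essentially as you indicate.
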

\begin{proof} See \cite{toly}*{Proposition B.1.1, Theorem~B.2.4} (or \cite{indcoh}), for the case of $k$ characteristic zero and slightly different hypotheses on $X, Y$.  The same argument in fact works under the hypotheses given here, with \autoref{lem:very good2} providing the necessary dualizability results for $\QCsh(X)$ over $k$.
\end{proof}

Next we consider the (absolute) dualizability statement.  Our input is the following key boundedness property of Grothendieck duality:
\begin{prop}\label{prop:dcoh-t-dual} Suppose $X$ that is a Noetherian geometric stack admitting a dualizing complex $\omega_X$ and of finite cohomological dimension.  Let $\DD$ denote the duality functor with respect to $X$. Then:
      
  The Grothendieck duality functor
        \[ \DD \colon \DCoh X^{op} \stackrel{\sim}\longrightarrow \DCoh X \]
        is an equivalence, and it is left and right $t$-exact up to a finite shift.
\end{prop}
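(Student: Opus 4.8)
The plan is to reduce to the case of an affine base by means of a smooth atlas, where the assertion becomes the classical theory of dualizing complexes, and then to isolate the only genuinely quantitative point---the \emph{finiteness} of the shift---as a direct consequence of the finite injective dimension of $\omega_X$. Throughout, $\DD = \HHom_X(-,\omega_X)$ denotes the duality functor. There are two things to establish: that $\DD$ restricts to a contravariant self-equivalence of $\DCoh X$, and that it is left and right $t$-exact up to a shift, i.e. that there is a single integer $c$ with $\DD(\DCoh(X)_{\geq 0}) \subset \DCoh(X)_{\leq c}$ and $\DD(\DCoh(X)_{\leq 0}) \subset \DCoh(X)_{\geq -c}$.

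First I would set up the smooth-local reduction. Since $X$ is geometric, choose a smooth surjection $f\colon U \to X$ with $U$ affine and quasi-compact, so that $f^*$ is faithfully flat, hence conservative, and $t$-exact. The restriction $\omega_U \simeq f^!\omega_X \simeq f^*\omega_X \otimes \omega_{U/X}[d]$ is again a dualizing complex, where $\omega_{U/X}$ is the relative canonical line bundle and $d$ the relative dimension; as $U$ is quasi-compact, $d$ is bounded. Grothendieck duality along the smooth map $f$ (\autoref{prop:groth-duality}) supplies a natural equivalence $f^*\DD_X \F \simeq \DD_U(f^*\F) \otimes \L[\pm d]$ for a line bundle $\L$, so that $f^*\DD_X$ and $\DD_U f^*$ agree up to a $t$-exact twist and a shift by the bounded integer $d$. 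Because $f^*$ is $t$-exact, conservative, and detects coherence and boundedness (by faithfully flat descent), both the equivalence and the finite-shift bound for $\DD_X$ follow from the corresponding statements for $\DD_U$, the constant changing by at most $d$; note that the twist and shift cancel upon applying $\DD$ twice, so the biduality map on $X$ pulls back to affine biduality and is therefore an equivalence by conservativity. This reduces everything to $X = \Spec A$ with $A$ Noetherian admitting a dualizing complex $\omega$.

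On the affine, the self-equivalence is classical: $\RHom_A(-,\omega)$ preserves $\DCoh(A)$, since $\Ext$-modules of finitely generated modules over a Noetherian ring are finitely generated and the finite injective dimension of $\omega$ bounds the range, while biduality $\F \to \DD\DD\F$ is the defining property of a dualizing complex, already recorded for $\DCohpm$ in \autoref{prop:groth-duality}; restricting to $t$-bounded objects gives the equivalence on $\DCoh$. For the finite shift I would invoke finite injective dimension directly: there are integers $e_0 \leq e_1$, depending only on $\omega$, with $\DD M$ supported in homological degrees $[-e_1,-e_0]$ for every $M$ in the heart. Given $\F \in \DCoh(A)$, filter it by its Postnikov tower, whose graded pieces are the shifts $H_i(\F)[i]$; then $\DD(H_i(\F)[i])$ is supported in degrees $[-e_1-i,\,-e_0-i]$. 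If $\F \in \DCoh(A)_{\geq 0}$ every contributing $i$ is $\geq 0$, so all pieces---and hence $\DD\F$---lie in degrees $\leq -e_0$, and symmetrically $\DD(\DCoh(A)_{\leq 0}) \subset \DCoh(A)_{\geq -e_1}$. As objects of $\DCoh$ are bounded the filtration is finite, the bound propagates through the cofiber sequences, and $c = \max(-e_0, e_1)$ works uniformly in $\F$.

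The main obstacle is precisely this uniformity, which is the content of ``up to a \emph{finite} shift.'' On the affine it is nothing more than the finite injective dimension built into the definition of a dualizing complex; the real work is ensuring it survives the passage to the stack, where one must simultaneously keep the relative dimension $d$ of the atlas bounded (guaranteed by quasi-compactness) and know that $\omega_X$ is genuinely $t$-bounded, so that $\DD$ carries the heart into a fixed range. This last point is where the finite cohomological dimension hypothesis enters, ensuring together with $\QCsh X = \Ind \DCoh X$ that the local bounds assemble to a global one. By contrast, the equivalence itself is formal once biduality is known, and I do not expect it to cause difficulty beyond bookkeeping with the smooth-descent twists.
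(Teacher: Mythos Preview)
Your proposal is correct and follows essentially the same strategy as the paper. The paper's proof is simply a pointer to \autoref{prop:groth-duality}, whose argument is precisely smooth-local reduction to the affine case followed by the classical theory of dualizing complexes; you have unpacked this with a direct Postnikov-filtration argument for the $t$-boundedness on $\DCoh$, whereas the paper establishes the stronger statement for $\DCohpm$ via stabilization/approximation arguments and then restricts---but the route and the key inputs (flat-base-change compatibility of $\HHom(-,\omega_X)$, finite injective dimension of $\omega$, conservativity and $t$-exactness of $f^*$) are the same.

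One small clarification: your closing remark about where finite cohomological dimension enters is slightly misplaced. In the paper's argument it is used in the convergence step showing that $H_i\DD(\tau_{\geq -k}\,-)$ stabilizes, which is needed to extend smooth-locality of $\DD$ from $\DCohp$ to $\DCohm$ and hence to $\DCohpm$. For the bounded case you actually treat, the Postnikov filtration is finite and no such stabilization is required; the uniform bound comes entirely from the finite injective dimension of $\omega$ together with the bounded relative dimension $d$ of your quasi-compact atlas, as you correctly say earlier. The hypothesis $\QCsh X=\Ind\DCoh X$ is not needed here.
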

\begin{proof} See \autoref{prop:groth-duality}.
\end{proof}

We re-interpret it as a dualizability result for $\QCsh(X) \isom \Ind(\DCoh(X))$:
\begin{corollary}\label{cor:dcoh-t-dual} Suppose $X$ is of finite cohomological dimension, admits a dualizing complex, and is such that $\QCsh(X) = \Ind(\DCoh(X))$.  Then, one can equip $\QCsh(X)$ with the following alternate $t$-structure henceforth denoted $\ol{\QCsh}(X)$:
      \[ \ol{\QCsh}(X)_{>0} = \Ind\left( \DD(\DCoh X_{<0}) \right)   \qquad \ol{\QCsh}(X)_{\leq 0} = \Ind\left( \DD(\DCoh X_{\geq 0}) \right) \]
      
    Then:
    \begin{enumerate}
      \item $\ol{\QCsh}(X)_{\leq 0}$ consists precisely of those $\F \in \QCsh(X)$ such that 
        \[ \RGamma(X, \F \shotimes -)\colon \QCsh(X) \to k\mod \] is a left $t$-exact functor;
      \item The $!$-integral transform gives a $t$-exact equivalence
        \[ \ol{\QCsh}(X) \longrightarrow \Fun^{L}(\QCsh X, k\mod) \]
      \item Suppose that $\C$ is a presentable $\infty$-category with $t$-structure compatible with filtered colimits.  Then, the equivalence
        \[ \ol{\QCsh}(X) \otimes \C \stackrel{\sim}\longrightarrow \Fun^L(\QCsh(X), \C) \]
        \[ \F \otimes c \mapsto \RGamma(X, \F \shotimes -) \otimes c \]
        is a $t$-exact equivalence.
      \item The identity functor $\QCsh X = \ol{\QCsh} X$ is left and right exact up to finite shifts.  In particular, the equivalence of (iii) 
        \[ \QCsh(X) \otimes \C \stackrel{\sim}\longrightarrow \Fun^L(\QCsh(X), \C) \]
        is left and right exact up to finite shifts (with the usual $t$-structure on the left hand-side).
    \end{enumerate}
\end{corollary}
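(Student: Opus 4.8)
The plan is to establish parts (i)--(iv) in turn, extracting everything from Grothendieck--Serre duality (\autoref{prop:dcoh-t-dual}) together with the absolute self-duality of $\QCsh(X)$ (\autoref{prop:absolute-fun}(ii)). First I would check that the displayed formulas genuinely define a $t$-structure. Since $\DD \colon \DCoh(X)^{op} \stackrel{\sim}\longrightarrow \DCoh(X)$ is an anti-equivalence, it carries the standard $t$-structure to its opposite, whose connective and coconnective aisles are $\DD(\DCoh X_{<0})$ and $\DD(\DCoh X_{\geq 0})$; as $\DCoh(X)$ is stable under truncation, this dual $t$-structure transports along $\Ind$ to the claimed structure $\ol{\QCsh}(X)$ on $\QCsh(X) = \Ind(\DCoh X)$. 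The key structural point, used throughout, is that $\ol{\QCsh}(X)$ is rigged \emph{precisely} so that $\DD \colon \DCoh(X)^{op} \to \ol{\DCoh}(X)$ becomes \emph{exactly} $t$-exact (with no shift); the finite shifts of \autoref{prop:dcoh-t-dual} only re-enter in (iv), when $\ol{\QCsh}(X)$ is compared with the standard structure.

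The crux is the following duality identity: for $\G \in \DCoh(X)$ and $\H \in \QCsh(X)$ arbitrary,
\[ \RGamma\left(X, \DD\G \shotimes \H\right) \isom \RHom_X(\G, \H). \]
This holds because coherent sheaves are dualizable for $\shotimes$ (whose unit is $\omega_X$), with dual $\DD\G = \HHom(\G,\omega_X)$, so that $\DD\G \shotimes \H \isom \HHom(\G, \H)$ naturally in $\H$, and one then applies $\RGamma$; this is the content of \autoref{prop:groth-duality}. I would prove (i) and the $t$-exactness in (ii) simultaneously. Under the absolute equivalence $\Psi \colon \QCsh(X) \stackrel{\sim}\longrightarrow \Fun^L(\QCsh X, k\mod)$, $\F \mapsto \RGamma(X, \F \shotimes -)$, it suffices to show that $\Psi$ sends $\ol{\QCsh}(X)_{\leq 0}$ into the left $t$-exact functionals and $\ol{\QCsh}(X)_{>0}$ into their complement in the functor $t$-structure of \autoref{app:t-bdd}; since $\Psi$ is an equivalence and these are complementary aisles, this at once proves $\Psi$ is $t$-exact and yields the characterization (i). The first inclusion reduces, since both sides are closed under filtered colimits and $\ol{\QCsh}(X)_{\leq 0} = \Ind(\DD(\DCoh X_{\geq 0}))$, to the coherent case $\F = \DD\G$ with $\G$ connective, where the identity gives $\Psi_\F \isom \RHom_X(\G,-)$; this is left $t$-exact because $\HHom(\G,-)$ preserves coconnective objects for $\G$ connective coherent and $\RGamma$ is left $t$-exact \emph{without} a shift. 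The second inclusion is symmetric.

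For (iii), the equivalence $\ol{\QCsh}(X) \otimes \C \stackrel{\sim}\longrightarrow \Fun^L(\QCsh X, \C)$ is the self-duality of $\QCsh(X)$ established in (ii), base-changed to an arbitrary presentable $\C$. To see it is $t$-exact I would check it on pure tensors $\F \otimes c$: if $\F \in \ol{\QCsh}(X)_{\leq 0}$ and $c \in \C_{\leq 0}$, then for coconnective $\H$ the value $\RGamma(X, \F \shotimes \H) \otimes c$ is coconnective, using (i) for the first factor and the compatibility of the $t$-structure on $\C$ with filtered colimits and with tensoring by $k\mod$ for the second; the connective case is dual. Finally (iv) is immediate from \autoref{prop:dcoh-t-dual}: $\DD$ is $t$-exact up to a finite shift for the standard structure and exactly $t$-exact for $\ol{\phantom{X}}$, so the identity $\QCsh(X) \to \ol{\QCsh}(X)$ differs from a $t$-exact functor by that finite shift in each direction; composing with the $t$-exact equivalence of (iii) then yields the ``in particular'' clause for the usual $t$-structure on $\QCsh(X)\otimes\C$.

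The main obstacle I anticipate is the duality identity and its \emph{naturality}: one must know $\DD\G \shotimes \H \isom \HHom(\G, \H)$ as functors of $\H \in \QCsh(X)$ (not merely for coherent $\H$) and compatibly with $\RGamma$, so that $\RGamma(X, \DD\G \shotimes -) \isom \RHom_X(\G, -)$ is an equivalence of colimit-preserving functors $\QCsh(X) \to k\mod$. This is exactly where the hypotheses $\QCsh(X) = \Ind(\DCoh X)$ and finite cohomological dimension are needed: the former lets one reduce functorial statements to coherent generators, and the latter guarantees (via \autoref{prop:push-CD}) that $\RGamma$ commutes with the filtered colimits used in the reduction. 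Once this naturality is secured, the remainder is bookkeeping with generation by coherent objects and the explicit aisles of the two $t$-structures.
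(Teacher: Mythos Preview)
Your approach is essentially the paper's, resting on the same duality identity $\RGamma(X, \DD\G \shotimes -) \isom \RHom_X(\G,-)$ and the same transport of $t$-structure along $\DD$.  There is, however, one genuine gap in your treatment of (iii).

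You propose to verify $t$-exactness ``on pure tensors,'' checking that $\F \otimes c$ with $\F \in \ol{\QCsh}(X)_{\leq 0}$ and $c \in \C_{\leq 0}$ lands in the left $t$-exact functors.  The connective half of this argument is fine: $(\ol{\QCsh}(X) \otimes \C)_{>0}$ \emph{is} generated under colimits by connective pure tensors, so checking those suffices for right $t$-exactness.  But the coconnective part $(\ol{\QCsh}(X) \otimes \C)_{\leq 0}$ is defined as the \emph{right orthogonal} to the connective part, not as something generated by coconnective pure tensors; so knowing that coconnective pure tensors map into $\Fun^L_{\leq 0}$ does not by itself give left $t$-exactness of the equivalence.

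The paper sidesteps this by a cleaner factorization: the equivalence in (iii) is the composite of $\Ind(\DD) \otimes \id_\C \colon \ol{\QCsh}(X) \otimes \C \stackrel\sim\to \Ind(\DCoh(X)^{op}) \otimes \C$, which is $t$-exact by the very definition of the bar $t$-structure, with the tautological identification $\Ind(\DCoh(X)^{op}) \otimes \C \isom \Fun^{ex}(\DCoh X, \C)$, which is $t$-exact because the $t$-structure on the functor category is \emph{defined} via this identification (\autoref{ssec:t-tens}).  Your argument is easily repaired along these lines, or alternatively by noting that since the map is an equivalence it suffices to check right $t$-exactness of both it and its inverse on connective generators.

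A smaller point in (iv): the paper makes explicit that tensoring with $\C$ only automatically preserves \emph{right} $t$-exactness up to a shift (again because connective parts are generated by pure tensors), and then uses that the identity functor is its own adjoint to deduce left $t$-boundedness from right $t$-boundedness.  You leave this implicit in ``composing with the $t$-exact equivalence of (iii),'' but the step $\QCsh(X)\otimes\C \to \ol{\QCsh}(X)\otimes\C$ needs exactly this remark.
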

\begin{proof}
  \begin{enumerate}
  \item Let us show that the collection of such $\F$ contains $\DD(\DCoh X_{\geq 0})$ and is closed under filtered colimits.  It is closed under filtered colimits since $\RGamma$ and $\shotimes$ are, and the $t$-structure on $k\mod$ is compatible with filtered colimits.  If $\F' \in \DCoh X_{\geq 0}$, there is an equivalence
  \[ \RGamma(X, \DD\F' \shotimes - ) = \RHom_{\QCsh(X)}(\F', -) \] so that this functor is left $t$-exact since $\F' \in \QCsh(X)_{\geq 0}$.

  \item This is a reformulation of (i).

  \item By construction, $\Ind(\DD)$ gives a $t$-exact equivalence $\ol{\QCsh}(X) \isom \Ind(\DCoh(X)^{op})$.  The functor under considering is now the composite of the $t$-exact equivalence 
    \[ \ol{\QCsh}(X) \otimes \C \stackrel{\Ind\DD}\longrightarrow \Ind(\DCoh(X)^{op}) \]
    and the $t$-exact equivalence (c.f., \autoref{ssec:t-tens})
    \[ \Ind(\DCoh(X))^{op} \otimes \C \isom \Fun^{ex}(\DCoh X, \C). \]
  \item The identity functor is its own adjoint, so being left (resp., right) exact up to a shift implies that it is also right (resp., left) exact up to a shift.   The result without an auxillary $\C$ now follows by \autoref{prop:dcoh-t-dual}(ii).  Tensoring with $\C$ preserves the property of being right $t$-exact up to a shift by construction, completing the proof.\qedhere
\end{enumerate}
\end{proof}

We are now ready to complete the proof:
\begin{proof}[Proof of \autoref{thm:fun-regulariz}]\mbox{}
  \begin{enumerate}
      \item
        Note that $\Phish$ is left $t$-exact up to a shift, since pushforwards, $!$-pullbacks, and exterior products over the ground field are all so.  The only one of these which may be non-obvious is $!$-pullback: Using quasi-compactness the claim is local, so it is enough to consider the case of a smooth morphism where $f^! = \Omega_f^{d}[d] \otimes f^*$ is a shift by $d$ of an exact functor; and the case of a finite morphism where $f^! = \RHom_{\O_X}(f_* \O_Y, -)$ is left $t$-exact.\footnote{Alternatively, one can reduce to the case of an open embedding and a proper morphism, and use the finite cohomological dimension of a proper relative algebraic space.}
\item
  First note that for $S = \pt$, the map is an equivalence by \autoref{prop:absolute-fun}.  Let us reduce the general case to this.

We note that the $!$-integral transform and exterior products produce maps of augmented cosimplicial diagrams
  \[ \xymatrix{
\QCsh(X \times_S Y) \ar[d]^{\Phish} \ar[r] & \QCsh(X \times Y) \ar[d]^{\Phish} \ar[r] & \QCsh(X \times S \times Y) \ar[d]^{\Phish} \ar[r] & \cdots \\
\Fun^L_{\QCsh S}(\QCsh X, \QCsh Y)  \ar[r] \ar[d]^= & \Fun^L(\QCsh X, \QCsh Y)  \ar[r] \ar[d]^{\sim}  & \ar[d]^{\sim} \Fun^L(\QCsh(X \times S), \QCsh Y) \ar[r]   & \cdots \\ 
\Fun^L_{\QCsh S}(\QCsh X, \QCsh Y)  \ar[r] & \Fun^L(\QCsh X, \QCsh Y)  \ar[r]         & \Fun^L(\QCsh X \otimes \QCsh S, \QCsh Y) \ar[r]   & \cdots }
\]
The maps on the bottom row are given by the product structure, and the bottom row is a totalization diagram.

The maps in the top row are given by pushforward along diagonal maps and graphs.  In particular, they are $t$-exact.  Thus, the claim will be proven if we can show that
\[
\QCsh(X \times_S Y)_{<0} \longrightarrow \Tot\left\{ \QCsh(X \times S^{\times \bullet} \times Y)_{<0} \right\}
\]
is fully faithful.  We will in fact show that it is an equivalence.  Note that the natural functor $\QCsh(-) \to \QC(-)$ is $t$-exact, commutes with pushforward, and induces an equivalence $\QCsh(-)_{<0} \to \QC(-)_{<0}$.  Thus the result follows by examining the diagram
  \[ \xymatrix{
\QCsh(X \times_S Y) \ar[r] \ar[d] & \QCsh(X \times Y) \ar[d] \ar[r] & \QCsh(X \times S \times Y) \ar[d] \ar[r] & \cdots \\
\QC(X \times_S Y)  \ar[r] & \QC(X \times Y) \ar[r] & \QC(X \times S \times Y) \ar[r] & \cdots }
\]
where the bottom row is a $t$-exact totalization diagram by the tensor product theorem for $\QC$.
\item We first handle the case of $S = \pt$.  In this case, 
  \[ \Fun^L(\QCsh X, \QCsh Y) = \Fun^{ex}(\DCoh X, \QCsh Y) = \Ind(\DCoh X^{op}) \otimes \QCsh Y \] carries a $t$-structure, whose co-connective objects are precisely the left $t$-exact functors.  The result then follows from \autoref{cor:dcoh-t-dual}.  Let $N$ be the constant for $X$ and $Y$ for $S = \pt$.

  We now reduce the general case to this: Suppose that 
  \[ F \in \Fun^L_{\QCsh(S)}(\QCsh(X), \QCsh(Y)) \mapsto F_\bullet \in \Tot\left\{ \Fun^L(\QCsh(X) \times \QCsh(S)^{\otimes \bullet}, \QCsh(Y)) \right\} \] 
  is such that $F_0\colon \QCsh(X) \to \QCsh(Y)$ is left $t$-exact.  It follows that each $F_\bullet$ is left $t$-exact up to a finite shift as well, since it is equivalent to the composite
  \[ \QCsh(X) \otimes \QCsh(S)^{\otimes \bullet} \stackrel{\boxtimes^{-1}}\longrightarrow \QCsh(X \times S^{\bullet}) \longrightarrow \QCsh(X) \stackrel{F_0}\longrightarrow \QCsh(Y) \]
  of the inverse of the exterior product equivalence (which is left $t$-exact, since the exterior product over $k$ is right $t$-exact), the pushforward map (which is left $t$-exact), and $F_0$.

  By (ii) and (iii) in the case of a point, applied to $F_\bullet$, there exists an essentially unique
  \[ \F_\bullet \in \Tot \left\{ \QCsh(X \times S^{\times \bullet} \times Y)_{<\infty} \right\} \]
  such that $\Phish(\F_\bullet) \isom F_\bullet$.  Since $F_0$ is left exact, we have that $\F_0 \in \QCsh(X \times Y)_{<N}$.  Since pushforward is left $t$-exact, it follows that $\F_\bullet \in \QCsh(X \times S^{\times \bullet} \times Y)_{<N}$ for all $\bullet$.  It follows from the proof of (ii) that $\F_\bullet$ is the image of some $\F \in \QCsh(X \times_S Y)_{<N}$
  \qedhere
\end{enumerate}
\end{proof}

\begin{remark} Let 
  \[ \theta \colon \Fun^L_{\QCsh(S)}(\QCsh(X), \QCsh(Y)) \longrightarrow \Fun^L(\QCsh(X), \QCsh(Y)) = \Fun^{ex}(\DCoh X, \QCsh(Y)) \] be the natural conservative functor of forgetting $\QCsh(S)$-linearity.  We have seen that the right hand side carries a $t$-structure with $(-)_{\leq 0}$ consisting of the left $t$-exact functors.  Let the superscript ``$t\leq 0$'' on a functor category refer to the full subcategory of left $t$-exact functors.  We can ask: 

  \begin{enumerate}
      \item
  When is
  \[ \Fun^{L, t_{<0}}_{\QCsh(S)}(\QCsh(X), \QCsh(Y)) \subset \Fun^L_{\QCsh(S)}(\QCsh(X), \QCsh(Y)) \]
  a localization, so that this is in fact a $t$-structure?
\item What is the left completion / regularization of this $t$-structure?
  \end{enumerate}

  Suppose (i) held.  Then: the Theorem would imply that the $!$-integral transform induces an equivalence on regularization and completion, answering (ii).

  Let us consider (i): Note that $\theta$ is conservative and preserves all colimits.  If it commuted with filtered limits we would be done, and moreover the $t$-structure would be compatible with filtered colimits since the $t$-structure on the right hand side is and $\theta$ preserves colimits.  Under the identifications above, it suffices that the pushforward along the graph $\Gamma \colon X \times Y \to X \times S \times Y$
  \[ \Gamma_* \colon \QCsh(X \times Y) \to \QCsh(X \times S \times Y) \]
  (and all similar pushforwards along base-changes of graphs of $X \to S$, $Y \to S$, and diagonals of $S$) 
  be limit-preserving.  This is the case if and only if $f$ is of finite Tor dimension: in that case, $f^*$ preserves colimits and compact objects, and $f_*$ is its right adjoint.

  So, we see that it is necessary and sufficient to assume for (i) that $S$ be smooth.  In this case, $\QCsh S \isom \QC S$ is rigid and we have:
\end{remark}

\begin{corollary}\label{cor:dualiz-qcsh-with-t} Suppose that $S$ is smooth, and that $X$ and $Y$ are in addition of finite Tor dimension over $S$.  Then, there is a $t$-exact equivalence
  \[ \Fun^L_{\QCsh(S)}(\QCsh(X), \QCsh(Y)) \isom \ol{\QCsh}(X) \otimes_{\QCsh(S)} \QCsh(Y) \]
  where the left hand side is equipped with the $t$-structure of the previous Remark.

  The $!$-integral transform is left $t$-exact up to a finite shift, and realizes $\QCsh(X \times_S Y)$ as a left $t$-regularization of the functor category.
\end{corollary}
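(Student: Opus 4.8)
The plan is to exploit the rigidity of $\QCsh(S)\isom\QC(S)$, which holds because $S$ is smooth, in order to rewrite the functor category as a relative tensor product, and then to read off the regularization statement from the boundedness control provided by \autoref{thm:fun-regulariz}. Recall from the Remark preceding this Corollary that our hypotheses (in particular that $X$ and $Y$ are of finite Tor dimension over $S$) guarantee that the left $t$-exact functors form the coconnective part of a $t$-structure on $\Fun^L_{\QCsh(S)}(\QCsh(X),\QCsh(Y))$, and that the forgetful functor $\theta$ to $\Fun^L(\QCsh(X),\QCsh(Y))$ is conservative, colimit-preserving, and — this is exactly where smoothness of $S$ is spent — filtered-limit-preserving. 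This is the $t$-structure meant on the left-hand side.

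First I would establish the tensor identification. Since $\QCsh(S)$ is rigid, the calculus of module categories over a rigid base supplies a canonical equivalence
\[ \Fun^L_{\QCsh(S)}(\QCsh(X),\QCsh(Y)) \isom \QCsh(X)^{\vee}\otimes_{\QCsh(S)}\QCsh(Y), \]
where $\QCsh(X)^{\vee}=\Fun^L_{\QCsh(S)}(\QCsh(X),\QCsh(S))$ is the $\QCsh(S)$-linear dual, once $\QCsh(X)$ is known to be dualizable over $\QCsh(S)$. I would deduce this dualizability by relativizing \autoref{prop:absolute-fun}: the cosimplicial Čech presentation over $S$ used in the proof of \autoref{thm:fun-regulariz}(ii) exhibits both the functor category and the relative tensor product as (co)bar totalizations of their absolute analogues, where dualizability is the content of \autoref{prop:absolute-fun}, and rigidity of the base matches the bar and cobar constructions. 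It then remains to identify the dual, with its induced $t$-structure, as $\ol{\QCsh}(X)$. In the absolute case ($S=\pt$) this is precisely \autoref{cor:dcoh-t-dual}: part (ii) identifies $\ol{\QCsh}(X)$ with $\Fun^L(\QCsh(X),k\mod)=\QCsh(X)^{\vee}$, and part (iii) shows the resulting equivalence $\ol{\QCsh}(X)\otimes\C\isom\Fun^L(\QCsh(X),\C)$ is $t$-exact.

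To propagate this across the relative tensor product I would use that, because $S$ is smooth, $\QCsh(S)=\QC(S)$ and $-\otimes_{\QCsh(S)}-$ is left $t$-exact (the role of the field/smoothness hypothesis flagged in the section's standing assumptions); moreover the relative Grothendieck--Serre duality identification differs from the absolute one only by a shift and by tensoring with a line bundle pulled back from $S$, both $t$-exact and invertible, so the relatively-computed dual agrees with $\ol{\QCsh}(X)$. Feeding the strict $t$-exactness of \autoref{cor:dcoh-t-dual}(iii) through the cosimplicial descent — whose structure maps are pushforwards along diagonals and graphs, hence $t$-exact — then yields the asserted $t$-exact equivalence. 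For the final statement, $\Phish$ is left $t$-exact up to a finite shift by \autoref{thm:fun-regulariz}(i) together with the $t$-structure just identified; the source $\QCsh(X\times_S Y)=\Ind(\DCoh(X\times_S Y))$ is regular by \autoref{prop:qcsh-t-reg}; and by \autoref{cor:shriek-bounded} the map $\Phish$ is an equivalence on bounded-above, hence on coherent, objects. Since a left-$t$-exact-up-to-shift functor out of a regular category which is an equivalence on coherent objects exhibits its source as $\R(\C)=\Ind\Coh(\C)$ of the target (the appendix's characterization of regularization in terms of coherent objects), this gives the regularization claim.

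I expect the main obstacle to be the $t$-exactness in the relative setting: verifying that the tensor $t$-structure on $\ol{\QCsh}(X)\otimes_{\QCsh(S)}\QCsh(Y)$ coincides on the nose with the functor-category $t$-structure of the preceding Remark, and tracking the shifts in relative Grothendieck--Serre duality carefully enough that the \emph{strict} $t$-exactness of \autoref{cor:dcoh-t-dual}(iii) survives descent over $S$. The rigidity and smoothness of $S$ are exactly what force the relevant pushforwards to be $t$-exact and the base-tensor to be left $t$-exact, so this is where those two hypotheses must be consumed.
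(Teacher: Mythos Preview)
The paper offers no explicit proof of this corollary; it is stated directly after the Remark establishing the $t$-structure when $S$ is smooth, and is evidently meant as an immediate consequence of the rigidity of $\QCsh(S)\isom\QC(S)$ combined with \autoref{thm:fun-regulariz} and \autoref{cor:dcoh-t-dual}. Your outline is precisely this implicit argument, written out: rigidity supplies the tensor identification, the absolute $t$-exactness of \autoref{cor:dcoh-t-dual}(iii) is transported along the cosimplicial descent of the proof of \autoref{thm:fun-regulariz}(ii), and the regularization claim follows from \autoref{cor:shriek-bounded} plus the regularity of $\QCsh(X\times_S Y)$ (\autoref{prop:qcsh-t-reg}).

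The obstacle you flag is the genuine one. A cleaner route than tracking shifts through relative Serre duality is to bypass the comparison of dualizing complexes and instead verify directly that the two $t$-structures have the same $(-)_{\le 0}$: in each case, co-connectivity can be tested after applying the conservative, $t$-exact, filtered-limit-preserving forgetful functor $\theta$ to the absolute setting, where \autoref{cor:dcoh-t-dual}(iii) applies on the nose. For the regularization step, note that \autoref{cor:shriek-bounded} gives an equivalence on each $(-)_{<n}$ (not merely on $(-)_{<\infty}$ as a whole), which is what you need to conclude that $\Phish$ restricts to an equivalence $\DCoh(X\times_S Y)\isom\Coh$ of the functor category and hence, upon applying $\Ind$, realizes the source as $\R$ of the target.
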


\section{Functors out of $\DCoh$}\label{sect: DCoh}
\begin{na} The standing assumptions for this section are the same as the previous section, except that in addition $S$ will be assumed \emph{smooth}.
\end{na}

We will now derive consequences, for small categories, of the results of the previous subsection.  Our main result will be:

\begin{theorem}\label{prop:fun-dcoh} Suppose that $S$ is a quasi-compact, geometric, and smooth $k$-stack over a perfect field $k$.  Suppose that that $\pi_X\colon X \to S$ is a quasi-compact finitely-presented separated $S$-algebraic space, and that $Y$ is a finitely-presented $S$-stack.  Then $!$-integral transforms give an equivalence
  \[ \Phish \colon 
  \DCoh_{prop/Y}(X \times_S Y/_! X) \stackrel\sim\longrightarrow \Fun^{ex}_{\Perf S}(\DCoh X, \DCoh Y)  \]
  where $\DCoh_{prop/Y}(X \times_S Y/_! X)  \subset \QCsh(X \times_S Y)_{<\infty}$ denotes
   the full subcategory consisting of those $\F \in \QCsh(X \times_S Y)$ satisfying:
  \begin{enumerate}
      \item $H_i \F = 0$ for $i \vinograd 0$;
      \item Each $H_i \F$ is coherent, with support proper over $Y$;
      \item $\F$ has finite-$\shotimes$ dimension over $X$: i.e., locally on $X \times_S Y$, the functor $\F \shotimes p_1^!(-)$ is right $t$-bounded up to finite shift;
  \end{enumerate}

  Furthermore: for $\K$ in this category, it is the case that $\DD \K$ is almost perfect and of finite Tor dimension over $X$, and there is a natural equivalence $\Phish_{K} = \DD \circ \Phi_{\DD \K} \circ \DD$.
\end{theorem}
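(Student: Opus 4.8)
The plan is to route everything through the large categories of \autoref{thm:fun-regulariz}, reducing the small-category statement to an \emph{automatic boundedness} phenomenon. First, since $S$ is smooth, hence regular, we have $\QCsh S = \QC S$ and $\Perf S = \DCoh S$; consequently an exact $\Perf S$-linear functor $F\colon \DCoh X \to \DCoh Y$ Ind-extends to a colimit-preserving $\QCsh S$-linear functor $\Ind F \colon \QCsh X \to \QCsh Y$. Because the compact objects of $\QCsh X$ and $\QCsh Y$ are precisely $\DCoh X$ and $\DCoh Y$ (our unspoken assumption $\QCsh(-) = \Ind\DCoh(-)$), the Ind-extension is fully faithful and identifies $\Fun^{ex}_{\Perf S}(\DCoh X, \DCoh Y)$ with the full subcategory of $\Fun^L_{\QCsh S}(\QCsh X, \QCsh Y)$ of functors \emph{preserving compact objects}. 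The proof then splits into two tasks: (a) show every compact-preserving $F$ is left $t$-exact up to a finite shift, so that it falls under \autoref{cor:shriek-bounded}; and (b) characterize, among the kernels $\K \in \QCsh(X \times_S Y)_{<\infty}$ so produced, exactly those for which $\Phish_\K$ preserves compacts.

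The main obstacle is task (a): the \emph{uniform} upper bound. The content is that $F(\DCoh X_{\leq 0}) \subset \QCsh Y_{\leq N}$ for a single $N$, even though boundedness of each individual value is immediate. I would reduce to $S = \Spec A$ affine (with $A$ regular) and invoke \autoref{lem:very good} to obtain a single connective perfect, hence compact, generator $G \in \DCoh X$, presenting $\QCsh X \simeq \Mod_{\End_{\QCsh X}(G)}$. Under this presentation $F$ is reconstructed from the bimodule $F(G) \in \DCoh Y$, which is bounded; meanwhile, for $M \in \DCoh X_{\leq 0}$ the module $\RHom_{\QCsh X}(G, M)$ is coconnective up to a shift that depends only on the Tor-amplitude of $G$ and the finite cohomological dimension of $X$ over $S$ (again \autoref{lem:very good}), hence \emph{uniformly} in $M$. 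Combining the bounded bimodule $F(G)$ with this uniform coconnectivity yields the desired single $N$. Since the standard $t$-structure on $\QCsh$ is compatible with filtered colimits, one checks $\QCsh X_{\leq 0} = \Ind(\DCoh X_{\leq 0})$, and as $F$ preserves filtered colimits the bound propagates to all of $\QCsh X_{\leq 0}$, giving left $t$-exactness up to the shift $N$. I expect the bookkeeping in the reconstruction step (controlling $F(G)\otimes_{\End G}(-)$ uniformly) to be the genuinely delicate point.

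Granting (a), \autoref{cor:shriek-bounded} produces a \emph{unique} $\K \in \QCsh(X \times_S Y)_{<\infty}$ with $F = \Phish_\K$; in particular $\K$ is bounded above, which is condition (i). For conditions (ii)--(iii) I would pass to the $*$-transform via the duality relation $\Phish_\K = \DD \circ \Phi_{\DD\K} \circ \DD$, using \autoref{lem:groth-dual-functors} together with the $t$-exactness up to a shift of Grothendieck duality (\autoref{prop:dcoh-t-dual}). As the dualities on $X$ and $Y$ are autoequivalences of $\DCoh X$ and $\DCoh Y$, the functor $F$ preserves compacts if and only if $\Phi_{\DD\K}(\G) = p_{2*}(p_1^*\G \otimes \DD\K)$ lies in $\DCoh Y$ for \emph{all} $\G \in \DCoh X$. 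Arguing as in the completion of the proof of \autoref{thm: dcoh shriek} --- reducing to $Y$ affine Noetherian so that $p_1$ is affine and the pullbacks $p_1^*\G$ generate --- and applying \autoref{prop: push dcoh} with $Z = X \times_S Y$ over $S = Y$, this holds precisely when $\DD\K$ is coherent, has support proper over $Y$, and is of finite Tor dimension over $X$; the last condition is exactly the sharpening needed to pass from a $\Perf X$-source to a $\DCoh X$-source, matching the general slogan $\DCoh(X/X) = \Perf X$.

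Transporting these three properties of $\DD\K$ back through $\DD$ --- using \autoref{lem:groth-dual-functors}(ii) to exchange finite Tor dimension over $X$ with finite $\shotimes$-dimension over $X$, and the $t$-exactness up to a shift of $\DD$ to match coherence and proper support --- recovers exactly conditions (i)--(iii) for $\K$, and in the same breath proves the ``furthermore'' assertion that $\DD\K$ is almost perfect and of finite Tor dimension over $X$ with $\Phish_\K = \DD \circ \Phi_{\DD\K} \circ \DD$. Finally, the equivalence follows by combining fully-faithfulness (inherited from the Ind-extension and from \autoref{cor:shriek-bounded}(ii)) with the essential-surjectivity statement just established, namely that the kernels realizing compact-preserving functors are precisely those in $\DCoh_{prop/Y}(X \times_S Y/_! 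X)$.
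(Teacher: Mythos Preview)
Your overall two-step strategy --- prove automatic $t$-boundedness, then invoke \autoref{cor:shriek-bounded} and characterize the resulting kernels --- is exactly the paper's. The Ind-extension setup and the passage through \autoref{cor:shriek-bounded} are fine (this is \autoref{cor:fun-dcoh-0}). But both steps (a) and (b), as you've written them, have gaps.

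\textbf{Step (a).} Your invocation of \autoref{lem:very good} gives a single \emph{perfect} complex generating $\QC(X)$. Such a $G$ is compact in $\QCsh(X)$, but it does \emph{not} generate $\QCsh(X)$ unless $X$ is regular: when $X$ is singular, $\Perf X \subsetneq \DCoh X$, and the inclusion $\Ind\Perf X \to \Ind\DCoh X$ is not essentially surjective. So the Morita presentation $\QCsh X \simeq \Mod_{\End G}$ fails, and the bimodule reconstruction collapses. If you try to patch this by using a \emph{coherent} generator $\G \in \Coh(X)^\heart$ (this is the paper's \autoref{lem:very good2}, proved by Noetherian induction and generic smoothness over the perfect field --- this is where ``perfect'' is used), your uniform bound on $\RHom_{\QCsh X}(\G,M)$ via ``Tor-amplitude of $\G$'' no longer makes sense: a coherent $\G$ need not have finite Tor amplitude. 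The paper's argument in \autoref{lem:dcoh-bounded} is genuinely different: rather than control $\RHom(\G,-)$ uniformly, it observes that $\id_{\QCsh X} = \Phish_{\Delta_*\omega_X}$, that $\Delta_*\omega_X \in \DCoh(X^2)$, and that $\DD\G \boxtimes \G$ generates $\DCoh(X^2)$ (via \autoref{prop:absolute-fun}); hence $\id$ is built in \emph{finitely many steps} from the manifestly left $t$-exact functor $\RHom(\G,-)\otimes_k F(\G)$, and boundedness transfers.

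\textbf{Step (b).} Your route through $\DD$ and $\Phi$ is the content of the Remark and \autoref{cor:functors-dcoh-reformulation} \emph{after} the theorem, but you cannot appeal to \autoref{prop: push dcoh} as written: $\K$ is only bounded above, so $\DD\K$ is only \emph{almost perfect} (bounded below), not in $\DCoh$; \autoref{prop: push dcoh} characterizes bounded coherent kernels and says nothing about the finite-Tor-dimension-over-$X$ refinement you need. The paper instead works directly on the $!$-side: \autoref{lem:tensor-product:DDAPerf} shows $\DCohm(X\times_S Y)$ is generated, under finite limits, by $\DCohm(X\times_S Y)_{<0}$ together with $p_1^!(\DCohm X)$; this lets one test against all of $\DCohm(X\times_S Y)$, and \autoref{prop: push pscoh} (a $!$-dual of \autoref{prop: push aperf}) then pins down conditions (i)--(ii). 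Condition (iii) is extracted separately from right $t$-boundedness of $\RGamma(\K \shotimes p_1^!(-))$ via \autoref{lem:tor-dimension} (this, not \autoref{lem:groth-dual-functors}(ii), is what exchanges $\shotimes$- and $*$-Tor dimension).
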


Before giving the proof, we give a few remarks and elaborate the last sentence into a reformulation:
\begin{remark}
  The last sentence is asserting a \emph{different} relationship between $\Phish$ and $\Phi$ than that appearing in \autoref{cor:compare-phi-phish}.  In particular, the objects of $\QC(X \times_S Y)$ appearing here are not generally bounded above.  Let us describe them more explicitly:


Let 
$\DCoh_{prop/Y}(X\times_S Y/X)$ 
consist of the almost perfect complexes with support proper over $Y$ and which have finite ($*$--)Tor amplitude over $X$.  Then, there is a commutative diagram of equivalences
  \[ \xymatrix{
 \ar[d]^{\DD}|{\sim} \DCoh_{prop/Y}(X \times_S Y/_! X) \ar[r]^{\Phish}|{\sim} &  
 \Fun^{ex}_{\Perf S}(\DCoh X, \DCoh Y) \ar[d]^{\DD \circ (-) \circ \DD}|{\sim} \\
 \DCoh_{prop/Y}(X \times_S Y/X)^{op}  \ar[r]^{\Phi}|{\sim} & 
\Fun^{ex}_{\Perf S}(\DCoh X, \DCoh Y)^{op} 
  } \]  
  Note that the last arrow is an equivalence by unbounded Grothendieck duality \autoref{prop:groth-duality}: It shows that $\DCohp$ and $\DCohm$ are interchanged, and a separate boundedness argument shows that it preserves the property of having properly supported homology sheaves.\footnote{This is automatic for $\DCoh$, where compactly supported homology sheaves = compact support as a complex, since duality is compatible with open restriction and thus with support of a complex.  Now the property passes to left and right completions as before, noting that $H_i(\DD \F)$ depends on $\F$ only through a bounded truncation.}  It remains to check that ``finite $\shotimes$ dimension'' is interchanged with ``finite Tor dimension.''  Note that both sides are defined to be local on $X$ and on $X \times_S Y$.   This then follow from the following Lemma.
\end{remark}

\begin{lemma}\label{lem:tor-dimension} Suppose $f \colon \Spec A \to \Spec B$ is a map of connective Noetherian dg rings.  For $\F \in \DCohpm(A)$, the following are equivalent:
  \begin{enumerate}
      \item $\F \shotimes f^! -\colon \QCsh(B) \to \QCsh(A)$ is left and right $t$-bounded up to shift;
      \item $\F$ is homologically bounded above, and the functor $R (\F \shotimes f^! - ) \colon \QCsh(B) \to \QC(A)$ is right $t$-bounded up to shift;
      \item $\F$ is homologically bounded above, and the functor $R (\F \shotimes f^! -) \colon \DCoh(B) \to \DCohm(A)$ is right $t$-bounded up to shift;
      \item $\F$ is homologically bounded above, and the functor $R (\F \shotimes f^! (\DD -)) \colon (\DCoh(B))^{op} \to \DCohm(A)$ is right $t$-bounded up to shift;
      \item $\DD\F$ is homologically bounded below, and the functor $\DD\F \otimes f^*(-) \colon \DCoh(B) \to \DCohp(A)$ is left $t$-bounded up to shift;
      \item The functor $\DD\F \otimes f^* - \colon \QC(B) \to \QC(A)$ is left and right $t$-bounded up to shift.
  \end{enumerate}

  Note that (i) in our notation reads $\F \in \DCoh(A/_! B)$, while (vi) in our notation reads 
  $\DD\F \in \DCoh(A/B)$.
\end{lemma}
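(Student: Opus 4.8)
The plan is to prove the six conditions equivalent by running a single cycle of implications
\[
\text{(i)} \Rightarrow \text{(ii)} \Rightarrow \text{(iii)} \Leftrightarrow \text{(iv)} \Leftrightarrow \text{(v)} \Rightarrow \text{(vi)} \Rightarrow \text{(i)},
\]
with all of the duality content concentrated in one ``master identity.'' Using \autoref{lem:groth-dual-functors}(ii) with $f$ in place of $p_1$ (that compatibility only involves the $*$- and $!$-pullbacks of a morphism), together with $\DD\DD \isom \id$, I would first record for $\G \in \DCoh(B)$ the natural equivalence
\[
\F \shotimes f^!(\DD\G) \isom \DD\bigl(\DD\F \otimes f^*\G\bigr),
\]
obtained by setting $\K = \DD\F$ in that formula. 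Since $\DD\F \otimes f^*\G \in \DCohp(A)$, its dual lies in $\DCohm(A) \subset \QC(A)_{<\infty}$, where $R$ is an equivalence; so applying $R$ turns the master identity into an identity relating the functor of (iv) and the functor of (v) through the contravariant equivalence $\DD$.

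The implications along the first half of the chain are the routine ones. For \text{(i)}$\Rightarrow$\text{(ii)}, I would apply $R$ (which preserves right $t$-boundedness up to shift and lands in $\QC(A)$) and evaluate the right $t$-bounded functor $\F \shotimes f^!(-)$ on a single coconnective test object, which forces $\F$ bounded above because $f^!$ is left $t$-exact up to a shift. For \text{(ii)}$\Rightarrow$\text{(iii)} I would restrict the domain to $\DCoh(B)$ and check the values land in $\DCohm(A)$: for $\H \in \DCoh(B)$ both $\F$ and $f^!\H$ have coherent homology, so $\F \shotimes f^!\H$ does too, and right $t$-boundedness together with $\F$ bounded above makes $R(\F \shotimes f^!\H)$ bounded above. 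The equivalence \text{(iii)}$\Leftrightarrow$\text{(iv)} is just reparametrization of the source by the equivalence $\DD\colon \DCoh(B)^{op} \isom \DCoh(B)$ of \autoref{prop:dcoh-t-dual}, which is $t$-bounded in both directions up to finite shift and hence preserves ``right $t$-bounded up to shift.''

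The crux is \text{(iv)}$\Leftrightarrow$\text{(v)}: feeding in the master identity, right $t$-boundedness up to shift of $\G \mapsto R(\F \shotimes f^!(\DD\G))$ becomes, through the $t$-bounded (both ways, up to finite shift) duality $\DD$ of \autoref{prop:dcoh-t-dual}, exactly left $t$-boundedness up to shift of $\G \mapsto \DD\F \otimes f^*\G$; and ``$\F$ bounded above'' matches ``$\DD\F$ bounded below'' under the interchange $\DD\colon \DCohm(A) \leftrightarrow \DCohp(A)$. For \text{(v)}$\Rightarrow$\text{(vi)} I would extend from $\DCoh(B)$ to $\QC(B)$: right $t$-boundedness up to shift on $\QC(B)$ is immediate since $\DD\F$ is bounded below and $f^*$, $\otimes$ are right $t$-exact, while the remaining left $t$-boundedness up to shift on all of $\QC(B)$ is a genuine finite-Tor-amplitude statement detected on coherent (indeed heart) objects, propagating a uniform upper shift bound through filtered colimits. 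Finally \text{(vi)}$\Rightarrow$\text{(i)} runs the master identity backwards over $\QCsh(B) = \Ind\DCoh(B)$ and $\QC(B)$, transporting the two-sided boundedness of $\DD\F \otimes f^*(-)$ through the $\Ind$-extended duality to two-sided boundedness of $\F \shotimes f^!(-)$.

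I expect the main obstacle to be the crux passages \text{(iv)}$\Leftrightarrow$\text{(v)} and \text{(vi)}$\Rightarrow$\text{(i)} in the required generality. The duality formula of \autoref{lem:groth-dual-functors}(ii) is stated for $\DCohp$ inputs, whereas here $\F \in \DCohpm(A)$ and the objects $\DD\G$, $f^!(\DD\G)$ need not be bounded on either side, so the compatibility of $\shotimes$, $f^!$, $f^*$, and $\DD$ (via \autoref{prop:groth-duality}) must be extended to this two-sided-almost-perfect setting, and its interaction with the regularization functor $R$---an equivalence only on bounded-above objects---tracked carefully at each use. The second delicate point is the \text{(v)}$\Rightarrow$\text{(vi)} extension, namely upgrading ``left $t$-bounded on $\DCoh(B)$'' to honest finite Tor amplitude over $B$; ensuring that the uniform upper bound survives the passage to filtered colimits is the technical heart there.
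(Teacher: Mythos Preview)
Your approach is essentially the same as the paper's: both hinge on the Grothendieck-duality identity $\F \shotimes f^!(\DD\G) \isom \DD(\DD\F \otimes f^*\G)$ (your ``master identity,'' from \autoref{lem:groth-dual-functors}(ii)) to pass between the $\shotimes f^!$ and $\otimes f^*$ formulations via (iii)$\Leftrightarrow$(iv)$\Leftrightarrow$(v), and both handle (v)$\Rightarrow$(vi) by extending from $\DCoh(B)$ to $\QC(B)$ using Noetherianity, filtered colimits, and stabilization of homology under truncations.

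The one structural difference is that the paper argues by pairwise equivalences (i)$\Leftrightarrow$(ii)$\Leftrightarrow\cdots\Leftrightarrow$(vi), while you close a cycle through (vi)$\Rightarrow$(i). The paper's direct (ii)$\Rightarrow$(i) uses only that $R\colon \QCsh(A)_{<n} \to \QC(A)_{<n}$ is an equivalence for each $n$ (so right $t$-boundedness transfers back once left $t$-boundedness is known), and (iii)$\Rightarrow$(ii) uses only $\QCsh(B) = \Ind\DCoh(B)$ with the $t$-structure compatible with filtered colimits. This entirely sidesteps the need to ``Ind-extend'' the contravariant duality identity that you correctly flag as the delicate point of your (vi)$\Rightarrow$(i); that step is doable (reduce to $\DCoh(B)$ via compact generation and filtered colimits, then apply the identity and \autoref{prop:dcoh-t-dual}), but it is unnecessary extra work. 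One small slip: in your (i)$\Rightarrow$(ii), the boundedness-above of $\F$ comes from the \emph{left} $t$-boundedness in (i) evaluated at $\omega_B$, since $f^!\omega_B = \omega_A$ is the $\shotimes$-unit and hence $\F \shotimes f^!\omega_B = \F$; your reference to the ``right $t$-bounded functor'' there appears to be a typo.
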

\begin{proof}
  Note that (i) implies that $\F$ is homologically bounded above, by taking $- = \omega_B$ and using left $t$-boundedness up to a shift.  Then, since $\shotimes$ is left $t$-bounded up to a shift, we see that (i) is equivalent to (ii) since $R \colon \QCsh(A)_{<n} \isom \QC(A)_{<n}$ is an equivalence for each $n$.

  Point (iii) is well-defined since $\shotimes$ and $f^!$ preserve the property of being in $\DCohm$ by \autoref{lem:groth-dual-functors} and the tensor product on $\DCohp$.  Then, (ii) is equivalent to (iii) since $\QCsh(B) = \Ind\DCoh(B)$ and the $t$-structure is compatible with filtered colimits.

  Continuing: (iii) is equivalent to (iv) by Grothendieck duality, including the boundedness assertion of \autoref{prop:dcoh-t-dual}. Next, (iv) is equivalent to (v) by another application of \autoref{lem:groth-dual-functors} and unbounded Grothendieck duality.  
  
  Finally, (vi) implies (v) since one sees that $\DD\F$ is homologically bounded below by taking $- = A$.  Conversely, (v) implies that the functor is $t$-bounded above on anything that is obtained as a filtered colimit of objects of $\DCoh(B)$ -- which includes all of $\QC(B)^{\heart}$ since $B$ is Noetherian, and all of $(\QC(B))_{<\infty}$ since $\QC B$ is right complete;  since $\DD\F$ is bounded below, the homology groups of 
  \[ \DD\F \otimes f^*(\tau_{\leq k}(-)) \] stabilize as $k \to \infty$ so that (v) is equivalent to (vi).
\end{proof}

The following is thus a reformulation of the Theorem, making no reference to shrieks.
\begin{corollary}\label{cor:functors-dcoh-reformulation}
Suppose that $\pi\colon X \to S$ are as in \autoref{prop:fun-dcoh}.  Then, star integral transforms give an equivalence
  \[ \DCoh_{prop/Y}(X \times_S Y/X) \stackrel\Phi\longrightarrow \Fun^{ex}_{\Perf S}(\DCoh X, \DCoh Y) \]
  where $\DCoh_{prop/Y}(X \times_S Y/X) \subset \QC(X \times_S Y)$ is as in the previous remark.
\end{corollary}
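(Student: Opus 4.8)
The plan is to obtain this corollary as the Grothendieck dual of \autoref{prop:fun-dcoh}, so that no genuinely new analytic input is needed: it is precisely the bottom row of the commutative diagram recorded in the preceding remark, read off after discarding the opposite-category decorations. The mechanism is the identity $\Phish_{\K} = \DD \circ \Phi_{\DD\K} \circ \DD$ furnished by \autoref{prop:fun-dcoh}, which I would use to transport the established equivalence $\Phish$ across the duality involution $\DD$ and recognize the result as the star transform $\Phi$.

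First I would pin down the two vertical equivalences of that square. On the target side, conjugation $F \mapsto \DD \circ F \circ \DD$ is an equivalence onto $\Fun^{ex}_{\Perf S}(\DCoh X, \DCoh Y)^{op}$ because $\DD$ is an involutive anti-equivalence on each of $\DCoh X$ and $\DCoh Y$ (\autoref{prop:dcoh-t-dual}), and it is visibly $\Perf S$-linear. On the source side, unbounded Grothendieck duality (\autoref{prop:groth-duality}) gives an equivalence $\DD$ on all of $\DCohpm(X \times_S Y)$; the real task is to verify that it carries the subcategory $\DCoh_{prop/Y}(X \times_S Y/_! X)$ onto $\DCoh_{prop/Y}(X \times_S Y/X)^{op}$. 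That identification splits into three matchings: duality interchanges bounded-above and bounded-below almost perfect complexes, i.e. $\DCohm \leftrightarrow \DCohp$, handling conditions (i)--(ii); the proper-support condition is preserved because it is detected on a bounded truncation and $\DD$ is compatible with open restriction (the argument sketched in the remark's footnote); and ``finite $\shotimes$-dimension over $X$'' is exchanged with ``finite $*$-Tor amplitude over $X$'', which is exactly the equivalence of conditions (i) and (vi) in \autoref{lem:tor-dimension}, applied locally on $X$ and on $X \times_S Y$.

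With both vertical arrows and the top arrow $\Phish$ equivalences, and the square commuting by the displayed identity, two-out-of-three forces the bottom arrow $\Phi$ to be an equivalence $\DCoh_{prop/Y}(X \times_S Y/X)^{op} \to \Fun^{ex}_{\Perf S}(\DCoh X, \DCoh Y)^{op}$; passing to opposite categories then yields the stated covariant equivalence, since $\Phi_{\K}$ is manifestly covariant in $\K$. Essentially every step is formal manipulation of equivalences, so I expect the only point carrying real content to be the source-category identification, and within it the Tor-dimension matching of \autoref{lem:tor-dimension} -- the interchange of the $\shotimes$-finiteness and $*$-Tor finiteness conditions under $\DD$ is where the substance of the reformulation lives.
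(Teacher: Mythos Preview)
Your proposal is correct and matches the paper's own argument essentially line for line: the corollary is deduced from \autoref{prop:fun-dcoh} via the commutative square in the preceding remark, with the left vertical equivalence justified by unbounded Grothendieck duality (interchange of $\DCohm$ and $\DCohp$, preservation of proper support, and the $\shotimes$/Tor-dimension swap from \autoref{lem:tor-dimension}) and the right vertical by conjugation with $\DD$. There is nothing to add.
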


From this, we can deduce a few more Corollaries:
\begin{corollary} Restrict the above to the case of $Y = S$.  Then, we obtain that 
  \[ \Perf_{prop/S} X \stackrel{\sim}\longrightarrow \Fun^{ex}_{\Perf S}(\DCoh X, \DCoh S) \]
  via $\P \mapsto \Phi_{\P}$.
\end{corollary}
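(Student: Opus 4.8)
The plan is to deduce this Corollary by specializing \autoref{cor:functors-dcoh-reformulation} to the case $Y = S$ and then identifying the resulting source category with $\Perf_{prop/S} X$. First I would record the geometric identifications: the relative diagonal furnishes a canonical equivalence $X \times_S S \isom X$, under which the two projections become $p_1 = \id_X$ and $p_2 = p_X$. With these identifications in hand, the star integral transform $\Phi_\P(\F) = p_{2*}(p_1^* \F \otimes \P)$ of \autoref{cor:functors-dcoh-reformulation} becomes $\Phi_\P(\F) = p_{X*}(\P \otimes \F)$, which is exactly the assignment $\P \mapsto \Phi_\P$ appearing in the statement. Thus the equivalence, together with its well-definedness (landing in $\DCoh S$) and fully-faithfulness, is inherited directly from the more general result once the source is correctly identified.

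Next I would unwind the definition of the source category under this specialization. By \autoref{cor:functors-dcoh-reformulation} and the remark preceding it, $\DCoh_{prop/Y}(X \times_S Y/X)$ consists of almost perfect complexes $\P \in \DCohp(X \times_S Y)$ whose homology sheaves have support proper over $Y$ and which have finite ($*$-)Tor amplitude over $X$ via $p_1$. Setting $Y = S$, the proper-support condition becomes properness over $S$ via $p_X$, while the finite Tor amplitude condition over $X$ via $p_1 = \id_X$ becomes finite Tor amplitude over $\O_X$ itself.

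The key step is then to recognize that an almost perfect complex of finite Tor dimension over $\O_X$ is precisely a perfect complex; this is the standard characterization recalled in the introduction, namely $\DCoh(X/X) = \Perf(X)$. Consequently the source $\DCoh_{prop/S}(X/X)$ is exactly $\Perf_{prop/S} X$, the perfect complexes with support proper over $S$, and the equivalence of \autoref{cor:functors-dcoh-reformulation} specializes to the asserted one.

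I expect no genuine obstacle here; this is a clean specialization rather than a new argument. The only points requiring care are bookkeeping: verifying that the proper-support and Tor-amplitude conditions are transported correctly across the identification $X \times_S S \isom X$, and confirming that the a-priori local (on $X$) notion of finite $*$-Tor amplitude collapses, once $p_1 = \id_X$ so that $p_1^*$ is the identity, to the usual notion of finite Tor dimension over $\O_X$ that enters the characterization of perfection (the two agree since perfection and finite Tor dimension are both local conditions for quasi-compact $X$).
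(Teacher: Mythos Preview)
Your proposal is correct and follows essentially the same approach as the paper: specialize \autoref{cor:functors-dcoh-reformulation} to $Y=S$, identify $X\times_S S \isom X$, and then invoke the characterization ``perfect $=$ almost perfect of finite Tor amplitude'' to recognize the source as $\Perf_{prop/S} X$. The paper's proof is a one-line version of exactly this argument.
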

\begin{proof} By the reformulation of \autoref{cor:functors-dcoh-reformulation}, it is enough to recall that $\P$ perfect is equivalent to $\P$ almost perfect and of finite Tor amplitude.
\end{proof}

\begin{corollary} Restrict the above, and the results of \autoref{sec:perf} to the case of $Y = S$, and $X \to S$ proper.  Then, there are ``dualities''
  \[ \Fun^{ex}_{\Perf S}(\DCoh X, \DCoh S) \isom \Perf X \]
  \[ \Fun^{ex}_{\Perf S}(\Perf X, \DCoh S) \isom \DCoh X \]
\end{corollary}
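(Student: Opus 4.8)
The plan is to read off both equivalences from the two main functor theorems, specialized to the case $Y = S$, after observing that properness of $p_X \colon X \to S$ renders the two ``proper support'' conditions vacuous.

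For the first duality I would invoke the immediately preceding corollary --- the $Y = S$ specialization of \autoref{prop:fun-dcoh} --- which supplies an equivalence
\[ \Perf_{prop/S} X \stackrel{\sim}\longrightarrow \Fun^{ex}_{\Perf S}(\DCoh X, \DCoh S), \qquad \P \mapsto \Phi_\P. \]
It then remains only to identify the source with $\Perf X$. Since $p_X$ is proper, the support of any perfect complex on $X$ is a closed subspace of $X$, and a closed subspace of a proper $S$-algebraic space is again proper over $S$; hence every object of $\Perf X$ already has support proper over $S$, so that $\Perf_{prop/S} X = \Perf X$.

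For the second duality I would apply \autoref{thm: dcoh shriek} with $Y = S$, so that $X \times_S Y = X$ and one obtains
\[ \Phi \colon \DCoh_{\proper/S}(X) \stackrel{\sim}\longrightarrow \Fun^{ex}_{\Perf S}(\Perf X, \DCoh S). \]
Its hypotheses hold: $S$ smooth is in particular perfect, $p_X$ proper is in particular quasi-compact and separated, and $Y = S$ is locally Noetherian. Exactly as in the first case, properness of $p_X$ forces $\DCoh_{\proper/S}(X) = \DCoh X$, yielding the claimed equivalence.

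I expect no serious obstacle here; the only point requiring care is the collapse of the two proper-support conditions, which is precisely the elementary fact that closed immersions are proper and that properness is stable under composition, so that any $\O_X$-coherent complex on a proper $X/S$ automatically has $S$-proper support. (Since $S$ is smooth one moreover has $\DCoh S \isom \Perf S$, so that these ``dualities'' recover the reflexivity statement of \autoref{reflexive}.)
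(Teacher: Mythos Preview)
Your proof is correct and follows exactly the paper's approach: the paper's proof is the one-liner ``Combine the previous Corollary with the results of \autoref{thm: dcoh shriek},'' and you have simply spelled this out, making explicit that properness of $p_X$ collapses the proper-support conditions $\Perf_{prop/S} X = \Perf X$ and $\DCoh_{\proper/S}(X) = \DCoh X$.
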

\begin{proof} Combine the previous Corollary with the results of \autoref{thm: dcoh shriek}.
\end{proof}

The rest of this sction will be devoted to a proof of the Theorem.  The idea is to go in two steps:
\begin{enumerate} 
\item In \autoref{ssec:bounded}, we prove a generation result that will imply that any $\Perf S$-linear functor $\DCoh(X) \to \DCoh(Y)$ is automatically left (and right) $t$-bounded up to a finite shift.  This allows us to apply the results of the previous subsection to identity it with a subcategory of $\QCsh(X \times_S Y)$.
\item In \autoref{ssec:identify-C-sh}, we identify this subcategory.  The proof is similar to that presented in \autoref{sec:perf}.
\end{enumerate}

\subsection{A generation result, and a boundedness consequence}\label{ssec:bounded}
First, we need to establish a generation result:
\begin{lemma}\label{lem:very good2} Suppose that $\pi \colon X \to S = \Spec k$ is a quasi-compact, quasi-separated, and locally almost finitely presented algebraic space over a perfect field $k$.  Then, $\QCsh(X)$ is compactly-generated by $\DCoh(X)$, and there is a single $\G \in \Coh(X)^{\heart}$ which generates $\DCoh(X)$.
\end{lemma}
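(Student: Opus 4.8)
The plan is to prove the two assertions in turn, the first being essentially formal and the second the real content. Since our standing assumptions give $\QCsh(X) = \Ind(\DCoh(X))$ and $\DCoh(X)$ is idempotent-complete, the compact objects of $\QCsh(X)$ are exactly $\DCoh(X)$; hence $\DCoh(X)$ is automatically a set of compact generators, and a single compact generator of $\QCsh(X)$ is the same data as a single $\G \in \DCoh(X)$ whose thick (stable, idempotent-complete) subcategory is all of $\DCoh(X)$. So everything reduces to producing such a $\G$ in the heart $\Coh(X)^\heart$. Two harmless reductions come first: as $\DCoh(X)$ is thickly generated by $\Coh(X)^\heart$ (every bounded coherent complex is a finite iterated extension of shifts of its cohomology sheaves), it suffices to generate the heart; and since pushforward along $X_{cl}\hookrightarrow X$ and $X_{red}\hookrightarrow X$ is $t$-exact, fully faithful, and an equivalence on hearts, filtering by the (derived and classical) nilpotents transports a heart generator upward. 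Thus I may assume $X$ is a classical, reduced, Noetherian algebraic space of finite type over $k$.

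The core is then a Noetherian induction on $\dim X$ using generic smoothness together with the open--closed localization sequence. Since $k$ is perfect and $X$ is reduced, the smooth locus $j\colon U \hookrightarrow X$ is dense open, with closed complement $i\colon Z \hookrightarrow X$ (reduced induced structure) of strictly smaller dimension. On the regular space $U$ one has $\DCoh(U)=\Perf(U)$, and \autoref{lem:very good} with base $\Spec k$ supplies a single perfect generator $G_U$ of $\QC(U)$; replacing $G_U$ by $\bigoplus_n H_n(G_U)$ — which generates the same thick subcategory, as $G_U$ is a finite extension of its shifted cohomology sheaves and each cohomology sheaf is itself perfect on the regular $U$ — yields a heart generator $\G_U \in \Coh(U)^\heart$ of $\DCoh(U)$. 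By the inductive hypothesis $\DCoh(Z)$ admits a heart generator $\G_Z$; put $\G_Z' = i_*\G_Z$. Extending $\G_U$ to a coherent sheaf $\G_X'$ on $X$ (coherent sheaves on a quasi-compact open of a Noetherian space extend), I set $\G = \G_X' \oplus \G_Z' \in \Coh(X)^\heart$.

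It remains to verify that $\G$ generates, using the Verdier localization $\DCoh_Z(X) \to \DCoh(X) \xrightarrow{j^*} \DCoh(U)$. The kernel $\DCoh_Z(X)$ is thickly generated by $i_*\DCoh(Z)$ — any complex whose cohomology is set-theoretically supported on $Z$ is filtered, via powers of the ideal of $Z$, by objects pushed forward from $Z$ — and therefore equals the thick subcategory of $\G_Z'$, which is contained in that of $\G$. On the other hand $j^*\G = \G_U$ (since $j^*\G_Z' = 0$), and $\G_U$ thickly generates $\DCoh(U)$. A thick subcategory that contains the kernel of a Verdier localization and whose image thickly generates the quotient must be the whole category; hence the thick subcategory of $\G$ is all of $\DCoh(X)$, closing the induction and giving both claims of the Lemma.

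The main obstacle is the geometric and homological input underlying this induction, not the formal bookkeeping: establishing generic smoothness and, above all, the open--closed localization sequence for $\DCoh$ (equivalently $\QCsh$) of Noetherian algebraic spaces — the essential surjectivity of $j^*$, the identification of its kernel with $\DCoh_Z(X)$, and the dévissage presenting $\DCoh_Z(X)$ as thickly generated by $i_*\DCoh(Z)$. These are the algebraic-space analogues (obtained, as in \autoref{lem:very good}, by replacing Mayer--Vietoris with Nisnevich/excision squares) of standard scheme-theoretic facts, and it is their verification in the present derived setting where the genuine work lies.
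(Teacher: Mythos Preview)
Your argument is correct and shares the paper's skeleton exactly: reduce to $(X_{cl})_{red}$, then run Noetherian induction via generic smoothness (this is where perfection of $k$ enters) to split $X$ into a regular open $U$---where $\DCoh(U)=\Perf(U)$ and \autoref{lem:very good} applies---and a strictly smaller closed $Z$. The gluing step is where you diverge. The paper lifts $G_U \oplus G_U[+1]$ to an object of $\DCoh(X)$ via the Thomason--Trobaugh trick and then verifies that $G \oplus i_* G_Z$ generates $\QCsh(X)$ by an explicit $\RHom$-vanishing argument (using $i^!$ and the fiber sequence $\F_Z \to \F \to j_* j^*\F$). You instead extend the coherent \emph{sheaf} $\G_U$ across $X$---classical and elementary for Noetherian spaces---and conclude via the Verdier localization $\DCoh_Z(X) \to \DCoh(X) \xrightarrow{j^*} \DCoh(U)$. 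Your route is a bit more economical and has the pleasant feature of staying in $\Coh(X)^{\heart}$ throughout, so that the generator visibly lies in the heart rather than requiring a tacit final replacement by $\bigoplus_n H_n(-)$; the price is that you must invoke essential surjectivity of $j^*$ on $\DCoh$ and the d\'evissage identification of $\DCoh_Z(X)$ with the thick closure of $i_*\DCoh(Z)$, exactly the inputs you rightly flag in your final paragraph. (One small wording slip: $i_*$ from $X_{red}$ is not an equivalence on hearts, only fully faithful---but your actual argument, filtering by powers of the nilradical, is the correct one and is what the paper's Claim~1 does as well.)
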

\begin{proof}\mbox{} This follows from \cite{DAG-XII}*{Theorem~1.5.10} applied to $\QCsh(-)$.  However to show that a single object suffices, a slightly different argument is required (since we do not a priori know a single generator for $\DCoh_Z(X)$ in the affine case).  We will prove this by Noetherian induction $X$, with the help of two observations:
 
  \medskip

  {\noindent}{\bf Claim 1: }{\it This property for $(X_{cl})_{red}$ implies this property for $X$.}\\

  {\noindent}{\bf Claim 2: }{\it Suppose $U \subset X$ is an open in a classical algebraic space $X$.  Let $Z$ denote the closed complement with its reduced induced structure.  Suppose that the property holds for each of $U$ and $Z$. Then it holds for $X$.}

  \medskip

  {\noindent}{\it Assuming the claims we complete the proof:}\\  
  We proceed by Noetherian induction, so suppose that the result is known for all proper closed sub-algebraic spaces $Z \subsetneq X$.  By Claim 1 we may suppose that $X$ is classical and reduced.  Since $k$ is perfect, we have that $X$ is generically smooth.  Thus there exists an open $U \subset X$ which is smooth and in particular regular.  Thus $\DCoh U = \Perf U$ so that \autoref{lem:very good} implies that the property holds for $U$.  By Noetherian induction, the result holds also for $Z = X \setminus U$ with its reduced induced structure so that Claim 2 completes the inductive step.

  \medskip

  {\noindent}{\it Proof of Claim 1:} \\ 
  Note that every $F \in \DCoh(\X)$ is an extension of the shifted sheaves $H_i \F$, which are pushed forward from $X_{cl}$.  And every $\F \in \DCoh(\X)^{\heart}$ is an interated extension of sheaves pushed forward from $(\X_{cl})_{\red}$ by filtering by powers of the nilradical ideal of $\O_{\X_{cl}}$.

  \medskip
  {\noindent}{\it Proof of Claim 2:} \\  
  Let $G_U \in \DCoh(U)$ and $G_Z \in \DCoh(Z)$ be objects whose shifts generate $\QCsh(U)$ and $\QCsh(Z)$.  Let $j \colon U \to X$ and $i \colon Z \to X$ be the inclusions.  The form of Thomason's argument in \cite{DAG-XI}*{Lemma~6.19} shows that there exists $G \in \DCoh(X)$ such that $j^* G \isom G_U \oplus G_U[+1]$.  Then, we claim that
  \[ G \oplus i_* G_Z  \in \DCoh(X) \]
  generates  $\QCsh(X)$ under shifts and colimits.  

  Suppose $\F \in \QCsh(X)$, and as before form the fiber sequence
  \[ \F_Z \longrightarrow \F \longrightarrow j_* j^* \F \]
 
  Consider the right adjoint $i^! \colon \QCsh_Z(X) \to \QCsh(Z)$ to $i_*$, so that $\F_Z = i_* i^! \F$.  Note that by passing to homology and filtering by the nilradical, as in the proof of Claim 1,  we see that $i_* \DCoh(Z)$ generates $\DCoh_Z(X)$ under cones, shifts, and retracts.  Thus, $i^!$ is conservative.  But,
  \[ 0 = \RHom_X(i_* G_Z, \F) = \RHom_Z(G_Z, i^! \F) = \RHom_Z(G_Z, i^! \F_Z) \]
  implies that $i^! \F_Z = 0$.  Thus, $\F_Z = 0$ and $\F = j_* j^* \F$.

  Next, exactly as in the proof of \autoref{lem:very good} observe that
  \[ 0 = \RHom_X(G, \F) = \RHom_X(G, j_* j^* \F) = \RHom_U(j^* G, j^* \F) \]
  which implies that $\RHom_U(G_U, j^* \F) = 0$ so that $j^* \F = 0$ and $\F = j_* j^* \F = 0$.
\end{proof}

Now we apply it to get a $t$-boundness result:

\begin{lemma}\label{lem:dcoh-bounded}
  Suppose that $S$ is a quasi-compact, finitely-presented, perfect $k$-stack; that $X$ is a quasi-compact, quasi-separated, and finitely-presented $S$-algebraic space; and that $Y$ is a quasi-compact and finitely-presented $S$-stack.

  Then every $\Perf(S)$-linear exact
  functor $F \colon \DCoh(X) \to \DCoh(Y)$ is left and right $t$-exact up to a shift.  In particular, Kan extension produces a fully faithful embedding
  \[ \Fun_{\Perf S}^{ex}(\DCoh X, \DCoh Y) \hookrightarrow \Fun_{\QC(S)}^{L, t-bdd}(\QCsh(X), \QCsh(Y)) \]
\end{lemma}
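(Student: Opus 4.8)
The plan is to pass to Ind-completions and reduce the $t$-boundedness of $F$ to the homological boundedness of an associated integral kernel, via the absolute Grothendieck-duality package of \autoref{cor:dcoh-t-dual}.

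First, since $\QCsh(X) = \Ind\DCoh(X)$ (by \autoref{lem:very good2} together with the standing assumptions) and likewise $\QCsh(Y) = \Ind\DCoh(Y)$, the functor $F$ extends uniquely to a colimit-preserving functor $\hat F = \Ind(F)\colon \QCsh(X) \to \QCsh(Y)$ with $\hat F|_{\DCoh X} = F$. It suffices to show $\hat F$ is left and right $t$-exact up to a shift, since $F$ is its restriction to the compact objects. As $t$-boundedness is insensitive to the linear structure, I may forget the $\Perf(S)$-linearity and regard $\hat F \in \Fun^L(\QCsh X, \QCsh Y)$ over $k$.

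Next I convert $\hat F$ into a kernel. The space $X$ is Noetherian, finitely presented over the perfect field $k$, hence carries a dualizing complex and, by \autoref{lem:very good}, has finite cohomological dimension; with $\QCsh X = \Ind\DCoh X$ this verifies the hypotheses of \autoref{cor:dcoh-t-dual}. Applying part (iii) with $\C = \QCsh(Y)$, and then part (iv), the $!$-integral transform yields equivalences \[ \Fun^L(\QCsh X, \QCsh Y) \simeq \ol{\QCsh}(X)\otimes_k \QCsh(Y) \simeq \QCsh(X)\otimes_k\QCsh(Y) \] in which the last identification (for the standard $t$-structure on the target) is $t$-exact up to a finite shift. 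Let $\K$ be the object of $\QCsh(X)\otimes_k\QCsh(Y)$ corresponding to $\hat F$. Because the equivalence is $t$-exact up to a finite shift, $\hat F$ is left and right $t$-exact up to a shift if and only if $\K$ is homologically bounded, so everything is reduced to proving boundedness of $\K$.

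This last step is where the hypothesis $F(\DCoh X) \subseteq \DCoh Y$ enters, and I expect it to be the main obstacle. Under the identification $\QCsh X \otimes \QCsh Y \simeq \Fun^{ex}(\DCoh X, \QCsh Y)$ of \autoref{cor:dcoh-t-dual}(iii), the kernel $\K$ is precisely $F$, now known to land in the bounded coherent objects $\DCoh Y$. Boundedness of $\K$ is local on $X$ (and on $Y$), so I may pass to an affine \'etale chart of $X$, where $\K$ is realized as a genuine object on $\mathrm{chart}\times_k Y$; this sidesteps the fact that $X$ is only assumed quasi-separated. On such a chart, the condition that pairing $\K$ against the single generator $\G \in \Coh(X)^{\heart}$ of \autoref{lem:very good2} (and its perfect twists) produces objects of $\DCoh Y$ forces $\K$ to be bounded coherent, by exactly the argument completing the proof of \autoref{thm: dcoh shriek}, i.e. \autoref{prop: push dcoh} and \autoref{prop: push aperf} applied to $p_2$. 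Boundedness of $\K$ then gives that $\hat F$, hence $F$, is left and right $t$-exact up to a shift; the ``in particular'' follows since Ind-extension is fully faithful (as $\DCoh X$ compactly generates $\QCsh X$) and has just been shown to land in the $t$-bounded functors. Beyond routine bookkeeping, the point requiring genuine care is the coherence of $\K$, together with uniformity of the shifts across the (finite) chart cover; the d\'evissage to the heart and the compatibility of all truncations with filtered colimits are what make these shifts uniform.
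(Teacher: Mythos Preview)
Your overall strategy is close to the paper's: reduce to the absolute case, pass to Ind-completions, and use the duality picture of \autoref{cor:dcoh-t-dual} to recast $t$-boundedness of $\hat F$ as homological boundedness of a kernel $\K \in \QCsh(X)\otimes_k \QCsh(Y) \simeq \QCsh(X\times_k Y)$. This reduction is sound.

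The genuine gap is in the final step. You claim that $\K$ is bounded by invoking \autoref{prop: push dcoh} and \autoref{prop: push aperf}, but those results test $\K$ against \emph{perfect} complexes via the $*$-transform: they characterize $\K \in \DCoh$ by asking that $\RGamma(Z,\P\otimes\K)\in\DCoh$ for $\P\in\Perf(Z)$. What you actually have from the hypothesis $F(\DCoh X)\subset\DCoh Y$ is that $p_{2*}\bigl(p_1^!(\F)\shotimes\K\bigr)\in\DCoh Y$ for $\F\in\DCoh X$ --- a $!$-condition against \emph{coherent} test objects. These are not the same, and ``perfect twists'' of the generator $\G$ does not bridge the difference. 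The $!$-analogue you might hope to use, \autoref{prop: push pscoh}, already \emph{assumes} $\K$ bounded above in its hypotheses (ii) and (iii), so it cannot supply that conclusion. In short, nothing you cite converts ``$F(\G)$ bounded for a single generator $\G$'' into ``$\K$ bounded,'' and indeed this implication is false for arbitrary colimit-preserving functors.

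The paper closes this gap with the following idea, which is what your argument is missing: since $\G$ generates $\DCoh(X)$ and exterior product is an equivalence over $k$, the object $\DD\G\boxtimes\G$ generates $\DCoh(X\times X)$; in particular $\Delta_*\omega_X\in\DCoh(X\times X)$ is built in \emph{finitely many} steps (cones, shifts, retracts) from $\DD\G\boxtimes\G$. Applying $\Phish$, this says $\id_{\QCsh X}$ is built in finitely many steps from $\RHom_{\QCsh X}(\G,-)\otimes_k\G$, and hence $\hat F$ is built in finitely many steps from $\RHom_{\QCsh X}(\G,-)\otimes_k F(\G)$. The latter is left $t$-exact because $\G$ is connective and $F(\G)$ is bounded; finitely many cones and shifts preserve left $t$-exactness up to a shift, giving the result (right $t$-exactness then follows by conjugating with Grothendieck duality). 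This finite-generation-of-the-identity step is the substantive content you need in place of the appeal to \autoref{prop: push dcoh}.
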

\begin{proof}
  The proof of \autoref{lem:fully-faithful}(i) shows that the inclusion $\DCoh Y \to \QCsh Y$ induces a fully-faithful functor
  \[ \Fun_{\Perf S}^{ex}(\DCoh X, \DCoh Y) \hookrightarrow \Fun_{\Perf S}^{ex}(\DCoh X, \QCsh Y) \] 
and using that $S$ is perfect and our unspoken assumption on $X$ we see that Kan extension identifies 
\[  \Fun_{\Perf S}^{ex}(\DCoh X, \QCsh Y) \stackrel{\sim}\longrightarrow \Fun_{\QC S}^{L}(\QCsh X, \QCsh Y) \]  
It remains to prove the uniform $t$-boundedness assertion.

  We proceed via a sequence of reductions:

  \medskip

  {\noindent}{\it Reduction to $Y$ affine: }
  First note that, since $Y$ is quasi-compact, the claim is flat local on $Y$.  So, we may suppose that $Y$ is affine.

  \medskip

  {\noindent}{\it Reduction to the absolute case: }
  Suppose that $f \colon U \to S$ is a smooth affine atlas.  Consider the diagram
  \[ \xymatrix{
  \DCoh X \ar[d] \ar[r]^F & \DCoh Y  \ar[d] \\
  \DCoh X \otimes_{\Perf S} \Perf U \ar[r]^-{F \otimes \id} \ar[d]^{\sim} & \DCoh Y \otimes_{\Perf S} \Perf U\ar[d]^{\sim} \\
  \DCoh (X \times_S U) \ar@{-->}[r]^{F_U}  & \DCoh (Y \times_S U)
  }
  \]
  We claim that the indicated vertical maps are indeed equivalences.  For this see e.g. \cite{indcoh}*{Prop.~4.5.3} and note that the proof applies in the present context.  Consequently, there is a unique functor $F_U$ making the diagram commute.  Suppose that $F_U$ is known to be left and right $t$-exact up to a shift.  It follows that the composite 
  \[ F_U \circ (f_X)^* = (f_Y)^* \circ F \colon \DCoh X \to \DCoh (Y \times_S U) \]
  is left and right $t$-exact up to a shift, since $f_X^*$ is a flat pullback map and hence $t$-exact.  By the previous reduction (that the claim can be checked flat locally on $Y$) this suffices, since $f_Y \colon Y \times_S U \to Y$ is a smooth cover.
  
  So, we may suppose $Y = \Spec A$ and $S$ are both affine and $X$ is a  finite-type separated algebraic space.  It suffices to show that any exact functor $F \colon \DCoh(X) \to \DCoh(Y)$ is left and right $t$-exact up to a shift: in particular, we no longer have any dependence on $S$.

  \medskip

  {\noindent}{\it Reduction to $X$ classical (in particular, of finite Tor dimension): } 
  Consider the closed immersion $i\colon X_{cl} \to X$ of the underlying classical algebraic space of $X$.  Note that $i_*\colon \DCoh(X_{cl}) \to \DCoh(X)$ is $t$-exact and induces an equivalence of the hearts; since the $t$-structure on $\DCoh X$ is bounded, it suffices to show that $ F \circ i \colon \DCoh(X_{cl}) \to \DCoh(Y)$ is left and right $t$-exact up to a shift.

  \medskip

  {\noindent}{\it Final proof:}
Observe that it follows from \autoref{prop:dcoh-t-dual} that  $F \colon \DCoh(X) \to \DCoh(Y)$ is right $t$-exact up to a shift if and only if $(\DD \circ F \circ \DD)^{op}$ is left $t$-exact up to a shift.  Consequently, it suffices to prove that any such $F$ is left $t$-exact up to a shift.

By \autoref{lem:very good2} there exists a $\G \in \DCoh(X)^{\heart}$ that generates.  Replacing $F$ by a shift we may suppose that $F(\G) \subset \DCoh(Y)_{\leq 0}$.  Setting $F' = \Ind F$, it suffices to show that any colimit preserving functor $F'\colon \QCsh X \to \QCsh Y$ satisfying 
\[ F'(\G) \in (\QCsh Y)_{\leq 0} \] 
is left $t$-bounded up to a shift.  Note that the full subcategory of left $t$-bounded functors is closed under cones, shifts, and retracts.  Note also that the colimit preserving functor 
\[ H' = \RHom_{\QCsh X}(\G, -) \otimes_k F'(\G) \]
is left $t$-exact.  Indeed, $\RHom_{\QCsh X}(\G, -)$ is left $t$-exact since $\G \in \DCoh(X)_{\geq 0}$; and $-\otimes_k F'(G)$ is left $t$-exact since $k$ is a field and $F'(G) \in \QCsh(Y)_{<0}$.  It thus suffices to show that $F'$ can be built from $H'$ in finitely many steps of taking cones, shifts, and retracts.  For this, it suffices to show that $\id_{\QCsh(X)}$ can be built from 
\[ \RHom_{\QCsh(X)}(\G, -) \otimes \G = \Phish_{\DD\GG \boxtimes \G} \] in finitely many steps.
  
By Grothendieck duality, $\DD \G \in \DCoh(X)$ also generates $\DCoh X$.  By \autoref{prop:absolute-fun} the exterior product $\boxtimes \colon \DCoh(X) \otimes \DCoh(X) \to \DCoh(X^2)$ is an equivalence, so that it follows that $\DD\G \boxtimes \G$ generates $\DCoh(X^2)$.   Since $\O_X$ is bounded, we have that $\omega_X \in \DCoh(X)$, so that $\Delta_* \omega_X \in \DCoh(X^2)$.  Consequently, $\Delta_* \omega_X$ may be built in finitely many steps, consisting of cones, shifts, and retracts, from $\DD\G\boxtimes \G$.  Applying the $!$integral transform $\Phish$ completes the proof since $\id_{\QCsh(X)} \isom \Phish_{\Delta_* \omega_X}$.
\end{proof}

\begin{corollary}\label{cor:fun-dcoh-0}  Suppose that $S, X, Y$ satisfy the hypotheses of \autoref{prop:fun-dcoh}. 
  Let \[ \sh{\C}_{X,Y/S} \subset \QCsh(X \times_S Y)_{<\infty} \] denote the full subcategory consisting of those $\K$ such that $\Phish_\K(\DCoh X) \subset \DCoh Y$.  Then, the restriction of the $!$-integral transform provides an equivalence 
        \[ \sh{\C}_{X,Y/S} \stackrel{\Phish}{\longrightarrow} \Fun_{\Perf S}^{ex}(\DCoh X, \DCoh Y) \]
\end{corollary}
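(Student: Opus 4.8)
The plan is to deduce this purely formally from \autoref{cor:shriek-bounded} and \autoref{lem:dcoh-bounded}, by realizing both the source $\sh{\C}_{X,Y/S}$ and the target $\Fun^{ex}_{\Perf S}(\DCoh X, \DCoh Y)$ as fully faithful subcategories of the single intermediate large functor category $\Fun^{L, t<\infty}_{\QCsh(S)}(\QCsh X, \QCsh Y)$, and then checking that the two essential images coincide. Since $S$ is smooth it is in particular regular, so $\QCsh(S) \isom \QC(S)$ and linearity over the two agrees; I use them interchangeably.

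First I would set up the target embedding. By the universal property of $\Ind$, using that $S$ is perfect and that $\QCsh X = \Ind(\DCoh X)$, Kan extension identifies $\Fun^{ex}_{\Perf S}(\DCoh X, \QCsh Y) \isom \Fun^L_{\QC(S)}(\QCsh X, \QCsh Y)$, under which $\Fun^{ex}_{\Perf S}(\DCoh X, \DCoh Y)$ is precisely the full subcategory of colimit-preserving $F$ with $F(\DCoh X) \subset \DCoh Y$; the fully-faithfulness here is exactly \autoref{lem:dcoh-bounded} (via \autoref{lem:fully-faithful}(i) for the inclusion $\DCoh Y \hookrightarrow \QCsh Y$). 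The crucial content I would invoke is the remaining assertion of \autoref{lem:dcoh-bounded}: every such $F$ is automatically left and right $t$-exact up to a finite shift, hence lands in $\Fun^{L, t<\infty}_{\QCsh(S)}(\QCsh X, \QCsh Y)$. Thus the essential image of the target is exactly the subcategory of those $F \in \Fun^{L, t<\infty}_{\QCsh(S)}(\QCsh X, \QCsh Y)$ with $F(\DCoh X) \subset \DCoh Y$.

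Next I would set up the source embedding. By \autoref{cor:shriek-bounded} the $!$-integral transform is an equivalence $\Phish \colon \QCsh(X \times_S Y)_{<\infty} \stackrel{\sim}\longrightarrow \Fun^{L, t<\infty}_{\QCsh(S)}(\QCsh X, \QCsh Y)$. By its very definition, $\sh{\C}_{X,Y/S} \subset \QCsh(X \times_S Y)_{<\infty}$ is the full subcategory of those $\K$ with $\Phish_\K(\DCoh X) \subset \DCoh Y$; hence under this equivalence its image is exactly the subcategory of $F \in \Fun^{L, t<\infty}_{\QCsh(S)}(\QCsh X, \QCsh Y)$ with $F(\DCoh X) \subset \DCoh Y$ --- the same subcategory identified in the previous step.

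Finally, since both $\Phish|_{\sh{\C}_{X,Y/S}}$ and the Kan-extension embedding of the target are fully faithful with identical essential image, the induced functor $\sh{\C}_{X,Y/S} \to \Fun^{ex}_{\Perf S}(\DCoh X, \DCoh Y)$ --- which by construction sends $\K$ to the restriction of $\Phish_\K$ to $\DCoh X$, i.e. is the restriction of the $!$-integral transform --- is an equivalence, as claimed. I expect no genuine obstacle here: all the real work lives upstream, in the automatic $t$-boundedness of \autoref{lem:dcoh-bounded} (whose proof rests on the single-generator statement \autoref{lem:very good2} and Beilinson resolution of the diagonal) and in the regularization statement \autoref{cor:shriek-bounded}. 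Given these, the Corollary is bookkeeping; the one point requiring care is that the condition on objects $\K$ defining $\sh{\C}_{X,Y/S}$ is matched, precisely via $t$-boundedness up to a shift, with the condition cutting out $\Fun^{ex}_{\Perf S}(\DCoh X, \DCoh Y)$ inside the large functor category.
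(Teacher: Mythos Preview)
Your argument is correct and matches the paper's own proof, which simply cites \autoref{lem:dcoh-bounded} together with \autoref{cor:dcoh-t-reg} (and the identification $\QCsh S \isom \QC S$ since $S$ is smooth). Your use of \autoref{cor:shriek-bounded} in place of \autoref{cor:dcoh-t-reg} is harmless: both encode the same equivalence on bounded-above objects that you need, and the rest is exactly the bookkeeping you describe.
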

\begin{proof} 
This is an immediate consequence of \autoref{lem:dcoh-bounded} and \autoref{cor:dcoh-t-reg}, identifying $\QC S \isom \QCsh S$ as symmetric monoidal categories since $S$ is smooth.
\end{proof}

\subsection{Identifying the right kernels}\label{ssec:identify-C-sh}
To complete the proof of the Theorem, we must identify $\sh{\C}_{X,Y/S}$ with $\DCoh_{prop/Y}(X \times_S Y/_! X)$.   We need the following preliminaries:
\begin{lemma}\label{lem:tensor-product:DDAPerf} In addition the assumptions of the Theorem, suppose that $Y = \Spec A$ is affine.  Let $\C \subset \DCohm(X \times_S Y)$ be the smallest full subcategory closed under finite limits, and containing $\DCohm(X \times_S Y)_{<0}$ and $p_1^!(\DCohm X)$.  Then, $\C = \DCohm(X \times_S Y)$.
\end{lemma}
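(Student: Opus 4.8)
The plan is to prove the equivalent \emph{dual} statement under Grothendieck duality, and there to produce $\F$ from a single finite stage of a resolution by pullbacks. A first instinct is a downward induction on the top nonzero homological degree of $\F$: given $\F \in \DCohm(X\times_S Y)_{\leq n}$ one would cover $H_n(\F)$ by the top homology of some $p_1^!\G$ and pass to the fiber, whose top degree drops, terminating once it lands in $\DCohm(X\times_S Y)_{<0}\subset\C$. This almost works, but a single $!$-pullback cannot in general surject onto $H_n(\F)$: the top homology of $p_1^!\G$ is twisted by the relative dualizing complex $\omega_{p_1}$, and this twist is a genuine obstruction when $p_1$ is not Gorenstein. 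The fix is to replace the one-step cover by a finite truncation of an honest resolution, which is cleanest to set up after dualizing.

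Concretely, applying the duality functor $\DD$ of \autoref{prop:dcoh-t-dual} turns the assertion into its mirror: since $\DD$ is a contravariant equivalence exchanging $\DCohm$ with $\DCohp$, carrying finite limits to finite colimits, sending $p_1^!(\DCohm X)$ to $p_1^*(\DCohp X)$ by \autoref{lem:groth-dual-functors}, and (being $t$-exact up to a finite shift) sending $\DCohm(X\times_S Y)_{<0}$ into a subcategory of the form $\DCohp(X\times_S Y)_{\geq c}$, it suffices to show that the smallest full subcategory $\DD(\C)\subset\DCohp(X\times_S Y)$ closed under finite colimits and containing both $p_1^*(\DCohp X)$ and $\DCohp(X\times_S Y)_{\geq c}$ is all of $\DCohp(X\times_S Y)$. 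This is the familiar direction: $p_1^*$ is right $t$-exact, so pullbacks surject onto coherent sheaves, and free-type resolutions live naturally in $\DCohp$.

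To build the resolution I would first reduce to $S=\Spec A$ affine by a smooth atlas, as in the proof of \autoref{lem:dcoh-bounded}; then $p_1\colon X\times_S Y\to X$ is affine because $Y\to S$ is, and by \autoref{lem:very good} there is a single perfect $G$ generating $\QC(X)$, whence $p_1^*G$ generates $\QC(X\times_S Y)$. For $\DD\F\in\DCohp(X\times_S Y)$, standard generation (using that $\RHom(p_1^*G,-)$ is conservative and $t$-exact up to a shift, cf.\ \autoref{lem:fake-lazard}) produces a resolution whose terms are finite sums of shifts of $p_1^*G$, hence lie in $p_1^*(\Perf X)\subset p_1^*(\DCohp X)$. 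Brutally truncating this resolution at a stage $N\gg 0$, the partial geometric realization $P^{(N)}$ is a finite iterated cofiber of pullback blocks, so $P^{(N)}\in\DD(\C)$, while the remaining error sits in $\DCohp(X\times_S Y)_{\geq N}$ and (for $N\geq c$) is a sufficiently connective object already in $\DD(\C)$. Writing $\DD\F=\cofib\bigl(\text{error}[-1]\to P^{(N)}\bigr)$ exhibits it as a finite colimit of objects of $\DD(\C)$, so $\DD\F\in\DD(\C)$; applying $\DD$ once more — where the cofiber becomes the fiber $\F=\fib(\dots)$ and the error, now highly coconnective, lands in $\DCohm(X\times_S Y)_{<0}$ — gives $\F\in\C$.

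The main obstacle is the generation-with-bounds at the heart of the argument: arranging a resolution of $\DD\F$ whose terms are genuinely \emph{pullbacks} from $X$ (not merely perfect complexes on $X\times_S Y$) and which is finite in each degree, so that each truncation error is uniformly connective. This is what forces the preliminary reduction to affine $S$ (to make $p_1$ affine and $p_1^*G$ a generator) and the use of \autoref{lem:fake-lazard} to control connectivity; the remaining bookkeeping — that $\DD$ really does interchange the four defining ingredients of $\C$ and $\DD(\C)$ with only finite shifts — is routine given \autoref{prop:dcoh-t-dual} and \autoref{lem:groth-dual-functors}.
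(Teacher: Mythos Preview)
Your strategy---dualize to $\DCohp$, then exhibit each object as a finite extension of pullbacks and something highly connective---is exactly the paper's approach, and your endgame (truncate a resolution, the remainder lands in $\DCohp_{\geq c}$) is correct.  Two deviations from the paper are worth flagging.

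First, the reduction to $S$ affine is unnecessary.  You only need $p_1 \colon X \times_S Y \to X$ to be affine, and that already holds: $S$ is geometric (affine diagonal) and $Y$ is affine, so $Y \to S$ is an affine morphism, hence so is its base change $p_1$.  The descent step you gesture at (``as in the proof of \autoref{lem:dcoh-bounded}'') would need justification for the present categorical statement, and you can simply skip it.

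Second, the step ``standard generation \dots\ produces a resolution whose terms are finite sums of shifts of $p_1^*G$'' is not quite standard and has a gap.  For a single perfect generator $G$, the endomorphism ring $R = \End(p_1^*G)$ need not be connective (e.g.\ whenever $X$ has positive cohomological dimension), and over a non-connective $R$ there is no free-resolution machinery producing the connectivity control you need.  The paper sidesteps this by allowing the resolving objects to be $p_1^* H$ for $H \in \DCohp(X)$ almost perfect (not just sums of shifts of a fixed $G$), and constructs the surjection on $H_0$ directly: since $p_1$ is affine, the counit $p_1^* p_{1*} \F \to \F$ surjects on $H_0$; write $p_{1*} \F$ as a filtered colimit of perfects $P_\alpha$, truncate to get connective almost perfect $\tau_{\geq 0} P_\alpha$, and use that $H_0(\F)$ is compact in the Noetherian heart to extract a single $\alpha$ that still surjects.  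Two applications of this claim give a length-two complex of pullbacks mapping isomorphically to $\F$ on $\tau_{\leq 0}$, and downward induction on the bottom degree finishes.  Your picture with $G$ can be salvaged by replacing ``sums of shifts of $G$'' with ``arbitrary perfect (or almost perfect) complexes on $X$''; then it becomes the paper's argument.
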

\begin{proof} We first use unbounded Grothendieck duality \autoref{prop:groth-duality} to translate to a statement with usual pullbacks: Letting $\C' = \DD\C \subset \DCohp(X \times_S Y)$, it suffices to show that $\C' = \DCohp (X \times_S Y)$.  Note that $\C'$ is the smallest full subcategory closed under finite colimits, and containing $\DD(\DCohm(X \times_S Y)_{<0})$ and $\DD p_1^!(\DCohm X) = p_1^*(\DCohp X)$.  Furthermore, 
  \[ \DD(\DCohm(X \times_S Y)_{<0}) \supset \DCohp(X \times_S Y)_{>-N} \]
  for some $N$ by the boundedness assertion of \autoref{prop:groth-duality}.  
  
  It thus suffices to prove the following Grothendieck dual assertion: Let $\C' \subset \DCohp(X \times_S Y)$ be the smallest full subcategory closed under finite colimits, and containing $\DCohp(X \times_S Y)_{>0}$ and $p_1^*(\DCohp X)$.  Then, $\C' = \DCohp(X \times_S Y)$.

  \medskip
  
  {\bf Claim:} For each connective $\F \in \DCohp(X \times_S Y)_{\geq 0}$, there exist a connective $H \in \DCohp(X)_{\geq 0}$ and map  \[ \phi \colon p_1^* H \to \F \] which induces a surjection on $H_0$.

  \medskip

  Assuming the claim, we complete the proof.  We will show that $\DCohp(X \times_S Y)_{>-n} \subset \C'$ for all $n$, by induction on $n$. The case $n = 0$ is by hypothesis.  Let us prove the case of $n$, assuming known the case of $n-1$: 

  Suppose $\F \in \DCohp(X \times_S Y)_{>-n}$, so that $\F[n-1]$ is connective.  Apply the Claim to it, to obtain a map 
  \[ \phi \colon p_1^* H \to \F[n-1] \] inducing a surjection on $H_0$ as above.  Since $H$ is connective and $p_1^*$ is left $t$-exact, the $p_1^* H$ is connective.  Since $\phi$ induces a surjection on $H_0$, the fiber $\K = \fib(\phi)$ of $\phi$ is also connective.  Applying the Claim to $\K$ we obtain
  \[ \phi' \colon p_1^* H' \to \K \] inducing a surjection on $H_0$.  It follows that
  \[ \cone(p_1^* H' \to \K \to p_1^* H) \to \F[n-1] \]
  induces an isomorphism on $H_0$.  In light of the connectivity of all terms involved, it induces an isomorphism on $\tau_{\leq 0}$.  Consequently,
  \[ \cone(p_1^* H'[1-n] \to p_1^* H[1-n]) \to \F \]
  induces an isomorphism on $\tau_{\leq (1-n)}$.  In particular, the fiber is in $\DCohp(X \times_S Y)_{>1-n}$ and thus in $\C'$ by the inductive hypothesis.  Since $\C'$ is closed under cones, and contains $p_1^* H[1-n]$ and $p_1^* H'[1-n]$, we see that $\F \in \C'$.

  \medskip

  We now prove the claim: Since $p_1$ is affine, we have that $p_{1*} \F \in \QC(X)_{\geq 0}$.  Since $X$ is perfect, we can write
  \[ p_{1*} \F = \dlim_\alpha P_\alpha \]
  with $P_\alpha \in \Perf(X)$.  Since the $t$-structure is compatible with filtered colimits we have
  \[ p_{1*} \F = \dlim_\alpha \tau_{\geq 0} P_\alpha \]
  and since $X$ is Noetherian and each $P_\alpha$ is perfect, the truncations $\tau_{\geq 0} P_\alpha \in (\DCohp X)_{\geq 0}$ are almost perfect.

  Next, consider the composite
  \[ \dlim_\alpha p_1^* (\tau_{\geq 0} P_\alpha) \isom  p_1^* ( \dlim_\alpha \tau_{\geq 0} P_\alpha)  \stackrel\sim\longrightarrow p_1^* p_{1*} \F \longrightarrow \F \]
  Since $p_1$ is affine, and $\F$ is connective, the last map induces a surjection on $H_0$.  The previous displayed equation shows that $H_0(\F)$ is the increasing union of the images on $H_0$ of the terms for each $\alpha$.  Since $H_0(\F)$ is coherent, it is compact in $\DCoh(X \times_S Y)^{\heart}$.  Consequently, there is some $\alpha$ so that
  \[ p_1^* (\tau_{\geq 0} P_\alpha) \longrightarrow \F \]
  induces a surjection on $H_0$.  This completes the proof.
\end{proof}

\begin{lemma}\label{lem:2-t-structure}
Suppose that $\pi  \colon Z \to S = \Spec A$ is a separated finitely-presented $S$-algebraic space, over an affine (finitely-presented over $k$) base. Let $\ol{H}_i(\F)$, $\ol{\tau}_{\geq i}$, etc. be the homology and truncation functor for the dual $t$-structure on $\QCsh(Z)$ that was denoted $\ol{\QCsh}(Z)$ in \autoref{cor:dcoh-t-dual}.  Note that if $\F \in \DCoh(Z)$, then $\ol{H}_i(\F) = \DD \circ H_i \circ \DD(\F) \in \DCoh(Z)$ and that the functor $\ol{H}_i$ is determined by this equality and the property of preserving filtered colimits.
  
  Then, the following conditions on $\F \in \QCsh(Z)$ are equivalent:
  \begin{enumerate}
    \item $\F \in \QCsh(Z)_{<N}$ for some $N$, and $H_i(\F)$ is coherent over $Z$ with support proper over $S$ for each $i$, and they vanish for $i \vinograd 0$;
      \item $\F \in \DCohm(Z)$ and $H_i(\F)$ has support proper over $S$ for each $i$;
      \item $\F \in \DCohm(Z)$ and $\ol{H}_i \circ H_j(\F) = H_j\circ \ol{H}_i(\F)$ has support proper over $S$ for each $i,j$;
      \item $\F \in \DCohm(Z)$ and $\ol{H}_i(\F)$ has support proper over $S$ for each $i$;
      \item $\F \in \QCsh(Z)_{<N}$ for some $N$, and $\ol{H}_i(\F)$ is coherent over $Z$ with support proper over $S$ for each $i$, and they vanish for $i \vinograd 0$;
    \end{enumerate}
\end{lemma}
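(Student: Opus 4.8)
The plan is to prove the five conditions equivalent as a cycle (i)$\Rightarrow$(ii)$\Rightarrow$(iii)$\Rightarrow$(iv)$\Rightarrow$(v)$\Rightarrow$(i), in which the two ``outer'' equivalences (i)$\iff$(ii) and (iv)$\iff$(v) are definitional repackagings and essentially all the content lives in (ii)$\iff$(iii)$\iff$(iv). For (i)$\iff$(ii): an object lies in $\DCohm(Z)$ exactly when it is homologically bounded above with coherent standard homology, which is precisely the conjunction ``$\F\in\QCsh(Z)_{<N}$ and $H_i\F$ coherent, vanishing for $i\vinograd 0$,'' so adjoining the proper-support clause to both sides matches (i) with (ii). The equivalence (iv)$\iff$(v) is the same statement read through the dual $t$-structure $\ol{\QCsh}(Z)$ of \autoref{cor:dcoh-t-dual}: using that $\DD$ is an equivalence $\DCohm(Z)^{op}\stackrel{\sim}{\longrightarrow}\DCohp(Z)$ which is left and right $t$-exact up to a finite shift (\autoref{prop:dcoh-t-dual}), membership in $\DCohm(Z)$ is equivalent to the dual-homology finiteness conditions of (v), and the index conventions are routine bookkeeping that I will suppress.

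The engine for (ii)$\iff$(iii)$\iff$(iv) is two structural properties of $\DD$ on $Z$: it is local (compatible with open restriction), hence preserves the support of a complex; and it is $t$-exact up to a single finite shift, so $\DD$ of a coherent sheaf is a homologically bounded complex of some fixed amplitude $d$, with coherent homology supported on the same closed subset. The easy implications are (ii)$\Rightarrow$(iii) and (iv)$\Rightarrow$(iii): when $H_j\F$ is a single properly-supported coherent sheaf, $\ol H_i\circ H_j(\F)=\DD H_i\DD(H_j\F)$ is coherent with support contained in $\supp H_j\F$, hence properly supported; and starting instead from (iv), for $\F\in\DCohm(Z)$ each $\ol H_i\F=\DD(H_i\DD\F)$ is $\DD$ of a single coherent sheaf, so $H_j\circ\ol H_i(\F)$ has support contained in $\supp\ol H_i\F$, again proper. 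Here I read (iii) as the conjunction that both families of mixed homologies are properly supported; the displayed equality should be understood as the assertion that the two resulting support conditions agree, \emph{not} as a functorial identity — already over $Z=\Spec k$ one computes $\ol H_i=H_{-i}$, so $\ol H_i\circ H_j$ and $H_j\circ\ol H_i$ need not coincide.

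The main step is (iii)$\Rightarrow$(iv), together with its mirror (iii)$\Rightarrow$(ii). Filtering $\F$ by its standard truncations $\tau_{\geq -n}\F$, whose associated graded pieces are the shifted sheaves $H_j\F[j]$, and applying $\ol H_i$, produces a spectral sequence with $E_1$-terms $\ol H_{i-j}(H_j\F)=\ol H_{i-j}\circ H_j(\F)$ converging to $\ol H_i\F$. Two finiteness observations make this usable even though $\F$ is unbounded below. First, because $\DD$ has fixed finite amplitude $d$, for each fixed $i$ only finitely many $j$ contribute a nonzero $E_1$-term, so the spectral sequence is finite in each total degree. Second, extensions of finitely many coherent sheaves with proper support again have coherent, properly-supported homology. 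Hence $\ol H_i\F$ is a finite iterated extension of the properly-supported sheaves $\ol H_{i-j}\circ H_j(\F)$ provided by (iii), and is therefore properly supported; this gives (iv). Filtering instead by dual truncations $\ol\tau$ and applying $H_\bullet$ yields the symmetric spectral sequence and (iii)$\Rightarrow$(ii), closing the cycle (and showing the two halves of (iii) are each equivalent to (ii) and (iv)).

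The hard part is exactly this unbounded-below comparison. For $\F\in\DCohm(Z)$ there can be infinitely many nonzero homology sheaves as $i\to-\infty$, so ``the support of $\F$'' is an infinite union and proper support of \emph{each} homology sheaf is genuinely stronger than proper support of the complex; were $\F$ homologically bounded, (ii) and (iv) would be immediate, since $\DD$ preserves the (finite) total support. The device that rescues the unbounded case is the finite amplitude of Grothendieck duality, i.e. the finite cohomological dimension of $Z\to S$ guaranteed by our standing hypotheses, which truncates the spectral sequence in each total degree and lets proper support propagate term by term. A secondary point needing care is the precise sense of the equality in (iii): it must be interpreted as an equivalence of support conditions rather than an identity of functors, with its two halves serving as the respective inputs to (iii)$\Rightarrow$(iv) and (iii)$\Rightarrow$(ii).
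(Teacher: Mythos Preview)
Your argument is essentially correct and in one respect more careful than the paper's own proof. The paper treats (ii)$\Leftrightarrow$(iii)$\Leftrightarrow$(iv) in a single terse paragraph: it records that for a bounded $\K\in\DCoh(Z)$ the support of $\K$ coincides with the union of supports of its homology sheaves (in either $t$-structure, since $\DD$ is local), and then writes ``Since $\DD$ is $t$-bounded, one obtains that both (ii) and (iii) are equivalent to requiring that $\DD H_i(\F)$ have support proper over $S$''---with the passage to (iv) left implicit, apparently via the displayed equality $\ol H_i\circ H_j(\F)=H_j\circ\ol H_i(\F)$ in the statement of (iii). You are right that this equality fails already over a point, so read literally the paper's argument has a gap precisely at the bridge from (ii) to (iv). Your spectral-sequence argument---filtering $\F$ by standard truncations, using the finite $t$-amplitude of $\DD$ to see that for each fixed $i$ only finitely many $j$ contribute a nonzero $E_1$-term $\ol H_{i-j}(H_j\F)$, and then propagating proper support through a finite iterated extension in the dual heart---is the correct way to unpack ``since $\DD$ is $t$-bounded,'' and your interpretation of (iii) as the conjunction of the two mixed-homology conditions is the right reading.

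One place you are too quick is (v)$\Rightarrow$(iv). You invoke the equivalence $\DD\colon\DCohm(Z)^{op}\stackrel\sim\to\DCohp(Z)$, but applying $\DD$ presupposes $\F\in\DCohm(Z)$, which is exactly what (iv) asserts and (v) does not: a priori (v) only gives $\F$ bounded above for the standard $t$-structure with each $\ol H_i\F\in\DCoh(Z)$. The paper fills this with a short induction: since the identity between the two $t$-structures is $t$-bounded in both directions, $\F$ is also bounded above for the dual $t$-structure, and then one shows $\ol\tau_{\geq i}\F\in\DCoh(Z)$ for all $i$ by descending induction on $i$; a second application of $t$-boundedness then gives $\tau_{\geq i}\F\in\DCoh(Z)$ for all $i$, i.e.\ $\F\in\DCohm(Z)$. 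Alternatively, your own spectral-sequence device, run with the predicate ``coherent'' in place of ``properly supported,'' yields the same conclusion and would make the (iv)$\Leftrightarrow$(v) step genuinely parallel to (i)$\Leftrightarrow$(ii) as you claim.
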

\begin{proof}
  Note that (i) is equivalent to (ii) by the definition of $\DCohm$.  
  
  Let us show that (ii)-(iv) are equivalent. First note note that for $\K \in \DCoh(Z)$ -- indeed for any bounded complex -- the support of $\K$, as a complex, is the union of the supports of its homology sheaves.  So if $\F \in \DCoh(Z)$ then $\F$ has proper support iff both $\F$ and $\DD \F$ have proper support iff each $H_i(\F)$ and $H_i(\DD \F)$ have proper support iff $\ol{H}_i(\DD\F) = \DD H_i(\F)$ and $\ol{H}_i(\F) = \DD H_i(\DD \F)$ all have proper support.  Since $\DD$ is $t$-bounded, one obtains that both (ii) and (iii) are both equivalent to requiring that $\DD H_i(\F)$ have support proper over $S$ for each $i$.

  Next, note that (iv) obviously implies (v).  For the converse, suppose that $\F \in \QCsh(Z)_{<N}$ is such that $\ol{H}_i(\F) \in \DCoh(Z)$ for all $i$.  Since $\F$ is bounded, and the identity functor between two $t$-structures is left $t$-exact up to a shift, it follows by induction $\ol{\tau}_{\geq i} \F \in \DCoh(Z)$ for all $i$.  Using that the identity functor is both left and right $t$-bounded up to a shift, it follows that $\tau_{\geq i} \F \in \DCoh(Z)$ for all $i$.  This completes the proof.
\end{proof}

We have the Grothendieck dual statement to \autoref{prop: push aperf}:
\begin{prop}\label{prop: push pscoh} 
Suppose that $\pi  \colon Z \to S = \Spec A$ is a separated finitely-presented $S$-algebraic space over an affine (finitely-presented over $k$) base.
Then, the following conditions on $\F \in \QCsh(Z)$ are equivalent:
  \begin{enumerate}
      \item $\F \in \DCohm(Z)$ and $H_i(\F)$ has support proper over $S$ for each $i$;
      \item $\F \in \QCsh(Z)_{<N}$ for some $N$, and $H_i(X, \H \shotimes \F) \in \DCoh(A)^{\heart}$ for all $\H \in \DCohm Z$;
      \item $\F \in \QCsh(Z)_{<N}$ for some $N$, and $\RGamma(\H \shotimes \F) \in \DCohm(A)$ for all $\H \in \DCohm Z$;
  \end{enumerate}
\end{prop}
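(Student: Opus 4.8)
The plan is to deduce this from its Grothendieck dual, \autoref{prop: push aperf}, by applying the duality functor $\DD = \DD_Z$. Using unbounded Grothendieck duality (\autoref{prop:groth-duality}), $\DD$ is a $t$-bounded anti-equivalence on $\QCsh(Z)_{<\infty}$ interchanging $\DCohm(Z)$ with $\DCohp(Z)$; I set $\G = \DD\F$ and will match each condition (i)--(iii) on $\F$ with the corresponding condition of \autoref{prop: push aperf} on $\G$. Throughout I write $\RGamma^{!}$ for the $\QCsh$-theoretic global sections appearing in the statement and $\RGamma$ for the ordinary ($\QC$-theoretic) one.

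First I would dispose of condition (i). By unbounded duality, $\F \in \DCohm(Z) \iff \G \in \DCohp(Z)$. For the proper-support clauses, recall $\ol{H}_i(\F) = \DD \circ H_i \circ \DD(\F) = \DD\,H_i(\G)$, and that $\DD$ preserves proper support on the heart since it is compatible with open restriction; thus all $H_i(\F)$ have support proper over $S$ iff all $H_i(\G)$ do. This is precisely the equivalence (ii)$\iff$(iv) of \autoref{lem:2-t-structure}, so condition (i) here matches condition (i) of \autoref{prop: push aperf} for $\G$.

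Next, the global-sections conditions. The bridge is the pair of natural identities: duality interchanges the two tensor products, $\DD(\H \shotimes \F) \isom \DD\H \otimes \DD\F$ (the absolute case of \autoref{lem:groth-dual-functors}), and Grothendieck--Serre duality relates the $!$- and $*$-pushforwards, $\RGamma^{!}(Z, \K) \isom \DD_A\,\RGamma(Z, \DD\K)$. Combining these gives $\RGamma^{!}(Z, \H \shotimes \F) \isom \DD_A\,\RGamma(Z, \DD\H \otimes \G)$. Since $\H \mapsto \DD\H$ is an anti-equivalence $\DCohm(Z) \to \DCohp(Z)$ and $\DD_A$ is a $t$-bounded anti-equivalence interchanging $\DCohm(A)$ with $\DCohp(A)$, condition (iii) here translates into the requirement that $\RGamma(Z, \H' \otimes \G) \in \DCohp(A)$ for all $\H' \in \DCohp(Z)$, which is exactly condition (iii) of \autoref{prop: push aperf} for $\G$ (the homology-coherence clause of (ii) becomes the same $\DCohp$-condition, using that $\RGamma$ is bounded below by the finite cohomological dimension of $p$). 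Applying \autoref{prop: push aperf} to $\G$ then gives the mutual equivalence of all three conditions for $\F$; the remaining link (ii)$\iff$(iii) can also be checked directly, exactly as in the proof of \autoref{prop: push aperf}, its only content being a boundedness statement.

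The hard part will be the Grothendieck--Serre identity $\RGamma^{!}(Z, \K) \isom \DD_A\,\RGamma(Z, \DD\K)$ in the direction forcing condition (i): that identity is unconditional only for $\K$ with proper support (or for $p$ itself proper), whereas proper support is exactly what we wish to conclude. To circumvent this I would use the $\IndCoh$ internal-Hom reformulation $\RGamma^{!}(Z, \H \shotimes \F) \isom \RHom_{\QCsh(Z)}(\DD\H, \F)$, which is intrinsic to $Z$ and free of any pushforward-duality hypothesis, to transport condition (iii) to $\G$ without presupposing proper support. The reductions internal to \autoref{prop: push aperf} (Nagata compactification, Chow's lemma, and resolution of the diagonal applied to $\G$) then carry the weight, and that proposition returns the proper support of $\G$, hence of $\F$.
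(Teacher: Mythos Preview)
Your diagnosis of the obstruction is exactly right, but the proposed workaround does not close it. The internal-Hom reformulation $\RGamma^{!}(Z, \H \shotimes \F) \isom \RHom_{\QCsh(Z)}(\DD\H, \F)$ is valid, but it only re-expresses condition (iii) as another condition \emph{on $\F$ in $\QCsh$}; it does not produce condition (iii) of \autoref{prop: push aperf} on $\G = \DD\F$ in $\QC$. To pass from $\RHom_{\QCsh(Z)}(\H', \F) \in \DCohm(A)$ to $\RGamma(Z, \H' \otimes \G) \in \DCohp(A)$ you must still commute $\DD_A$ past the pushforward, which is precisely the Grothendieck--Serre step you flagged as circular. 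So \autoref{prop: push aperf} cannot be invoked as a black box on $\G$. There is also a prior issue: writing $\G = \DD\F$ already presumes $\F \in \DCohpm(Z)$, whereas hypothesis (iii) only gives $\F \in \QCsh(Z)_{<N}$; unbounded duality in \autoref{prop:groth-duality} lives on $\DCohpm$, not on all of $\QCsh(Z)_{<\infty}$.

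The paper instead \emph{re-runs} the reduction scheme of \autoref{prop: push aperf} directly in the $\QCsh$/$\shotimes$ setting, without ever translating the global-sections condition across duality. The easy directions (i) $\Rightarrow$ (ii) $\Leftrightarrow$ (iii) use only left $t$-exactness. For (iii) $\Rightarrow$ (i), one reduces to $Z$ classical using the counit $i_* i^! \to \id$ together with the dual $t$-structure $\ol{H}_i$ from \autoref{lem:2-t-structure}; then Nagata compactification and Chow's lemma (now via $p_* p^! \F \to \F$ rather than the $*$-unit) reduce to $Z = \PP^n_S$; finally Beilinson's resolution, rewritten through the $\QCsh$-over-$\QC$ module structure as $\P' \otimes_A \RGamma(Z, (\omega_Z \otimes \P) \shotimes -)$, gives the conclusion directly. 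Duality enters only locally (in defining $\ol{H}_i$ and in recasting the Beilinson kernels), never for the pushforward. Your intuition that the statement is ``Grothendieck dual'' to \autoref{prop: push aperf} is correct, but the duality has to be woven through each reduction step rather than applied once globally.
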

\begin{proof}  Note that (ii) is equivalent to (iii) by the left $t$-exactness of the functors involved.  Furthermore, (i) implies (ii) by the proper pushforward Theorem and the left $t$-boundedness of the functor $H_i(X, \H \shotimes -)$ which allows us to replace $\F$ by something in $\DCoh(Z)$ whose support is proper over $S$.

  It suffices to show that (iii) implies (i).  We imitate the proof of \autoref{prop: push aperf}, making the same series of reductions:

  \medskip

  {\noindent}{\it Reduction to $Z$ and $A$ classical:}\\
  By \autoref{lem:2-t-structure} it suffices to show that $\ol{H}_i(\F)$ is coherent with proper support for all $i$.  Furthermore, the proof of op.cit. shows that these vanish for $i \vinograd 0$ since $\F$ is left bounded for the usual $t$-structure.  Without loss of generality we may suppose that $\ol{H}_i(\F) = 0$ for $i > 0$, and it will suffice -- since (i) implies (iii) -- to show that $\ol{H}_0(\F)$ is coherent with proper support.

Let $i\colon  Z_{cl} \to Z$ be the inclusion of the underlying classical algebraic space of $Z$.  Note that $i^!$ is left $t$-exact for the $\ol{\QCsh}$ $t$-structure, since $i_*$ is $t$-exact and $i^!$ is its right adjoint.  In contrast to the usual $t$-structure, the natural map
  \[ i_* i^! \ol{H}_0(\F)  \longrightarrow \ol{H}_0(\F)  \]
  induces an isomorphism on $\ol{H}_0$: Since everything commutes with filtered colimits, it suffices so check this assuming that $\ol{H}_0(\F) = \DD(\F')$ with $\F' \in \DCoh(Z)^{\heart}$.  Then, this is the map
  \[  \DD H_0 (i_* i^* \F') = \ol{H}_0(i_* i^! \F)    \longrightarrow    \ol{H}_0(\F) = \DD \F'   \] 
  Grothendieck dual to the usual map $\F \to i_* i^* \F$ which induces an equivalence on $H_0$.

  Thus, we may reduce to showing that $i^! \F$ satisfies the hypotheses of (i).  By the projection formula, it satisfies the hypotheses of (iii).  So, we are redued to showing that (iii) implies (i)  in case $Z$ (and $A$) is classical.

  {\noindent}{\it Reduction to $Z$ proper:}\\ Note that $\pi$ is assumed finite-type, so we no longer have to do that reduction.  The exact same Nagata compactification argument as before applies to reduce to the case of $Z$ proper.
  
  {\noindent}{\it Reduction to $Z$ projective projective:}\\ The same Chow's Lemma + Noetherian induction argument as before applies, now using the map $p_* p^! \F \to \F$, to reduce to the case of $Z$ projective.  The same argument as before reduces us to the case of $Z = \PP^n_S$ a projective space.
  
  {\noindent}{\it Case of $Z = \PP^n_S$:}\\ In this case, we can build the identity functor on $\QC(Z \times_S Z)$ out of functors of the form 
  \[ \P' \otimes_A  \RGamma(Z, \P \otimes -) = \P' \otimes_A \RGamma(Z, (\omega_Z \otimes \P) \shotimes -) \]
  so that the hypotheses on $\F$ imply that it lies in $\DCohm Z$.  This completes the proof.
\end{proof}

Finally, we're ready to complete the proof of the Theorem:
\begin{proof}[Proof of \autoref{prop:fun-dcoh}]
  We must characterize those $\K \in \QCsh(X \times_S Y)_{<\infty}$ such that $\Phish_{\K}(\DCoh X) \subset \DCoh Y$. Note that the characterization, and the hypothesis, are both local on $Y$, so that we may suppose $Y$ affine.  Since $\Phish_{\K}$ is left $t$-bounded, it follows by an approximation argument that $\Phish_{\K}(\DCohm X) \subset \DCohm Y$.

  Let $\C \subset \DCohm(X \times_S Y)$ denote the full subcategory consisting of those $\F$ such that
  \[ H_i\left( p_{2_*}(\F \shotimes \K)\right) \in \DCoh Y^{\heart} \quad \text{for all $i \leq 0$} \]
  Since $\K$ is bounded above, there is some $N$ such that $\DCohm(X \times_S Y)_{<-N} \subset \C$.  It is closed under finite limits by inspection, and contains $p_1^!(\DCohm X)$ since $\Phish_{\K}(\DCohm X) \subset \DCohm Y$ by the above.  Thus by \autoref{lem:tensor-product:DDAPerf} we have $\C = \DCohm(X \times_S Y)$.

  Consequently, noting that the functor is left $t$-bounded up to a shift, we have that
  \[ p_{2*}(\F \shotimes \K) \in \DCohm Y \quad \text{for all $\F \in \DCohm(X \times_S Y)$}. \]

  By \autoref{prop: push pscoh} any such $\K$ lies in $\DCohm(X \times_S Y)$ and has homology sheaves that are compactly supported over $Y$.  It remains to check that assertion about finite $\shotimes$-amplitude over $X$.  Note that the functor
  \[ \RGamma(X \times_S Y, \K \shotimes (p_1)^!(-))  = \RGamma(Y, \Phish_{\K}(-)) \]
  is left and right $t$-bounded by \autoref{lem:dcoh-bounded} and the finite cohomological dimension of $Y$.  If $X$ were affine, we would thus be done by \autoref{lem:tor-dimension}.  For the general case, let $q\colon U \to X$ be a smooth cover by an affine scheme and let $q'\colon U \times_S Y \to X \times_S Y$ be its base-change.  
  
  Note that $(q')^! \K$ has finite $\shotimes$-dimension over $U$: Indeed, for $\F \in \QCsh U$ base-change and projection provide equivalences
  \[ \RGamma(X \times_S Y, \K \shotimes p_1^! q_* \F) = \RGamma\left(X \times_S Y, \K \shotimes q'_*(p_1^!(\F))\right) = \RGamma\left(U \times_S Y, (q')^! \K \shotimes p_1^! \F \right)  \]
  so that this follows by the above, together with the observation that $q_*$ is left and right $t$-bounded up to a shift.

  Since $q'$ is smooth, this also implies that $(q')^* \K$ has finite $\shotimes$-dimension over $U$, which was our definition.  This completes the proof.
\end{proof}

\subsection{Complements: Functors out of $\DCoh \otimes_{\Perf} \DCoh$}
We have now described the case of functors out of $\Perf X$ and $\DCoh X$.  In case $X$ is of finite Tor dimension, there is a natural restriction functor between them and we have seen that the descriptions of functor categories are compatible with this.  In case $X$ is \emph{quasi-smooth}, there are also a variety of categories between the two.  There should likely be a similar result for functor categories out of these $\DCoh_{\Lambda}$ in general.  We will, however, content ourselves with certain special subcategories: Those gotten as the essential image of exterior products $\DCoh \otimes_{\Perf} \DCoh \otimes_{\Perf} \cdots$.

\begin{lemma} 
Suppose $S$ is a quasi-compact, geometric, smooth $k$-stack; that $X_i \to S$ are quasi-compact, separated, finitely-presented, finite Tor-dimension $S$-algebraic spaces for $i = 1, \ldots, n$; and that $Y$ is a quasi-compact and finitely-presented $S$-stack.  Then, the restriction
  \[ \Fun^{ex}_{\Perf S}(\DCoh X_1 \otimes_{\Perf S} \cdots \otimes_{\Perf S} \DCoh X_n, \DCoh Y) \longrightarrow \Fun^{ex}_{\Perf S}(\Perf X_1 \otimes_{\Perf S} \cdots \otimes_{\Perf S} \Perf X_n, \DCoh Y) \] 
  is fully faithful.  Furthermore, one recovers $\F$ from the restriction $i^* \F$ by Kan extending and restricting along $\DCoh \hookrightarrow \QC$.
\end{lemma}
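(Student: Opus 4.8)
The plan is to reduce to the case $n=1$ by an absolute computation followed by smooth descent, mirroring the proofs of \autoref{lem:dcoh-bounded} and \autoref{cor:fun-dcoh-0}. Write $W = X_1 \times_S \cdots \times_S X_n$. Iterating the $\Perf$-K\"unneth equivalence (\ref{eq perf tens equiv}) identifies the source of the restriction with $\Perf(W)$, so that the domain of $i$ is $\A := \Perf X_1 \otimes_{\Perf S}\cdots\otimes_{\Perf S}\Perf X_n \simeq \Perf(W)$ and $\Ind\A \simeq \QC(W)$. The exterior product over $S$ defines an exact $\Perf S$-linear functor $m\colon \B := \DCoh X_1 \otimes_{\Perf S}\cdots\otimes_{\Perf S}\DCoh X_n \to \DCoh(W)$; it lands in bounded (rather than merely bounded-above) coherent complexes precisely because $S$ is smooth, so that the relative diagonal, through which $\boxtimes_S$ factors, is of finite Tor dimension. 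By construction $m$ restricts on $\A$ to the standard inclusion $\Perf W \hookrightarrow \DCoh W$, and the ``Kan extend, then restrict along $\DCoh\hookrightarrow\QC$'' recipe of the statement is concretely the composite $\B \xrightarrow{m}\DCoh W \hookrightarrow \QC W \xrightarrow{\Phi_{\K}}\QCsh Y$, where $\K\in\DCoh_{prop/Y}(W\times_S Y)$ is the kernel attached to $i^*\F$ by \autoref{thm: dcoh shriek} and $\Phi_\K$ is its $*$-transform Kan-extended to $\QC W$.

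First I would settle the absolute case $S=\Spec k$. Here \autoref{prop:absolute-fun}(i), applied iteratively, upgrades $m$ to an \emph{equivalence} $\B \simeq \DCoh(W)$: exterior product over a field is $t$-exact, so no boundedness is lost and nothing is quotiented. Under this equivalence $i^*$ becomes restriction along $\Perf W\hookrightarrow\DCoh W$, i.e. the $n=1$ instance of the lemma for $W\to\Spec k$, and that instance is already in hand. Indeed \autoref{cor:functors-dcoh-reformulation} identifies $\Fun^{ex}_{\Perf S}(\DCoh W,\DCoh Y)$ with the kernels in $\DCoh_{prop/Y}(W\times_S Y/W)$ of finite Tor dimension over $W$, while \autoref{thm: dcoh shriek} identifies $\Fun^{ex}_{\Perf S}(\Perf W,\DCoh Y)$ with \emph{all} of $\DCoh_{prop/Y}(W\times_S Y)$; the restriction $i^*$ is exactly the inclusion of the former full subcategory into the latter, hence fully faithful, and the recovery statement becomes the assertion that the kernels of finite Tor dimension over $W$ are precisely those for which $\Phi_\K$ extends from $\Perf W$ to $\DCoh W$.

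Finally I would descend to general smooth $S$ by the totalization argument of \autoref{thm:fun-regulariz}(ii)--(iii) and the ``reduction to the absolute case'' step of \autoref{lem:dcoh-bounded}. Choosing a smooth affine atlas and forming its \v{C}ech nerve, base change along $-\otimes_{\Perf S}\Perf(U^{\times_S\bullet})$ turns $\B$ and $\A$ into their analogues over each $U^{\times_S m}$ (using the $\DCoh\otimes\Perf$ K\"unneth equivalence $\DCoh X_j\otimes_{\Perf S}\Perf U\simeq\DCoh(X_j\times_S U)$ cited in \autoref{lem:dcoh-bounded}), and both functor categories, together with $i^*$, are recovered as the totalizations of these; the face maps are pushforwards along graphs and diagonals and hence $t$-exact. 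The main obstacle is exactly this descent: over a nontrivial base $m$ is \emph{not} an equivalence (indeed $\QCsh$ fails K\"unneth over $S$, which is the raison d'\^etre of \autoref{sect: shriek}), so full faithfulness cannot be read off objectwise and must be propagated, and --- more delicately --- so must the \emph{uniform} $t$-boundedness keeping the kernels inside $\QCsh(-\times_S Y)_{<\infty}$ at every cosimplicial level. This is what the regularization package (\autoref{thm:fun-regulariz}, \autoref{cor:dcoh-t-reg}) is designed to control: as in the proof of \autoref{thm:fun-regulariz}(iii) one fixes the shift $N$ supplied by the absolute case, checks that it is preserved by the $t$-exact face maps, and concludes that the termwise equivalences assemble into the desired fully faithful $i^*$ together with its recovery.
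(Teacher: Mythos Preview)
Your absolute case is fine: over $S=\Spec k$ the exterior product $\B\simeq\DCoh(W)$ is an equivalence, and then the $n=1$ statement follows from \autoref{cor:functors-dcoh-reformulation} and \autoref{thm: dcoh shriek}, since both functor categories embed as full subcategories of $\QC(W\times Y)$ via their kernel descriptions.

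The descent step, however, does not work as written. Forming the \v{C}ech nerve of a smooth affine atlas $U\to S$ puts you over the bases $U^{\times_S m}$, which are smooth affine $k$-schemes but not points. At each level you face exactly the original problem with $S$ replaced by $U^{\times_S m}$: the exterior product $\DCoh(X_1\times_S U^{\times_S m})\otimes_{\Perf U^{\times_S m}}\cdots$ is \emph{not} equivalent to $\DCoh$ of the fiber product over $U^{\times_S m}$, so you cannot reduce to $n=1$ there. You seem to have in mind the bar resolution of \autoref{thm:fun-regulariz}(ii)--(iii), whose terms are absolute products $X_1\times\cdots\times X_n\times S^{\times m}\times Y$ over $k$; but that is a different diagram from the \v{C}ech nerve, and even there one must argue that the \emph{small} functor categories with target $\DCoh Y$ (not $\QCsh Y$) are recovered from the totalization, which is not automatic since $\DCoh Y$ is not closed under limits.

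The paper avoids descent entirely. It proves two claims: (1) any $F\in\Fun^{ex}_{\Perf S}(\B,\DCoh Y)$ is right $t$-exact up to a shift in the sense that it takes products of connective objects to bounded-below objects (this uses the absolute K\"unneth $\DCoh X_1\otimes\cdots\otimes\DCoh X_n\simeq\DCoh(X_1\times\cdots\times X_n)$ together with \autoref{lem:dcoh-bounded}); and (2) any $F\in\Fun^{ex}(\DCoh(W),\QC Y)$ whose restriction along $\boxtimes_S$ is right $t$-exact in that sense is right $t$-exact in the usual sense. Claim (2) is the real work: after reducing to $S$ affine and then to $S=\pt$, one runs an induction on open/excision covers of the $X_i$ to propagate $t$-exactness from exterior products to all of $\DCoh(W)$. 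Granting the claims, full faithfulness follows directly from left $t$-completeness of $\QC(Y)$: writing $L\colon\QC(W)\to\QCsh(W)$ for Kan extension of $\Perf\hookrightarrow\DCoh$ and $M$ for its right adjoint, the cone of $LM\to\id$ is infinitely connective, so any right $t$-exact functor into $\QC(Y)$ annihilates it and the counit $M^*L^*F\to F$ is an equivalence. This last step is exactly your ``Kan extend and restrict'' recovery formula, but the paper supplies the missing reason it works.
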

\begin{proof} It will suffice to prove the following two claim:
 
  \bigskip 

  {\noindent}{\bf Claim 1:} {\em Suppose $F \in \Fun^{ex}_{\Perf S}(\DCoh X_1 \otimes_{\Perf S} \cdots \otimes_{\Perf S} \DCoh X_n, \DCoh Y)$.  Then, $F$ is ``right $t$-exact up to a shift'' in the sense that there exists an integer $N$ so that 
  \[ F\left( (\DCoh X_1)_{>0} \times \cdots \times (\DCoh X_n)_{>0} \right) \subset (\DCoh Y)_{>-N} \]}

  Note that if $n=1$, this was \autoref{lem:dcoh-bounded}.  As there, conjugating with Grothendieck duality implies that any such $F$ is also left $t$-exact up to shift in the appropriate sense.

\bigskip

Consider the exterior product
\[ \boxtimes_S \colon \DCoh(X_1) \otimes_{\Perf(S)} \cdots \otimes_{\Perf(S)} \DCoh(X_n) \longrightarrow \DCoh(X_1 \times_S \cdots \times_S X_n) \]

  {\noindent}{\bf Claim 2:} {\em Suppose $F \in \Fun^{ex}(\DCoh(X_1 \times_S \cdots \times_S X_n)), \QC Y)$ is such that $F \circ \boxtimes_S$ is right $t$-exact up to a shift in the sense of Claim 1.  Then, $F$ is right $t$-exact up to a shift in the usual sense.}

\bigskip

  Let us explain how the claims imply the desired result.
 Let us introduce the temporary notations $$\DCoh(X_I/S)= \DCoh X_1 \otimes_{\Perf S} \cdots \otimes_{\Perf S} \DCoh X_n$$ and
 $$\Perf(X_I/S)=\Perf X_1 \otimes_{\Perf S} \cdots \otimes_{\Perf S} \Perf X_n.$$
 Now consider the diagram
\[  \xymatrix{
\Fun^{ex}_{\Perf S}(\DCoh(X_I/S), \DCoh Y) \ar[r]  \ar@{^{(}->}[d] & \Fun^{ex}_{\Perf S}(\Perf(X_I/S), \DCoh Y)  \ar@{^{(}->}[d] \\
\Fun^{ex}_{\Perf S}(\DCoh(X_I/S), \QC Y) \ar[r] \ar@{^{(}->}[d]_{\mathrm{LKan}_{\boxtimes_S}} & \Fun^{ex}_{\Perf S}(\Perf(X_I/S), \QC Y) \ar@{^{(}->}[d]^{\sim}_{\mathrm{LKan}_{\boxtimes_S}}  \\
\Fun^{ex}_{\Perf S}(\DCoh(X_1 \times_S \cdots \times_S X_n), \QC Y) \ar[r] & \Fun^{ex}_{\Perf S}(\Perf(X_1 \times_S \cdots \times_S X_n), \QC Y) \\
}\]   
where the top set of vertical arrows are induced by the inclusions, and the bottom set of vertical arrows are given by left Kan extension along the respective fully faithful exterior product functor.  From the diagram, we see that it is thus enough to show that bottom arrow -- induced by restriction along the inclusion of $\Perf$ into $\DCoh$ -- is fully faithful on the essential image of the left vertical composite.  The Claims, taken together, imply that this essential image consists entirely of functors which are right $t$-exact up to a shift.  

Let
\[ L \colon \QC(X_1 \times_S \cdots \times_S X_n) \longrightarrow \QCsh(X_1 \times_S \cdots \times_S X_n) \]  be the Kan extension of $\Perf(\ldots) \to \DCoh(\ldots)$.
It is enough to show that the restriction functor
\[ L^* \colon \Fun^{L}_{\QC S}(\QCsh(X_1 \times_S \cdots \times_S X_n), \QC(Y)) \longrightarrow \Fun^{L}_{\QC S}(\QC(X_1 \times_S \cdots \times_S X_n), \QC(Y)) \]
is fully faithful on the subcategory of right $t$-exact functors.

To do this, we introduce some more notation: Note that $L$ preserves compact objects and colimits, so that it admits a limit-and-colimit preserving right adjoint $M$.  Furthermore, $M$ is $t$-exact.  Consequently, $L^*$ is right adjoint to $M^*$ and it suffices to show that $M^* L^*(F) \isom F$ for a right $t$-exact functor $F$.  

But recall that $L$ and $M$ induces an equivalence on co-connective objects.  In particular, for every $V \in \QCsh(...)$ the co-unit map
$L(M(V)) \to V$ has infinitely connective cone.  Consequently, any right $t$-exact functor carries it to an infinitely connective object of $\QC(Y)$ -- since the latter is left $t$-complete, any infinitely connective object is a zero object.  This proves that the counit $M^* L^*(F) \to F$ is an equivalence, and completes the proof modulo the Claims.

  \bigskip

  {\noindent}{\it Proof of Claim 1: }\\ 
  Note that $F$ gives rise to a $\Perf S^{\otimes n}$-linear functor
  $ F' \colon \DCoh X_1 \otimes \cdots \otimes \DCoh X_n \longrightarrow \DCoh Y$ 
  and that $F$ satisfies the conclusion of the claim if and only if $F'$ satisfies it.  Consider the commutative diagram
  \[ \xymatrix{
  \DCoh X_1 \otimes \cdots \otimes \DCoh X_n \ar[r]^-{F'} \ar[d]^{\sim}_{\boxtimes} & \DCoh Y \\
  \DCoh (X_1 \times \cdots \times X_n) \ar@{-->}[ur]_-{F''}
  } \]
  Since the vertical map is a $\Perf S^{\otimes n}$-linear equivalence, there exists a $\Perf S^{\otimes n}$-linear functor $F''$ making the diagram commute.  By \autoref{lem:dcoh-bounded}, $F''$ is right $t$-exact up to a shift in the usual sense.  Since the exterior product is right $t$-exact in the sense of the Claim, this completes the proof.

  \bigskip

  {\noindent}{\it Proof of Claim 2: }\\ We may pass to large categories, so
suppose that 
\[ F \colon \QCsh(X_1 \times_S \cdots \times_S X_n) \longrightarrow \QC Y \]
is a colimit-preserving functor such that
\[ F(V_1 \boxtimes_S \cdots \boxtimes_S V_n) \in \QC(Y)_{>0} \qquad \text{for all tuples with $V_i \in \QCsh(X_i)_{>0}$} \]
We wish to show that $F$ is right $t$-exact up to a finite shift.

\medskip

{\it Reduction to $S$ affine:} Suppose $p \colon U \to S$ is a smooth cover by an affine scheme.  Let $p_i \colon X'_i = X_i \times_S U \to X_i$ be the projections.   Imitating the proof of \autoref{lem:dcoh-bounded}, we see that we can reduce to the affine case provided we check that $F_U$ is right $t$-exact provided that $F$ is.  That is, we are given that
\[ F_U(p_i^* V_1 \otimes \cdots \otimes p_n^* V_n) \] is connective for $V_i$ connective, and we must conclude that it is also the case that
$ F_U(V'_1 \otimes \cdots \otimes V'_n)$ is connective for arbitrary $V'_i$ connective.  For this, we note that connective objects are closed under colimits, that tensor and $F_U$ all preserve colimits, and the geometric realization diagram
\[ V'_i \stackrel\sim\longleftarrow \left\| (p_i^* p_{i*})^{\bullet+1} V'_i \right\| \]
Note that each term of the geometric realization is the pullback of a connective object, since $p_i$ is smooth and affine so that $p_i^*$ and $p_{i*}$ are both $t$-exact.  This completes the reduction to $S$ affine.

\medskip

{\it Reduction to $S = Y = \pt$:} The map 
\[ i \colon X_1 \times_S \cdots \times_S X_n  \longrightarrow X_1 \times \cdots \times X_n \] has finite Tor dimension since $S$ is smooth.  Thus, there is a well-defined -- and right $t$-exact -- pullback functor 
\[ i^* \colon \QCsh(X_1 \times \cdots \times X_n) \longrightarrow \QCsh(X_1 \times_S \cdots \times_S X_n) \]
and furthermore
\[ i^*(V_1 \boxtimes \cdots \boxtimes V_n) = V_1 \boxtimes_S \cdots \boxtimes_S V_n \] for tuples as above.  Thus, it is enough to replace $S$ by $\pt$ and $F$ by $F \circ i^*$.  Thus we are in the absolute setting $S = \pt$.  Note also that the claim is local on $Y$, so we may suppose that $Y$ is affine.  Composing with the exact, conservative, global sections functor we may suppose that $Y = \pt$.

\medskip

{\it Reduction to $S = \pt$ affine:} We now handle a series of increasingly more complicated cases.

\smallskip

Note first that if all the $X_i$ are \emph{affine}, then the claim is easy: In this case, $\QCsh(X_1 \times_S \cdots \times_S X_n)_{\geq 0}$ is just generated by the structure sheaf $\O$, which is an exterior product of structure sheaves.  Thus, we can conclude that $F$ is in fact right $t$-exact.

\smallskip

Next, for pedagogical purposes, suppose that $X_2, \ldots, X_n$ are affine, while $X_1 = U \cup V$ is the intersection of two affine schemes.  Set $\X = X_1 \times \cdots \times X_n$, and $\X_U = \X \times_{X_1} U$ and similarly for $\X_V$ and $\X_{U \cap V}$.  Let $j_U \colon \X_U \to \X$ be the inclusion, and similarly for $V$.  Since $(j_U)_*$ is $t$-exact, we see that the above case applies to $F \circ j_U$  -- thus $F \circ j_U$ is right $t$-exact, and similarly for $F \circ j_V$ and $F \circ j_{U \cap V}$.  Now, for any $\G \in \QCsh(\X)$ we have a (rotated) Mayer-Vietoris cofiber sequence
\[ (j_{U*} j_U^* \G \oplus j_{V*} j_V^* \G)[-1] \longrightarrow (j_{U\cap V*} j_{U \cap V}^* \G)[-1]  \longrightarrow \G\]
Applying $F$ we obtain
\[ (F \circ j_{U*}(j_U^* \G) \oplus F \circ j_{V*}(j_V^* \G))[-1] \longrightarrow (F \circ j_{U\cap V*}(j_{U \cap V})^* \G)[-1]  \longrightarrow F(\G) \]
and since the first two terms are in $\QC(Y)_{\geq -1}$ by the above, we see that $F(\G) \in \QC(Y)_{\geq -1}$.   Thus, $F$ is right $t$-exact up to a shift by $1$.  A similar argument, by inducting on the number of affine opens in a cover of each $X_i$, completes the proof in case where each $X_i$ is a quasi-compact and separated scheme.

\smallskip

Suppose now that $X_2, \ldots, X_n$ are schemes, while $X_1$ is an algebraic space.  Notice that if $f \colon X'_1 \to X_1$ is any affine morphism from a scheme or algebraic space for which we know the result, then $F \circ f_*$ is right $t$-exact up to a shift by the above cases.   Consider now an excision square as before: So $U \subset X_1$ is an open subspace for which we know the result, $Z$ is its closed complement, and there is an \'etale map $\eta \colon \Spec R \to \X_1$ such that $\eta^{-1}(U)$ is affine.  Then, $\eta^{-1}(Z)$ is cut out by some $f_1, \ldots, f_d \in H_0(R)$.  In this case, for any $\G \in \QCsh(\X)_{\geq 0}$ we can consider the rotated cofiber sequence
\[ j_{U*} j_U^* \G[-1] \longrightarrow \F_Z \longrightarrow \G \]
and applying $F$ we obtain
\[ (F \circ j_{U*}) j_U^* \G[-1] \longrightarrow F(\F_Z) \longrightarrow F(\G) \]
We can bound the connectivity of the first term by using that we know the result for $U$; we bound it for the second term by computing the local cohomology on $\Spec R$ and using the explicit Koszul sequence (to get a bound of $(-d)$-connective).  This boundes the connectivity of $F(\G)$.  This completes the proof in this simplified case, and the general case is analogous.
   \end{proof}

 \begin{prop}\label{prop:fun-dcoh-many}
  Suppose that $S$ is a regular Noetherian perfect stack; that $X_i \to S$ are finite-type, finite Tor-dimension, relative algebraic spaces for $i=1,\ldots,n$; and that $Y$ a perfect $S$-stack.
  
  Let $\C_{X_i,Y/S} \subset \QC(X_1 \times_S \cdots \times_S X_n \times_S Y)$ denote the full subcategory of those $\K$ such that 
  \[ \Phi_{\K}(\F_1 \boxtimes_S \F_2 \cdots \boxtimes_S \F_n) \in \DCoh Y \qquad \text{for all $\F_i \in \DCoh(X_i)$} \] 
  
  Then: $\C_{X_i,Y/S} \subset \DCoh_{prop/Y}(X_1 \times_S \cdots \times_S X_n \times_S Y)$ and the restriction of the star integral transform induces an equivalence
  \[ \C_{X, Y/S} \stackrel{\Phi}\longrightarrow \Fun^{ex}_{\Perf S}(\DCoh X_1 \otimes_{\Perf S} \DCoh X_2 \otimes_{\Perf S} \cdots \otimes_{\Perf S} \DCoh X_n, \DCoh Y) \]
\end{prop}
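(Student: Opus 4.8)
The plan is to reduce the entire statement to the single-source case already established in \autoref{thm: dcoh shriek}, using the preceding Lemma to pass back and forth between functors out of the exterior-tensor category $\DCoh X_1 \otimes_{\Perf S} \cdots \otimes_{\Perf S} \DCoh X_n$ and functors out of $\Perf$. Write $Z = X_1 \times_S \cdots \times_S X_n$. Since each $X_i \to S$ is a finite-type separated relative algebraic space and fiber products of such over $S$ are again of this form, $Z \to S$ is a quasi-compact separated relative algebraic space of finite type, hence of finite cohomological dimension by \autoref{lem:very good}, so it satisfies the hypotheses of \autoref{thm: dcoh shriek}. Iterating equivalence~(\ref{eq perf tens equiv}) gives $\Perf Z \simeq \Perf X_1 \otimes_{\Perf S} \cdots \otimes_{\Perf S} \Perf X_n$. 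Finally, because $S$ is regular and each $X_i$ is of finite Tor dimension over $S$, each $X_i$ is of finite Tor dimension over $k$, so $\Perf X_i \subset \DCoh X_i$ and the exterior products of perfect complexes lie in the domain of any $F \in \Fun^{ex}_{\Perf S}(\DCoh X_1 \otimes_{\Perf S}\cdots\otimes_{\Perf S} \DCoh X_n, \DCoh Y)$.

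For essential surjectivity, given such an $F$ I would restrict along $\Perf \hookrightarrow \DCoh$ to obtain $i^* F \in \Fun^{ex}_{\Perf S}(\Perf Z, \DCoh Y)$ and apply \autoref{thm: dcoh shriek} to produce a kernel $\K \in \DCoh_{\proper/Y}(Z \times_S Y)$ with $\Phi_\K|_{\Perf Z} \simeq i^* F$. The recovery clause of the preceding Lemma (``Kan extend and restrict along $\DCoh \hookrightarrow \QC$'') then identifies $F$ with $\Phi_\K \circ \boxtimes_S$, since Kan extending $i^* F$ to a colimit-preserving functor on $\QC Z$ yields exactly $\Phi_\K$ by the $\QC$ functor theorem. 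In particular $\Phi_\K(\F_1 \boxtimes_S \cdots \boxtimes_S \F_n) = F(\F_1 \otimes \cdots \otimes \F_n) \in \DCoh Y$ for all $\F_i \in \DCoh X_i$, so $\K \in \C_{X_i,Y/S}$ and $\Phi(\K) \simeq F$.

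For the inclusion $\C_{X_i,Y/S} \subset \DCoh_{\proper/Y}(Z \times_S Y)$ and for full faithfulness I would use the commuting triangle
\[ \C_{X_i,Y/S} \xrightarrow{\ \Phi\ } \Fun^{ex}_{\Perf S}\bigl(\DCoh X_1 \otimes_{\Perf S}\cdots\otimes_{\Perf S} \DCoh X_n,\, \DCoh Y\bigr) \xrightarrow{\ i^*\ } \Fun^{ex}_{\Perf S}(\Perf Z,\, \DCoh Y), \]
whose composite sends $\K$ to $\Phi_\K|_{\Perf Z}$. For the inclusion, observe that any $\K \in \C_{X_i,Y/S}$ already satisfies $\Phi_\K(\Perf Z) \subset \DCoh Y$: taking $\F_i = \P_i \in \Perf X_i$ in the defining condition shows $\Phi_\K(\P_1 \boxtimes_S \cdots \boxtimes_S \P_n) \in \DCoh Y$, and such exterior products generate $\Perf Z$ under finite colimits, shifts and retracts, which $\DCoh Y$ preserves. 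The ``completing the proof'' step of \autoref{thm: dcoh shriek} then forces $\K \in \DCoh_{\proper/Y}(Z \times_S Y)$, giving part (a) and exhibiting $i^* \circ \Phi$ as the restriction of the \autoref{thm: dcoh shriek} equivalence to the full subcategory $\C_{X_i,Y/S}$, hence fully faithful. Since $i^*$ is fully faithful by the preceding Lemma, $\Phi$ is fully faithful as well.

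I expect the genuine difficulty to have already been absorbed into the preceding Lemma — its boundedness Claims and its Kan-extension recovery statement — so that relative to those inputs the present argument is a formal diagram chase. The one point demanding care is precisely the recovery step: one must confirm that the kernel $\K$ extracted at the level of $\Perf Z$ via \autoref{thm: dcoh shriek} computes $F$ correctly on \emph{all} exterior products of coherent, not merely perfect, sheaves. This is exactly what the ``Kan extend and restrict along $\DCoh \hookrightarrow \QC$'' clause supplies, together with the compatibility of $\Phi_\K$ with the filtered-colimit presentations of objects of $\DCoh Z$ by perfect complexes.
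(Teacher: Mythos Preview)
Your proposal is correct and follows essentially the same approach as the paper's own proof: both reduce to \autoref{thm: dcoh shriek} applied to $Z = X_1 \times_S \cdots \times_S X_n$, use the tensor equivalence $\Perf Z \simeq \Perf X_1 \otimes_{\Perf S} \cdots \otimes_{\Perf S} \Perf X_n$, and invoke the preceding Lemma (its full faithfulness of restriction along $\Perf \hookrightarrow \DCoh$ together with the Kan-extension recovery clause) to transport the answer back. Your write-up is simply a more explicit unpacking of the paper's two-sentence argument, and you have correctly isolated the one delicate point—the recovery step—as being absorbed into the Lemma.
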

\begin{proof} This follows immediately from the previous Lemma and \autoref{thm: dcoh shriek}.  Indeed, it identifies the functor category with a full subcategory of 
  \begin{align*} \Fun^{ex}_{\Perf S}(\Perf X_1 \otimes_{\Perf S} \cdots \otimes_{\Perf S} \Perf X_n, \DCoh Y)  &\isom \Fun^{ex}_{\Perf S}(\Perf (X_1 \times_S \cdots \times_S X_n), \DCoh Y)  \\
    &\isom \DCoh_{prop/Y}(X_1 \times_S \cdots \times_S X_n \times_S Y) 
  \end{align*}
  and it follows from the last phrase in the Lemma that this is the indicated subcategory.  Finally, recall that the proof of \autoref{thm: dcoh shriek} in fact identified $\DCoh_{prop/Y}(\cdots)$ as the full-subcategory of $\QC(\cdots)$ consisting of those $\K$ such that $\Phi_\K(\Perf) \subset \DCoh$.
\end{proof}


\section{Appendix: Recollections on $t$-structures}\label{app:t-bdd}
The goal of this Appendix is to recall some constructions having to do with $\infty$-categories with $t$-structure which are implicit in many places, and possibly explicit in some, in this paper.  Some constructions similar to $\R$ appear in \cite{FrenkelGaitsgory-Dmod}*{Section~22} and ideas similar to those exposed here have also been worked out by J. Lurie in unpublished work.  The present exposition is a shortened version of the Appendices to a not-yet-available preprint of the third author \cite{toly-loops}, so we will omit some proofs.

\subsection{Completions of $t$-structures}
For the reader's convenience, we recall several convenient conditions and constructions with $t$-structures from \cite{LurieHA}:
\begin{lemma} Suppose $\C$ is a stable presentable $\infty$-category with accessible $t$-structure (recall \cite[Def.~1.4.5.12]{LurieHA} that this is equivalent to requiring $\C_{<0}$ to be presentable).  Then, TFAE:
	\begin{enumerate}
		\item $\C_{<0}$  is closed under filtered colimits in $\C$.
		\item $i_{<0}\colon \C_{<0} \to \C$ preserves filtered colimits.
		\item $L_{<0} = i_{<0} \tau_{<0}\colon \C \to \C$ preserves filtered colimits.
		\item $L_{\geq 0} = i_{\geq 0} \tau_{\geq 0} \colon \C \to \C$ preserves filtered colimits.
		\item $\tau_{\geq 0}\colon \C \to \C_{\geq 0}$ preserves filtered colimits.
	\end{enumerate}
	These equivalent conditions imply that:
	\begin{enumerate}
		\item $i_{\geq 0}\colon \C_{\geq 0} \to \C$ preserves compact objects.
		\item $\tau_{<0}\colon \C \to \C_{<0}$ preserves compact objects.
	\end{enumerate}
	Furthermore,
	\begin{itemize}
		\item If $\C$ is compactly-generated, then so is $\C_{<0}$ (with compact objects retracts of objects of the form $\tau_{<0} \K$, $\K \in \C^c$).  In this case, the above conditions are \emph{equivalent to} $\tau_{<0}$ preserving compact objects.
		\item If $\C$ and $\C_{\geq 0}$ are compactly-generated, then the above conditions are \emph{equivalent} to $i_{\geq 0}$ preserving compact objects.
	\end{itemize}
\end{lemma}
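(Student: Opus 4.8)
The plan is to prove the cycle of equivalences (1)--(5) directly from the two defining adjunctions $\tau_{<0}\dashv i_{<0}$ and $i_{\geq 0}\dashv \tau_{\geq 0}$ together with stability of $\C$, and then to obtain all remaining assertions from the standard criterion that, for an adjunction $F\dashv G$ of presentable categories, $G$ preserving filtered colimits implies $F$ preserving compact objects, with the converse holding once the source of $F$ is compactly generated \cite{LurieHA}. For the equivalences themselves: (1) and (2) are mutual reformulations, since $i_{<0}$ is fully faithful and so preserves a filtered colimit exactly when the colimit computed in $\C$ of a diagram lying in $\C_{<0}$ lies again in $\C_{<0}$. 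As $\tau_{<0}$ is a left adjoint it preserves all colimits, whence (2) gives (3) by composition; conversely, if $L_{<0}$ preserves filtered colimits then for a filtered diagram in $\C_{<0}$ the unit $\colim \to L_{<0}(\colim)\simeq \colim L_{<0}(-)\simeq \colim(-)$ is an equivalence, exhibiting the colimit in the essential image $\C_{<0}$ of $L_{<0}$, which is (1). I would deduce (3)$\Leftrightarrow$(4) from the fibre sequence $L_{\geq 0}\to \id\to L_{<0}$: the identity preserves all colimits and filtered colimits are exact in the stable category $\C$, so the outer two functors preserve filtered colimits simultaneously. Finally (4)$\Leftrightarrow$(5) holds because $i_{\geq 0}$ is a fully faithful left adjoint, hence preserves and reflects filtered colimits along its image, so $L_{\geq 0}=i_{\geq 0}\tau_{\geq 0}$ preserves them iff $\tau_{\geq 0}$ does.

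The two displayed consequences are then immediate from the easy half of the criterion, which needs no compact-generation hypothesis: applying it to $i_{\geq 0}\dashv\tau_{\geq 0}$ using (5) shows $i_{\geq 0}$ preserves compacts, and applying it to $\tau_{<0}\dashv i_{<0}$ using (2) shows $\tau_{<0}$ preserves compacts. For the first \emph{Furthermore} bullet I would use the converse half of the criterion, which is available once $\C$ is compactly generated: ``$\tau_{<0}$ preserves compacts'' then forces its right adjoint $i_{<0}$ to preserve filtered colimits, i.e. condition (2), so that this property joins the list of equivalent conditions. Granting it, $\C_{<0}$ is compactly generated because the objects $\tau_{<0}\K$ with $\K\in\C^c$ are now compact in $\C_{<0}$ and generate it under colimits --- the latter since $\tau_{<0}$ preserves colimits and is essentially surjective onto $\C_{<0}$ while $\C^c$ generates $\C$ --- and the identification of the compact objects as retracts of the $\tau_{<0}\K$ is the standard description of compacts in a category compactly generated by a family already closed under finite colimits and shifts. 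The second bullet is symmetric: ``$i_{\geq 0}$ preserves compacts'', together with compact generation of its source $\C_{\geq 0}$, yields via the converse half that $\tau_{\geq 0}$ preserves filtered colimits, i.e. condition (5); combined with the easy direction this gives the asserted equivalence.

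I expect the genuine content to be concentrated in two places. The first is the stable fibre-sequence step (3)$\Leftrightarrow$(4), where one must know that filtered colimits commute with the finite limits defining the truncation triangle; this is where stability of $\C$ is essential. The second, and the main obstacle, is the compact generation of $\C_{<0}$: the crux is that $\tau_{<0}\K$ is compact in $\C_{<0}$, which unwinds through the adjunction $\tau_{<0}\dashv i_{<0}$ precisely to $i_{<0}$ preserving filtered colimits, so that this step is where the equivalent hypotheses genuinely re-enter rather than being formal. Everything else is bookkeeping: tracking which of the two adjunctions is in play and, for each use of the compact/filtered-colimit criterion, whether a compact-generation hypothesis on the source is required --- it is not for the two displayed consequences, but it is for the converses in both \emph{Furthermore} bullets.
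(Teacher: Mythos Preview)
The paper's own proof is simply ``Omitted,'' so there is no approach to compare against. Your argument is correct: the cycle (1)--(5) follows cleanly from the two adjunctions and the truncation fiber sequence $L_{\geq 0}\to\id\to L_{<0}$ (using that filtered colimits in a stable presentable category are exact), and the remaining assertions are exactly the standard compact-object/filtered-colimit criterion applied to $\tau_{<0}\dashv i_{<0}$ and $i_{\geq 0}\dashv\tau_{\geq 0}$, with compact generation of the relevant source supplying the converse direction in each \emph{Furthermore} bullet.

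One minor remark on presentation: in your (3)$\Rightarrow$(1) step, the reason the unit $\colim x_\alpha \to L_{<0}(\colim x_\alpha)$ is an equivalence is that both $\id$ and $L_{<0}$ preserve the filtered colimit, so $\eta_{\colim x_\alpha}$ identifies with $\colim \eta_{x_\alpha}$, a filtered colimit of equivalences. And in describing the compacts of $\C_{<0}$, the phrase ``closed under finite colimits and shifts'' is slightly off since $\C_{<0}$ is not stable; what you want (and what your argument actually uses) is that $\{\tau_{<0}\K : \K\in\C^c\}$ is closed under finite colimits in $\C_{<0}$, which suffices for the standard retract description of compacts in $\Ind$ of a finitely cocomplete category. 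Neither point affects correctness.
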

\begin{proof} Omitted.
\end{proof}

\begin{defn} Suppose $\C$ is a stable $\infty$-category with $t$-structure.  We say that the $t$-structure is \demph{compatible with filtered colimits} if $\C$ has all filtered colimits and $\C_{<0}$ is closed under filtered colimits in $\C$.
\end{defn}

\begin{defn} Suppose $\C$ is a stable $\infty$-category with $t$-structure.  
	\begin{itemize}
		\item We say that the $t$-structure is (weakly) \demph{left complete} if the natural map
			\[ \F \longrightarrow \ilim_n \tau_{<n} \F \] is an equivalence for all $\F \in \C$ (in particular, the inverse limit is required to exist).
                      
                        We say that it is left complete if furthermore every tower in $\ilim \C_{<n}$ comes from an object of $\C$.
		\item We say that the $t$-structure is (weakly) \demph{right complete} if the natural map
			\[\dlim_n \tau_{\geq n} \F \longrightarrow \F \] is an equivalence for all $\F \in \C$ (in particular, the direct limit is required to exist).
                        
                        We say that it is right complete if furthermore every diagram of objects in $\ilim \C_{>-n}$ comes from an object of $\C$.
	\end{itemize}
\end{defn}

\begin{remark} The previous definition is of course formally symmetric: a $t$-structure on $\C$ is left complete iff the opposite $t$-structure on $\C^\op$ is right complete.  In practice there is however a substantial asymmetry: We are generally interested in presentable categories, and the opposite of a presentable category is almost never presentable.  More practically, the categories that arise in algebraic geometry -- at least for our purposes -- tend to be right-complete, but some interesting categories fail to be left-complete.  
\end{remark}

\begin{remark} 
By \cite{LurieHA}*{1.2.1.19}, this distinction between the ``(weakly)'' and not variants disappears for the notion of left complete (resp., right complete) provided that $\C$ has countable products (resp., coproducts), and that countable products are right $t$-exact (resp., coproducts are left $t$-exact) up to a finite shift.
  
In particular, if $\C$ is presentable and the $t$-structure compatible with filtered colimits then ``weakly left complete'' coincides with ``left complete.''
\end{remark}

\begin{example} Suppose $A \in \Alg(k\mod)$.  Then, $A\mod$ is equipped with a right complete accessible $t$-structure compatible with filtered colimits.  It is uniquely characterized by the following: $(A\mod)_{>0}$ is generated by those $M \in A\mod$ whose underlying complex is connective, that is lies in $(k\mod)_{>0}$.  Then, $(A\mod)_{\leq 0}$ is recovered as the right-orthogonal to this.  It follows that the forgetful functor $A\mod \to k\mod$ is right $t$-exact; it is left $t$-exact iff $A$ is connective:
	\begin{itemize}
		\item Suppose $A$ is itself connective.  Then, $(A\mod)_{<0}$ consists of those $M \in A\mod$ whose underlying complex is co-connective, that is lies in $(k\mod)_{<0}$.  In this case, the $t$-structure is left complete.
		\item Suppose $A=C^*(BS^1, k)\isom k\ps{\bt}$, where $\bt$ is in homological degree $-2$.  One can explicitly describe the $t$-structure in this case as follows: $(A\mod)_{>0}$ is generated by $k[+n]$, $n > 0$, so that $(A\mod)_{<0}$ consists of those $M$ for which $\RHom_A(k, M) \in (k\mod)_{<0}$.  Consequently, the $t$-structure is not left complete: $k\pl{\bt}$ is a non-zero object which is in $(A\mod)_{<n}$ for all $n$ since $\RHom_A(k, k\pl{\bt}) = 0$.
	\end{itemize}
\end{example}

\subsection{Coherent and Noetherian $t$-structures}
Assuming some extra ``finiteness'' conditions on the $t$-structure, one has extra operations of \emph{regularization} available in addition to \emph{completion}.

\begin{lemma} Suppose $\C$ is a stable $\infty$-category with $t$-structure that is compatible with filtered colimits.  For $\F \in \C$, the following conditions are equivalent
	\begin{enumerate}
		\item $\tau_{<n} \F \in \C_{<n}$ is compact for all $n \in \ZZ$;
		\item $\Map_{\C}(\F, -)$ commutes with filtered colimits in $\C_{<n}$ for all $n \in \ZZ$ (``commutes with uniformly bounded above colimits'');
		\item $\RHom_{\C}(\F, -)$ commutes with filtered colimits in $\C_{<n}$ for all $n \in \ZZ$.
	\end{enumerate}

	Furthermore,
	\begin{itemize}
		\item Suppose in addition that $\F$ is assumed bounded above: $\F \in \C_{<n}$.  Then, the above are equivalent to: $\F \in (\C_{<n})^c$ and its image under the inclusion $i_{<n}\colon \C_{<n} \to \C_{<m}$ is compact for all $m \geq n$;
		\item Suppose that $\C$ is right complete.  If $\F$ is bounded above and satisfies the above equivalent conditions, then it is also bounded below.
	\end{itemize}
\end{lemma}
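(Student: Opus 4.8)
The plan is to establish the three numbered conditions equivalent first, and then dispatch the two \emph{furthermore} clauses. Throughout, the hypothesis that the $t$-structure is compatible with filtered colimits is what makes everything go: it guarantees that each $\C_{<n}$ (a shift of $\C_{<0}$) is closed under filtered colimits in $\C$, so filtered colimits in $\C_{<n}$ are computed in $\C$ and the notion of compactness in $\C_{<n}$ is unambiguous. For $(i) \Leftrightarrow (ii)$, I would use that the truncation $\tau_{<n}\colon \C \to \C_{<n}$ is left adjoint to the inclusion $i_{<n}$, so that for $G \in \C_{<n}$ there is a natural equivalence $\Map_\C(\F, i_{<n}G) \isom \Map_{\C_{<n}}(\tau_{<n}\F, G)$. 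Thus the restriction of $\Map_\C(\F,-)$ to $\C_{<n}$ is corepresented by $\tau_{<n}\F$, and it commutes with filtered colimits in $\C_{<n}$ exactly when $\tau_{<n}\F$ is compact there; running this over all $n$ gives the equivalence.

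For $(ii)\Leftrightarrow(iii)$, I would exploit that $\RHom_\C(\F,G)$ is the mapping spectrum, with zeroth space $\Map_\C(\F,G) = \Omega^\infty \RHom_\C(\F,G)$ and homotopy groups $\pi_k \RHom_\C(\F,G) \isom \pi_0 \Map_\C(\F, G[k])$. Since shifting by $[k]$ carries $\C_{<n}$ into $\C_{<n+k}$, if $(ii)$ holds for \emph{every} index $n$ then each homotopy group of $\RHom_\C(\F,-)$ commutes with filtered colimits in each $\C_{<n}$; as homotopy groups of spectra commute with filtered colimits, $(iii)$ follows. Conversely $\Omega^\infty$ preserves filtered colimits of spectra, so applying it to $\RHom_\C(\F,-)$ recovers $(ii)$ from $(iii)$.

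For the first \emph{furthermore} clause, assume $\F \in \C_{<n}$. When $m \geq n$ one has $\tau_{<m}\F = \F$, so condition $(i)$ for these indices is precisely the asserted compactness of the image of $\F$ in $\C_{<m}$; taking $m=n$ in particular records that $\F$ is compact in $\C_{<n}$. To recover $(i)$ for $m < n$ from compactness in $\C_{<n}$, note that for $G \in \C_{<m} \subset \C_{<n}$ the adjunction gives $\Map_{\C_{<m}}(\tau_{<m}\F, G) \isom \Map_\C(\F, G) \isom \Map_{\C_{<n}}(\F, G)$, and filtered colimits in $\C_{<m}$ and in $\C_{<n}$ agree (both computed in $\C$); hence compactness of $\F$ in $\C_{<n}$ forces $\tau_{<m}\F$ to be compact in $\C_{<m}$. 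This gives the stated equivalence.

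The clause requiring a genuine idea, and the step I expect to be the main obstacle, is the last one: that right-completeness upgrades ``bounded above together with $(i)$--$(iii)$'' to ``bounded below.'' Here the plan is to fix $N$ with $\F \in \C_{<N}$ and use right-completeness to write $\F \isom \colim_m \tau_{\geq -m}\F$ as a filtered colimit along the Whitehead tower. The crucial observation is that each $\tau_{\geq -m}\F$ still lies in $\C_{<N}$ (truncating below cannot raise the upper bound), so this is a filtered colimit of \emph{uniformly} bounded-above objects; this is exactly what lets condition $(ii)$ (with index $N$) apply to the whole tower at once, yielding $\Map_\C(\F,\F) \isom \colim_m \Map_\C(\F, \tau_{\geq -m}\F)$. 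Passing to $\pi_0$, the class of $\mathrm{id}_\F$ factors, for some $m_0$, through a map $s\colon \F \to \tau_{\geq -m_0}\F$ whose composite with the canonical $\tau_{\geq -m_0}\F \to \F$ is homotopic to the identity. This exhibits $\F$ as a retract of $\tau_{\geq -m_0}\F \in \C_{\geq -m_0}$, and since $\C_{\geq -m_0}$ is closed under retracts we conclude $\F \in \C_{\geq -m_0}$, i.e.\ $\F$ is bounded below. The points to check carefully are that the tower is genuinely filtered with colimit $\F$ (this is precisely right-completeness) and that the single uniform bound $N$ is what permits $(ii)$ to be invoked against the entire colimit.
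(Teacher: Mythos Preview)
The paper's own proof of this lemma is simply ``Omitted,'' so there is nothing to compare against. Your argument is correct and is the standard one: the adjunction $\tau_{<n} \dashv i_{<n}$ gives $(i)\Leftrightarrow(ii)$, the identification $\pi_k\RHom_\C(\F,G)\simeq\pi_0\Map_\C(\F,G[k])$ together with shifting the index gives $(ii)\Leftrightarrow(iii)$, and the retract argument via right-completeness is the expected proof of the last clause. One small point worth making explicit (you gesture at it but do not state it): the reason $\tau_{\geq -m}\F$ lies in $\C_{<N}$ is that the truncation functors commute, so $\tau_{\geq -m}\F = \tau_{\geq -m}\tau_{<N}\F = \tau_{<N}\tau_{\geq -m}\F \in \C_{<N}$; this is what justifies your claim that ``truncating below cannot raise the upper bound.''
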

\begin{proof}
Omitted.
\end{proof}

\begin{defn} Say that $\F \in \C$ is \demph{almost compact} if $\F$ satisfies the equivalent conditions of the previous Lemma.  Say that $\F \in \C$ is \demph{coherent} if 
	\begin{enumerate}
		\item $\F$ is bounded above, i.e., $\F \in \C_{<n}$ for some $n$.
		\item $\F$ satisfies the equivalent conditions of the previous Lemma.
	\end{enumerate}
	(If $\C$ is right complete, then any such $\F$ is also bounded below by the previous Lemma.)

        Define the full subcategory \demph{$\Cohp(\C) \subset \C$ (resp., $\Coh(\C) \subset \C$)} to consist of all $\F \in \C$ that are almost compact (resp., coherent).\footnote{This notation is potentially confusing, but fortunately will not be used much in general: $\Cohp(\C)$ need not be the left $t$-completion of $\Coh(\C)$ in general.}
      \end{defn}
  
\begin{remark}
Characterization (iii) of the previous Lemma makes clear that $\Cohp(\C)$ and $\Coh(\C)$ are stable subcategories.  Notice that, in general, the the $t$-structure need not restrict to these subcategories.
\end{remark}

We can impose the following more stringent conditions to eliminate this issue:
\begin{lemma}\label{lem:t-works} Suppose $\C$ is a stable $\infty$-category with $t$-structure that is compatible with filtered colimits.  Then, the following conditions are equivalent:
  \begin{enumerate}
      \item The $t$-structure on $\C$ restricts to one on $\Cohp(\C)$;
      \item The truncation functors on $\C$ preserves $\Cohp(\C)$.
      \item The inclusion $i_{<0}\colon \C_{<0} \to \C_{<1}$ preserves compact objects;
      \item The loops functor $\Omega\colon \C_{<0} \to \C_{<0}$ preserves compact objects;
    \end{enumerate}
  In this case, $\Cohp(\C)^{\heart} = \Coh(\C)^{\heart} = (\C^{\heart})^c$.

    These imply -- and in case $\C$ is right complete, are equivalent to --
\begin{enumerate}[resume]
      \item The subcategory of compact-objects in the heart $(\C^{\heart})^c \subset \C^{\heart}$ is abelian;
  \end{enumerate}

\end{lemma}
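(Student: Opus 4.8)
The plan is to prove the cycle (i) $\Leftrightarrow$ (ii) $\Leftrightarrow$ (iii) $\Leftrightarrow$ (iv), then deduce the heart identity and its relation to (v). Two of the equivalences are essentially formal. For (i) $\Leftrightarrow$ (ii): since $\Cohp(\C)$ is a \emph{stable} subcategory of $\C$, the ambient $t$-structure restricts to it precisely when $\Cohp(\C)$ is carried into itself by $\tau_{\geq 0}$ (equivalently, using the fibre sequence $\tau_{\geq 0}\to\id\to\tau_{<0}$ and stability, by $\tau_{<0}$), and in that case the restricted truncations are computed in $\C$; this is exactly condition (ii). For (iii) $\Leftrightarrow$ (iv): the shift $\Omega=[-1]$ restricts to an equivalence $\Omega\colon \C_{<1}\xrightarrow{\sim}\C_{<0}$ (inverse $\Sigma$), and the loops functor of (iv) is the composite $\C_{<0}\xrightarrow{i_{<0}}\C_{<1}\xrightarrow{\Omega}\C_{<0}$; as an equivalence preserves and reflects compactness, $i_{<0}$ preserves compacts iff this composite does.

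Next, (iii) $\Rightarrow$ (ii). Shifting the $t$-structure, (iii) is equivalent to the assertion that $i\colon \C_{<n}\to\C_{<n+1}$ preserves compacts for every $n$, hence — composing inclusions — that $\C_{<n}\to\C_{<m}$ preserves compacts for all $m\geq n$. Now let $\F\in\Cohp(\C)$ and $n\in\ZZ$. For $m\geq n$ we have $\tau_{<m}(\tau_{<n}\F)=\tau_{<n}\F$, compact in $\C_{<n}$ and therefore, by the above, compact in $\C_{<m}$; for $m\leq n$ we have $\tau_{<m}(\tau_{<n}\F)=\tau_{<m}\F$, compact by almost-compactness of $\F$. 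Thus $\tau_{<n}\F$ is almost compact, i.e.\ lies in $\Cohp(\C)$, and by stability $\tau_{\geq n}\F\in\Cohp(\C)$ as well, giving (ii).

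I expect the main obstacle to be (ii) $\Rightarrow$ (iii), which I would prove by reducing to the heart. First record two facts valid for any $t$-structure compatible with filtered colimits: the truncations $\tau_{<m}\colon\C\to\C_{<m}$ and $H_0=\tau_{\geq 0}\colon\C_{<1}\to\C^\heart$ preserve compact objects (each is a left adjoint whose right adjoint is the inclusion of a subcategory closed under filtered colimits, hence preserves them), and consequently $\C^c\subseteq\Cohp(\C)$. Using the description of $(\C_{<0})^c$ as retracts of objects $\tau_{<0}K$ with $K\in\C^c$ (the preceding Lemma; this is where compact generation of $\C$, which holds in all cases of interest, enters), it suffices to show $\tau_{<0}K\in(\C_{<1})^c$. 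Consider the fibre sequence $H_0(K)\to\tau_{<1}K\to\tau_{<0}K$ in $\C_{<1}$. Here $\tau_{<1}K$ is compact in $\C_{<1}$, while $H_0(K)=\tau_{\geq 0}\tau_{<1}K$ lies in $\Cohp(\C)$ by hypothesis (ii) (as $\tau_{<1}K\in\C^c\subseteq\Cohp(\C)$ and $\Cohp(\C)$ is closed under truncation); since $H_0(K)\in\Cohp(\C)\cap\C_{<1}\subseteq(\C_{<1})^c$ and $\tau_{<0}K=\cofib(H_0(K)\to\tau_{<1}K)$ is then a finite colimit of compacts, we get $\tau_{<0}K\in(\C_{<1})^c$, and passing to retracts yields (iii). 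The delicate points are the compact-generation input and the \emph{asymmetry} of the fibre sequence: one only obtains the cokernel (finite-colimit) direction, which is precisely the one needed.

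Finally, granting (i)--(iv): by (iii) every compact object of $\C_{<1}$ is almost compact, so $(\C^\heart)^c=(\C_{<1})^c\cap\C^\heart=\Cohp(\C)\cap\C^\heart=\Cohp(\C)^\heart$ (``$\subseteq$'' uses that $H_0$ preserves compacts, ``$\supseteq$'' uses almost-compactness), and this coincides with $\Coh(\C)^\heart$ since heart objects are bounded. Being the heart of the restricted $t$-structure of (i), $(\C^\heart)^c$ is abelian, which is (v). For the converse under right-completeness, I would run the dual dévissage: if $(\C^\heart)^c$ is abelian then kernels of maps of compact heart objects are compact, so each compact heart object admits a resolution with compact syzygies; right-completeness lets this Postnikov/resolution data assemble to show that compact heart objects are almost compact, and bootstrapping up the truncation tower that $\Cohp(\C)$ is closed under truncation, recovering (ii). The second genuine obstacle lies here — the dévissage is exactly the place where right-completeness is indispensable in passing from heart-level finiteness to almost-compactness in $\C$.
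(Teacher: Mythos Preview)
Your cycle (i)$\Leftrightarrow$(ii), (iii)$\Leftrightarrow$(iv), (iii)$\Rightarrow$(ii), and the deduction of the heart identity and of (v) all match the paper's proof. Two differences are worth noting.

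For (ii)$\Rightarrow$(iii), you are right that some extra input is needed: condition~(ii) only constrains truncations of objects already known to be almost compact, so it says nothing directly about an arbitrary compact object of $\C_{<0}$. The paper's proof is terse at this step, and your use of compact generation of $\C$ is the natural fix. One simplification: once $K\in\C^c\subset\Cohp(\C)$, applying (ii) directly gives $\tau_{<0}K\in\Cohp(\C)$, hence compact in every $\C_{<m}$ by the definition of almost compact---the fibre-sequence detour is unnecessary. (Also, your line ``$\tau_{<1}K\in\C^c$'' should read ``$K\in\C^c$, so $\tau_{<1}K\in\Cohp(\C)$ by (ii)''; $\tau_{<1}K$ is compact in $\C_{<1}$, not in $\C$.)

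For the converse (v)$\Rightarrow$(iii) under right completeness, the paper's route is shorter and different from your resolution sketch. The key observation is that right completeness forces any compact $K\in(\C_{<0})^c$ to be \emph{bounded}: writing $K=\colim_n\tau_{\geq -n}K$ as a filtered colimit inside $\C_{<0}$, compactness of $K$ makes $\id_K$ factor through some stage, so $K$ is a retract of a bounded object. One then reduces to the heart by a finite Postnikov d\'evissage. Your syzygy idea instead tries to build up from the heart, and as written it needs a supply of ``projective-like'' objects in $\C^\heart$ to resolve against, which is not available in general; the boundedness argument cuts down to the heart rather than building up from it and avoids this issue.
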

\begin{proof}
  Note that (i) $\Leftrightarrow$ (ii) is clear.  It is easy to check that (ii) $\Leftrightarrow$ (iii): It is enough to note that $\tau_{\leq k} \F \in \C_{\leq k}$ implies that $\tau_{\leq k'} \F \in \C_{\leq k'}$ is compact for all $k' \leq k$, and shifts of (iii) give the rest.  Finally (iii) $\Leftrightarrow$ (iv) since we may identify the two functors under the idenfication $\C_{<1} \isom \C_{<0}[1]$. 

  Assuming (i)-(iv), it is clear that $\Coh(\C)^{\heart}$ consist precisely of the compact objects of $\C^{\heart}$.

  Finally, note that (iii) clearly implies (v). If $\C$ is right complete, the compact objects of $\C_{<0}$ are bounded, giving the converse.
\end{proof}

Under the above hypotheses, we have:
\begin{lemma}  Suppose $\C$ is a stable $\infty$-category with $t$-structure that is compatible with filtered colimits, right complete, and satisfies the equivalent conditions of \autoref{lem:t-works}.  Then:
  \begin{itemize}
    \item $\Coh(\C)^{\heart} = \Coh(\C) \cap \C^{\heart} = (\C^{\heart})^c$ consists precisely of the compact (or ``finitely presented'') objects of $\C^\heart$.
        \item $\F \in \Cohp(\C)$ if and only if $H_n \F \in \Coh(\C)^{\heart} \subset \C^\heart$ and $H_n \F = 0$ for $n \ll 0$;
        \item $\F \in \Coh(\C)$ if and only if $H_n \F \in \Coh(\C)^{\heart} \subset \C^\heart$ and $H_n \F = 0$ for all but finitely many $n$.
  \end{itemize}  
\end{lemma}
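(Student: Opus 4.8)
The plan is to treat the first bullet as a citation and to reduce the two homological criteria to a single non-formal step. The first bullet is immediate: by definition $\Coh(\C)^\heart = \Coh(\C)\cap\C^\heart$, and \autoref{lem:t-works} identifies this intersection with $(\C^\heart)^c$. Throughout the remaining argument I will use three facts recorded above: that $\Cohp(\C)$ and $\Coh(\C)$ are stable subcategories (closed under shifts, cones, and retracts); that the truncation functors preserve $\Cohp(\C)$, hence also $\Coh(\C)$, by \autoref{lem:t-works}; and that $(\C^\heart)^c = \Cohp(\C)^\heart = \Coh(\C)^\heart$.

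The one genuinely non-formal point, which I expect to be the main obstacle, is the following strengthening of the last bullet of the lemma preceding the definition (which assumed $\F$ bounded above): \emph{every almost compact $\F$ is bounded below}, i.e.\ $H_n\F = 0$ for $n \ll 0$. To prove this, fix any $n$ and put $G = \tau_{<n}\F \in \C_{<n}$, which is compact in $\C_{<n}$ by almost compactness. By right completeness the canonical map
\[ \dlim_m \tau_{\geq -m} G \longrightarrow G \]
is an equivalence, and it is a filtered (sequential) colimit taken \emph{inside} $\C_{<n}$, since each $\tau_{\geq -m}G$ is again bounded above. Compactness of $G$ then forces $\id_G$ to factor through some $\tau_{\geq -m_0}G$, exhibiting $G$ as a retract of a bounded-below object; since $\C_{\geq -m_0}$ is closed under retracts we conclude $G \in \C_{\geq -m_0}$, that is $H_j\F = 0$ for $j < -m_0$.

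For the second bullet, suppose first $\F \in \Cohp(\C)$. The key step gives $H_n\F = 0$ for $n \ll 0$, and since the truncation functors preserve $\Cohp(\C)$ each homology lies in $\Cohp(\C)\cap\C^\heart = \Cohp(\C)^\heart = (\C^\heart)^c = \Coh(\C)^\heart$. Conversely, suppose each $H_n\F \in (\C^\heart)^c$ with $H_n\F = 0$ for $n \ll 0$; then $\F$ is bounded below, so for every $N$ the truncation $\tau_{<N}\F$ is bounded, and its finite Postnikov tower realizes it as an iterated extension of the shifts $H_j\F[j]$. Each such shift lies in $\Coh(\C)\subset\Cohp(\C)$ because $(\C^\heart)^c = \Coh(\C)^\heart$, so by stability $\tau_{<N}\F \in \Cohp(\C)$; being also bounded above it is coherent, hence almost compact, so in particular $\tau_{<N}(\tau_{<N}\F) = \tau_{<N}\F$ is compact in $\C_{<N}$. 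As $N$ was arbitrary, $\F$ is almost compact.

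The third bullet then follows by adding the boundedness bookkeeping. If $\F \in \Coh(\C)$, then $\F$ is bounded above by definition and bounded below by the key step, so only finitely many $H_n\F$ are nonzero, and each lies in $\Coh(\C)^\heart$ by the second bullet. Conversely, if only finitely many $H_n\F$ are nonzero and each lies in $(\C^\heart)^c = \Coh(\C)^\heart$, then $\F$ is bounded and is a finite extension of the shifts $H_n\F[n]\in\Coh(\C)$, whence $\F\in\Coh(\C)$ by stability.
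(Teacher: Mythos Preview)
Your proof is correct. The paper omits its own proof of this lemma, so there is nothing to compare against; your argument is exactly the sort of thing one would expect to fill in. The only step worth a remark is the passage ``$H_n\F=0$ for $n\ll 0$ implies $\F$ is bounded below'' in the converse of the second bullet: you use this silently, and it does require right completeness (an object of $\C_{<-m_0}$ with vanishing homology has all $\tau_{\geq -m}$ bounded with vanishing homology, hence zero, so the object itself is zero by $\F=\dlim\tau_{\geq -m}\F$). This is standard and you clearly know it, but it is the one place where right completeness enters the converse direction. Your ``key step'' is precisely the content of the last Furthermore bullet of the lemma preceding the definition of almost compact, and your argument in fact shows the bounded-above hypothesis there is unnecessary.
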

\begin{proof} Omitted.
\end{proof}

Finally, we come to a strengthening of the above:
\begin{lemma} Suppose that $\C$ is a stable $\infty$-category with $t$-structure that is compatible with filtered colimits, that is right complete and that satisfies any of the equivalent conditions of  \autoref{lem:t-works}.
  
  Then, the following conditions are equivalent:
\begin{enumerate}
  \item $\C_{<0}$ is compactly-generated as $\infty$-category (in particular, presentable);
  \item $\C^{\heart}$ is compactly-generated as ordinary category;
  \item $\C^{\heart}$ is a locally coherent abelian category.  (Recall this means that the compact objects form an abelian category, and that $\C^{\heart}$ is compactly generated.  In particular, it is Grothendieck.)
\end{enumerate}
\end{lemma}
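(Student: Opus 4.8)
The plan is to dispatch $(ii)\Leftrightarrow(iii)$ first and then concentrate on $(i)\Leftrightarrow(ii)$, which is the heart of the matter. Throughout I will freely use three facts available under the standing hypotheses: that $\C$ is cocomplete (finite colimits from stability together with filtered colimits give all small colimits); that $\C^{\heart}=\C_{\geq 0}\cap\C_{<1}$ is closed under filtered colimits in $\C$, since both factors are; and, crucially, that $\tau_{\geq 0}$ preserves filtered colimits, which is one of the equivalent reformulations of ``compatible with filtered colimits'' recorded in the opening Lemma of \autoref{app:t-bdd}. The equivalence $(ii)\Leftrightarrow(iii)$ is then pure bookkeeping: because we assume the equivalent conditions of \autoref{lem:t-works}, clause (v) of that Lemma — which applies as $\C$ is right complete — already asserts that $(\C^{\heart})^c$ is an abelian subcategory of $\C^{\heart}$. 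Thus for $\C^{\heart}$ the property ``locally coherent'' differs from ``compactly generated'' only by a condition our hypotheses make automatic, and the two are interchangeable.

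For $(i)\Rightarrow(ii)$ I would argue as follows. Shifting, $\C_{<1}\simeq\C_{<0}[1]$ is compactly generated, and I first identify its compact objects. Any $P\in(\C_{<1})^c$ is bounded below, since by right completeness $P\simeq\colim_n\tau_{\geq n}P$ and compactness makes $P$ a retract of some $\tau_{\geq n}P$; hence $P$ is bounded. Combining \autoref{lem:t-works}(iii) (so that the inclusions $\C_{<1}\to\C_{<m}$ preserve compacts for $m\geq 1$) with the bounded-above clause of the characterization of almost compact objects, $P$ is almost compact, hence coherent; so $(\C_{<1})^c=\Coh(\C)\cap\C_{<1}$. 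Now take $A\in\C^{\heart}$ and write $A\simeq\colim_i P_i$ as a filtered colimit of such coherent $P_i\in\C_{<1}$. Applying $\tau_{\geq 0}$ and using that it preserves filtered colimits gives $A=\tau_{\geq 0}A\simeq\colim_i\tau_{\geq 0}P_i=\colim_i H_0 P_i$, a filtered colimit (formed inside $\C^{\heart}$) of objects $H_0 P_i\in\Coh(\C)^{\heart}=(\C^{\heart})^c$. Thus every object of $\C^{\heart}$ is a filtered colimit of compact objects, which together with cocompleteness exhibits $\C^{\heart}\simeq\Ind\big((\C^{\heart})^c\big)$.

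For $(ii)\Rightarrow(i)$ I propose the dual generating set $\{A[n] : A\in(\C^{\heart})^c,\ n\leq -1\}$ for $\C_{<0}$. Each $A[n]$ is coherent and lies in $\C_{<0}$, hence is compact in $\C_{<0}$ by the almost-compact characterization. To see that these objects generate it suffices to check that their right-orthogonal vanishes: given $0\neq\F\in\C_{<0}$, right completeness and boundedness above furnish a top nonzero homology $H_m\F\in\C^{\heart}$ with $m\leq -1$, and since $(\C^{\heart})^c$ generates $\C^{\heart}$ there is $A\in(\C^{\heart})^c$ with $\Hom_{\C^{\heart}}(A,H_m\F)\neq 0$. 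Because $A$ is connective and $H_m\F=\tau_{\geq 0}(\F[-m])$, this group computes $\pi_0\Map_{\C}(A[m],\F)$, producing a nonzero map $A[m]\to\F$ with $A[m]$ in the proposed set; hence $\F$ is not orthogonal. Cocompleteness of $\C_{<0}$ (colimits computed as $\tau_{<0}$ of colimits in $\C$) then upgrades this to compact generation.

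The step requiring the most care — and the main obstacle — is exactly the coreflective nature of $H_0=\tau_{\geq 0}$ on the coconnective side $\C_{<1}$: as a right adjoint it neither obviously commutes with filtered colimits nor transfers compact generation, so one cannot simply push generators across it. What rescues $(i)\Rightarrow(ii)$ is the nonformal input that, under compatibility with filtered colimits, $\tau_{\geq 0}$ \emph{does} preserve filtered colimits, used in tandem with the identification of $(\C_{<1})^c$ with coherent objects via \autoref{lem:t-works} and the almost-compact characterization; these are precisely the places where the standing finiteness hypotheses are essential.
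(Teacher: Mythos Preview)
Your proof is correct and complete. The overall strategy matches the paper's (reduce to the heart via truncations and coherence), but the execution differs in both nontrivial implications.

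For $(i)\Rightarrow(ii)$, the paper simply asserts ``general nonsense, since $\C^{\heart}$ is the quotient of $\C_{\geq 0}$ by the essential image of $\C_{>0}$,'' which is terse to the point of being cryptic (it is $\C_{<0}$, not $\C_{\geq 0}$, that is assumed compactly generated). Your route---identifying $(\C_{<1})^c$ with $\Coh(\C)\cap\C_{<1}$ and then pushing a filtered presentation through the filtered-colimit-preserving functor $\tau_{\geq 0}$---is explicit and makes clear exactly where each hypothesis enters.

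For $(ii)\Rightarrow(i)$, the paper instead writes down the comparison $\Ind(\Coh(\C)\cap\C_{<0})\to\C_{<0}$ and proves essential surjectivity by an induction on the amplitude of bounded objects via the fiber sequences $\tau_{\geq -k}\F\to(H_{-k}\F)[-k]\to(\tau_{\geq -(k+1)}\F)[+1]$, together with a $\Sigma\Omega\simeq\id$ step to pass from $\C_{<-1}$ to $\C_{<0}$. Your ``detect on top homology'' argument is shorter and avoids the inductive bookkeeping; the paper's version has the minor advantage of directly exhibiting the equivalence $\C_{<0}\simeq\Ind(\Coh(\C)\cap\C_{<0})$ rather than deducing it from a generation criterion.
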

\begin{proof} Note that \autoref{lem:t-works} implies that the compact objects of $\C^{\heart}$ form an abelian category, so that (ii) $\Leftrightarrow$ (iii).
  
  Note that (i) implies (ii) by general non-sense, since $\C^{\heart}$ is the quotient of $\C_{\geq 0}$ by the essential image of $\C_{>0}$ and $i_{>0}$ was assumed to preserve compact objects.  
  
  It remains to show that (ii) implies (i): Note first that the objects of $\Coh(\C) \cap \C_{<0}$ are all compact in $\C_{<0}$ by \autoref{lem:t-works}.  Since $\C$ admits  all filtered colimits, and $\C_{<0}$ is closed under filtered colimits, there is a fully faithful functor
  \[ \Ind \left[ \Coh(\C) \cap \C_{<0} \right] \longrightarrow \C_{<0}. \]
  Notice that $\Coh(\C) \cap \C_{<0}$ admits all finite colimits, since this is true for $\Coh(\C)$ and $\Coh(\C)$ is preserved by $\tau_{<0}$.  Thus it is enough to prove that this functor is essentially surjective.  
  
  Since $\C$ is right complete, we are reduced to proving that the bounded objects are in the image.  Note that (ii) implies that the heart is in the image.  Both sides have finite homotopy limits, and the functor preserves them, so that considering the rotated fiber sequences
  \[ \tau_{\geq -k} \F \longrightarrow  (H_{-k} \F)[-k] \longrightarrow (\tau_{\geq -(k+1)} \F)[+1] \]
  shows that the image contains all bounded objects of $\C_{<-1}$ by induction on the range of non-vanishing homotopy groups.  Since the functor also preserves finite homotopy colimits, to complete the proof it suffices to show that $\Sigma \circ \Omega \isom \id$ on both sides. For then each object of $\C_{<0}$ will be a suspension of something in $\C_{<-1}$, which is in the image.  In each case, this follows because $\Coh(\C) \cap \C_{<0} = \Coh(\C)_{<0}$ and $\C_{<0}$ are the co-connective parts of a $t$-structure.
\end{proof}

This brings us to the following definition (which the previous Lemmas give various equivalent formulations and consequences of):
\begin{defn}\label{defn:t-coh} Suppose $\C$ is a stable $\infty$-category with $t$-structure.  We say that the $t$-structure is \demph{coherent}  if the following conditions are satisfied:
	\begin{itemize}
		\item The $t$-structure is compatible with filtered colimits;
		\item The $t$-structure is right complete;
                \item $\C^{\heart}$ is a locally coherent abelian category.
	\end{itemize}
\end{defn}

\subsection{Regular and complete $t$-structures}
\begin{defn}\label{defn:t-reg} Suppose $\C$ is a stable $\infty$-category with $t$-structure compatible with filtered colimits.  We have seen that $\C_{<0} \to \C_{<1}$, etc., preserves filtered colimits.  We say that the $t$-structure is (left) \demph{regular} if the natural map
  \[ \Fun^{\text{filtered colimits}}(\C, \D) = \ilim_n \Fun^{\text{filtered colimits}}((\C_{<n}, i_{<n}), \D) \]
  is an equivalence for every category $\D$ admiting filtered colimits.
\end{defn}

\begin{prop} Suppose that $\C$ is \emph{coherent}.  Then, $\C$ is regular if and only if $\C$ is compactly-generated by $\Coh(\C)$.  
  
  There is a universal regular $\infty$-category with $t$-structure mapping to $\C$, and it is given by the formula
  \[ \R(\C) \eqdef \Ind\left(\Coh(\C)\right) \longrightarrow \C \]
  The functor $\R(\C) \to \C$ preserves colimits, is $t$-exact, and the induced functor $\R(\C)_{<0} \to \C_{<0}$ is an equivalence.  The $t$-structure on $\R(\C)$ is also coherent.
\end{prop}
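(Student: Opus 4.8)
The plan is to realize $\R(\C) = \Ind(\Coh(\C))$, equipped with a suitable induced $t$-structure, as the coreflection of $\C$ into regular $t$-categories, and to deduce the stated equivalence (the ``iff'') together with the universal property from a single computation of filtered-colimit-preserving functor categories. Throughout I write $\Fun^{\mathrm{fc}}$ for categories of filtered-colimit-preserving functors. The key input is the equivalence $\C_{<n} \isom \Ind(\Coh(\C)_{<n})$ for every $n$, established in the proof of the preceding Lemma: coherence guarantees that $\C_{<n}$ is compactly generated with compact objects $\Coh(\C)_{<n}$, using \autoref{lem:t-works}. Since every coherent object is bounded above, $\Coh(\C) = \dlim_n \Coh(\C)_{<n}$ as a filtered union of stable subcategories.

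First I would construct $\R(\C)$ and its $t$-structure. By \autoref{lem:t-works} the $t$-structure restricts to $\Coh(\C)$, giving a bounded $t$-structure there; taking $\Ind$ produces a $t$-structure on $\R(\C) = \Ind(\Coh(\C))$ with $\R(\C)_{\geq 0} = \Ind(\Coh(\C)_{\geq 0})$ and $\R(\C)_{<0} = \Ind(\Coh(\C)_{<0})$, by construction compatible with filtered colimits and right complete. Its heart is $\Ind(\Coh(\C)^{\heart}) = \Ind((\C^{\heart})^c) \isom \C^{\heart}$, the last equivalence because $\C^{\heart}$ is locally coherent; in particular $\R(\C)^{\heart}$ is locally coherent, so $\R(\C)$ is coherent. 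The inclusion $\Coh(\C) \hookrightarrow \C$ preserves finite colimits and is $t$-exact, so it extends uniquely to a colimit-preserving functor $\rho \colon \R(\C) \to \C$.

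The main technical step is to check that $\rho$ is $t$-exact and induces an equivalence $\R(\C)_{<0} \xrightarrow{\sim} \C_{<0}$. Since $\rho$ preserves filtered colimits, is $t$-exact on the generators $\Coh(\C)$, and both $t$-structures are compatible with filtered colimits, $t$-exactness follows because $\R(\C)_{\geq 0}$ (resp. $\R(\C)_{<0}$) is generated under filtered colimits by $\Coh(\C)_{\geq 0}$ (resp. $\Coh(\C)_{<0}$), which $\rho$ carries into $\C_{\geq 0}$ (resp. $\C_{<0}$), and these are closed under filtered colimits. On coconnective parts $\rho$ restricts to $\Ind(\Coh(\C)_{<0}) \to \C_{<0}$, which is precisely the equivalence of the key input; shifting gives $\R(\C)_{<n} \isom \C_{<n}$ for all $n$. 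I expect this identification, together with the verification that the $\Ind$-$t$-structure is genuinely right complete, to be the main obstacle, since everything afterwards is formal.

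It remains to prove regularity and universality. For any $\D$ admitting filtered colimits, the universal property of $\Ind$ gives
\[ \Fun^{\mathrm{fc}}(\R(\C), \D) \isom \Fun(\Coh(\C), \D) \isom \ilim_n \Fun(\Coh(\C)_{<n}, \D) \isom \ilim_n \Fun^{\mathrm{fc}}(\C_{<n}, \D), \]
and one checks that this is the natural comparison map of \autoref{defn:t-reg}, so $\R(\C)$ is regular. Moreover precomposition with $\rho$ realizes the natural map for $\C$ itself as $\rho^{*}\colon \Fun^{\mathrm{fc}}(\C, \D) \to \Fun^{\mathrm{fc}}(\R(\C), \D)$; hence $\C$ is regular iff $\rho^{*}$ is an equivalence for all such $\D$, iff (co-Yoneda) $\rho$ is an equivalence, iff $\C \isom \Ind(\Coh(\C))$ — that is, iff $\C$ is compactly generated by $\Coh(\C)$. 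Finally, for the universal property among regular categories with left $t$-exact filtered-colimit-preserving functors: any such functor out of a regular $\E$ is determined by its restrictions to the coconnective towers $\{\E_{<n} \to (-)_{<n}\}$ by regularity of $\E$, and since $\R(\C)_{<n} \isom \C_{<n}$ one obtains $\Fun^{\mathrm{fc}}_{\text{l.t.ex.}}(\E, \R(\C)) \isom \Fun^{\mathrm{fc}}_{\text{l.t.ex.}}(\E, \C)$, exhibiting $\rho \colon \R(\C) \to \C$ as the desired coreflection.
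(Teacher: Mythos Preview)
Your proposal is correct and follows essentially the same route as the paper: both hinge on the identification $\C_{<n}\simeq\Ind(\Coh(\C)_{<n})$ together with $\Coh(\C)=\dlim_n\Coh(\C)_{<n}$, from which the chain $\Fun^{\mathrm{fc}}(\R(\C),\D)\simeq\ilim_n\Fun^{\mathrm{fc}}(\C_{<n},\D)$ falls out. The paper phrases this as computing the colimit $\colim_n^{\mathrm{fc}}\C_{<n}$ directly and identifying it with $\Ind(\Coh(\C))$, whereas you build $\R(\C)$ first and then verify regularity, but the content is the same; if anything you are more explicit about the ``iff'' characterization and the coreflection property, which the paper leaves largely implicit in the colimit identification.
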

\begin{proof} 
  
  By hypothesis, $\C_{<0}$ is compactly generated with compact objects $\Coh(\C)_{<0}$.   The functors $\C_{<0} \to \C_{<1}$ preserve both filtered colimits and compact objects, so we see that
  \[ \colim_n^{\text{filtered colimits}} \C_{<n} = \Ind\left(\colim_n \Coh(\C)_{<n} \right) = \Ind\left( \Coh(\C) \right) \]
  In particular, the first colimit exists: This is the assertion that there is an $\infty$-category with the correct universal property; and it is given by the desired formula. 

Notice that the functor $\R(\C) \to \C$ preserves filtered colimits by construction, and finite colimits on compact objects by inspection, so that it preserves colimits.  Since both $t$-structures are compatible with filtered colimits, and since
\[ \R(\C)_{<0} = \Ind(\Coh(\C)_{<0}) \qquad \R(\C)_{>0} = \Ind(\Coh(\C)_{>0}) \] by construction, we see that this functor is $t$-exact.  It is evident that it induces an equivalence on co-connective objects.

  Let us verify that $\R(\C)$ is coherent: The $t$-structure is compatible with filtered colimits, as $\R(\C)_{<0} \to \R(\C)$ preserves filtered colimits by construction.  It is right complete and satisfies the extra coherent condition, since these both depend only on $\R(\C)_{<0} \isom \C_{<0}$.

\end{proof}

\begin{defn} Suppose $\C$ is a stable $\infty$-category with $t$-structure.  We say that the $t$-structure is (left) \demph{complete} if the natural functor
  \[ \C \to \ilim_n (\C_{<n}, \tau_{<n}) \]
  is an equivalence.\footnote{This is just a reformulation of the earlier definition!}
\end{defn}

\begin{prop} Suppose that $\C$ is a stable $\infty$-category with $t$-structure.  Then, there is a universal complete $\infty$-category with $t$-structure mapping to $\C$, and it is given by the formula
  \[ \C \longrightarrow \oh{\C} = \ilim_n \C_{<n} \]
  This functor is $t$-exact, and the induced functor $\C_{<0} \to \oh{\C}_{<0}$ is an equivalence.
  
  If $\C$ is coherent, then the $t$-structure on $\C$ is also coherent.
\end{prop}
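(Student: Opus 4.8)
The plan is to give $\oh{\C} = \ilim_n \C_{<n}$ an explicit description as a category of towers and then read off each assertion, paralleling the preceding regularization result. An object of the limit (taken in $\inftyCat$ along the transition maps $\tau_{<n}\colon \C_{<n+1}\to\C_{<n}$) is a compatible system $(\F_n)_n$ with $\F_n\in\C_{<n}$ and $\tau_{<n}\F_{n+1}\isom\F_n$. Although the $\C_{<n}$ are not stable and the $\tau_{<n}$ are not exact, the limit is stable: the shift is given by the re-indexing $(\F_n)_n \mapsto (\F_{n+1}[1])_n$, which is invertible and compatible with the tower condition (cf.\ \cite{LurieHA}*{\S1.2.1}). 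First I would put a $t$-structure on $\oh{\C}$ by declaring $(\F_n)\in\oh{\C}_{\geq 0}$ (resp.\ $\in\oh{\C}_{<0}$) iff $\F_n\in\C_{\geq 0}$ (resp.\ $\in\C_{<0}$) for all $n$; the truncation functors are computed levelwise, $\tau_{<0}(\F_n)_n=(\tau_{<0}\F_n)_n$ and $\tau_{\geq0}(\F_n)_n=(\tau_{\geq0}\F_n)_n$, which again satisfy the tower condition because truncation functors commute, and the orthogonality and truncation-triangle axioms then hold levelwise. The natural functor is $\C\to\oh{\C}$, $\F\mapsto(\tau_{<n}\F)_n$.

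Next I would verify the three ``by construction'' claims. The functor $\C\to\oh{\C}$ commutes with truncations, since $\tau_{<0}(\tau_{<n}\F)_n=(\tau_{<0}\F)_n$, and therefore carries $\C_{\geq0}$ into $\oh{\C}_{\geq0}$ and $\C_{<0}$ into $\oh{\C}_{<0}$; hence it is $t$-exact. For the equivalence on coconnective objects: if $(\F_n)\in\oh{\C}_{<0}$ then for $n\geq0$ one has $\F_{n+1}\in\C_{<0}\subset\C_{<n}$, so $\F_n=\tau_{<n}\F_{n+1}=\F_{n+1}$, and the tower is determined by $\F_0\in\C_{<0}$; thus projection to the $0$th term is inverse to $\C_{<0}\to\oh{\C}_{<0}$. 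The identical computation at each level gives $\oh{\C}_{<n}\isom\C_{<n}$ compatibly with the truncations, so $\ilim_n\oh{\C}_{<n}\isom\ilim_n\C_{<n}=\oh{\C}$; checking that this identification is the natural completion map shows $\oh{\C}$ is complete.

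For the universal property I would argue that $\C\to\oh{\C}$ is initial among $t$-exact functors from $\C$ into complete $t$-categories. Given complete $\D$ and $t$-exact $F\colon\C\to\D$, $t$-exactness yields functors $F_n\colon\C_{<n}\to\D_{<n}$ commuting with truncations, hence $\ilim_n F_n\colon\oh{\C}\to\oh{\D}$; composing with the equivalence $\D\isom\oh{\D}$ (completeness of $\D$) produces the factorization $\C\to\oh{\C}\to\D$, and the limit description forces it to be essentially unique. Equivalently, completion is idempotent and left adjoint to the inclusion of complete $t$-categories.

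Finally, suppose $\C$ is coherent; I would show $\oh{\C}$ satisfies the three conditions of \autoref{defn:t-coh} by noting, exactly as in the proof of the regularization proposition, that each depends only on $\oh{\C}_{<0}\isom\C_{<0}$. Compatibility with filtered colimits holds because $\oh{\C}_{<0}\isom\C_{<0}$ is closed under them; crucially, this also forces filtered colimits in $\oh{\C}$ to be computed levelwise, since the transition maps $\tau_{<n}$ preserve filtered colimits. Local coherence of the heart is inherited from $\oh{\C}^{\heart}\isom\C^{\heart}$, which the level analysis gives (an object of the heart is a tower with $\F_n\in\C^{\heart}$ equal for $n\geq1$ and $\F_n=0$ for $n\leq0$). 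The step I expect to require the most care is \emph{right} completeness of $\oh{\C}$, since a priori left-completing could destroy it. This is where the levelwise description of filtered colimits is essential: for $(\F_n)\in\oh{\C}$ the canonical map $\dlim_m \tau_{\geq -m}(\F_n)_n\to(\F_n)_n$ is computed levelwise as $(\dlim_m\tau_{\geq -m}\F_n)_n\to(\F_n)_n$, which is an equivalence because each $\dlim_m\tau_{\geq -m}\F_n\isom\F_n$ by right completeness of $\C$. This interplay between the two completions under the filtered-colimit hypothesis is the one genuinely nontrivial point.
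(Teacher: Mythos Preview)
Your proposal is correct and follows essentially the same approach as the paper. The paper's own proof simply cites \cite{LurieHA}*{\S1.2.1} for everything except the final coherence assertion, and then disposes of coherence in one sentence by noting that compatibility with filtered colimits holds because each transition functor in the inverse limit preserves them, while the remaining conditions depend only on $\oh{\C}_{<0}\isom\C_{<0}$. Your write-up is a faithful unpacking of what that reference contains, and your treatment of right completeness (via levelwise filtered colimits) is more careful than the paper's terse assertion.
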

\begin{proof} See \cite{LurieHA}*{$\S$1.2.1} for everything but the last sentence.
  
  Let us verify that $\oh{\C}$ is coherent if $\C$ is: The $t$-structure is compatible with filtered colimits since each functor in the inverse limit is so, and the other properties depend only on $\oh{\C}_{<0} \isom \C_{<0}$.
\end{proof}

The point of making these definitions is the following:
\begin{defn}\mbox{} \begin{enumerate}
  \item Let $\Coht_t$ denote the $\infty$-category whose objects are $\infty$-categories $\C$ with coherent $t$-structure; whose $1$-morphisms are colimit preserving and $t$-exact functors; and whose higher morphisms are as in $\Cat_\infty$.
  \item Let $\Reg_t \subset \Coht_t$ denote the subcategory whose objects are $\infty$-categories $\C$ with \emph{regular} $t$-structure.
  \item Let $\Cplt_t \subset \Coht_t$ denote the subcategory whose objects are $\infty$-categories $\C$ with \emph{complete} (and coherent) $t$-structure.
\end{enumerate}
\end{defn}

\begin{theorem}\label{thm:reg-coh-equiv} The composites
  \[ \C \mapsto \oh{\C} \colon \Reg_t \hookrightarrow \Coh_t  \longrightarrow \Cplt_t \]
  \[ \C \mapsto \R(\C) \colon \Cplt_t \hookrightarrow \Coh_t  \longrightarrow \Reg_t \]
  are inverse equivalences of $\infty$-categories.
\end{theorem}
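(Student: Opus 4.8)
The plan is to realize the two functors as an adjoint pair and then check that the unit and counit are equivalences. By the two universal properties already established, regularization $\R\colon \Coh_t \to \Reg_t$ is right adjoint to the inclusion $\Reg_t \hookrightarrow \Coh_t$, and completion $\oh{(-)}\colon \Coh_t \to \Cplt_t$ is left adjoint to the inclusion $\Cplt_t \hookrightarrow \Coh_t$. Composing these adjunctions, for $\C \in \Reg_t$ and $\D \in \Cplt_t$ one has
\[ \Map_{\Cplt_t}(\oh{\C}, \D) \isom \Map_{\Coh_t}(\C, \D) \isom \Map_{\Reg_t}(\C, \R(\D)), \]
so that $\oh{(-)}|_{\Reg_t}$ is left adjoint to $\R|_{\Cplt_t}$. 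It therefore suffices to prove that the unit $\C \to \R(\oh{\C})$ (for $\C$ regular) and the counit $\oh{\R(\D)} \to \D$ (for $\D$ complete) are equivalences; objectwise equivalence of these natural transformations gives the asserted inverse equivalence of $\infty$-categories.

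The key input is that both functors fix the tower of coconnective subcategories. Indeed the regularization proposition gives that $\R(\E) \to \E$ is $t$-exact and induces an equivalence $\R(\E)_{<n} \isom \E_{<n}$ for every $n$, and likewise $\E \to \oh{\E}$ induces $\E_{<n} \isom \oh{\E}_{<n}$, all compatibly with the transition functors $\E_{<n} \to \E_{<n+1}$. Moreover a coherent category is reconstructed from this tower in each of the two flavors: if $\E$ is complete then $\E \isom \ilim_n \E_{<n}$ by definition, while if $\E$ is regular then $\E \isom \Ind(\Coh(\E))$ with $\Coh(\E) = \colim_n (\E_{<n})^c$, so that $\Coh(\E)$, and hence $\E$, is determined by the tower as well. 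I will isolate the following consequence as a lemma: a $t$-exact, colimit-preserving functor between regular coherent categories that induces an equivalence on each $\E_{<n}$ is an equivalence, since it then carries $\Coh(\E) = \colim_n (\E_{<n})^c$ equivalently onto $\Coh$ and both sides are the $\Ind$-completions of their coherent objects.

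With these tools the two round trips are immediate. For $\C$ regular, the map $\C \to \oh{\C}$ is an equivalence on coconnectives, so $\Coh(\oh{\C}) \isom \Coh(\C)$ and hence $\R(\oh{\C}) = \Ind\Coh(\oh{\C}) \isom \Ind\Coh(\C) = \R(\C) \isom \C$; unwinding the universal properties identifies this composite equivalence with the unit $\C \to \R(\oh{\C})$. For $\D$ complete, $\R(\D) \to \D$ is an equivalence on each $(-)_{<n}$, so $\oh{\R(\D)} = \ilim_n \R(\D)_{<n} \isom \ilim_n \D_{<n} = \oh{\D} \isom \D$, and again this is the counit $\oh{\R(\D)} \to \D$. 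Thus unit and counit are equivalences and the adjunction is an adjoint equivalence.

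The main obstacle is bookkeeping rather than conceptual: one must check that the coconnective equivalences produced by the two propositions are natural in $\E$ and compatible with the transitions $\tau_{<n}$, so that they assemble into genuine natural transformations of $\infty$-functors, and that the two displayed composite equivalences really coincide with the adjunction unit and counit rather than merely matching their sources and targets. The isolated lemma — equivalence on all coconnective parts implies equivalence of regular categories — is where the coherence hypotheses enter, through the identification $\Coh(\E) = \colim_n (\E_{<n})^c$ and the presentation $\E \isom \Ind\Coh(\E)$; once this is in hand the remainder is formal manipulation of the two adjunctions.
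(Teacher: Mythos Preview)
Your proof is correct and follows essentially the same approach as the paper: establish the adjunction $\oh{(-)} \dashv \R$ from the universal properties of completion and regularization, then verify that the unit and counit are equivalences using that both operations induce equivalences on each $(-)_{<n}$. The paper's argument is terser---it simply notes that a $t$-exact colimit-preserving functor between regular (resp.\ complete) coherent categories which is an equivalence on co-connective objects is an equivalence---but your more explicit decomposition via $\Coh(\E) = \colim_n (\E_{<n})^c$ and $\E \isom \ilim_n \E_{<n}$ amounts to the same thing.
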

\begin{proof} It follows from the above that the first functor is left-adjoint to the second.  It is enough to check that the unit and co-unit is an equivalence.  For instance if $\C \in \Reg_t$ then we must check that
  \[ \R(\oh{\C}) \longrightarrow \C \]
  is an equivalence.  Since both are regular and the functor is left $t$-exact and preserves filtered colimits, it is enough to note that it is an equivalence on co-connective objects, which we have seen.  The argument for the other adjoint is similar.
\end{proof}

\subsection{Tensor products, and functors, of $t$-structures}\label{ssec:t-tens}\mbox{}
\begin{defn} Suppose that $\C, \D$ are stable presentable $\infty$-categories with accessible $t$-structures compatible with filtered colimits.  Then, $\C \otimes \D$ is a stable presentable $\infty$-category, and we define an accessible $t$-structure compactible with filtered colimits on it by requiring that $(\C \otimes \D)_{\geq 0}$ be generated under colimits by objects of the form
  \[ c \otimes d : c \in \C_{\geq 0}, d \in \D_{\geq 0} \]

  Then, $(\C \otimes \D)_{<0}$ is characterized by being the right orthogonal to the above.
\end{defn}

Defining well-behaved $t$-structures on functor categories seems to be more subtle.  However, if $\C$ is compactly-generated we can get around this: 
\begin{defn} Suppose that $\C^c$ is a small stable idempotent-complete $\infty$-category with $t$-structure.  Then, $\C = \Ind(\C^c$) admits an accessible $t$-structure compatible with filtered colimits.    In this case, the dual of $\C$ identifies with $\Ind((\C^c)^{op})$ and this \emph{also} admits such a $t$-structure: Indeed, $(\C^c)^{op}$ admits a $t$-structure determined by setting
  \[ (\C^c)^{op}_{\geq 0} = (\C^c)_{\leq 0} \qquad (\C^c)^{op}_{\leq 0} = (\C^c)_{\geq 0} \]
  and simply ``reversing'' the truncation sequences.
\end{defn}

\begin{remark} If $\C$ is as in the previous definition, and $\D$ is a stable presentable $\infty$-category with $t$-structure, then we can again put a $t$-structure on $\Fun^L(\C, \D)$ as follows: Identify 
  \[ \Fun^L(\C, \D) \isom \Ind((\C^c)^{op}) \otimes \D \]
  and equip it with the $t$-structure from the previous two definitions.  Explicitly,
  \[ \Fun^L(\C,\D)_{\geq 0} \]
  is generated under colimits and extensions by objects of the form $\RHom_{\C}(c, -) \otimes d)$ for $c \in \C_{\geq 0}$ and $d \in \D_{\geq 0}$.

  A definition chase shows that $\Fun^L(\C,\D)_{<0}$ consists precisely of those $F$ for which 
  \[ \Map_{\Fun^L(\C,\D)}(\RHom_{\C}(c,-) \otimes d, F) = \Map_{\D}(d, F(c)) = \pt \]
  for all $c, d$ as above -- and this is precisely those $F$ which are left $t$-exact.
\end{remark}

\subsection{Quasi-coherent and ind-coherent complexes}
\begin{prop}\label{prop:qc-t-cplt} Suppose that $X$ is a geometric stack.  Then, $\QC(X)$ is a stable presentable $\infty$-category with accessible $t$-structure.  This $t$-structure is both left and right complete.  If $X$ is Noetherian, then $\QC(X)$ is coherent.
\end{prop}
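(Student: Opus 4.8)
The plan is to reduce to the affine case by smooth descent, treating the two completeness statements separately and bringing in finite cohomological dimension (automatic in characteristic zero) for left completeness.

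First I would choose a smooth surjection $p\colon U=\Spec A \to X$ from an affine; since $X$ is geometric its diagonal is affine, so every term $U_n$ of the \v{C}ech nerve is affine, with connective ring of functions $A_n$, and $\QC(X)\simeq \ilim_{\Delta}\QC(U_\bullet)$ as a limit of presentable stable categories along the colimit-preserving pullback functors. This exhibits $\QC(X)$ as stable and presentable. The structure maps in the \v{C}ech diagram are flat pullbacks, hence $t$-exact, so the standard $t$-structures on the $\QC(U_n)=A_n\mod$ assemble into one on the limit: I would set $\QC(X)_{\geq 0}$ (resp.\ $\QC(X)_{<0}$) to consist of those $\F$ with $p^*\F$ connective (resp.\ coconnective), which agrees with levelwise connectivity because $p$ is faithfully flat and so $p^*$ is conservative and $t$-exact. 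Truncations are computed levelwise and descend since pullback commutes with $\tau_{\geq n}$, and accessibility of the $t$-structure is inherited from the affine case through this descent limit. Compatibility with filtered colimits is immediate, as connectivity is detected by the conservative, $t$-exact, colimit-preserving functor $p^*$.

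Right completeness I would verify after applying $p^*$: being conservative, $t$-exact, and colimit-preserving, it gives $p^*(\dlim_n \tau_{\geq -n}\F)=\dlim_n \tau_{\geq -n}p^*\F = p^*\F$, reducing the assertion to right completeness of $A\mod$ for the connective ring $A$, which is standard \cite{LurieHA}.

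The main obstacle is left completeness, because $p^*$ is a left adjoint and need not commute with the limit $\ilim_n \tau_{<n}\F$. Here I would use that $X$ has finite cohomological dimension, say bounded by $d$ --- automatic in characteristic zero by \cite{DrinfeldGaitsgory} --- which ensures that countable products in $\QC(X)$ are right $t$-exact up to the fixed shift $d$. Granting this, \cite{LurieHA}*{1.2.1.19} reduces left completeness to the vanishing of the ``infinitely connective'' objects, i.e.\ to showing that $\bigcap_n \QC(X)_{\geq n}$ consists only of zero objects. This last point is easy: if $\G$ lies in every $\QC(X)_{\geq n}$, then $p^*\G$ lies in every $\QC(U)_{\geq n}$, hence $p^*\G=0$ because $A\mod$ is left complete for the connective ring $A$ (so that $\bigcap_n (A\mod)_{\geq n}=0$); conservativity of $p^*$ then gives $\G=0$.

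Finally, for the coherence claim when $X$ is Noetherian, the two completeness properties and compatibility with filtered colimits are already in hand, so by \autoref{defn:t-coh} it remains to check that $\QC(X)^{\heart}$ is locally coherent. But the heart depends only on the classical truncation $X_{cl}$ and is the Grothendieck abelian category of ordinary quasi-coherent sheaves; for $X$ Noetherian its compact objects are exactly the coherent sheaves, which form an abelian category --- kernels and cokernels of maps of coherent sheaves are again coherent, by the Noetherian hypothesis --- and which generate under filtered colimits. Hence $\QC(X)^{\heart}$ is locally coherent (indeed locally Noetherian), and the $t$-structure is coherent in the sense of \autoref{defn:t-coh}.
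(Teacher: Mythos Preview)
Your treatment of presentability, the $t$-structure, right completeness, and the coherence claim is correct and matches the standard story (the paper simply cites DAG~VIII for the first two sentences, and for coherence the classical fact that every quasi-coherent sheaf is a filtered colimit of its coherent subsheaves).

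The gap is in left completeness. The appeal to Drinfeld--Gaitsgory is not valid in the stated generality: their result gives finite cohomological dimension for stacks of finite type over a characteristic-zero field, but the proposition is stated for an arbitrary geometric stack, with no finite-type hypothesis. Even granting finite cohomological dimension, your assertion that countable products in $\QC(X)$ are then right $t$-exact up to a fixed shift is not justified and is not obvious---products in $\QC(X) = \ilim_\Delta \QC(U_\bullet)$ are \emph{not} computed levelwise, since the transition functors $f^*$ are left adjoints and need not preserve products.

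The argument in DAG~VIII, which the paper cites, avoids this detour entirely. Because the transition functors in the \v{C}ech diagram are $t$-exact, one has $\QC(X)_{<m} \simeq \ilim_\Delta \QC(U_n)_{<m}$ as full subcategories of $\QC(X)$ for each $m$. Then
\[
\ilim_m \QC(X)_{<m} \;\simeq\; \ilim_m \ilim_{\Delta} \QC(U_n)_{<m} \;\simeq\; \ilim_{\Delta} \ilim_m \QC(U_n)_{<m} \;\simeq\; \ilim_{\Delta} \QC(U_n) \;=\; \QC(X),
\]
using interchange of limits and left completeness of $A_n\mod$ for connective $A_n$. No cohomological-dimension hypothesis is needed. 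Your worry that $p^*$ may not commute with the Postnikov limit is beside the point: one never has to pull that limit back along $p^*$, one computes it directly in the descent description.
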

\begin{proof} See DAG VIII for the first two sentences of the Proposition.  For the third, it is a classical statement that every object of $\QC(X)^{\heart}$ is a filtered colimit of its coherent subobjects.
\end{proof}

\begin{example}\label{ex:dcoh-cpltions} Suppose $A$ is a Noetherian ring.  Then, $A\mod$ carries a $t$-structure that is both left and right complete in the strong sense.  Meanwhile, the full subcategory $\DCoh A \subset A\mod$ carries a $t$-structure which is both left and right bounded.  In particular, it is weakly left and right complete, though not strongly so.   This fully faithful exact embedding into a left (resp., right) complete category identifies the left (resp., right) completion of $\DCoh A$ with full subcategories of $A\mod$:
  \begin{itemize}
      \item The left completion of $\DCoh A$ identifies with $\DCohp A$, the full-subcategory of modules $M$ with $H_i(M)$ coherent over $H_0(A)$ for all $i$ and vanishing for $i \ll 0$;
      \item The right completion of $\DCoh A$ identifies with $\DCohm A$, the full-subcategory of modules $M$ with $H_i(M)$ coherent over $H_0(A)$ for all $i$ and vanishing for $i \vinograd 0$;
      \item The left completion of the right completion (equivalently the other way around) of $\DCoh A$ identifies with $\DCohpm A$, the full sub-category of modules $M$ with $H_i(M)$ coherent over $H_0(A)$ for all $i$.
  \end{itemize}
  
  This provides an ``application'' of the formal symmetry of the definitions.  Suppose that $\omega \in A\mod$ is a dualizing complex.  This means that $\omega$ has homologically bounded above coherent homology, finite injective dimension, and the natural map $A \to \RHom_A(\omega,\omega)$ is an equivalence.  It follows that the induced duality functor 
  \[ \DD = \RHom_A(-,\omega) \colon \DCoh A^{op} \longrightarrow \DCoh A \]
  is an equivalence and that it is left and right $t$-exact up to finite shifts (where the opposite category gets the opposite $t$-structure).  By formal nonsense, it induces an equivalence on left completion of right completions
  \[ \DD \colon \left( \DCohpm A\right)^{op} \isom \DCohpm A\]
\end{example}

\begin{prop}\label{prop:groth-duality} Suppose that $X$ is a geometric stack of finite type over $S = \Spec R$ for $R$ a Noetherian ring admitting a dualizing complex $\omega_R$.  Then, 
  \begin{itemize}
      \item $X$ admits a dualizing complex, i.e., an $\omega_X \in \QC(X)$ such that for any smooth map $U = \Spec A \to X$ the restriction $\res{\omega_X}{U}\in A\mod$ is a dualizing complex in the above sense;
    \end{itemize}

    Suppose that $X$ is a geometric stack admitting a dualizing complex $\omega_X$, and that $X$ has finite cohomological dimension.  Then,
    \begin{enumerate}
      \item The formation of $\DD(-)=\RHom_X(-, \omega_X)$ is smooth local on $X$ for $- \in \DCohpm X$;
      \item $\DD$ induces an anti-equivalence 
        \[ \DD \colon \DCohpm X^{op} \isom \DCohpm X \]
        which is left and right $t$-exact up to finite shifts.  In particular, it interchanges the bounded above and bounded below complexes.
     
      \item The fully faithful, $t$-exact, embedding
      \[ i \colon \DCoh X \to \DCohp X \]
      exhibits $\DCohp X$ as a left $t$-completion of $\DCoh X$.

     \item Grothendieck duality determines a fully faithful embedding
        \[ \DD \colon (\DCoh X)^{op} \to \DCohp X \]
        which is left and right $t$-exact up to a shift.  It exhibits $\DCohp X$ as a left $t$-completion of $(\DCoh X)^{op}$ up to finite shifts.
  \end{enumerate}
\end{prop}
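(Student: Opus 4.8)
The plan is to reduce every assertion to the affine case recorded in \autoref{ex:dcoh-cpltions} by means of a smooth atlas, and then to assemble the local statements using the $t$-structure formalism of the Appendix.

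First I would construct $\omega_X$ and establish (i) together, since they are intertwined. For the structure map $p \colon X \to S = \Spec R$, choose a smooth atlas $U = \Spec A \to X$; on $U$ the relative Grothendieck-duality functor produces a dualizing complex, and one normalizes $\omega_U$ (by the relative canonical and a shift by the relative dimension) so that smooth restriction of a dualizing complex is again dualizing. The content of the construction is descent: for the two projections from $U \times_X U$ the two pullbacks of $\omega_U$ are both dualizing complexes and hence differ by an invertible twist, and the resulting cocycle is trivialized using the cotangent complex of the smooth atlas. This glues the $\omega_U$ to $\omega_X \in \QC(X)$ with the stated restriction property. The same comparison --- that $\RHom$ commutes with flat (smooth) pullback on pseudo-coherent inputs and that $f^* \omega_X \isom \omega_U$ up to twist --- yields (i): for smooth $f$, $f^* \DD_X(\F) \isom \DD_U(f^* \F)$ for $\F \in \DCohpm X$, which reduces to the identity $\RHom_A(-,\omega_A) \otimes_A B \isom \RHom_B(- \otimes_A B, \omega_B)$ in the affine case.

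With (i) in hand, (ii) is a gluing argument. Being in $\DCohpm$ is smooth-local (coherence of each homology sheaf is local, and the heart is well-behaved by \autoref{prop:qc-t-cplt}), and by (i) the functor $\DD_X$ commutes with restriction to the atlas. The biduality map $\F \to \DD_X \DD_X \F$ and the comparison $\RHom_X(\F,\G) \to \RHom_X(\DD\G,\DD\F)$ are morphisms in $\DCohpm X$ (resp.\ on mapping spectra) that become equivalences after restriction to $U$ by the affine case of \autoref{ex:dcoh-cpltions}, hence are equivalences; this gives the anti-equivalence and full faithfulness. The $t$-exactness up to a finite shift, and the interchange of bounded-above and bounded-below objects, are local on the atlas, and the shift is uniform because $X$ is quasi-compact and $\omega$ has uniformly bounded injective dimension there, the finite cohomological dimension of $X$ guaranteeing the relevant boundedness when passing between sheafy and global duality.

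For (iii) I would invoke the completion formalism directly: $\DCoh X$ carries a bounded $t$-structure, so by the Appendix its left completion is $\ilim_n \DCoh(X)_{<n}$, a complete $t$-category with coconnective part $\DCoh(X)_{<0}$. Since $\QC(X)$ is left complete (\autoref{prop:qc-t-cplt}), an object $\F \in \DCohp X$ is recovered from its tower $(\tau_{<n}\F)$, and conversely any compatible tower assembles to such an $\F$; this identifies the left completion with $\DCohp X$, exactly as in the affine statement of \autoref{ex:dcoh-cpltions}. Finally (iv) combines (ii) and (iii): by the formal symmetry of completions, the left completion of $(\DCoh X)^{op}$ is $(\DCohm X)^{op}$, which (ii) carries under $\DD$ to $\DCohp X$, $t$-exactly up to a finite shift; composing the inclusion $(\DCoh X)^{op} \hookrightarrow (\DCohm X)^{op}$ with this equivalence produces the fully faithful embedding exhibiting $\DCohp X$ as the left completion of $(\DCoh X)^{op}$ up to finite shifts. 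The main obstacle is the first step --- the construction and smooth descent of $\omega_X$ together with the compatibility (i) carrying the correct twist, i.e.\ genuine relative Grothendieck duality and its smooth base change in the derived setting; once these are secured, the remaining parts are formal gluing together with the Appendix's completion machinery.
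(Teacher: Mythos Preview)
Your overall strategy---construct $\omega_X$ by smooth descent via the cocycle/torsor of dualizing complexes (trivialized by the relative cotangent line), reduce (i) and (ii) to the affine statements of \autoref{ex:dcoh-cpltions}, and deduce (iii), (iv) from the completion formalism---is exactly the paper's approach.

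There is, however, a gap in your argument for (i). You invoke the base-change identity $\RHom_A(-,\omega_A)\otimes_A B \isom \RHom_B(-\otimes_A B,\omega_B)$ ``on pseudo-coherent inputs'' and then apply it to $\F\in\DCohpm X$. But objects of $\DCohpm$ are not bounded below in general, hence not almost perfect, and the flat base-change formula for $\RHom$ is not a priori valid for them. The paper handles this by a two-step approximation: one first shows (i) for $\Perf X$ by dualizability; then extends to $\DCohp X$ by approximating by perfect complexes, using that $\omega_X$ is bounded above so that each $H_i\DD(\tau_{\leq k}\F)$ stabilizes in $k$; and finally extends to $\DCohpm X$ by approximating by $\tau_{\geq -k}\F \in \DCohp X$, using that $\omega_X$ has finite injective dimension \emph{and} that $X$ has finite cohomological dimension to ensure that each $H_i\DD(\tau_{\geq -k}\F)$ stabilizes in $k$. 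You invoke these finiteness hypotheses only in your sketch of (ii), but they are already needed here in (i); without this convergence argument, smooth-locality of $\DD$ on unbounded complexes does not follow.

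Once (i) is established in this way, your argument for (ii)---reduce to the affine case and cite the completion-theoretic equivalence of \autoref{ex:dcoh-cpltions}---is valid, and in fact a little cleaner than the paper's, which redoes the same $\Perf\subset\DCohp\subset\DCohpm$ approximation in the affine case rather than appealing directly to the formal nonsense on completions. Your (iii) and (iv) match the paper.
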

\begin{proof} We first prove the existence of a dualizing complex:
  \begin{itemize}
    \item 
Note that the notion of dualizing complexes is smooth local on affine rings.

Furthermore, one can show that if $\omega$ and $\omega'$ are two dualizing complexes on $U = \Spec A$, then $\RHom_A(\omega, \omega')$ is a graded line on $A$.  So, the $\infty$-category of dualizing complexes is equivalent to a $1$-category, and the (ordinary) stack of dualizing complexes on $X$ form a torsor -- on the smooth site of $X$ -- for the Picard groupoid of graded lines $\ZZ \ltimes B\GG_m$.  Call this torsor $\Dualiz^*_X \to X$.

Let $\det \LL_{/X}$ be the graded line, on the smooth site of $X$, given by $U \mapsto \det \LL_{U/X}$.  The existence of the functor $f^!$ on dualizing complexes (for finite type maps of Noetherian rings) implies that there is a $\ZZ \ltimes B\GG_m$-torsor $\Dualiz^!_S$ on the fppf site of $S$.  The natural isomorphisms $f^* \otimes \det \LL_f \isom f^!$ for smooth maps provide isomorphisms of $\ZZ \ltimes B\GG_m$-torsors
shows that $\det\LL_{/X}$ and $\det \LL_{/S}$ give isomorphisms
\[ \Dualiz^*_X \stackrel{\det\LL_{/X}}\longrightarrow \res{\Dualiz^*_S}{X} \stackrel{\det\LL_{/S}}\longleftarrow \res{(\det \LL_{/S} \otimes \Dualiz^*_S)}{X} \]

Since $\Dualiz^*_S$ was trivial by assumption, we are done.
\end{itemize}

Now the rest:
  \begin{enumerate}
    \item Let $\C \subset \QC X$ denote the full subcategory on which $\DD(-)$ is smooth local.  Note that $\Perf X \subset \C$ by dualizability.
      
      Since $\omega_X$ is bounded above, a convergence result implies that $\DCohp X \subset \C$: It suffices to show that for each $i$, $H_i \DD(\tau_{\leq k} -)$ is constant for $k \geq N(i)$ with $N(i)$ depending only on $i$ and the boundedness of $\omega_X$, and in particular remaining true with the same constant after smooth base change.  This implies that $\DCohp X \subset \C$, since for $\F \in \DCohp X$ and each $k$ there exists a perfect complex $\P_k$ and a map $\P_k \to \F$ inducing an equivalence on $\tau_{\leq k}$.
      
      Similarly, since $\omega_X$ has finite injective dimension and $X$ has finite cohomological amplitude, we see that $H_i \DD(\tau_{\geq -k} -)$ is constant for $k \geq M(i)$ with $M(i)$ depending only on $i$, the injective dimension of $\omega_X$ and the cohomological amplitude of $X$.  This implies that $\DCohpm X \subset \C$, since if $\F \in \DCohpm X$ then $\tau_{\geq -k} \F \in \DCohp X$ for each $k$.
      \item In light of (ii), the claim is smooth local so we may suppose that $X = \Spec A$.  Let $\C \subset \DCohpm X$ denote the full subcategory on which the double duality map $\F \to \DD \circ \DD(\F)$ is an equivalence.  Since $\omega$ is a dualizing complex, $A \in \C$ and so $\Perf A \subset \C$.

        The proof of (i), together with the fact that $A\mod$ is left and right complete, shows that $\DD$ is left and right $t$-exact up to a shift.  Consequently, so is $\DD \circ \DD$.  This implies that both $H_i(\tau_{leq k} \F)$ and $H_i(\DD \circ \DD (\tau_{\leq k}\F))$ eventually stabilize.  This implies that $\DCohp A \subset \C$ by the same approximation-by-perfect argument as above.  Similarly, $H_i(\tau_{geq -k} \F)$ and $H_i(\DD \circ \DD (\tau_{\geq -k}\F))$ eventually stabilize.  This implies that $\DCohpm A \subset \C$ by approximation by pseudo coherent (or almost perfect) complexes.

        This proves that $\DD$ is an equivalence.  The proof showed that it was left and right $t$-exact up to a shift.
   
        \item By definition $\DCoh X$ consists of the left bounded objects of $\DCohp X$, so it suffices to note that Postnikov towers in $\DCohp X$ are convergent since they are so in $\QC X$.
        \item This follows from (ii) and (iii). More directly, one sees that the left $t$-completion of $\DCoh X^{op}$ identifies with $(\DCohm X)^{op}$ and (ii) shows that $\DD$ identifies this with $\DCohp X$. \qedhere
  \end{enumerate}
\end{proof}

\begin{remark} It seems not entirely clear that the existence of $f^!$ is written down for derived stacks not of finite type over a char. $0$ field.  
  
  Let us note that if $X$ and $S$ are \emph{classical}, then the classical statements -- at the level of derived categories -- suffice for our purposes.  Indeed, if $X$ and $S$ are classical, then $\Dualiz^*_X$ is equivalent to a (classical) $1$-groupoid and embeds fully faithfully into the maximal subgroupoid of the derived category of $X$.  So, functoriality at the level of derived categories suffices.

\end{remark}

\begin{prop}\label{prop:qcsh-t-reg} Suppose that $X$ is a geometric stack of finite type over $\Spec k$ for $k$ a characteristic zero field.  Then, $\QCsh(X)$ is a stable presentable $\infty$-category with accessible $t$-structure.  This $t$-structure is coherent and regular.
  
  Furthermore, the natural map
  \[ \QCsh(X) \to \QC(X) \]
  realizes $\QCsh(X)$ as the regularization of $\QC(X)$, and $\QC(X)$ as the completion of $\QCsh(X)$.
\end{prop}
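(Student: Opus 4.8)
The plan is to deduce the entire statement formally from the regularization machinery of the Appendix, by showing that $\QCsh(X)$ is \emph{literally} the regularization of $\QC(X)$. Since $X$ is of finite type over the field $k$ it is in particular Noetherian, so \autoref{prop:qc-t-cplt} applies and tells us that $\QC(X)$ carries an accessible $t$-structure which is coherent and both left and right complete; in particular $\QC(X) \in \Cplt_t$. This is the only external geometric input needed, together with the standing assumption that $\QCsh(X) = \Ind(\DCoh X)$.

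The one content-bearing step is to identify the abstractly defined coherent objects of $\QC(X)$ with the geometric $\DCoh(X)$. Since $\QC(X)$ is coherent, the Lemma following \autoref{lem:t-works} characterizes $\Coh(\QC(X))$ as the complexes whose homology lies in $\Coh(\QC(X))^{\heart} = (\QC(X)^{\heart})^c$ and is concentrated in finitely many degrees. By the Noetherian hypothesis the compact (equivalently finitely presented) objects of the abelian category $\QC(X)^{\heart}$ are exactly the coherent sheaves, so $\Coh(\QC(X))$ is precisely the category of $t$-bounded complexes with coherent homology, i.e. $\DCoh(X)$. Feeding this into the regularization proposition (the one following \autoref{defn:t-reg}) applied to $\C = \QC(X)$ yields that $\R(\QC(X)) = \Ind(\Coh(\QC(X))) = \Ind(\DCoh X)$ is a coherent, regular $t$-category --- hence stable, presentable, and equipped with an accessible $t$-structure --- that the canonical functor $\R(\QC(X)) \to \QC(X)$ is $t$-exact and colimit-preserving, and that it restricts to an equivalence on coconnective objects. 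Matching the $t$-structures (both $\R(\C)$ and the standard $\QCsh$ $t$-structure are generated under colimits by $\DCoh(X)_{\geq 0}$, equivalently $\R(\C)_{\geq 0} = \Ind(\Coh(\C)_{\geq 0})$) identifies $\QCsh(X) = \R(\QC(X))$ together with its $t$-structure. This establishes all the assertions except the completion claim, and exhibits $\QCsh(X) \to \QC(X)$ as the regularization map.

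For the remaining completion statement I would invoke \autoref{thm:reg-coh-equiv}, which asserts that $\R$ and $\oh{(-)}$ are inverse equivalences between $\Cplt_t$ and $\Reg_t$. Since $\QC(X) \in \Cplt_t$ by the first step and $\QCsh(X) = \R(\QC(X))$ by the second, the inverse equivalence gives $\oh{\QCsh(X)} = \oh{\R(\QC(X))} = \QC(X)$, which is exactly the statement that $\QC(X)$ is the completion of $\QCsh(X)$.

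The main obstacle is entirely concentrated in the identification $\Coh(\QC(X)) = \DCoh(X)$: it is here that one must reconcile the abstract, $t$-structure-theoretic notion of coherent object from the Appendix with the geometric category $\DCoh(X)$, and this hinges on the Noetherian hypothesis to guarantee that the compact objects of the heart $\QC(X)^{\heart}$ are the coherent sheaves. The remaining steps are formal consequences of the Appendix, provided one is careful to check that the $t$-structure produced abstractly on $\Ind(\DCoh X)$ coincides with the standard $t$-structure on $\QCsh(X)$.
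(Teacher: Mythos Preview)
Your approach is correct but takes a genuinely different route from the paper's. The paper argues directly on the $\QCsh$ side, citing \cite{DrinfeldGaitsgory}: one uses a finite-length stratification by global quotient stacks to establish finite cohomological dimension (this is where characteristic zero enters), deduces $\QCsh(X)^c = \DCoh(X)$, and then uses the stratification again to show that $\QCsh(X)^{\heart}$ generates, reducing to the classical fact that quasi-coherent sheaves are unions of coherent subsheaves. You instead work from the $\QC$ side: invoke \autoref{prop:qc-t-cplt} to get $\QC(X)$ coherent and complete, identify $\Coh(\QC(X)) = \DCoh(X)$ via the Noetherian hypothesis, and then let the abstract regularization machinery do the work, with \autoref{thm:reg-coh-equiv} giving the completion statement for free. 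The paper's route makes transparent exactly where characteristic zero is used; in your argument that hypothesis is hidden inside the ``standing assumption'' $\QCsh(X) = \Ind(\DCoh X)$, which is itself justified by the same Drinfeld--Gaitsgory input. Your approach has the virtue of making the $\QCsh/\QC$ relationship a formal instance of the $\R/\oh{(-)}$ equivalence, at the cost of obscuring the geometric content. One small caution: the ``standing assumption'' you invoke is stated for Sections~4--5, not for the Appendix where this Proposition lives, so strictly speaking you are citing Drinfeld--Gaitsgory directly rather than appealing to an ambient hypothesis; this is fine, but worth saying explicitly.
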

\begin{proof} See \cite{DrinfeldGaitsgory}:  One uses a finite-length stratification by global quotient stacks to show that $X$ has finite cohomological dimension (this is where one uses characteristic zero); from this, we deduce that $\QCsh(X)^c = \DCoh(X)$.  Then, one uses the stratification to show that $\QCsh(X)^{\heart}$ generates, reducing to the statement about ordinary quasi-coherent sheaves being unions of their coherent subsheaves.
\end{proof}


\section{Correction to Lemma 3.0.13}\label{s:correction}
We thank Johan de Jong and Noah Olander for questioning the validity of Lemma 3.0.13. Indeed,
it is false; we thank Germ\'{a}n Stefanich for providing a counterexample. In place of  Lemma 3.0.13, one can use the following. We thank  Germ\'{a}n Stefanich for generously providing the argument.

\begin{prop}[\cite{Luriespec} Proposition 9.6.3.1]\label{prop single object}
Let $X$ be a quasi-compact, quasi-separated algebraic space. Then there exists a perfect complex $\mathcal{F}$ on $X$ such that the smallest subcategory of $\QC(X)$ containing $\mathcal{F}$ and closed under colimits and extensions contains $\QC(X)_{\geq 0}$.
\end{prop}

\begin{prop}\label{prop correction}
Let $X$ be a quasi-compact separated algebraic space over an affine $S$. If $G$ is a perfect complex generating $\QC(X)$ then
\[
\Hom_X(G, -): \QC(X) \rightarrow \QC(S)
\]
detects the properties of being bounded above and being bounded below.
\end{prop}
\begin{proof}

Let $M$ be an object of $\QC(X)$ such that $\Hom_X(G, M)$ is bounded above. Since $G$ is a compact generator, we have that $\Hom_X(\mathcal{F}, M)$ is bounded above for the compact  object $\mathcal{F}$ from Proposition \ref{prop single object}. Assume that $\Hom_X(\mathcal{F}, M)$ is bounded above by $i$. Then $\Hom_X(H, M)$ is bounded above by $i$ for any object $H$ in the closure of $\mathcal{F}$ under colimits and extensions. In particular, this holds for any connective object $H$. It follows that $M$ is bounded above by $i$.

Assume now given an object $M$ of $\QC(X)$ such that $\Hom_X(G, M)$ is bounded below. Recall that the t-structure on $\QC(X)$ is defined in such a way that being connective can be detected by pullback to affine schemes. We therefore need to show that there exists a number $j$ such that for every map $f: T \rightarrow X$ from an affine scheme $T$ into $X$, the pullback $f^*M$ is bounded below by $j$. This is the same as $\Gamma(T, f^*M)$ being bounded below by $j$. We compute this as 
\[
\Gamma(X, f_*f^*M) = \Gamma(X, f_*\mathcal{O}_{T} \otimes M).
\] 
The map $f$ is affine schematic, and therefore $f_*$ is t-exact. This implies that $f_*\mathcal{O}_T$ is connective and hence $f_*\mathcal{O}_{T} \otimes M$ is contained in the closure of $\mathcal{F} \otimes M $ under extensions and colimits, where $\mathcal{F}$ is as in Proposition \ref{prop single object}. We conclude that $\Gamma(T, f^*M)$ is bounded below by $j$, where $j$ is the lower bound for $\Gamma(X, \mathcal{F} \otimes M) = \Hom_X(\mathcal{F}^*, M)$ (which exists since $\mathcal{F}^*$ is in the closure of $G$ under finite colimits, shifts, and retracts). It follows that $M$ itself is bounded below by $j$.
\end{proof}

\begin{remark}
Note that with minor modification we can weaken ``separated'' to ``quasi-separated'' in the previous Proposition as follows. In the proof of the ``bounded below'' case, we note that it suffices to verify that $f^* M$ is bounded below by $j$ for a single faithfully-flat cover by an affine scheme $f: T \rightarrow X$; and that it suffices for the rest of the argument to show that $f_* \mathcal{O}_T \in \QC(X)_{\geq -d}$ for some $d$.  This follows from $f$ having finite cohomological dimension, for concreteness: we can check this on the affine cover $f$, so it is enough to check $\Gamma(f^* f_* \mathcal{O}_T)$ is bounded below by $-d$; by base-change, this is $\Gamma(T \times_X T, \mathcal O_{T \times_X T})$ and so bounded below by $-d$ for some $d$ (e.g., the Zariski dimension of $T \times_X T$).
\end{remark}

\bibliography{coherent}

\end{document}